\documentclass[draft,reqno]{amsart}
\usepackage[utf8]{inputenc}
\usepackage{amssymb}
\usepackage{euscript}
\usepackage{extarrows}
\usepackage{mathrsfs}
\usepackage{a4wide}
\usepackage{amsmath}
\usepackage{setspace}
\usepackage{yhmath}
\usepackage{scalerel}

\makeatletter
\@namedef{subjclassname@2010}{%
	\textup{2010} Mathematics Subject Classification}
\makeatother

\newcommand\reallywidehat[1]{\arraycolsep=0pt\relax%
	\begin{array}{c}
		\stretchto{
			\scaleto{
				\scalerel*[\widthof{\ensuremath{#1}}]{\kern-.5pt\bigwedge\kern-.5pt}
				{\rule[-\textheight/2]{1ex}{\textheight}} 
			}{\textheight} %
		}{0.5ex}\\           
		#1\\                 
		\rule{-1ex}{0ex}
	\end{array}
}

\newtheorem{thm}{Theorem}
\newtheorem{cor}[thm]{Corollary}
\newtheorem{lem}[thm]{Lemma}
\newtheorem{pro}[thm]{Proposition}

\theoremstyle{remark}
\newtheorem{rem}[thm]{Remark}

\theoremstyle{definition}
\newtheorem{exa}[thm]{Example}
\newtheorem{dfn}[thm]{Definition}

\DeclareMathOperator{\D}{d\hspace{-0.25ex}}

\newcommand*{\cbb}{\mathbb C}
\newcommand*{\cfw}{C_{\phi,w}}

\newcommand*{\efw}{\mathsf{E}_{\phi,w}}

\newcommand*{\dz}[1]{{\EuScript D}(#1)}

\newcommand*{\hfw}{{\mathsf h}_{\phi,w}}

\newcommand*{\smalloplus}{\raise0pt\hbox{$\scriptscriptstyle \oplus$}}

\begin{document}
	\setstretch{1.1}
	\title[Mean transforms of composition operator pairs]{Mean transforms of unbounded weighted composition operator pairs}
	
	\author[J.B. Zhou]{
		Jing-Bin Zhou}
	\address{Jing-Bin Zhou, School of Mathematics,
		Shanghai University of Finance and Economics,
		777 Gouding Road, Shanghai 200433, P. R. China}
	\email{zhoujingbin@stu.sufe.edu.cn}
	
	\author[S. Yang]{Shihai Yang}
	\address{Shihai Yang, School of Mathematics,
		Shanghai University of Finance and Economics,
		777 Gouding Road, Shanghai 200433, P. R. China}
	\email{yang.shihai@mail.shufe.edu.cn}
	
	\keywords{weighted composition operator, Mean transform, spherical quasinormal operator, spherical $p$-hyponormal operator}
	\subjclass[2010]{Primary 47B38, 47B37, 47B33, 47B20; Secondary 47B49}
	\maketitle
	
\begin{abstract}
	In this paper, we first characterize the polar decomposition of unbounded weighted composition operator pairs \(\textbf{C}_{\phi,\omega}\) in an \(L^2\)-space. Based on this characterization, we introduce the \(\lambda\)-spherical mean transform \(\mathcal{M}_\lambda(\textbf{C}_{\phi,\omega})\) for $\lambda\in[0,1]$. We then investigate the dense definiteness of \(\mathcal{M}_\lambda(\textbf{C}_{\phi,\omega})\). As an application, we provide an example of a \(p\)-hyponormal operator whose Aluthge transform is densely defined, while its $\lambda$-mean transform has a trivial domain. Furthermore, we establish the relationship between the dense definiteness of $\textbf{C}_{\phi,\omega}$ and $\mathcal{M}_{\lambda}(\textbf{C}_{\phi,\omega})$, based on the notion of powers for operator pairs in the sense of M{\"u}ller and Soltysiak. We also give a characterization of spherically quasinormal weighted composition operator pairs via the $\lambda$-spherical mean transform, revealing some properties that differ from the single operator case. Finally, we characterize a class of spherically \(p\)-hyponormal weighted composition operators on discrete measure spaces. As a corollary, we present corresponding results on the spherical \(p\)-hyponormality of unbounded \(2\)-variable weighted shifts and theirs $\lambda$-spherical mean transforms. 
\end{abstract}

\section{Introduction}
Let $\mathcal{L}(\mathcal{H},\mathcal{K})$ be the set of all linear operators and $\mathcal{B}(\mathcal{H},\mathcal{K})$ the algebra of bounded linear operators between the complex Hilbert spaces $\mathcal{H}$ and $\mathcal{K}$. If $\mathcal{H}=\mathcal{K}$, the notation $\mathcal{B}(\mathcal{H},\mathcal{K})$ is abbreviated to $\mathcal{B}(\mathcal{H})$. For an operator $T\in\mathcal{L}(\mathcal{H},\mathcal{K})$, we denote its null space, adjoint, and norm (in case they exists) by $\mathcal{N}(T)$, $T^\ast$ and $\|T\|$, respectively. Let $T^\frac{1}{2}$ denote the positive square root of the positive selfadjoint operator $T$. It is well known that every densely defined closed operator $T\in\mathcal{L}(\mathcal{H},\mathcal{K})$ has the unique polar decomposition $T=U|T|$, where $|T|=(T^\ast T)^{1/2}$ and $U$ is a partial isometry with $\mathcal{N}(U)=\mathcal{N}(T)$. The Aluthge transform of $T\in\mathcal{B}(\mathcal{H})$, defined as $\Delta(T)=|T|^{\frac{1}{2}}U|T|^{\frac{1}{2}}$, was introduced by Aluthge \cite{alu-ieot-1990} in his study of $p$-hyponormal operators. The generalization by Ch\={o} and Tanahashi \cite{cho-smj-2002}, known as the {\em $\lambda$-Aluthge transform}, is given for $T\in\mathcal{B}(\mathcal{H})$ and $\lambda\in[0,1]$, as follows:\begin{equation}\label{lambdaaluthge}
	\Delta_{\lambda}(T)=|T|^\lambda U|T|^{1-\lambda}.
\end{equation}Many authors have investigated the Aluthge transform in different contexts (see \cite{cho-smj-2002, curto-crm-2019, sjj-mn-2023, curto-ieot-2018, zhou-glma-2023, Zid-Fil-2022, jabb-fil-2017}). One reason the Aluthge transform is of interest may lie in the fact that it preserves many properties of the original operator $T$. For instance, Jung et al. proved that $T$ has a nontrivial invariant subspace if and only if $\Delta(T)$ does \cite{jung-ieot-2000}, and also showed that $T$ and $\Delta(T)$ share the same spectrum \cite{jung-ieot-2001}. Moreover, Kim and Ko proved that $T$ has property ($\beta$) if and only if $\Delta$ has property ($\beta$) \cite{Kim-GM-2005}. While most research on the Aluthge transform concerns bounded operators, the literature on the unbounded case remains limited to \cite{trep-jmaa-2015} and \cite{benhida-mn-2020}. A natural question that arises for the Aluthge transform of unbounded operators is whether it preserves the dense definiteness and closedness of the original operator. Trepkowski in \cite{trep-jmaa-2015} constructed a densely defined weighted composition operator whose Aluthge transform has a trival domain; that is, the domain of the Aluthge transform is $\{0\}$. Benhida et al. \cite{benhida-mn-2020} established that the Aluthge transform of a closed weighted composition operator is not necessarily closed. On the other hand,  Lee et al. in \cite{lee-jmaa-2014} considered the unilateral weighted shift $W_{\alpha}\equiv(\alpha_{0},\alpha_{1},\alpha_{3},\cdots)$, obtaining $$\Delta(W_{\alpha})=|W_{\alpha}|^\frac{1}{2}U_{\alpha}|W_{\alpha}|^\frac{1}{2}=\text{shift}(\sqrt{\alpha_{0}\alpha_{1}},\sqrt{\alpha_{1}\alpha_{2}},\sqrt{\alpha_{2}\alpha_{3}},\cdots)$$
and
$$\mathcal{M}_{\frac{1}{2}}(W_{\alpha})=\frac{U_{\alpha}|W_{\alpha}|+|W_{\alpha}|U_{\alpha}}{2}=\text{shift}(\frac{\alpha_{0}+\alpha_{1}}{2},\frac{\alpha_{1}+\alpha_{2}}{2},\frac{\alpha_{2}+\alpha_{3}}{2},\cdots),$$
where $W_{\alpha}=U_{\alpha}|W_{\alpha}|$ is the polar decomposition of $W_{\alpha}$ and $\alpha\equiv\{\alpha_{n}\}_{n=0}^\infty$ is a positive bounded sequence. Thus, Lee et al. introduced the {\em mean transform} $\mathcal{M}_{\frac{1}{2}}(T)$ of $T$ as $\mathcal{M}_{\frac{1}{2}}(T)=\frac{U|T|+|T|U}{2}$, where $T=U|T|$ is the polar decomposition of $T\in\mathcal{B}(\mathcal{H})$. Later, this transform is generalized in \cite{zamani-jmaa-2021} by Zamani as 
\begin{equation}\label{zamanimean}
\mathcal{M}_{\lambda}(T)=\lambda U|T|+(1-\lambda)|T|U,\quad T\in\mathcal{B}(\mathcal{H})
\end{equation}
 for $\lambda\in [0,1]$. We refer the reader to \cite{lee-jmaa-2014,svic-jmaa-2024,ben-bjma-2020,cha-pro-2019} for more facts and details on generalized mean transform.

To extend the mean transform for bounded operators to the case of unbounded operator pairs, we first recall the $\lambda$-spherical Aluthge transform for operator tuples defined by Curto et al. \cite{curto-crm-2019}. Consider the set of all pairs of operators $\mathcal{L}(\mathcal{H})\times\mathcal{L}(\mathcal{H})=\{(T_{1},T_{2}):T_{i}\in\mathcal{L}(\mathcal{H}),i=1,2\}.$ For $\textbf{T}=(T_{1},T_{2})\in\mathcal{L}(\mathcal{H})\times\mathcal{L}(\mathcal{H}),$ it may be identified with the column operator, still denoted by $\textbf{T},$ that is,
\begin{equation*}
	\textbf{T}=\left(
	\begin{array}{c}
		T_{1} \\
		T_{2}
	\end{array}
	\right):\mathcal{H}\to\mathcal{H}\oplus\mathcal{H},x\mapsto\left(
	\begin{array}{c}
		T_{1}x \\
		T_{2}x
	\end{array}
	\right),
\end{equation*}
hence $\textbf{T}\in\mathcal{L}(\mathcal{H},\mathcal{H}\oplus\mathcal{H})$. Let
\begin{equation*}
	\left(
	\begin{array}{c}
		T_{1} \\
		T_{2}
	\end{array}
	\right)=U|\textbf{T}|=\left(
	\begin{array}{c}
		U_{1} \\
		U_{2}
	\end{array}
	\right)|\textbf{T}|
\end{equation*}
be the polar decomposition of $\textbf{T}\in\mathcal{B}(\mathcal{H},\mathcal{H}\oplus\mathcal{H}),$ where the positive operator $|\textbf{T}|$ is defined by $|\textbf{T}|=\sqrt{\textbf{T}^\ast\textbf{T}}=\sqrt{T_{1}^\ast T_{1}+T_{2}^\ast T_{2}}$ and $U=\left(
\begin{array}{c}
	U_{1} \\
	U_{2}
\end{array}
\right)$ is a spherical isometry from $\mathcal{H}$ into $\mathcal{H}\oplus\mathcal{H},$ then for any $\lambda\in[0,1],$ the pair of operators
\begin{equation}\label{curtoaluthge}
	\Delta_{\lambda}(\textbf{T})=(|\textbf{T}|^\lambda U_{1}|\textbf{T}|^{1-\lambda},|\textbf{T}|^\lambda U_{2}|\textbf{T}|^{1-\lambda})
\end{equation}
is called the {\em $\lambda$-spherical Aluthge transform} of the pair $\textbf{T}=(T_{1},T_{2})$. It should be noted that for closed densely defined operators $T_{1}$ and $T_{2}$, the column operator $\textbf{T}=(T_{1},T_{2})$ may not admit the polar decomposition (\cite[Question 21.2.16]{mortad-2022-book}). When the unbounded column operator $\textbf{T}=(T_{1},T_{2})$ has the polar decomposition $\textbf{T}=\left(
\begin{array}{c}
	U_{1} \\
	U_{2}
\end{array}
\right)|\textbf{T}|$, the forms of Equations \eqref{lambdaaluthge}, \eqref{curtoaluthge} and \eqref{zamanimean} lead to the definition of its $\lambda$-spherical mean transform$\colon$ $$\mathcal{M}_{\lambda}(\textbf{T})=(\lambda U_{1}|\textbf{T}|+(1-\lambda)|\textbf{T}|U_{1},\lambda U_{2}|\textbf{T}|+(1-\lambda)|\textbf{T}|U_{2})$$
for $\lambda\in[0,1]$. In particular, if $\textbf{T}$ is a densely defined weighted composition operator pair $\textbf{C}_{\phi,\omega}$ in an $L^2$-space, we prove that the column operator $\textbf{C}_{\phi,\omega}$ always admits a polar decomposition (Proposition \ref{basic}). Motivated by \cite{trep-jmaa-2015} and \cite{benhida-mn-2020}, we focus on the study of the mean transform for weighted composition operator pairs in the $L^2$ setting.

This paper is organized as follows. In Section 3, we show that a weighted composition operator pair $\textbf{C}_{\phi,\omega}=(C_{\phi_{1},\omega_{1}},C_{\phi_{2},\omega_{2}})$ is densely defined and closed, which guarantees the existence of its polar decomposition. We then provide an explicit description of this decomposition.
 
  In Section 4, we introduce the $\lambda$-spherical mean transform of $\textbf{C}_{\phi,\omega}$ for $\lambda\in[0,1]$, extending the $\lambda$-mean transform defined by Zamani \cite{zamani-jmaa-2021} for single bounded operators. For $\lambda\in(0,1)$, we prove that the $\lambda$-spherical mean transform $\mathcal{M}_{\lambda}(\textbf{C}_{\phi,\omega})$ of $(C_{\phi_{1},\omega_{1}},C_{\phi_{2},\omega_{2}})$ coincides with the composition operator pair $(C_{\phi_{1},\omega_{\lambda}^1},C_{\phi_{2},\omega_{\lambda}^2})$ (see Theorem \ref{meanbasic} (iv)). At the endpoints $\lambda=0,1$, it generally equals the restriction of that pair $(C_{\phi_{1},\omega_{\lambda}^1},C_{\phi_{2},\omega_{\lambda}^2})$ to the domain $\EuScript{D}(\mathcal{M}_{\lambda}(\textbf{C}_{\phi,\omega}))$, and we provide conditions for exact equality. For simplicity, we subsequently focus on the case $\lambda\in(0,1)$. Unlike the Aluthge transform for composition operators, the $\lambda$-spherical mean transform of $\textbf{C}_{\phi,\omega}$ is always closed. We characterize the dense definiteness of $\mathcal{M}_{\lambda}(\textbf{C}_{\phi,\omega})$ and, as an application, present a $p$-hyponormal operator $C_{\phi,\omega}$ whose Aluthge transform is densely defined, while its $\lambda$ mean transform $\mathcal{M}_{\lambda}(C_{\phi,\omega})$ has trivial domain. Furthermore, although the $\lambda$-spherical mean transform of $\textbf{C}_{\phi,\omega}$ does not preserve dense definiteness in general, it maps a weighted composition operator pair to another. This naturally raises the question of whether the $\lambda$-spherical mean transform is bijective on the set of bounded composition operator pairs. Example \ref{17} provides a negative answer. To conclude this section, we establish the relationship between the dense definiteness of $\textbf{C}_{\phi,\omega}$, its toral $\lambda$-spherical mean transform $\widehat{\mathcal{M}_{\lambda}(\textbf{C}_{\phi,\omega})}$, and its $\lambda$-spherical mean transform $\mathcal{M}_{\lambda}(\textbf{C}_{\phi,\omega})$, based on the notion of powers for operator pairs in the sense of M{\"u}ller and Soltysiak \cite{muller-studia-1992}. Some examples on $l^2$ (the space of square summable sequences) illustrate our results. 
  
  The final section of this paper is devoted to the study of generalized normality for composition operator pairs and their $\lambda$-spherical mean transforms, particularly spherical quasinormality and spherical $p$-hyponormality. We provide several characterizations of spherical quasinormality for such operator pairs and observe a key distinction from the single unbounded quasinormal operator case: for the pair $\textbf{C}_{\phi,\omega}$ with the polar decomposition $\textbf{C}_{\phi,\omega}=\left(
  \begin{array}{c}
  	U_{1} \\
  	U_{2}
  \end{array}
  \right)|\textbf{C}_{\phi,\omega}|$, the components are not necessarily commutative, i.e., $U_{i}|\textbf{C}_{\phi,\omega}|\ne|\textbf{C}_{\phi,\omega}|U_{i}$ for $i=1,2$. Furthermore, in order to investigate spherical $p$-hyponormality of $\textbf{C}_{\phi,\omega}=(C_{\phi_{1},\omega_{1}},C_{\phi_{2},\omega_{2}})$ defined via functional calculus for self-adjoint operators, we prove that the associated operator $\sum\limits_{i=1}^2C_{\phi_{i},\omega_{i}}C_{\phi_{i},\omega_{i}}^\ast$ is always densely defined, thus guaranteeing the existence of its adjoint. While $\sum\limits_{i=1}^2C_{\phi_{i},\omega_{i}}C_{\phi_{i},\omega_{i}}^\ast$ is automatically self-adjoint in the bounded case, Example \ref{notselfadjoint} shows that $\sum\limits_{i=1}^2C_{\phi_{i},\omega_{i}}C_{\phi_{i},\omega_{i}}^\ast$ may fail to be self-adjoint in the unbounded setting. Finally, we extend the recent work of Kim et al. \cite[Theorem 2.3]{Yoon-glma-2022} on spherical $p$-hyponormality for $2$-variable weighted shifts. We characterize spherical $p$-hyponormality for a class of weighted composition operator pairs over discrete measure spaces. As a corollary, we derive an analogous result for unbounded $2$-variable weighted shifts. Moreover, we establish sufficient conditions under which the $\lambda$-spherical mean transform preserves spherical $p$-hyponormality, thereby generalizing a result of Stanković \cite[Theorem 3.6]{svic-jmaa-2024}.

\section{Preliminaries}	
Let $\mathbb{R}$, $\mathbb{C}$, $\mathbb{N}$, and $\mathbb{Z}_{+}$ denote the sets of real numbers, complex numbers, positive integers, and nonnegative integers, respectively. The closure of a subset $A$ in a metric space is denoted by $\overline{A}$. For a function $f:X\to\mathbb{C}$ or $f:X\to[0,\infty]$ and a relation $\square\in\{=,>,\ne\}$, we write $\{f\,\square\,0\}$ for $\{x\in X:f(x)\,\square\,0\}$. The notation $\{f\,\square\,\infty\}$ is defined analogously. The characteristic function of a subset $\varOmega \subseteq X$ is denoted by $\chi_{\varOmega}$. Fix $k\in\mathbb{N}$, and let $\textbf{T}=(T_{1},T_{2},\cdots,T_{k})$ be a $k$-tuple of linear operators in a Hilbert space $(\mathcal{H},\|\cdot\|)$. Consider the Hilbert space $\mathcal{H}^{k+1}$, defined as the direct sum of $k+1$ copies of $\mathcal{H}$. Following \cite{js-jmsj-2003}, we define the {\em $\left(joint\right)$ domain} $\EuScript{D}(\textbf{T})$, the {\em $\left(joint\right)$ graph} $\mathcal{G}(\textbf{T})$ and the {\em $\left(joint\right)$ graph norm} $\|\cdot\|_{\textbf{T}}$ of the operator tuple $\textbf{T}$ by
\begin{equation*}
	\EuScript{D}(\textbf{T})=\bigcap\limits_{i=1}^k\EuScript{D}(T_{i}),
\end{equation*}
\begin{equation*}
	\mathcal{G}(\textbf{T})=\big\{(f,T_{1}f,T_{2}f,\cdots,T_{k}f)\in \mathcal{H}^{k+1}:f\in\EuScript{D}(\textbf{T})\big\},
\end{equation*}
and
\begin{equation*}
	\|f\|_{\textbf{T}}^2=\|f\|^2+\sum\limits_{i=1}^k\|T_{i}f\|^{2},
\end{equation*}
respectively. 
The operator tuple $\textbf{T}$ is said to be closed if $\mathcal{G}(\textbf{T}$ is closed in $\mathcal{H}^{k+1}$. For another operator tuple $\textbf{S}=(S_{1},S_{2},\cdots,S_{k})$, we write $\textbf{T}\subseteq \textbf{S}$ if $\mathcal{G}(\textbf{T})\subseteq\mathcal{G}(\textbf{S})$, which is equivalent to $\EuScript{D}(\textbf{T})\subseteq\EuScript{D}(\textbf{S})$ and $T_{i}f=S_{i}f$ for $i=1,2,\cdots,k$ and $f\in\EuScript{D}(\textbf{T})$.

Let $(X,\mathscr{A},\mu)$ be a $\sigma$-finite measure space. Denote by $L^{2}(\mu)$ the Hilbert space of all square $\mu$-integrable $\mathscr{A}$-measurable complex functions on $X$, with inner product $\langle f,g\rangle=\int_{X}f\bar{g}\D\mu,$ $f,g\in L^{2}(\mu).$ A mapping $\phi:X\to X$ is called an {\em $\mathscr{A}$-measurable transformation} of $X$ if $\varphi^{-1}(\mathscr{A})\subset\mathscr{A}$, where $\varphi^{-1}(\mathscr{A})=\{\phi^{-1}(\varDelta)\colon\varDelta\in\mathscr{A}\}.$ Given a complex $\mathscr{A}$-measurable function $\omega$ on $X$ and an $\mathscr{A}$-measurable transformation $\phi$ of $X,$ define the $weighted$ $ composition$ $operator$ $C_{\phi,\omega}$ on $\EuScript{D}(C_{\phi,\omega})=\{f\in L^2(\mu)\colon\omega\cdot(f\circ\phi)\in L^2(\mu)\}$ as\begin{equation*}
	C_{\phi,\omega}f=\omega\cdot( f\circ\phi),\ f\in\EuScript{D}(C_{\phi,\omega}).
\end{equation*}
If we take \(\phi(x)=x\), then \(C_{\phi,\omega} = M_{\omega}\), the well-known multiplication operator. Define the measure $\mu^\omega$, $\mu_{\omega}\colon\mathscr{A}\to[0,+\infty]$ by
\begin{equation}\label{muomega1}
	\mu^\omega(\Delta)=\mu(\{\omega\ne0\}\bigcap\varDelta)
\end{equation}
and
\begin{equation}\label{muomega}
	\mu_{\omega}(\varDelta)=\int_{\varDelta}|\omega|^2\D\mu,\ \varDelta\in\mathscr{A}.
\end{equation}
One can verify that the measure $\mu^\omega$ and $\mu_{\omega}$ are $\sigma$-finite and  mutuallly absolutely continuous. It should be mentioned that $C_{\phi,\omega}$ is well-defined if and only if $\mu_{\omega}\circ\phi^{-1}\ll\mu$, i.e., $\mu_{\omega}\circ\phi^{-1}$ is absolutely continuous with respect to $\mu$. 
On the other hand, by the Radon-Nikodym theorem, there exists a unique (up to a set of $\mu$-measure zero) $\mathscr{A}$-measurable function $\mathsf{h}_{\phi,\omega}:X\to[0,\infty]$ such that
\begin{equation}\label{rd}
	\mu_{\omega}\circ\phi^{-1}(\varDelta)=\int_{\phi^{-1}(\varDelta)}|\omega|^2\D\mu=\int_{\varDelta}\mathsf{h}_{\phi,\omega}\D\mu.
\end{equation}
The function $\mathsf{h}_{\phi,\omega}$, called the Radon-Nikodym derivative, plays a major role in the study on unbounded weighted composition operator $C_{\phi,\omega}$. Combining \eqref{rd} with the change of variable formula \cite[Chapter 6]{makar-spr-2013} yields\begin{align}\label{intcom}
	\int_{X}f\circ\phi\D\mu_{\omega}=\int_{X}f\mathsf{h}_{\phi,\omega}\D\mu
\end{align}
for every $\mathscr{A}$-measurable function $f$ such that either $\int_{X}|f|\circ\phi\D\mu_{\omega}<\infty$ or $f\ge0$. Those properties of $C_{\phi,\omega}$ used in our discussion are summarized below.
\begin{thm}\label{combasic}
	Suppose $C_{\phi,\omega}$  is well-defined. Then
	\begin{enumerate}
		\item[(i)]
		$\EuScript{D}(C_{\phi,\omega})=L^2((1+\mathsf{h}_{\phi,\omega})\D\mu)$,
		\item[(iii)]
		\text{$C_{\phi,\omega}$ is closed,} 
		\item[(iii)]
		\text{$C_{\phi,\omega}$ is densely defined if and only if $\mathsf{h}_{\phi,\omega} < \infty$ a.e.\ $[\mu]$,} 
		\item[(iv)]
		\text{$C_{\phi,\omega}$ is bounded on $L^2(\mu)$ if and only if $\mathsf{h}_{\phi,\omega}$ is essentially bounded with respect to $\mu$, and} \text{in this case, $\|C_{\phi,\omega}\|^2=\|\mathsf{h}_{\phi,\omega}\|_{\infty}$, where $\| h_{\phi,\omega}\|_{\infty}$ denotes the essential supremum of $\mathsf{h}_{\phi,\omega}$,}
		\item[(vi)]
		$\mathcal{N}(C_{\phi,\omega})=\chi_{\{\mathsf{h}_{\phi,\omega}=0\}}L^2(\mu)$.
	\end{enumerate}
\end{thm}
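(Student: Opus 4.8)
The entire statement rests on a single identity obtained from the change of variable formula \eqref{intcom}. Applying it to the nonnegative function $|f|^2$ in place of $f$ and recalling the definition \eqref{muomega} of $\mu_\omega$, I would first record that for every $\mathscr{A}$-measurable $f$,
\begin{equation*}
	\|C_{\phi,\omega}f\|^2 = \int_X |\omega|^2\,|f\circ\phi|^2\,\D\mu = \int_X |f|^2\circ\phi\,\D\mu_\omega = \int_X |f|^2\,\mathsf{h}_{\phi,\omega}\,\D\mu,
\end{equation*}
with both sides allowed to be infinite. Part (i) is then immediate: $f$ lies in $\EuScript{D}(C_{\phi,\omega})$ precisely when $f\in L^2(\mu)$ and the right-hand side is finite, i.e.\ when $\int_X |f|^2(1+\mathsf{h}_{\phi,\omega})\,\D\mu<\infty$, which is exactly the defining condition for membership in $L^2((1+\mathsf{h}_{\phi,\omega})\D\mu)$.

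For closedness I would observe that the same identity computes the graph norm explicitly, $\|f\|_{C_{\phi,\omega}}^2 = \|f\|^2 + \|C_{\phi,\omega}f\|^2 = \int_X |f|^2(1+\mathsf{h}_{\phi,\omega})\,\D\mu$, so that $\EuScript{D}(C_{\phi,\omega})$ equipped with the graph norm is isometric to $L^2((1+\mathsf{h}_{\phi,\omega})\D\mu)$. The latter is complete, and the bound $1+\mathsf{h}_{\phi,\omega}\ge 1$ guarantees that graph-norm convergence forces $L^2(\mu)$-convergence of both $f_n$ and $C_{\phi,\omega}f_n$; hence every graph-Cauchy sequence converges to a limit lying in the domain, and $C_{\phi,\omega}$ is closed.

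The remaining parts follow from the master identity by elementary measure-theoretic arguments. For dense definiteness, if $\mathsf{h}_{\phi,\omega}<\infty$ a.e.\ then the sets $\{\mathsf{h}_{\phi,\omega}\le n\}$ exhaust $X$ and the truncations $f\chi_{\{\mathsf{h}_{\phi,\omega}\le n\}}$ lie in the domain and converge to $f$ in $L^2(\mu)$ by dominated convergence; conversely, if $\mu(\{\mathsf{h}_{\phi,\omega}=\infty\})>0$, then every domain element vanishes a.e.\ on $\{\mathsf{h}_{\phi,\omega}=\infty\}$, so $\sigma$-finiteness supplies a nonzero $\chi_E\in L^2(\mu)$ with $E\subseteq\{\mathsf{h}_{\phi,\omega}=\infty\}$ of positive finite measure that is orthogonal to $\EuScript{D}(C_{\phi,\omega})$, and the domain fails to be dense. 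For boundedness, the identity gives $\|C_{\phi,\omega}f\|^2\le\|\mathsf{h}_{\phi,\omega}\|_\infty\|f\|^2$ when $\mathsf{h}_{\phi,\omega}$ is essentially bounded, whence $\EuScript{D}(C_{\phi,\omega})=L^2(\mu)$ and $\|C_{\phi,\omega}\|^2\le\|\mathsf{h}_{\phi,\omega}\|_\infty$; the reverse inequality follows by testing against $\chi_E$ for $E\subseteq\{\mathsf{h}_{\phi,\omega}>c^2\}$ of positive finite measure on which $\mathsf{h}_{\phi,\omega}$ is bounded, where $c=\|C_{\phi,\omega}\|$, forcing $\mathsf{h}_{\phi,\omega}\le c^2$ a.e.\ and the equality $\|\mathsf{h}_{\phi,\omega}\|_\infty=\|C_{\phi,\omega}\|^2$. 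Finally, $C_{\phi,\omega}f=0$ is equivalent to $\int_X|f|^2\mathsf{h}_{\phi,\omega}\,\D\mu=0$, i.e.\ $f=0$ a.e.\ on $\{\mathsf{h}_{\phi,\omega}>0\}$; such $f$ automatically satisfy the domain condition, so $\mathcal{N}(C_{\phi,\omega})=\chi_{\{\mathsf{h}_{\phi,\omega}=0\}}L^2(\mu)$.

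No step presents a serious obstacle, since the content is the standard theory of unbounded weighted composition operators and everything is driven by the one identity above. The points demanding the most care are the justification of \eqref{intcom} for $|f|^2$ (covered by the stated validity for nonnegative measurable functions) and the converse direction of the dense-definiteness criterion, where $\sigma$-finiteness of $\mu$ is genuinely needed to extract a nonzero vector orthogonal to the domain.
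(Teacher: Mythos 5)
Your proof is correct and follows the standard route: every part is driven by the master identity $\|C_{\phi,\omega}f\|^{2}=\int_{X}|f|^{2}\mathsf{h}_{\phi,\omega}\,\D\mu$ obtained from \eqref{intcom}, with the truncation/dominated-convergence density argument, the graph-norm completeness argument, and the $\sigma$-finiteness step in the converse of (iii) all handled correctly. Note that the paper states Theorem \ref{combasic} without proof, deferring to \cite{b-j-j-sW}; your argument matches the one in that reference and is exactly the template the authors themselves reuse for the pair analogues --- the same identity appears as \eqref{mulcomeq} in the proof of Theorem \ref{polar}(i), and the same density and closedness arguments appear in parts (ii) and (iii) of Proposition \ref{basic}.
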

\begin{thm}\label{adjoint}
	Suppose $C_{\phi,\omega}$ is well-defined and densely defined. Then the following equalities hold$:$
	\begin{align*}
		\dz{\cfw^*}&=\big\{f\in L^2(\mu)\colon \hfw\cdot\efw(f_w)\circ\phi^{-1}\in L^2(\mu)\big\},\\
		\cfw^*f&=\hfw\cdot\efw(f_w)\circ\phi^{-1},\quad f\in\dz{\cfw^*},
	\end{align*}
	where $f_w=\chi_{\{w\neq 0\}}\frac{f}{w}$.
\end{thm}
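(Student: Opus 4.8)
The plan is to evaluate the sesquilinear form $\is{\cfw g}{f}$ for $g$ in the dense domain $\dz{\cfw}$ and an arbitrary $f\in L^2(\mu)$, transport it back and forth between $\mu$ and the measure $\mu_w$ given by $\mu_w(\varDelta)=\int_\varDelta|w|^2\D\mu$ (cf. \eqref{muomega}), and then read off the adjoint from the resulting pairing. The two tools that move us between the measures are the change-of-variables identity \eqref{intcom} and the conditional expectation $\efw$ relative to $\phi^{-1}(\ascr)$ taken in $\mu_w$.

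First I would check that the right-hand side is meaningful. For $f\in L^2(\mu)$ the function $f_w=\chi_{\{w\ne0\}}\frac{f}{w}$ satisfies $\int_X|f_w|^2\D\mu_w=\int_{\{w\ne0\}}|f|^2\D\mu\Le\|f\|^2$, so $f_w\in L^2(\mu_w)$ and its conditional expectation $\efw(f_w)$ is well defined. Being $\phi^{-1}(\ascr)$-measurable, $\efw(f_w)$ factors as $(\efw(f_w)\circ\phi^{-1})\circ\phi$ for an $\ascr$-measurable function denoted $\efw(f_w)\circ\phi^{-1}$; I would note that the ambiguity of this factorization off $\{\hfw>0\}$ is harmless, since in the final formula it is multiplied by $\hfw$.

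The core is a single computation. For $g\in\dz{\cfw}$, using $w\bar f=|w|^2\overline{f_w}$ on $\{w\ne0\}$ and $\D\mu_w=|w|^2\D\mu$, I would write
\[
\is{\cfw g}{f}=\int_X w\,(g\circ\phi)\,\bar f\,\D\mu=\int_X (g\circ\phi)\,\overline{f_w}\,\D\mu_w .
\]
Here $g\circ\phi\in L^2(\mu_w)$, since $\int_X|g\circ\phi|^2\D\mu_w=\int_X|g|^2\hfw\D\mu<\infty$ by \eqref{intcom} and Theorem \ref{combasic}(i), so every integral is absolutely convergent. As $g\circ\phi$ is $\phi^{-1}(\ascr)$-measurable, the defining property of $\efw$ (with $\efw(\overline{f_w})=\overline{\efw(f_w)}$) lets me replace $\overline{f_w}$ by $\overline{\efw(f_w)}$; writing $h=\efw(f_w)\circ\phi^{-1}$ and applying \eqref{intcom} to $F=g\bar h$ then yields
\[
\is{\cfw g}{f}=\int_X (g\bar h)\circ\phi\,\D\mu_w=\int_X g\,\bar h\,\hfw\,\D\mu=\is{g}{\hfw\,h}.
\]

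Finally I would extract the adjoint. If $\hfw\cdot(\efw(f_w)\circ\phi^{-1})\in L^2(\mu)$, the last display shows that $g\mapsto\is{\cfw g}{f}$ is bounded on $\dz{\cfw}$, whence $f\in\dz{\cfw^*}$ and $\cfw^*f=\hfw\cdot(\efw(f_w)\circ\phi^{-1})$. Conversely, if $f\in\dz{\cfw^*}$, then $\is{g}{\hfw h}=\is{g}{\cfw^*f}$ for every $g$ in the dense set $\dz{\cfw}$; testing against functions supported on sets $\varDelta$ of finite measure on which $\hfw$ is bounded (these lie in $\dz{\cfw}$ and, since $\hfw<\infty$ a.e.\ by Theorem \ref{combasic}(iii) and $\mu$ is $\sigma$-finite, exhaust $X$) forces $\hfw h=\cfw^*f$ a.e., so both the membership condition and the formula hold. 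I expect the obstacle to be not the algebra but the measure-theoretic bookkeeping: justifying absolute convergence at each transport step, invoking the existence and $\phi^{-1}(\ascr)$-measurability of $\efw(f_w)$, and upgrading the weak identity $\is{g}{\hfw h}=\is{g}{\cfw^*f}$ to an a.e.\ equality through the local-integrability and exhaustion argument.
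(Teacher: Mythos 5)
Your proof is correct and follows essentially the same route as the paper: the statement itself is imported from \cite{b-j-j-sW}, but the paper's proof of its two-variable analogue, Proposition \ref{basic}(v), performs exactly your computation --- transporting $\is{\cfw g}{f}$ through $\mu_w$ via \eqref{muomega}, \eqref{exp}, \eqref{invexp} and \eqref{intcom} to obtain $\is{g}{\hfw\cdot\efw(f_w)\circ\phi^{-1}}$ --- and then, as you do, tests against characteristic functions of finite-measure sets on which $\hfw$ is bounded to identify $\cfw^*f$ a.e. Your absolute-convergence checks and the exhaustion argument (from $\hfw<\infty$ a.e.\ and $\sigma$-finiteness) are precisely the bookkeeping the paper delegates to \cite[Lemmas 2 and 9]{b-j-j-sW}, so there is no gap.
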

\textbf{Throughout the remainder of this paper, we always assume that every weighted composition operator $C_{\phi,\omega}$ in $L^2(\mu)$ is well-defined; that is, $\mu_{\omega}\circ\phi^{-1}\ll\mu$.}

A further useful concept is the conditional expectation operator associated with the measure space $(X, \phi^{-1}(\mathscr{A}), \mu_{\omega})$. Assume that $C_{\phi,\omega}$ is densely defined, then the measure $\mu_{\omega}|_{\phi^{-1}(\mathscr{A})}$ is $\sigma$-finite \cite[Proposition 10]{b-j-j-sW}. An application of the Radon--Nikodym theorem implies the existence of a unique $\phi^{-1}(\mathscr{A})$-measurable function $E_{\phi,\omega}(f)$ satisfying
 $$\int_{\phi^{-1}(\Delta)}f\D\mu_{\omega}=\int_{\phi^{-1}(\Delta)}\mathsf{E}_{\phi,\omega}(f)\D\mu_{\omega},\quad\forall\ \Delta\in\mathscr{A}$$
whenever $f\ge0$ a.e. $[\mu_{\omega}]$ or $f\in L^2(\mu_{\omega})$. The transformation
$$E_{\phi,\omega}:L^{2}(X,\mathscr{A},\mu_{\omega})\bigcup\{f:f\ge0\ \text{a.e.}\ [\mu_{\omega}]\}\to L^2(X, \phi^{-1}(\mathscr{A}), \mu_{\omega}),\quad\quad f\mapsto\mathsf{E}_{\phi,\omega}(f)$$is called  the {\em conditional expectation operator}. Now, let $f,g\ge0$ a.e. $[\mu_{\omega}]$ or $f,g \colon X \to \cbb$ be $\mathscr{A}$-measurable functions such that $f \in L^2(\mu_w)$ and $g\circ \phi\in L^2(\mu_w)$. Then the following identities hold:\begin{align}\label{exp}
	\int_X g \circ \phi \cdot f \D \mu_w = \int_X g \circ \phi \cdot \mathsf{E}_{\phi,\omega}(f) \D \mu_w,
\end{align}
\begin{align}\label{exp+}
	\efw(g \circ \phi \cdot f) = g \circ \phi \cdot \efw(f).
\end{align}
An important formal expression, denoted by $\mathsf{E}_{\phi,\omega}(f)\circ\phi^{-1}$, is defined as the unique (up to a set of $\mu$-measure zero) $\mathscr{A}$-measurable function on $X$ such that\begin{align}\label{Epro}
  \text{$\mathsf{E}_{\phi,\omega}(f)\circ \phi^{-1}=0$ a.e. $[\mu]$ on $\{\mathsf{h}_{\phi,\omega}=0\}$,}
\end{align}
and\begin{align} \label{invexp}
	(\efw(f) \circ \phi^{-1})\circ \phi = \efw(f) \quad \text{a.e.\ $[\mu_w]$}.
\end{align}
For proofs and more facts about $C_{\phi,\omega}$ and $E_{\phi,\omega}$, we refer to \cite{b-j-j-sW}. 

For the reader's convenience, we state some preliminary facts about the space 
$L^2(Y,2^Y,\nu)$, where $Y$ is countable, $2^Y$ is its power set, and $\nu$ is the counting measure. In this setting, $\nu(\Delta)=0$ if and only if $\Delta=\emptyset$. Consequently, $\nu_{\omega}\circ\phi^{-1}\ll\nu$ for any measurable transform $\phi$ and function $\omega$, which guarantees that every weighted composition operator $C_{\phi,\omega}$ in this space is well-defined. The explicit formulas for computing $\mathsf{h}_{\phi,\omega}$, $\mathsf{E}_{\phi,\omega}(f)$ and $\mathsf{E}_{\phi,\omega}(f)\circ\phi^{-1}$ are given in \cite[Proposition 79 and 80]{b-j-j-sW} by:\begin{equation}\label{h}
	\mathsf{h}_{\phi,\omega}(y)=\sum\limits_{x\in\phi^{-1}(y)}|\omega(x)|^2,
\end{equation}
\begin{equation}\label{Ef}
	\mathsf{E}_{\phi,\omega}(f)(y)=\begin{cases}
		\frac{\sum\limits_{x\in\phi^{-1}(\phi(y))}f(x)|\omega(x)|^2}{\sum\limits_{x\in\phi^{-1}(\phi(y))}|\omega(x)|^2}&\mathsf{h}_{\phi,\omega}(\phi(y))\ne0\\
		0&\mathsf{h}_{\phi,\omega}(\phi(y))=0
	\end{cases}
\end{equation}
and
\begin{equation}\label{Efphi}
(\mathsf{E}_{\phi,\omega}(f)\circ\phi^{-1})(y)=\begin{cases}
	\frac{\sum\limits_{x\in\phi^{-1}(y)}f(x)|\omega(x)|^2}{\sum\limits_{x\in\phi^{-1}(y)}|\omega(x)|^2}&\mathsf{h}_{\phi,\omega}(y)\ne0\\
	0&\mathsf{h}_{\phi,\omega}(y)=0
\end{cases}.
\end{equation}


\section{Polar decomposition of $\textbf{C}_{\phi,\omega}$}
In this section, we first show that the pair $\textbf{C}_{\phi,\omega}=(C_{\phi_{1},\omega_{1}},C_{\phi_{2},\omega_{2}})$ as an operator from $L^{2}(\mu)$ into $L^{2}(\mu)\oplus L^{2}(\mu)$ is densely defined and closed. Then, based on the polar decomposition of a densely defined closed operator, we give an explicit description of the polar decomposition of the pair $\textbf{C}_{\phi,\omega}.$ We denote by $\textbf{C}_{\phi,\omega}^\ast$ the adjoint of the column operator $\textbf{C}_{\phi,\omega}\in\mathcal{L}\big(L^{2}(\mu),L^{2}(\mu)\oplus L^{2}(\mu)\big)$, provided it exists.

\begin{pro}\label{basic}
	Let $C_{\phi_{1},\omega_{1}}$ and $C_{\phi_{2},\omega_{2}}$ be densely defined operators. Then the pair of operators $\textbf{C}_{\phi,\omega}=(C_{\phi_{1},\omega_{1}},C_{\phi_{2},\omega_{2}})$ satisfies the following properties$:$
	\begin{enumerate}
		\item[(i)] $\EuScript{D}(\textbf{C}_{\phi,\omega})=L^2\Big(\big(1+\mathsf{h}_{\phi_{1},\omega_{1}}+\mathsf{h}_{\phi_{2},\omega_{2}})\D\mu\Big)$,
		\item[(ii)] 
		$\overline{\EuScript{D}(\textbf{C}_{\phi,\omega})}=L^{2}(\mu)$,
		\item[(iii)] 
		$\mathcal{G}(\textbf{C}_{\phi,\omega})$ is closed,
		\item[(iv)] $\textbf{C}_{\phi,\omega}^\ast=\overline{(C_{\phi_{1},\omega_{1}}^\ast,C_{\phi_{2},\omega_{2}}^\ast)}$.
		\item[(v)]
		$\EuScript{D}(\textbf{C}_{\phi,\omega}^\ast)=\Bigg\{\left(
		\begin{array}{c}
			f_{1} \\
			f_{2}
		\end{array}
		\right)\in L^2(\mu)\oplus L^2(\mu)\colon\sum\limits_{i=1}^{2}\mathsf{h}_{\phi_{i},\omega_{i}}\cdot\mathsf{E}_{\phi_{i},\omega_{i}}(f_{i_{\omega_{i}}})\circ\phi_{i}^{-1}\in L^{2}(\mu)\Bigg\}$.
	\end{enumerate}
\end{pro}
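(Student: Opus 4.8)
The plan is to treat the five claims in the order (i), (ii), (iii), (v), (iv), reducing each to the single-operator facts recorded in Theorems \ref{combasic} and \ref{adjoint} together with abstract properties of column and row operators. For (i) I would unfold the definition of the joint domain, $\EuScript{D}(\textbf{C}_{\phi,\omega})=\EuScript{D}(C_{\phi_1,\omega_1})\cap\EuScript{D}(C_{\phi_2,\omega_2})$, substitute $\EuScript{D}(C_{\phi_i,\omega_i})=L^2((1+\mathsf{h}_{\phi_i,\omega_i})\D\mu)$ from Theorem \ref{combasic}(i), and note that, the $\mathsf{h}_{\phi_i,\omega_i}$ being nonnegative, $f$ lies in the intersection iff $\int|f|^2(1+\mathsf{h}_{\phi_1,\omega_1}+\mathsf{h}_{\phi_2,\omega_2})\D\mu<\infty$. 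For (ii), the density hypothesis gives $\mathsf{h}_{\phi_i,\omega_i}<\infty$ a.e.\ by Theorem \ref{combasic}(iii), hence $\mathsf{h}_{\phi_1,\omega_1}+\mathsf{h}_{\phi_2,\omega_2}<\infty$ a.e.; truncating an arbitrary $f\in L^2(\mu)$ to the sets $\{\mathsf{h}_{\phi_1,\omega_1}+\mathsf{h}_{\phi_2,\omega_2}\le n\}$ produces elements of $\EuScript{D}(\textbf{C}_{\phi,\omega})$ converging to $f$ by dominated convergence. For (iii), I would take a sequence in the joint graph converging in $L^2(\mu)^3$ and apply closedness of each component (Theorem \ref{combasic}(ii)) to identify the coordinates of the limit, concluding that the limit again lies in the joint graph.

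The analytic core is (v). Here I would first extend the computation underlying Theorem \ref{adjoint} to show that, for every $f_i\in L^2(\mu)$ and every $g\in\EuScript{D}(\textbf{C}_{\phi,\omega})$, one has the pairing identity $\langle C_{\phi_i,\omega_i}g,f_i\rangle=\int_X g\,\overline{\psi_i}\,\D\mu$ with $\psi_i:=\mathsf{h}_{\phi_i,\omega_i}\cdot\mathsf{E}_{\phi_i,\omega_i}((f_i)_{\omega_i})\circ\phi_i^{-1}$, the integral being absolutely convergent. The key estimate is $|\psi_i|\le\mathsf{h}_{\phi_i,\omega_i}\cdot\mathsf{E}_{\phi_i,\omega_i}(|(f_i)_{\omega_i}|)\circ\phi_i^{-1}$, to which I apply the change of variables \eqref{intcom} and Cauchy--Schwarz, using that $(f_i)_{\omega_i}\in L^2(\mu_{\omega_i})$ because $\int|(f_i)_{\omega_i}|^2\D\mu_{\omega_i}=\int_{\{\omega_i\ne0\}}|f_i|^2\D\mu\le\|f_i\|^2$. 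Summing over $i$ gives $\langle\textbf{C}_{\phi,\omega}g,(f_1,f_2)\rangle=\int_X g\,\overline{\psi_1+\psi_2}\,\D\mu$. If $\psi_1+\psi_2\in L^2(\mu)$ this exhibits $(f_1,f_2)\in\EuScript{D}(\textbf{C}_{\phi,\omega}^\ast)$ together with the value $\textbf{C}_{\phi,\omega}^\ast(f_1,f_2)=\psi_1+\psi_2$, which is precisely the claimed description.

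For the converse in (v), suppose $(f_1,f_2)\in\EuScript{D}(\textbf{C}_{\phi,\omega}^\ast)$ with $\textbf{C}_{\phi,\omega}^\ast(f_1,f_2)=h\in L^2(\mu)$, so that $\int_X g\,\overline{\psi_1+\psi_2-h}\,\D\mu=0$ for all $g\in\EuScript{D}(\textbf{C}_{\phi,\omega})$. Since $\mathsf{h}_{\phi_i,\omega_i}<\infty$ a.e.\ and $\mathsf{E}_{\phi_i,\omega_i}((f_i)_{\omega_i})$ is finite a.e.\ (being in $L^2(\mu_{\omega_i})$), the formal function $\psi_1+\psi_2-h$ is measurable and finite a.e.; testing against $g=\chi_{A_n}\,\mathrm{sgn}(\psi_1+\psi_2-h)$, where $A_n\uparrow X$ are sets of finite measure on which $\mathsf{h}_{\phi_i,\omega_i}$ and $|\psi_1+\psi_2-h|$ are bounded, yields $\int_{A_n}|\psi_1+\psi_2-h|\,\D\mu=0$ and hence $\psi_1+\psi_2=h$ a.e.; in particular $\psi_1+\psi_2\in L^2(\mu)$. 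I expect this converse — verifying that the formal sum $\psi_1+\psi_2$, a priori only finite a.e., coincides with the genuine $L^2$ function $h$ — to be the main obstacle, since it hinges on the admissible test functions $g\in\EuScript{D}(\textbf{C}_{\phi,\omega})$ being rich enough to separate $L^2(\mu)$.

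Finally, (iv) I would obtain abstractly. Writing $S$ for the row operator $(C_{\phi_1,\omega_1}^\ast,C_{\phi_2,\omega_2}^\ast)$, $S(g_1,g_2)=C_{\phi_1,\omega_1}^\ast g_1+C_{\phi_2,\omega_2}^\ast g_2$ on the domain $\EuScript{D}(C_{\phi_1,\omega_1}^\ast)\times\EuScript{D}(C_{\phi_2,\omega_2}^\ast)$, which is dense in $L^2(\mu)\oplus L^2(\mu)$ because each $C_{\phi_i,\omega_i}$ is closed and densely defined, I would compute $S^\ast$ directly: testing the defining relation of the adjoint separately in each coordinate and invoking $C_{\phi_i,\omega_i}^{\ast\ast}=C_{\phi_i,\omega_i}$ shows $S^\ast=\textbf{C}_{\phi,\omega}$. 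Since $\textbf{C}_{\phi,\omega}$ is densely defined and closed by (ii)--(iii), $S^\ast$ is densely defined, so $S$ is closable with $\overline{S}=S^{\ast\ast}=(S^\ast)^\ast=\textbf{C}_{\phi,\omega}^\ast$, which is the assertion.
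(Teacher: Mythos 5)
Your proposal is correct and follows essentially the same route as the paper's proof: (i)--(iii) reduce to Theorem \ref{combasic} exactly as in the paper, (v) rests on the same pairing identity obtained via \eqref{muomega}, \eqref{exp}, \eqref{rd} and \eqref{invexp} (the paper's computation \eqref{innerp}), with your converse differing only cosmetically (a phase-function test on finite-measure truncation sets versus the paper's characteristic functions $\chi_{\Delta\cap Z_{nm}}$ together with a cited lemma from \cite{b-j-j-sW}), and (iv) is the same double-adjoint argument $\overline{S}=S^{\ast\ast}=(S^{\ast})^{\ast}=\textbf{C}_{\phi,\omega}^{\ast}$, except that you prove the row-adjoint identity $S^{\ast}=\textbf{C}_{\phi,\omega}$ directly where the paper cites \cite[Proposition 3.1]{M-pams-2008}. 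Your direct truncation to $\{\mathsf{h}_{\phi_{1},\omega_{1}}+\mathsf{h}_{\phi_{2},\omega_{2}}\le n\}$ in (ii) is in fact slightly more streamlined than the paper's detour through simple functions, but the underlying idea is identical.
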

\begin{proof}
(i)  If $f\in\EuScript{D}(\textbf{C}_{\phi,\omega})$, then $f\in\EuScript{D}(C_{\phi_{1},\omega_{1}})\bigcap\EuScript{D}(C_{\phi_{2},\omega_{2}}).$
By Theorem \ref{combasic} (i), we have
\begin{align*}
	\int_{X}|f|^2(1+\mathsf{h}_{\phi_{i},\omega_{i}})\D\mu<\infty,\ i=1,2,
\end{align*}
which implies $\int_{X}|f|^{2}\mathsf{h}_{\phi_{i},\omega_{i}}\D\mu<\infty,\ i=1,2$. Hence $\int_{X}|f|^2(1+\mathsf{h}_{\phi_{1},\omega_{1}}+\mathsf{h}_{\phi_{2},\omega_{2}})\D\mu<\infty,$ i.e., $f\in L^2\Big(\big(1+\mathsf{h}_{\phi_{1},\omega_{1}}+\mathsf{h}_{\phi_{2},\omega_{2}})\D\mu\Big).$ The reverse inclusion follows immediately from Theorem \ref{combasic} (i) and the definition of $\EuScript{D}(\textbf{C}_{\phi,\omega})$.

(ii) Suppose $f\ge0$ in $L^{2}(\mu)$, then there exists an increasing sequence $\{s_{n}\}$ of nonnegative simple functions that converges pointwise to $f$. Let $a_{n_{1}},a_{n_{2}},\cdots,a_{n_{k}}$ be the distinct values of $s_{n}$ and set $\varDelta_{n_{k}}=\{x:s_{n}(x)=a_{n_{k}}\}$. We can write $s_{n}=\sum\limits_{k}a_{n_{k}}\chi_{\varDelta_{n_{k}}}$, where $\chi_{\varDelta_{n_{k}}}$ is the characteristic function of $\varDelta_{n_{k}}.$ Since $0\le s_{n}\le f$, we have\begin{equation*}
	\chi_{\varDelta_{n_{k}}}\in L^{2}(\mu).
\end{equation*} 
 Then, let $m$ be a positive integer and take $\varDelta_{n_{k}}^{m}=\{x\in\varDelta_{n_{k}}:1+\mathsf{h}_{\phi_{1},\omega_{1}}+\mathsf{h}_{\phi_{2},\omega_{2}}\le m\}$. It is easy to see that the characteristic function 
 \begin{equation*}
 \chi_{\varDelta_{n_{k}}^m}\in L^2\Big(\big(1+\mathsf{h}_{\phi_{1},\omega_{1}}+\mathsf{h}_{\phi_{2},\omega_{2}})\D\mu\Big)=\EuScript{D}(\textbf{C}_{\phi,\omega}).
 \end{equation*}
 By Theorem \ref{combasic} (iii), we obtain
$\lim\limits_{m\to\infty}\chi_{\varDelta_{n_{k}}^m}=\chi_{\varDelta_{n_{k}}}$ a.e.\ $[\mu]$. Hence, the Dominated Convergence Theorem shows that
\begin{equation*}
	\lim\limits_{m\to\infty}\int_{X}|\chi_{\varDelta_{n_{k}}^m}-\chi_{\varDelta_{n_{k}}}|^2\D\mu=0.
\end{equation*} Thus, by a routine approximation argument via simple functions, one can verify that $\EuScript{D}(\textbf{C}_{\phi,\omega})$ is dense in $L^{2}(\mu).$ This completes the proof.

(iii) Note that $\mathcal{G}(\textbf{C}_{\phi,\omega
})$ is closed if and only if $\EuScript{D}(\textbf{C}_{\phi,\omega})$ is a Banach space with respect to the graph norm of $\textbf{C}_{\phi,\omega}$. Let $\{f_{n}\}$ be a Cauchy sequence in $(\EuScript{D}(\textbf{C}_{\phi,\omega}),\|\cdot\|_{\textbf{C}_{\phi,\omega}})$, then $\{f_{n}\}$, $\{C_{\phi_{1},\omega_{1}}f_{n}\}$ and $\{C_{\phi_{2},\omega_{2}}f_{n}\}$ are all Cauchy sequences in $L^{2}(\mu).$ Suppose that $f$ is the limit of $\{f_{n}\}$ in $L^{2}(\mu)$. Since $C_{\phi_{i},\omega_{i}}$ ($i=1,2$) are closed, then $f\in\EuScript{D}(C_{\phi_{i},\omega_{i}})$ and $\lim\limits_{n\to\infty}C_{\phi_{i},\omega_{i}}f_{n}=C_{\phi_{i},\omega_{i}}f,$ which implies $f\in\EuScript{D}(\textbf{C}_{\phi,\omega})$ and $f_{n}\xrightarrow{\|\cdot\|_{\textbf{C}_{\phi,\omega}}}f$ as $n\to\infty$, that is, $(\EuScript{D}(\textbf{C}_{\phi,\omega}),\|\cdot\|_{\textbf{C}_{\phi,\omega}})$ is a Banach space.

(iv) Consider the column operator $\textbf{C}_{\phi,\omega}\in\mathcal{L}\big(L^{2}(\mu),L^{2}(\mu)\oplus L^{2}(\mu)\big).$ Since $C_{\phi_{i},\omega_{i}}$ ($i=1,2$) are closed, then $C_{\phi_{i},\omega_{i}}^\ast$ ($i=1,2$) are densely defined. It follows that the row operator $(C_{\phi_{1},\omega_{1}}^\ast,C_{\phi_{2},\omega_{2}}^\ast)\in\mathcal{L}\big(L^{2}(\mu)\oplus L^{2}(\mu),L^{2}(\mu)\big)$ is also densely defined. Thus, by \cite[Proposition 3.1]{M-pams-2008}, we have
\begin{equation*}
(C_{\phi_{1},\omega_{1}}^\ast,C_{\phi_{2},\omega_{2}}^\ast)^\ast=\left(
\begin{array}{c}
	C_{\phi_{1},\omega_{1}} \\
	C_{\phi_{2},\omega_{2}}
\end{array}
\right),
\end{equation*}
then
\begin{equation*}
\overline{(C_{\phi_{1},\omega_{1}}^\ast,C_{\phi_{2},\omega_{2}}^\ast)}=(C_{\phi_{1},\omega_{1}}^\ast,C_{\phi_{2},\omega_{2}}^\ast)^{\ast\ast}=\left(
\begin{array}{c}
	C_{\phi_{1},\omega_{1}} \\
	C_{\phi_{2},\omega_{2}}
\end{array}
\right)^\ast.
\end{equation*}

(v) Suppose $g\in\EuScript{D}(\textbf{C}_{\phi,\omega})$ and $\left(
\begin{array}{c}
	f_{1} \\
	f_{2}
\end{array}
\right)\in L^{2}(\mu)\oplus L^{2}(\mu)$. Set $f_{i_{\omega_{i}}}=\chi_{\{\omega_{i}\ne0\}}\cdot\frac{f_{i}}{\omega_{i}}$, ($i=1,2$). Then
\begin{equation}\label{innerp}
\begin{aligned}
&\Bigg\langle\left(
\begin{array}{c}
	C_{\phi_{1},\omega_{1}} \\
	C_{\phi_{2},\omega_{2}}
\end{array}
\right)g,\left(
\begin{array}{c}
	f_{1} \\
	f_{2}
\end{array}
\right)\Bigg\rangle=\langle C_{\phi_{1},\omega_{1}}g,f_{1}\rangle+\langle C_{\phi_{2},\omega_{2}}g,f_{2}\rangle\\&=\int_{X}\omega_{1}\cdot g\circ\phi_{1}\cdot\bar{f_{1}}\D\mu+\int_{X}\omega_{2}\cdot g\circ\phi_{2}\cdot\bar{f_{2}}\D\mu\\&=\int_{X}|\omega_{1}|^2\cdot g\circ\phi_{1}\cdot\overline{ f_{1_{\omega_{1}}}}\D\mu+\int_{X}|\omega_{2}|^2\cdot g\circ\phi_{2}\cdot\overline{ f_{2_{\omega_{2}}}}\D\mu\\&\stackrel{\eqref{muomega}}{=}\int_{X}g\circ\phi_{1}\cdot\overline{ f_{1_{\omega_{1}}}}\D\mu_{\omega_{1}}+\int_{X}g\circ\phi_{2}\cdot\overline{ f_{2_{\omega_{2}}}}\D\mu_{\omega_{2}}\\&\stackrel{\eqref{exp}}{=}\int_{X}g\circ\phi_{1}\cdot\overline{\mathsf{E}_{\phi_{1},\omega_{1}}(f_{1_{\omega_{1}}})}\D\mu_{\omega_{1}}+\int_{X}g\circ\phi_{2}\cdot\overline{\mathsf{E}_{\phi_{2},\omega_{2}}(f_{2_{\omega_{2}}})}\D\mu_{\omega_{2}}\\&\xlongequal{\eqref{rd},\eqref{invexp}}\int_{X}g\cdot h_{\phi_{1},\omega_{1}}\cdot\overline{\mathsf{E}_{\phi_{1},\omega_{1}}(f_{1_{\omega_{1}}})\circ\phi_{1}^{-1}}\D\mu+\int_{X}g\cdot h_{\phi_{2},\omega_{2}}\cdot\overline{\mathsf{E}_{\phi_{2},\omega_{2}}(f_{2_{\omega_{2}}})\circ\phi_{2}^{-1}}\D\mu\\&=\int_{X}g\cdot\sum\limits_{i=1}^{2}\mathsf{h}_{\phi_{i},\omega_{i}}\cdot\overline{\mathsf{E}_{\phi_{i},\omega_{i}}(f_{i_{\omega_{i}}})\circ\phi_{i}^{-1}}\D\mu.
\end{aligned}
\end{equation}
Let $\EuScript{D}=\Bigg\{\left(
\begin{array}{c}
	f_{1} \\
	f_{2}
\end{array}
\right)\in L^2(\mu)\oplus L^2(\mu)\colon\sum\limits_{i=1}^{2}\mathsf{h}_{\phi_{i},\omega_{i}}\cdot\mathsf{E}_{\phi_{i},\omega_{i}}(f_{i_{\omega_{i}}})\circ\phi_{i}^{-1}\in L^{2}(\mu)\Bigg\}$. By \eqref{innerp}, we have $\EuScript{D}\subset\EuScript{D}(\textbf{C}_{\phi,\omega}^\ast).$ On the other hand, by \cite[Lemma 9]{b-j-j-sW}, there exists an increasing sequence $\{X_{n}\}\subset\mathscr{A}$ (resp. $\{Y_{m}\}\subset\mathscr{A}$) such that $\mu(X_{n})<\infty$ (resp. $\mu(Y_{m})<\infty$), $h_{\phi_{1},\omega_{1}}\le n$ (resp. $h_{\phi_{2},\omega_{2}}\le m$) a.e. $[\mu]$ on $X_{n}$ (resp. $Y_{m}$) for $n\in\mathbb{N}$ (resp. $m\in\mathbb{N}$) and $\bigcup\limits_{n=1}^\infty X_{n}=X$ (resp. $\bigcup\limits_{m=1}^\infty Y_{m}=X$). Let $\varDelta\in\mathscr{A}$ and $Z_{nm}=X_{n}\bigcap Y_{m}$. By (i), we obtain $g=\chi_{\varDelta\bigcap Z_{nm}}\in\EuScript{D}(\textbf{C}_{\phi,\omega})$. If $\left(
\begin{array}{c}
	f_{1} \\
	f_{2}
\end{array}
\right)\in\EuScript{D}(\textbf{C}_{\phi,\omega}^\ast)$, then
\begin{equation*}
\Bigg\langle\textbf{C}_{\phi,\omega}g,\left(
\begin{array}{c}
	f_{1} \\
	f_{2}
\end{array}
\right)\Bigg\rangle=\Bigg\langle g,\textbf{C}_{\phi,\omega}^\ast\left(
\begin{array}{c}
	f_{1} \\
	f_{2}
\end{array}
\right)\Bigg\rangle.
\end{equation*}
It follows from the above equality and \eqref{innerp} that\begin{equation}\label{inteq}
	\int_{X}\chi_{\Delta\bigcap Z_{nm}}\cdot\sum\limits_{i=1}^{2}\mathsf{h}_{\phi_{i},\omega_{i}}\cdot\overline{\mathsf{E}_{\phi_{i},\omega_{i}}(f_{i_{\omega_{i}}})\circ\phi_{i}^{-1}}\D\mu=\int_{X}\chi_{\Delta\bigcap Z_{nm}}\cdot\overline{\textbf{C}_{\phi,\omega}^\ast\left(
	\begin{array}{c}
		f_{1} \\
		f_{2}
	\end{array}
	\right)}\D\mu<\infty.
\end{equation}
By \eqref{inteq} and \cite[Lemma 2]{b-j-j-sW}, we have $\sum\limits_{i=1}^{2}\mathsf{h}_{\phi_{i},\omega_{i}}\cdot\overline{\mathsf{E}_{\phi_{i},\omega_{i}}(f_{i_{\omega_{i}}})\circ\phi_{i}^{-1}}=\overline{\textbf{C}_{\phi,\omega}^\ast\left(
	\begin{array}{c}
		f_{1} \\
		f_{2}
	\end{array}
	\right)}\ \text{a.e. $[\mu]$}$
on $Z_{nm}$ for every $n,m\in\mathbb{N}.$ Hence,
\begin{equation*}
	\sum\limits_{i=1}^{2}\mathsf{h}_{\phi_{i},\omega_{i}}\cdot\mathsf{E}_{\phi_{i},\omega_{i}}(f_{i_{\omega_{i}}})\circ\phi_{i}^{-1}=\textbf{C}_{\phi,\omega}^\ast\left(
	\begin{array}{c}
		f_{1} \\
		f_{2}
	\end{array}
	\right)\ \text{a.e. $[\mu]$}
\end{equation*}  on $X=\bigcup\limits_{n,m}Z_{nm}$, which yields $\sum\limits_{i=1}^{2}\mathsf{h}_{\phi_{i},\omega_{i}}\cdot\mathsf{E}_{\phi_{i},\omega_{i}}(f_{i_{\omega_{i}}})\circ\phi_{i}^{-1}\in L^{2}(\mu)$, that is, $\EuScript{D}(\textbf{C}_{\phi,\omega}^\ast)\subset\EuScript{D}.$ The proof is completed.
\end{proof}
\begin{rem}
The equality in the assertion (iv) of this proposition yields that \begin{equation*}
	(C_{\phi_{1},\omega_{1}}^\ast,C_{\phi_{2},\omega_{2}}^\ast)\subseteq\overline{(C_{\phi_{1},\omega_{1}}^\ast,C_{\phi_{2},\omega_{2}}^\ast)}=\left(
	\begin{array}{c}
		C_{\phi_{1},\omega_{1}} \\
		C_{\phi_{2},\omega_{2}}
	\end{array}
	\right)^\ast.
\end{equation*}
It follows immediately from the above inclusions that
\begin{equation}\label{asteq}
(C_{\phi_{1},\omega_{1}}^\ast,C_{\phi_{2},\omega_{2}}^\ast)=\left(
\begin{array}{c}
	C_{\phi_{1},\omega_{1}} \\
	C_{\phi_{2},\omega_{2}}
\end{array}
\right)^\ast
\end{equation} if and only if the row operator $(C_{\phi_{1},\omega_{1}}^\ast,C_{\phi_{2},\omega_{2}}^\ast)$ is closed if and only if $\EuScript{D}(C_{\phi_{1},\omega_{1}}^\ast)\oplus\EuScript{D}(C_{\phi_{2},\omega_{2}}^\ast)=\EuScript{D}(\textbf{C}_{\phi,\omega}^\ast)$. In particular, for $C_{\phi_{i},\omega_{i}}\in\mathcal{B}(L^2(\mu))$, $i=1,2$, the fact $\EuScript{D}(C_{\phi_{i},\omega_{i}}^\ast)=L^{2}(\mu)$ implies that \eqref{asteq} holds. 
\end{rem}
In general, \eqref{asteq} fails even if $C_{\phi_{1},\omega_{1}}=C_{\phi_{2},\omega_{2}},$ which implies the row operator $(C_{\phi_{1},\omega_{1}}^\ast,C_{\phi_{2},\omega_{2}}^\ast)$, formed by closed operators $C_{\phi_{1},\omega_{1}}^\ast$ and $C_{\phi_{2},\omega_{2}}^\ast$, is not necessarily closed. For the reader's convenience, we give the following example.
\begin{exa}\label{ex1}
Let $X=\mathbb{N}$, $\mathscr{A}=2^X$ and $\mu$ be the counting measure on $X$. The $\mathscr{A}$-measurable transform $\phi$  and function $\omega$ are defined by $\phi(n)=\omega(n)=n$, $n\in\mathbb{N}$. Since $\mu(\varDelta)=0$ if and only if $\varDelta=\emptyset$, then $\mu_{\omega}\circ\phi^{-1}\ll\mu$. Hence, $C_{\phi,\omega}$ is well-defined. It follows from $\phi^{-1}(\mathscr{A})=\mathscr{A}$ that $E_{\phi,\omega}$ is the identity operator on $L^{2}(\mu_{\omega}).$ By \eqref{h}, we have
\begin{equation*}
\mathsf{h}_{\phi,\omega}(n)=\sum\limits_{x\in\phi^{-1}(n)}|\omega(x)|^2=|\omega(\phi^{-1}(n))|^2=|\omega(n)|^2=n^2,
\end{equation*}
which implies $C_{\phi,\omega}$ is densely defined. On the other hand, we have
\begin{equation*}
\mathsf{h}_{\phi,\omega}\cdot\mathsf{E}_{\phi,\omega}(f_{i_{\omega}})\circ\phi^{-1}=(\phi(n))^2\cdot\frac{f_{i}(\phi(n))}{\omega(\phi(n))}=\phi(n)f_{i}(\phi(n))=nf_{i}(n),\ (i=1,2),
\end{equation*}
where $f_{1}(n)=n^{-\frac{3}{2}}$ and $f_{2}(n)=-n^{-\frac{3}{2}}$ in $L^{2}(\mu).$ A simple calculation shows that \begin{equation}\label{domainnoteq1}
\int_{X}\Big|\sum\limits_{i=1}^{2}\mathsf{h}_{\phi,\omega}\cdot\mathsf{E}_{\phi,\omega}(f_{i_{\omega}})\circ\phi^{-1}\Big|^2\D\mu=0
\end{equation}
and\begin{equation}\label{domainnoteq2}
	\int_{X}\Big|\mathsf{h}_{\phi,\omega}\cdot\mathsf{E}_{\phi,\omega}(f_{i_{\omega}})\circ\phi^{-1}\Big|^2\D\mu=\sum\limits_{n=1}^\infty(nf_{i}(n))^2=\sum\limits_{n=1}^\infty n^{-1}>\infty,\ (i=1,2).
\end{equation}
Then, consider the adjoint $\textbf{C}_{\phi,\omega}^\ast$ of the column operator $\textbf{C}_{\phi,\omega}=\left(
\begin{array}{c}
	C_{\phi,\omega} \\
	C_{\phi,\omega}
\end{array}
\right)\in\mathcal{L}\big(L^{2}(\mu),L^{2}(\mu)\oplus L^{2}(\mu)\big)$. By Theorem \ref{adjoint}, \eqref{domainnoteq1}, \eqref{domainnoteq2} and Proposition \ref{basic} (v), we conclude that $\left(
\begin{array}{c}
	f_{1} \\
	f_{2}
\end{array}
\right)\in\EuScript{D}(\textbf{C}_{\phi,\omega}^\ast)$ but $f_{i}\notin\EuScript{D}(C_{\phi,\omega}^\ast)$, $(i=1,2)$, that is, $\EuScript{D}(\textbf{C}_{\phi,\omega}^\ast)\ne\EuScript{D}(C_{\phi,\omega}^\ast)\oplus\EuScript{D}(C_{\phi,\omega}^\ast).$
\end{exa} 

Since the graph of $\textbf{C}_{\phi,\omega} \in \mathcal{L}(L^{2}(\mu), L^{2}(\mu)\oplus L^{2}(\mu))$ is isometrically isomorphic to its joint graph, we conclude from parts (ii) and (iii) of Proposition \ref{basic} that $\textbf{C}_{\phi,\omega}$ is densely defined and closed, and therefore admits a polar decomposition. To present the polar decomposition of $\textbf{C}_{\phi,\omega}=(C_{\phi_{1},\omega_{1}},C_{\phi_{2},\omega_{2}})$, we state a lemma regarding the sum and product of multiplication operators (a special case of composition operators).

\begin{lem}\label{mulsp}
Suppose $\mathsf{M}_{\mathsf{h}_{1}}$ and $\mathsf{M}_{\mathsf{h}_{2}}$ are multiplication operators in $L^2(X,\mathscr{A},\mu)$, where $\mathsf{h}_{i}$, $i=1,2$ is an $\mathscr{A}$-measurable function satisfying $0\le\mathsf{h}_{i}<\infty$, a.e. $[\mu]$, then we have$:$
	\begin{enumerate}
	\item[(i)]
	$\mathsf{M}_{\mathsf{h}_{1}}+\mathsf{M}_{\mathsf{h}_{2}}=\mathsf{M}_{\mathsf{h}_{1}+\mathsf{h}_{2}}$,
	\item[(ii)] 
	$\mathsf{M}_{\mathsf{h}_{1}}\cdot\mathsf{M}_{\mathsf{h}_{2}}=\mathsf{M}_{\mathsf{h}_{1}\cdot\mathsf{h}_{2}}$ if and only if $\mathsf{h}_{1}^2+\frac{1}{\mathsf{h}_{2}^2}\cdot\chi_{\{\mathsf{h}_{2}\ne0\}}\ge c$ a.e. $[\mu]$ for some $c\in(0,\infty)$. 
\end{enumerate}
\end{lem}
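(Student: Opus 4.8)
The plan is to reduce both assertions to a comparison of the natural maximal domains of the weights involved, using throughout that $\EuScript{D}(\mathsf{M}_{\mathsf{h}})=\{f\in L^2(\mu):\mathsf{h}f\in L^2(\mu)\}$ for an a.e.\ finite $\mathsf{h}\Ge 0$, that such $\mathsf{M}_{\mathsf{h}}$ is closed, and that on its domain it acts by pointwise multiplication by $\mathsf{h}$.

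For (i), the operator sum has $\EuScript{D}(\mathsf{M}_{\mathsf{h}_1}+\mathsf{M}_{\mathsf{h}_2})=\EuScript{D}(\mathsf{M}_{\mathsf{h}_1})\cap\EuScript{D}(\mathsf{M}_{\mathsf{h}_2})$, whereas $\EuScript{D}(\mathsf{M}_{\mathsf{h}_1+\mathsf{h}_2})=\{f:(\mathsf{h}_1+\mathsf{h}_2)f\in L^2(\mu)\}$. Here the hypothesis $\mathsf{h}_i\Ge 0$ is exactly what is needed: it yields the pointwise estimates $\mathsf{h}_i\Le\mathsf{h}_1+\mathsf{h}_2$ and $(\mathsf{h}_1+\mathsf{h}_2)^2\Le 2(\mathsf{h}_1^2+\mathsf{h}_2^2)$, so that $(\mathsf{h}_1+\mathsf{h}_2)f\in L^2(\mu)$ if and only if $\mathsf{h}_1 f,\mathsf{h}_2 f\in L^2(\mu)$. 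Hence the two domains coincide and both operators send $f$ to $(\mathsf{h}_1+\mathsf{h}_2)f$; I would remark that for signed weights cancellation could enlarge the left-hand domain, so nonnegativity cannot be dropped.

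For (ii), I would first record that $\EuScript{D}(\mathsf{M}_{\mathsf{h}_1}\mathsf{M}_{\mathsf{h}_2})=\EuScript{D}(\mathsf{M}_{\mathsf{h}_2})\cap\EuScript{D}(\mathsf{M}_{\mathsf{h}_1\mathsf{h}_2})$, so that $\mathsf{M}_{\mathsf{h}_1}\mathsf{M}_{\mathsf{h}_2}\subseteq\mathsf{M}_{\mathsf{h}_1\mathsf{h}_2}$ always holds and the asserted identity is equivalent to the single domain inclusion $\EuScript{D}(\mathsf{M}_{\mathsf{h}_1\mathsf{h}_2})\subseteq\EuScript{D}(\mathsf{M}_{\mathsf{h}_2})$. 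On $\{\mathsf{h}_2=0\}$ both $\mathsf{h}_2 f$ and $\mathsf{h}_1\mathsf{h}_2 f$ vanish, so this inclusion concerns only $\{\mathsf{h}_2\ne 0\}$, where the stated weight is $\mathsf{h}_1^2+\mathsf{h}_2^{-2}$. For sufficiency, assuming $\mathsf{h}_1^2+\mathsf{h}_2^{-2}\Ge c>0$ there, I multiply through by $\mathsf{h}_2^2$ to obtain the pointwise bound $\mathsf{h}_2^2\Le c^{-1}(1+\mathsf{h}_1^2\mathsf{h}_2^2)$ on $\{\mathsf{h}_2\ne0\}$; integrating against $|f|^2$ then gives $\|\mathsf{M}_{\mathsf{h}_2}f\|^2\Le c^{-1}(\|f\|^2+\|\mathsf{M}_{\mathsf{h}_1\mathsf{h}_2}f\|^2)<\infty$ for every $f\in\EuScript{D}(\mathsf{M}_{\mathsf{h}_1\mathsf{h}_2})$, i.e.\ $f\in\EuScript{D}(\mathsf{M}_{\mathsf{h}_2})$, which is the required inclusion.

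The necessity is the crux, and here I would invoke the closed graph theorem rather than build an explicit counterexample. Assuming $\EuScript{D}(\mathsf{M}_{\mathsf{h}_1\mathsf{h}_2})\subseteq\EuScript{D}(\mathsf{M}_{\mathsf{h}_2})$, equip $\EuScript{D}(\mathsf{M}_{\mathsf{h}_1\mathsf{h}_2})$ with its graph norm, making it a Hilbert space since $\mathsf{M}_{\mathsf{h}_1\mathsf{h}_2}$ is closed. The map $f\mapsto\mathsf{M}_{\mathsf{h}_2}f$ is then everywhere defined on this space and closed (closedness of $\mathsf{M}_{\mathsf{h}_2}$ on $L^2(\mu)$ transfers through the stronger graph convergence), hence bounded: there is $K$ with $\|\mathsf{M}_{\mathsf{h}_2}f\|^2\Le K(\|f\|^2+\|\mathsf{M}_{\mathsf{h}_1\mathsf{h}_2}f\|^2)$. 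Testing on $f=\chi_E$ for arbitrary $E\subseteq\{\mathsf{h}_2\ne0\}$ of finite measure on which $\mathsf{h}_1,\mathsf{h}_2$ are bounded gives $\int_E\mathsf{h}_2^2\D\mu\Le K\int_E(1+\mathsf{h}_1^2\mathsf{h}_2^2)\D\mu$; since $E$ is arbitrary, the integrands satisfy $\mathsf{h}_2^2\Le K(1+\mathsf{h}_1^2\mathsf{h}_2^2)$ a.e.\ on each such truncation, and letting the truncation level grow (using $\mathsf{h}_1,\mathsf{h}_2<\infty$ a.e.) propagates this to all of $\{\mathsf{h}_2\ne0\}$. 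Dividing by $\mathsf{h}_2^2>0$ produces $\mathsf{h}_1^2+\mathsf{h}_2^{-2}\Ge 1/K$, the desired lower bound with $c=1/K$. I expect the delicate points to be the verification that the restricted multiplication map is genuinely closed and everywhere defined, so that the closed graph theorem legitimately applies, and the passage from the family of integral inequalities to the a.e.\ pointwise bound; the remaining steps are routine domain bookkeeping.
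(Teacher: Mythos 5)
Your argument is correct. Part (i) and the sufficiency half of (ii) are in substance the paper's own computations: the paper bounds the ratio $\frac{1+\mathsf{h}_{2}^2+(\mathsf{h}_{1}\mathsf{h}_{2})^2}{1+(\mathsf{h}_{1}\mathsf{h}_{2})^2}$ by $1+\frac{1}{c}$, which is your pointwise estimate $\mathsf{h}_{2}^2\le c^{-1}(1+\mathsf{h}_{1}^2\mathsf{h}_{2}^2)$ in disguise, and in (i) it uses the Schwarz inequality where you use $(\mathsf{h}_{1}+\mathsf{h}_{2})^2\le 2(\mathsf{h}_{1}^2+\mathsf{h}_{2}^2)$; your domain identity $\EuScript{D}(\mathsf{M}_{\mathsf{h}_{1}}\mathsf{M}_{\mathsf{h}_{2}})=\EuScript{D}(\mathsf{M}_{\mathsf{h}_{2}})\cap\EuScript{D}(\mathsf{M}_{\mathsf{h}_{1}\mathsf{h}_{2}})$ is the paper's \eqref{muld2}. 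The necessity half is where you genuinely diverge. The paper argues constructively: negating the condition produces sets $\varDelta_{n}=\{\mathsf{h}_{1}^2+\mathsf{h}_{2}^{-2}\chi_{\{\mathsf{h}_{2}\ne0\}}<\frac{1}{n}\}$ of positive measure, and after a $\sigma$-finiteness reduction it exhibits the explicit function $f=\Big(\frac{1}{1+(\mathsf{h}_{1}\mathsf{h}_{2})^2}\sum_{n}\frac{\chi_{Z_{n}}}{n^2\mu(Z_{n})}\Big)^{\frac{1}{2}}$ lying in $\EuScript{D}(\mathsf{M}_{\mathsf{h}_{1}\mathsf{h}_{2}})$ but not in $\EuScript{D}(\mathsf{M}_{\mathsf{h}_{1}}\mathsf{M}_{\mathsf{h}_{2}})$. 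You instead apply the closed graph theorem on $\EuScript{D}(\mathsf{M}_{\mathsf{h}_{1}\mathsf{h}_{2}})$ with its graph norm and then extract the a.e.\ pointwise bound by testing on indicators of truncation sets; both delicate points you flag are fine (the restricted map is everywhere defined by hypothesis and closed because graph-norm convergence dominates $L^2$-convergence, and the passage from integral inequalities over arbitrary measurable subsets to an a.e.\ bound is standard under $\sigma$-finiteness). Your route is in effect a proof of the inclusion criterion $L^2(v\,\D\mu)\subseteq L^2(u\,\D\mu)\Leftrightarrow u\le cv$ a.e.\ of \cite[Lemma 12.3]{Bud-AMPP-2014}, which the paper itself invokes later for the endpoint cases $\lambda\in\{0,1\}$; so your argument is softer and shorter at the price of Baire-category machinery, while the paper's is self-contained and produces a concrete witness of the domain failure, in the spirit of its many explicit examples.

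One further point, in your favor: you derive the lower bound only on $\{\mathsf{h}_{2}\ne0\}$, having correctly observed that the operator identity is equivalent to $\EuScript{D}(\mathsf{M}_{\mathsf{h}_{1}\mathsf{h}_{2}})\subseteq\EuScript{D}(\mathsf{M}_{\mathsf{h}_{2}})$ and therefore constrains nothing on $\{\mathsf{h}_{2}=0\}$. This is the accurate form of the statement: as literally written, the necessity fails whenever $\mu(\{\mathsf{h}_{1}=\mathsf{h}_{2}=0\})>0$ (take $\mathsf{h}_{1}=\mathsf{h}_{2}=0$, where both sides are the zero operator on all of $L^2(\mu)$ yet $\mathsf{h}_{1}^2+\mathsf{h}_{2}^{-2}\chi_{\{\mathsf{h}_{2}\ne0\}}=0$), and the paper's construction tacitly assumes $Z_{n}\subseteq\{\mathsf{h}_{2}\ne0\}$: on $\varDelta_{n}\cap\{\mathsf{h}_{2}=0\}$ the ratio $\frac{1+\mathsf{h}_{2}^2+(\mathsf{h}_{1}\mathsf{h}_{2})^2}{1+(\mathsf{h}_{1}\mathsf{h}_{2})^2}$ equals $1$, not $>1+n$, so the displayed strict inequality there can fail. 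Your version quietly repairs this edge case; it would be worth stating explicitly that the condition is to be read a.e.\ on $\{\mathsf{h}_{2}\ne0\}$.
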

\begin{proof}
(i) By the definition of the multiplication operator in $L^2(\mu)$, we immediately obtain
\begin{equation*}
\mathsf{M}_{\mathsf{h}_{1}}+\mathsf{M}_{\mathsf{h}_{2}}\supseteq\mathsf{M}_{\mathsf{h}_{1}+\mathsf{h}_{2}}.
\end{equation*}
It suffices to prove that $\EuScript{D}(\mathsf{M}_{\mathsf{h}_{1}}+\mathsf{M}_{\mathsf{h}_{2}})\subseteq\EuScript{D}(\mathsf{M}_{\mathsf{h}_{1}+\mathsf{h}_{2}})$. Let $f\in\EuScript{D}(\mathsf{M}_{\mathsf{h}_{1}}+\mathsf{M}_{\mathsf{h}_{2}})$, i.e., $\int_{X}|\mathsf{h}_{i}f|^2\D\mu<\infty$, $i=1,2$. By the Schwarz inequality, we have 
\begin{equation*}
	\int_{X}\mathsf{h}_{1}\mathsf{h}_{2}|f|^2\D\mu\le\left(\int_{X}|\mathsf{h}_{1}f|^2\D\mu\right)^{1/2}\left(\int_{X}|\mathsf{h}_{2}f|^2\D\mu\right)^{1/2}<\infty
\end{equation*}
Thus, it follows that\begin{equation*}
	\int_{X}|\mathsf{M}_{\mathsf{h}_{1}+\mathsf{h}_{2}}f|^2\D\mu=\int_{X}|\mathsf{h}_{1}f|^2+2\mathsf{h}_{1}\mathsf{h}_{2}|f|^2+|\mathsf{h}_{2}|^2\D\mu<\infty,
\end{equation*}
that is, $f\in\EuScript{D}(\mathsf{M}_{\mathsf{h}_{1}+\mathsf{h}_{2}})$. Therefore, we get the desired result.

(ii) It can be easily verified that
\begin{equation}\label{muld1}
\EuScript{D}(\mathsf{M}_{\mathsf{h}_{1}\cdot\mathsf{h}_{2}})=\Big\{f\in L^2(\mu):\int_{X}|\mathsf{M}_{\mathsf{h}_{1}\cdot\mathsf{h}_{2}}f|^2\D\mu<\infty\Big\}=L^2\Big((1+(\mathsf{h}_{1}\cdot\mathsf{h}_{2})^2)\D\mu\Big)
\end{equation}
and\begin{equation}\label{muld2}
	\EuScript{D}(\mathsf{M}_{\mathsf{h}_{1}}\cdot\mathsf{M}_{\mathsf{h}_{2}})=\EuScript{D}(\mathsf{M}_{\mathsf{h}_{1}\cdot\mathsf{h}_{2}})\bigcap\EuScript{D}(\mathsf{M}_{\mathsf{h}_{2}})=L^2\Big((1+\mathsf{h}_{2}^2+(\mathsf{h}_{1}\cdot\mathsf{h}_{2})^2)\D\mu\Big).
\end{equation}
If $\mathsf{h}_{1}^2+\frac{1}{\mathsf{h}_{2}^2}\cdot\chi_{\{\mathsf{h}_{2}\ne0\}}\ge c$ a.e. $[\mu]$ for some $c\in(0,\infty)$, then \begin{align*}
\text{$\frac{1+\mathsf{h}_{2}^2+(\mathsf{h}_{1}\cdot\mathsf{h}_{2})^2}{1+(\mathsf{h}_{1}\cdot\mathsf{h}_{2})^2}=1+\frac{\mathsf{h}_{2}^2}{1+(\mathsf{h}_{1}\cdot\mathsf{h}_{2})^2}=1+\frac{1}{\mathsf{h}_{1}^2+\frac{1}{\mathsf{h}_{2}^2}\cdot\chi_{\{\mathsf{h}_{2}\ne0\}}}\le1+\frac{1}{c}$\quad a.e. $[\mu]$},
\end{align*}
which implies \begin{equation*}
	\int_{X}|f|^2(1+\mathsf{h}_{2}^2+(\mathsf{h}_{1}\cdot\mathsf{h}_{2})^2)\D\mu\le\int_{X}(1+\frac{1}{c})|f|^2(1+(\mathsf{h}_{1}\cdot\mathsf{h}_{2})^2)\D\mu<\infty
\end{equation*}
for every $f\in L^2\Big((1+(\mathsf{h}_{1}\cdot\mathsf{h}_{2})^2)\D\mu\Big)$, i.e.,
\begin{equation}\label{domul}
	\EuScript{D}(\mathsf{M}_{\mathsf{h}_{1}\cdot\mathsf{h}_{2}})\subseteq	\EuScript{D}(\mathsf{M}_{\mathsf{h}_{1}}\cdot\mathsf{M}_{\mathsf{h}_{2}}).
\end{equation}
 On the other hand, the definition of the multiplication operator in $L^2(\mu)$ readily yields\begin{equation}\label{muleq}
	\mathsf{M}_{\mathsf{h}_{1}}\cdot\mathsf{M}_{\mathsf{h}_{2}}\subseteq\mathsf{M}_{\mathsf{h}_{1}\cdot\mathsf{h}_{2}}
\end{equation}
By \eqref{domul} and \eqref{muleq}, we have $\mathsf{M}_{\mathsf{h}_{1}}\cdot\mathsf{M}_{\mathsf{h}_{2}}=\mathsf{M}_{\mathsf{h}_{1}\cdot\mathsf{h}_{2}}$.

Conversely, suppose the part "only if" fails, then there exists a sequence $\{\varDelta_{n}\}\subset\mathscr{A}$ such that\begin{equation}\label{mu0}
	\mu(\varDelta_{n})\ne0,
\end{equation} 
where
\begin{equation}\label{deltacou}
\varDelta_{n}=\{x:\mathsf{h}_{1}^2(x)+\frac{1}{\mathsf{h}_{2}^2(x)}\cdot\chi_{\{\mathsf{h}_{2}\ne0\}}(x)<\frac{1}{n}\}
\end{equation}for $n\in\mathbb{N}$. Moreover, since $\mu$ is $\sigma$-finite, then there exists a sequence $\{X_{m}\}\subset\mathscr{A}$ such that $\mu(X_{m})<\infty$ and $\bigcup\limits_{m=1}^\infty X_{m}=X$. We claim that for every $n\in\mathbb{N}$ there exists some $k_{n}\in\mathbb{N}$ such that\begin{equation*}
	\mu(\varDelta_{n}\bigcap Y_{k_{n}})\ne0,
\end{equation*}where $Y_{k_{n}}=\bigcup\limits_{m=1}^{k_{n}} X_{m}$. Otherwise,we have $\mu(\varDelta_{n})=\mu\Bigg(\bigcup\limits_{k=1}^\infty(\varDelta_{n}\bigcap Y_{k})\Bigg)\le0$, which yields $\mu(\varDelta_{n})=0$ for $n\in\mathbb{N}$. This contradicts \eqref{mu0}. Further, setting $Z_{n}=\varDelta_{n}\bigcap Y_{k_{n}}$ and considering the function
\begin{equation*}
	f=\Bigg(\frac{1}{1+(\mathsf{h}_{1}\cdot\mathsf{h}_{2})^2}\sum\limits_{n=1}^\infty\frac{\chi_{Z_{n}}}{n^2\mu(Z_{n})}\Bigg)^\frac{1}{2},
\end{equation*} 
we obtain\begin{align*}
\int_{X}|f|^2(1+(\mathsf{h}_{1}\cdot\mathsf{h}_{2})^2)\D\mu=\int_{X}\sum\limits_{n=1}^\infty\frac{\chi_{Z_{n}}}{n^2\mu(Z_{n})}\D\mu=\sum\limits_{n=1}^\infty\frac{1}{n^2}<\infty
\end{align*}
and
\begin{align*}
	\int_{X}|f|^2(1+\mathsf{h}_{2}^2+(\mathsf{h}_{1}\cdot\mathsf{h}_{2})^2)\D\mu&=\int_{X}\frac{1+\mathsf{h}_{2}^2+(\mathsf{h}_{1}\cdot\mathsf{h}_{2})^2}{1+(\mathsf{h}_{1}\cdot\mathsf{h}_{2})^2}\sum\limits_{n=1}^\infty\frac{\chi_{Z_{n}}}{n^2\mu(Z_{n})}\D\mu\\&=\sum\limits_{n=1}^\infty\int_{X}\frac{1+\mathsf{h}_{2}^2+(\mathsf{h}_{1}\cdot\mathsf{h}_{2})^2}{1+(\mathsf{h}_{1}\cdot\mathsf{h}_{2})^2}\frac{\chi_{Z_{n}}}{n^2\mu(Z_{n})}\D\mu\\&\stackrel{\eqref{deltacou}}{>}\sum\limits_{n=1}^\infty\int_{Z_{n}}(1+n)\frac{1}{n^2\mu(Z_{n})}\D\mu\\&=\sum\limits_{n=1}^\infty\frac{1+n}{n^2}>\infty.
\end{align*}
So, combining \eqref{muld1} and \eqref{muld2} with the above integral inequalities, we have $	\EuScript{D}(\mathsf{M}_{\mathsf{h}_{1}\cdot\mathsf{h}_{2}})\not\subseteq	\EuScript{D}(\mathsf{M}_{\mathsf{h}_{1}}\cdot\mathsf{M}_{\mathsf{h}_{2}})$. 
However, the fact $\mathsf{M}_{\mathsf{h}_{1}}\cdot\mathsf{M}_{\mathsf{h}_{2}}=\mathsf{M}_{\mathsf{h}_{1}\cdot\mathsf{h}_{2}}$ implies that \eqref{domul} holds. This leads to a contradiction. The proof is completed.
\end{proof}
\begin{rem}\label{rkex1}
(i) If $\mathsf{h}_{2}$ is bounded in the above lemma, then $\mathsf{h}_{1}^2+\frac{1}{\mathsf{h}_{2}^2}\cdot\chi_{\{\mathsf{h}_{2}\ne0\}}\ge c$ a.e. $[\mu]$ obviously holds for some $c\in(0,\infty)$. In this case, $\mathsf{M}_{\mathsf{h}_{1}}\cdot\mathsf{M}_{\mathsf{h}_{2}}=\mathsf{M}_{\mathsf{h}_{1}\cdot\mathsf{h}_{2}}$ follows immediately from Lemma \ref{mulsp} (ii). We will use this in the poof of Theorem \ref{polar} (ii). 

(ii) We present a simple example to demonstrate that there indeed exist cases in which no constant $c\in(0,\infty)$ satisfies $\mathsf{h}_{1}^2+\frac{1}{\mathsf{h}_{2}^2}\cdot\chi_{\{\mathsf{h}_{2}\ne0\}}\ge c$ a.e. $[\mu]$ and $\mathsf{M}_{\mathsf{h}_{1}}\cdot\mathsf{M}_{\mathsf{h}_{2}}\ne\mathsf{M}_{\mathsf{h}_{1}\cdot\mathsf{h}_{2}}$, even if $h_{i}$ ($i=1,2$) are defined as in \eqref{rd}. Let $X=\mathbb{N}$, $\mathscr{A}=2^X$ and $\mu$ be the counting measure on $X$. Consider the composition operators $C_{\phi,\omega_{1}}$ and $C_{\phi,\omega_{2}}$, where $\phi(n)=n$, $\omega_{1}(n)=\frac{1}{n}$ and $\omega_{2}(n)=n$. By applying the approach detailed in Example \ref{ex1}, we can get
$\mathsf{h}_{\phi,\omega_{1}}(n)=\frac{1}{n^2}$
and
$\mathsf{h}_{\phi,\omega_{2}}(n)=n^2$. It follows immediately that $\mathsf{h}_{1}^2+\frac{1}{\mathsf{h}_{2}^2}\cdot\chi_{\{\mathsf{h}_{2}\ne0\}}\ge c$ a.e. $[\mu]$ fails for all $c\in(0,\infty)$. Further, let $f(n)=\frac{1}{n}\in L^2(\mu)$. We have
\begin{equation*}
\int_{X}|\mathsf{M}_{\mathsf{h}_{\phi,\omega_{1}}\cdot\mathsf{h}_{\phi,\omega_{2}}}f|^2\D\mu=\int_{X}|f|^2\D\mu<\infty	
\end{equation*}
and
\begin{equation*}
	\int_{X}|\mathsf{M}_{\mathsf{h}_{\phi,\omega_{2}}}f|^2\D\mu=\int_{X}n^2\D\mu>\infty,	
\end{equation*}
which implies \begin{equation*}
		\EuScript{D}(\mathsf{M}_{\mathsf{h}_{\phi,\omega_{1}}}\cdot\mathsf{M}_{\mathsf{h}_{\phi,\omega_{2}}})\ne\EuScript{D}(\mathsf{M}_{\mathsf{h}_{\phi,\omega_{1}}\cdot\mathsf{h}_{\phi,\omega_{2}}}).
\end{equation*}
Therefore, we get the desired result.
\end{rem}


Next, we are in a position to characterize the polar decomposition of $\textbf{C}_{\phi,\omega}=(C_{\phi_{1},\omega_{1}},C_{\phi_{2},\omega_{2}})$ in terms of the Radon-Nikodym derivatives $\mathsf{h}_{\phi_{1},\omega_{1}}$ and $\mathsf{h}_{\phi_{2},\omega_{2}}$.
\begin{thm}\label{polar}
Suppose $C_{\phi_{1},\omega_{1}}$ and $C_{\phi_{2},\omega_{2}}$ are densely defined. Let $\textbf{C}_{\phi,\omega}=\left(
\begin{array}{c}
	U_{1} \\
	U_{2}
\end{array}
\right)|\textbf{C}_{\phi,\omega}|$ be the polar decomposition of the pair $\textbf{C}_{\phi,\omega}=(C_{\phi_{1},\omega_{1}},C_{\phi_{2},\omega_{2}})$. For $i=1,2$, then the following properties hold$:$
\begin{enumerate}
	\item[(i)] 
	$|\textbf{C}_{\phi,\omega}|=\mathsf{M}_{\sqrt{\mathsf{h}_{\phi_{1},\omega_{1}}+\mathsf{h}_{\phi_{2},\omega_{2}}}}$. In particular, $|C_{\phi_{i},\omega_{i}}|=\Bigg|\left(
	\begin{array}{c}
		C_{\phi_{i},\omega_{i}} \\
		0
	\end{array}
	\right)\Bigg|=\mathsf{M}_{\sqrt{\mathsf{h}_{\phi_{i},\omega_{i}}}}$,
	\item[(ii)] $U_{i}=C_{\phi_{i},\widetilde{\omega_{i}}}$, where $\widetilde{\omega_{i}}$ is a complex $\mathscr{A}$-measurable function defined by
	\begin{equation*}
		\widetilde{\omega_{i}}=\frac{\omega_{i}\cdot\chi_{\{\omega_{i}\ne 0\}}}{\sqrt{(\mathsf{h}_{\phi_{1},\omega_{1}}+\mathsf{h}_{\phi_{2},\omega_{2}})\circ\phi_{i}}},\ \text{a.e. $[\mu]$}, 
	\end{equation*}
and $\|U_{i}\|=\Big\|\frac{\mathsf{h}_{\phi_{i},\omega_{i}}}{\mathsf{h}_{\phi_{1},\omega_{1}}+\mathsf{h}_{\phi_{2},\omega_{2}}}\cdot\chi_{\{\mathsf{h}_{\phi_{1},\omega_{1}}+\mathsf{h}_{\phi_{2},\omega_{2}}>0\}}\Big\|_{\infty}^{\frac{1}{2}}$. 
\end{enumerate}
\end{thm}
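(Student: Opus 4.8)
The plan is to compute the modulus $|\textbf{C}_{\phi,\omega}|$ directly from the quadratic form of $\textbf{C}_{\phi,\omega}^\ast\textbf{C}_{\phi,\omega}$, then to produce an explicit candidate weighted composition operator for each component $U_i$ and identify it by the uniqueness of the polar decomposition. For (i), I would start from the form identity $\||\textbf{C}_{\phi,\omega}|f\|^2=\|\textbf{C}_{\phi,\omega}f\|^2=\|C_{\phi_1,\omega_1}f\|^2+\|C_{\phi_2,\omega_2}f\|^2$, valid for $f\in\EuScript{D}(\textbf{C}_{\phi,\omega})$. Applying the change-of-variables formula \eqref{intcom} to each summand gives $\|C_{\phi_i,\omega_i}f\|^2=\int_X|f|^2\mathsf{h}_{\phi_i,\omega_i}\D\mu$, hence $\||\textbf{C}_{\phi,\omega}|f\|^2=\int_X|f|^2 H\,\D\mu=\|\mathsf{M}_{\sqrt H}f\|^2$, where $H:=\mathsf{h}_{\phi_1,\omega_1}+\mathsf{h}_{\phi_2,\omega_2}$. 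Since $\EuScript{D}(\mathsf{M}_{\sqrt H})=L^2((1+H)\D\mu)=\EuScript{D}(\textbf{C}_{\phi,\omega})$ by Proposition \ref{basic}(i), the two positive self-adjoint operators $|\textbf{C}_{\phi,\omega}|$ and $\mathsf{M}_{\sqrt H}$ carry the same quadratic form on a common domain and therefore coincide, which is (i); the special case $|C_{\phi_i,\omega_i}|=\mathsf{M}_{\sqrt{\mathsf{h}_{\phi_i,\omega_i}}}$ is the same computation applied to the pair $(C_{\phi_i,\omega_i},0)$.

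For (ii), the first thing to settle is that $\widetilde{\omega_i}$ is well defined, i.e.\ that $\{\omega_i\ne0\}\subseteq\{H\circ\phi_i>0\}$ a.e.\ $[\mu]$. This follows from \eqref{rd}: taking $\Delta=\{\mathsf{h}_{\phi_i,\omega_i}=0\}$ gives $\int_{\phi_i^{-1}(\Delta)}|\omega_i|^2\D\mu=\int_\Delta\mathsf{h}_{\phi_i,\omega_i}\D\mu=0$, so $\omega_i=0$ a.e.\ on $\phi_i^{-1}(\{\mathsf{h}_{\phi_i,\omega_i}=0\})$, whence $\{\omega_i\ne0\}\subseteq\{\mathsf{h}_{\phi_i,\omega_i}\circ\phi_i>0\}\subseteq\{H\circ\phi_i>0\}$ since $\mathsf{h}_{\phi_i,\omega_i}\le H$. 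With $\widetilde{\omega_i}$ in hand, I would set $V:=(C_{\phi_1,\widetilde{\omega_1}},C_{\phi_2,\widetilde{\omega_2}})$ and verify that $V$ is the partial isometry of the polar decomposition.

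The verification has three ingredients. First, compute the Radon--Nikodym derivative of each component: using $|\widetilde{\omega_i}|^2=|\omega_i|^2/(H\circ\phi_i)$, the identity $1/(H\circ\phi_i)=(\chi_{\{H>0\}}/H)\circ\phi_i$, and \eqref{intcom}, one obtains $\mathsf{h}_{\phi_i,\widetilde{\omega_i}}=\frac{\mathsf{h}_{\phi_i,\omega_i}}{H}\chi_{\{H>0\}}$; in particular each $\mathsf{h}_{\phi_i,\widetilde{\omega_i}}\le1$, so by Theorem \ref{combasic}(iv) each $C_{\phi_i,\widetilde{\omega_i}}$ is bounded, and $\mathsf{h}_{\phi_1,\widetilde{\omega_1}}+\mathsf{h}_{\phi_2,\widetilde{\omega_2}}=\chi_{\{H>0\}}$. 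Second, this yields $V^\ast V=\mathsf{M}_{\mathsf{h}_{\phi_1,\widetilde{\omega_1}}}+\mathsf{M}_{\mathsf{h}_{\phi_2,\widetilde{\omega_2}}}=\mathsf{M}_{\chi_{\{H>0\}}}$ by Lemma \ref{mulsp}(i), an orthogonal projection, so $V$ is a partial isometry with $\mathcal{N}(V)=\chi_{\{H=0\}}L^2(\mu)=\mathcal{N}(\mathsf{M}_{\sqrt H})=\mathcal{N}(\textbf{C}_{\phi,\omega})$. Third, I would check the reproducing identity $V|\textbf{C}_{\phi,\omega}|=\textbf{C}_{\phi,\omega}$: for $f\in\EuScript{D}(\textbf{C}_{\phi,\omega})$ the pointwise relation $\widetilde{\omega_i}\cdot(\sqrt H f)\circ\phi_i=\widetilde{\omega_i}\sqrt{H\circ\phi_i}\,(f\circ\phi_i)=\omega_i\,(f\circ\phi_i)$ gives $C_{\phi_i,\widetilde{\omega_i}}\mathsf{M}_{\sqrt H}f=C_{\phi_i,\omega_i}f$, where the identification of the composed multiplication operators on the correct domain is supplied by Remark \ref{rkex1}(i) (the bounded-factor case of Lemma \ref{mulsp}(ii)). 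By the uniqueness of the polar decomposition these three facts force $U_i=C_{\phi_i,\widetilde{\omega_i}}$.

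Finally, the norm formula is immediate: as $C_{\phi_i,\widetilde{\omega_i}}$ is bounded, Theorem \ref{combasic}(iv) gives $\|U_i\|=\|C_{\phi_i,\widetilde{\omega_i}}\|=\|\mathsf{h}_{\phi_i,\widetilde{\omega_i}}\|_\infty^{1/2}=\big\|\tfrac{\mathsf{h}_{\phi_i,\omega_i}}{H}\chi_{\{H>0\}}\big\|_\infty^{1/2}$, as claimed. I expect the main obstacle to be the bookkeeping on the exceptional set $\{H=0\}$ and its $\phi_i$-preimages: namely the well-definedness of the quotient defining $\widetilde{\omega_i}$, the correct evaluation of $\mathsf{h}_{\phi_i,\widetilde{\omega_i}}$ through \eqref{intcom}, and the rigorous passage from the pointwise relation to the operator identity $C_{\phi_i,\widetilde{\omega_i}}\mathsf{M}_{\sqrt H}=C_{\phi_i,\omega_i}$ with matching domains, which is precisely where the multiplication-operator calculus of Lemma \ref{mulsp} is needed rather than a merely formal manipulation.
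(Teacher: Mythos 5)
Your proposal is correct and follows essentially the same route as the paper: identify $|\textbf{C}_{\phi,\omega}|$ with $\mathsf{M}_{\sqrt{\mathsf{h}_{\phi_1,\omega_1}+\mathsf{h}_{\phi_2,\omega_2}}}$ through the quadratic-form computation via \eqref{intcom} on the common domain, define $\widetilde{\omega_i}$, compute $\mathsf{h}_{\phi_i,\widetilde{\omega_i}}=\frac{\mathsf{h}_{\phi_i,\omega_i}}{\mathsf{h}_{\phi_1,\omega_1}+\mathsf{h}_{\phi_2,\omega_2}}\chi_{\{\mathsf{h}_{\phi_1,\omega_1}+\mathsf{h}_{\phi_2,\omega_2}>0\}}$, verify the partial-isometry and factorization properties, and conclude by uniqueness of the polar decomposition. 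The only deviations are cosmetic and both sound: you prove well-definedness of $\widetilde{\omega_i}$ directly from \eqref{rd} with $\Delta=\{\mathsf{h}_{\phi_i,\omega_i}=0\}$ instead of the paper's contradiction argument via \cite[Lemma 6]{b-j-j-sW}, and you read off $\mathcal{N}(V)=\chi_{\{\mathsf{h}_{\phi_1,\omega_1}+\mathsf{h}_{\phi_2,\omega_2}=0\}}L^2(\mu)$ from $V^\ast V$ being a projection rather than comparing the kernels at the level of sets as the paper does.
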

\begin{proof}
(i) Since $\sqrt{\mathsf{h}_{\phi_{1},\omega_{1}}+\mathsf{h}_{\phi_{2},\omega_{2}}}$ is a $\mu$-a.e. nonnegative real-valued function, then the multiplication operator $\mathsf{M}_{\sqrt{\mathsf{h}_{\phi_{1},\omega_{1}}+\mathsf{h}_{\phi_{2},\omega_{2}}}}$ is positive and selfadjoint. By Proposition \ref{basic} (i), we see that \begin{equation*}
\EuScript{D}(|\textbf{C}_{\phi,\omega}|)=\EuScript{D}(\textbf{C}_{\phi,\omega})=\EuScript{D}(\mathsf{M}_{\sqrt{\mathsf{h}_{\phi_{1},\omega_{1}}+\mathsf{h}_{\phi_{2},\omega_{2}}}}).
\end{equation*}
Moreover, applying \eqref{intcom}, we have 
\begin{equation}\label{mulcomeq}
\begin{aligned}
	\||\textbf{C}_{\phi,\omega}|f\|^2&=\|\textbf{C}_{\phi,\omega}f\|^2=\|C_{\phi_{1},\omega_{1}}f\|^2+\|C_{\phi_{2},\omega_{2}}f\|^2\\&=\int_{X}|\omega_{1}|^2\cdot |f|^2\circ\phi_{1}\D\mu+\int_{X}|\omega_{2}|^2\cdot |f|^2\circ\phi_{2}\D\mu\\&=\int_{X}|f|^2\mathsf{h}_{\phi_{1},\omega_{1}}\D\mu+\int_{X}|f|^2\mathsf{h}_{\phi_{2},\omega_{2}}\D\mu\\&=\int_{X}|f|^2(\mathsf{h}_{\phi_{1},\omega_{1}}+\mathsf{h}_{\phi_{2},\omega_{2}})\D\mu\\&=\|\mathsf{M}_{\sqrt{\mathsf{h}_{\phi_{1},\omega_{1}}+\mathsf{h}_{\phi_{2},\omega_{2}}}}f\|^2
\end{aligned}
\end{equation}
for $f\in\EuScript{D}(|\textbf{C}_{\phi,\omega}|)$. Thus, $|\textbf{C}_{\phi,\omega}|=\mathsf{M}_{\sqrt{\mathsf{h}_{\phi_{1},\omega_{1}}+\mathsf{h}_{\phi_{2},\omega_{2}}}}$ follows from \cite[Lemma 4]{b-j-j-sW}. Further, by the closedness of $C_{\phi_{i},\omega_{i}}^\ast$ (resp. $C_{\phi_{i},\omega_{i}}$), we immediately deduce that the row operator
$(C_{\phi_{i},\omega_{i}}^\ast,C_{x,0}^\ast)=(C_{\phi_{i},\omega_{i}}^\ast,0)$ (resp. $\left(
\begin{array}{c}
	C_{\phi_{i},\omega_{i}} \\
	C_{x,0}
\end{array}
\right)=\left(
\begin{array}{c}
	C_{\phi_{i},\omega_{i}} \\
	0
\end{array}
\right)$) is closed for $i=1,2$, where $C_{x,0}$ is a weighted composition operator $C_{\phi,\omega}$ with $\phi(x)=x$ and $\omega(x)=0$ on $X$.
 Using \eqref{asteq} and the previous result, it follows that\begin{align*}
	|C_{\phi_{i},\omega_{i}}|=(C_{\phi_{i},\omega_{i}}^\ast C_{\phi_{i},\omega_{i}})^{\frac{1}{2}}=\Bigg(\left(
	\begin{array}{c}
		C_{\phi_{i},\omega_{i}} \\
		0
	\end{array}
	\right)^\ast\left(
	\begin{array}{c}
		C_{\phi_{i},\omega_{i}} \\
		0
	\end{array}
	\right)\Bigg)^{\frac{1}{2}}=\Bigg|\left(
	\begin{array}{c}
		C_{\phi_{i},\omega_{i}} \\
		0
	\end{array}
	\right)\Bigg|=\mathsf{M}_{\sqrt{\mathsf{h}_{\phi_{i},\omega_{i}}}}.
\end{align*}

(ii) Given $i\in\{1,2\}$. We first show $\widetilde{\omega_{i}}:X\to\mathbb{C}$ is well-defined by proving
\begin{equation}\label{mueq0}
	\mu\big(\{\omega_{i}\ne0\}\bigcap\{(\mathsf{h}_{\phi_{1},\omega_{1}}+\mathsf{h}_{\phi_{2},\omega_{2}})\circ\phi_{i}=0\}\big)=0.
\end{equation} 
Since
\begin{equation*}
	\{\omega_{i}\ne0\}\bigcap\{(\mathsf{h}_{\phi_{1},\omega_{1}}+\mathsf{h}_{\phi_{2},\omega_{2}})\circ\phi_{i}=0\}=\bigcup\limits_{n=1}^{\infty}\Big(\Big\{|\omega_{i}|\ge\frac{1}{n}\Big\}\bigcap\{\mathsf{h}_{\phi_{1},\omega_{1}}\circ\phi_{i}=0\}\bigcap\{\mathsf{h}_{\phi_{2},\omega_{2}}\circ\phi_{i}=0\}\Big),
\end{equation*}
then it suffices to prove that\begin{equation*}
\mu\Big(\Big\{|\omega_{i}|\ge\frac{1}{n}\Big\}\bigcap\{\mathsf{h}_{\phi_{1},\omega_{1}}\circ\phi_{i}=0\}\bigcap\{\mathsf{h}_{\phi_{2},\omega_{2}}\circ\phi_{i}=0\}\Big)=0
\end{equation*}
for every $n\in\mathbb{N}$.
Suppose the above equality fails, then there exists $n_{0}\in\mathbb{N}$ such that $\mu\Big(\Big\{|\omega_{i}|\ge\frac{1}{n_{0}}\Big\}\bigcap\{\mathsf{h}_{\phi_{i},\omega_{i}}\circ\phi_{i}=0\}\Big)\ne0$. Thus, we have\begin{align*}
	\mu_{\omega_{i}}(\mathsf{h}_{\phi_{i},\omega_{i}}\circ\phi_{i}=0)&=\int_{X}\chi_{\{\mathsf{h}_{\phi_{i},\omega_{i}}\circ\phi_{i}=0\}}\cdot|\omega_{i}|^2\D\mu\\&\ge\frac{1}{n_{0}^2}\int_{\big\{|\omega_{i}|\ge\frac{1}{n_{0}}\big\}}\chi_{\{\mathsf{h}_{\phi_{i},\omega_{i}}\circ\phi_{i}=0\}}\D\mu\\&=\frac{1}{n_{0}^2}\mu\Big(\Big\{|\omega_{i}|\ge\frac{1}{n_{0}}\Big\}\bigcap\{\mathsf{h}_{\phi_{i},\omega_{i}}\circ\phi_{i}=0\}\Big)\\&\ne0.
\end{align*}
This contradicts Lemma 6 in \cite{b-j-j-sW}. Consequently, $\widetilde{\omega_{i}}$ is well-defined. On the other hand, for $\varDelta\in\mathscr{A}$, it follows from the change of variable formula \cite[Chapter 6]{makar-spr-2013} and \eqref{rd} that
\begin{align*}
\mu_{\widetilde{\omega_{i}}}\circ\phi_{i}^{-1}(\varDelta)&=\int_{\phi_{i}^{-1}(\varDelta)}|\widetilde{\omega_{i}}|^2\D\mu=\int_{\phi_{i}^{-1}(\varDelta)}\frac{|\chi_{\{\omega_{i}\ne0\}}\cdot\omega_{i}|^2}{(\mathsf{h}_{\phi_{1},\omega_{1}}+\mathsf{h}_{\phi_{2},\omega_{2}})\circ\phi_{i}}\D\mu\\&=\int_{\phi_{i}^{-1}(\varDelta\bigcap\{\sum\limits_{i=1}^2\mathsf{h}_{\phi_{i},\omega_{i}\ne0}\})}\frac{|\chi_{\{\omega_{i}\ne0\}}\cdot\omega_{i}|^2}{(\mathsf{h}_{\phi_{1},\omega_{1}}+\mathsf{h}_{\phi_{2},\omega_{2}})\circ\phi_{i}}\D\mu\\&=\int_{\varDelta\bigcap\{\sum\limits_{i=1}^2\mathsf{h}_{\phi_{i},\omega_{i}\ne0}\}}\frac{1}{\mathsf{h}_{\phi_{1},\omega_{1}}+\mathsf{h}_{\phi_{2},\omega_{2}}}\D (\mu_{\omega_{i}}\circ\phi_{i}^{-1})\\&=\int_{\varDelta\bigcap\{\sum\limits_{i=1}^2\mathsf{h}_{\phi_{i},\omega_{i}\ne0}\}}\frac{\mathsf{h}_{\phi_{i},\omega_{i}}}{\mathsf{h}_{\phi_{1},\omega_{1}}+\mathsf{h}_{\phi_{2},\omega_{2}}}\D\mu\\&=\int_{\varDelta}\frac{\mathsf{h}_{\phi_{i},\omega_{i}}}{\mathsf{h}_{\phi_{1},\omega_{1}}+\mathsf{h}_{\phi_{2},\omega_{2}}}\cdot\chi_{\{\mathsf{h}_{\phi_{1},\omega_{1}}+\mathsf{h}_{\phi_{2},\omega_{2}}>0\}}\D\mu,
\end{align*}
which yields $\mu_{\widetilde{\omega_{i}}}\circ\phi_{i}^{-1}\ll\mu$ and
\begin{align}\label{polarrn}
	\text{$\mathsf{h}_{\phi_{i},\widetilde{\omega_{i}}}=\frac{\mathsf{h}_{\phi_{i},\omega_{i}}}{\mathsf{h}_{\phi_{1},\omega_{1}}+\mathsf{h}_{\phi_{2},\omega_{2}}}\cdot\chi_{\{\mathsf{h}_{\phi_{1},\omega_{1}}+\mathsf{h}_{\phi_{2},\omega_{2}}>0\}}$\quad a.e. $[\mu]$.}
\end{align} Hence, $C_{\phi_{i},\widetilde{\omega_{i}}}$ is well-defined. Moreover, by the above equality and $\|\mathsf{h}_{\phi_{i},\widetilde{\omega_{i}}}\|_{\infty}\le1$, we see that $C_{\phi_{i},\widetilde{\omega_{i}}}$ is bounded. Using (i) of this theorem, a direct computation establishes that $\textbf{C}_{\phi,\omega}=\left(
\begin{array}{c}
	C_{\phi_{1},\widetilde{\omega_{1}}} \\
	C_{\phi_{2},\widetilde{\omega_{2}}}
\end{array}
\right)|\textbf{C}_{\phi,\omega}|$. Finally, if we can prove that $\textbf{C}_{\phi,\widetilde{\omega}}\triangleq\left(
\begin{array}{c}
	C_{\phi_{1},\widetilde{\omega_{1}}} \\
	C_{\phi_{2},\widetilde{\omega_{2}}}
\end{array}
\right)$ is a partial isometry 
with
\begin{equation*}
	\mathcal{N}\Bigg(\left(
	\begin{array}{c}
		C_{\phi_{1},\widetilde{\omega_{1}}} \\
		C_{\phi_{2},\widetilde{\omega_{2}}}
	\end{array}
	\right)\Bigg)=\mathcal{N}\Bigg(\left(
	\begin{array}{c}
		C_{\phi_{1},\omega_{1}} \\
		C_{\phi_{2},\omega_{2}}
	\end{array}
	\right)\Bigg),
\end{equation*}then by the uniqueness of the polar decomposition, the proof is complete. Now, recall that an operator $U\in\mathcal{B}(\mathcal{H},\mathcal{K})$ is a partial isometry if and only if $U^\ast U$ is a projection. By \eqref{asteq}, (i) of this theorem and Lemma \ref{mulsp}, we have\begin{align*}
\textbf{C}_{\phi,\widetilde{\omega}}^\ast\textbf{C}_{\phi,\widetilde{\omega}}=\sum\limits_{i=1}^2C_{\phi_{i},\widetilde{\omega_{i}}}^\ast C_{\phi_{i},\widetilde{\omega_{i}}}=\sum\limits_{i=1}^2|C_{\phi_{i},\widetilde{\omega_{i}}}|^2=\sum\limits_{i=1}^2\mathsf{M}_{\mathsf{h}_{\phi_{i},\widetilde{\omega_{i}}}}=\mathsf{M}_{\mathsf{h}_{\phi_{1},\widetilde{\omega_{1}}}+\mathsf{h}_{\phi_{2},\widetilde{\omega_{2}}}}.
\end{align*}
It follows from \eqref{polarrn} that\begin{align*}
\text{$(\mathsf{h}_{\phi_{1},\widetilde{\omega_{1}}}+\mathsf{h}_{\phi_{2},\widetilde{\omega_{2}}})^2=\chi_{\{\mathsf{h}_{\phi_{1},\omega_{1}}+\mathsf{h}_{\phi_{2},\omega_{2}}>0\}}=\mathsf{h}_{\phi_{1},\widetilde{\omega_{1}}}+\mathsf{h}_{\phi_{2},\widetilde{\omega_{2}}}$\quad a.e. $[\mu]$,}
\end{align*}
which implies, by using Lemma \ref{mulsp} (ii), that $\mathsf{M}_{\mathsf{h}_{\phi_{1},\widetilde{\omega_{1}}}+\mathsf{h}_{\phi_{2},\widetilde{\omega_{2}}}}^2=\mathsf{M}_{\mathsf{h}_{\phi_{1},\widetilde{\omega_{1}}}+\mathsf{h}_{\phi_{2},\widetilde{\omega_{2}}}}$, i.e., $\textbf{C}_{\phi,\widetilde{\omega}}^\ast\textbf{C}_{\phi,\widetilde{\omega}}$ is a projection. On the other hand, since
\begin{equation*}
	\mathcal{N}\Bigg(\left(
	\begin{array}{c}
		C_{\phi_{1},\widetilde{\omega_{1}}} \\
		C_{\phi_{2},\widetilde{\omega_{2}}}
	\end{array}
	\right)\Bigg)=\Bigg\{f:\bigcap\limits_{i=1}^2C_{\phi_{i},\widetilde{\omega_{i}}}f=0\Bigg\}=\Bigg\{f:\bigcap\limits_{i=1}^2\frac{\omega_{i}\cdot\chi_{\{\omega_{i}\ne 0\}}}{\sqrt{(\mathsf{h}_{\phi_{1},\omega_{1}}+\mathsf{h}_{\phi_{2},\omega_{2}})\circ\phi_{i}}}\cdot f\circ\phi_{i}=0\Bigg\}
\end{equation*}
and
\begin{align*}
	\mathcal{N}\Bigg(\left(
	\begin{array}{c}
		C_{\phi_{1},\omega_{1}} \\
		C_{\phi_{2},\omega_{2}}
	\end{array}
	\right)\Bigg)&=\Bigg\{f:\bigcap\limits_{i=1}^2C_{\phi_{i},\omega_{i}}f=0\Bigg\}=\Bigg\{f:\bigcap\limits_{i=1}^2\omega_{i}\cdot f\circ\phi_{i}=0\Bigg\}\\&=\Bigg\{f:\bigcap\limits_{i=1}^2\omega_{i}\cdot\chi_{\{\omega_{i}\ne0\}}\cdot f\circ\phi_{i}=0\Bigg\},
\end{align*}
then we can get the desired result from \eqref{mueq0}.
\end{proof}
\begin{rem}
 Suppose	$\textbf{C}_{\phi,\omega}=\left(
\begin{array}{c}
	U_{1} \\
	U_{2}
\end{array}
\right)|\textbf{C}_{\phi,\omega}|$ is the polar decomposition of the pair $\textbf{C}_{\phi,\omega}=(C_{\phi_{1},\omega_{1}},C_{\phi_{2},\omega_{2}})$. For $f\in\EuScript{D}(\textbf{C}_{\phi,\omega})$, we have
\begin{equation*}
	\begin{aligned}
		\textbf{C}_{\phi,\omega}f=\left(
		\begin{array}{c}
			U_{1} \\
			U_{2}
		\end{array}
		\right)|\textbf{C}_{\phi,\omega}|f&\Rightarrow\left(
		\begin{array}{c}
			C_{\phi_{1},\omega_{1}}f \\
			C_{\phi_{2},\omega_{2}}f
		\end{array}
		\right)=\left(
		\begin{array}{c}
			U_{1}|\textbf{C}_{\phi,\omega}|f \\
			U_{2}|\textbf{C}_{\phi,\omega}|f
		\end{array}
		\right)\\&\Rightarrow C_{\phi_{i},\omega_{i}}f=U_{i}|\textbf{C}_{\phi,\omega}|f\quad\text{for}\ \  i=1,2.
	\end{aligned}
\end{equation*}
Hence, \begin{equation}\label{nepolar}
	C_{\phi_{i},\omega_{i}}\supseteq U_{i}|\textbf{C}_{\phi,\omega}|
\end{equation}
since $$\EuScript{D}(U_{i}|\textbf{C}_{\phi,\omega}|)=\EuScript{D}(|\textbf{C}_{\phi,\omega}|)=\EuScript{D}(\textbf{C}_{\phi,\omega})=\bigcap\limits_{i=1}^2\EuScript{D}(C_{\phi_{i},\omega_{i}})\subseteq\EuScript{D}(C_{\phi_{j},\omega_{j}})$$ for $i,j\in\{1,2\}$.

(i) In general, the equality in \eqref{nepolar} does not hold for $i\in\{1,2\}$. Indeed, let $C_{\phi,\omega}$ be a unbounded weighted composition operator with domain $\EuScript{D}(C_{\phi,\omega})\ne L^2(\mu)$. Let $C_{\phi,\omega}=U|C_{\phi,\omega}|$ be the polar decomposition of $C_{\phi,\omega}$, then we immediately have the polar decomposition
\begin{equation*}
	\textbf{C}_{\phi,\omega}=\left(
	\begin{array}{c}
		C_{\phi,\omega} \\
		C_{x,0}
	\end{array}
	\right)=\left(
	\begin{array}{c}
		U \\
		0
	\end{array}
	\right)|C_{\phi,\omega}|.
\end{equation*}
It follows clearly that $C_{x,0}\ne0\cdot|C_{\phi,\omega}|$ since $\EuScript{D}(|C_{\phi,\omega}|)=\EuScript{D}(C_{\phi,\omega})\ne L^2(\mu)=\EuScript{D}(C_{x,0})$.

(ii) In general, $C_{\phi_{i},\omega_{i}}=U_{i}|\textbf{C}_{\phi,\omega}|$ is not the polar decomposition of $C_{\phi_{i},\omega_{i}}$ even if the equality in \eqref{nepolar} holds for $i\in\{1,2\}$. Assume $C_{\phi,\omega}=C_{x,1}=1$, where $C_{x,1}$ is a weighted composition operator $C_{\phi,\omega}$ with $\phi(x)=x$ and $\omega(x)=1$. Consider the polar decomposition
\begin{equation*}
	\textbf{C}_{\phi,\omega}=\left(
	\begin{array}{c}
		C_{x,1} \\
		C_{x,1}
	\end{array}
	\right)=\left(
	\begin{array}{c}
		\frac{\sqrt{2}}{2} \\
		\frac{\sqrt{2}}{2}
	\end{array}
	\right)\sqrt{2}=\left(
	\begin{array}{c}
		U \\
		U
	\end{array}
	\right)|\textbf{C}_{\phi,\omega}|,
\end{equation*}
 then it follows clearly that $U=\frac{\sqrt{2}}{2}$ is not a partial isometry. Thus, $C_{\phi,\omega}=U|\textbf{C}_{\phi,\omega}|$ is not the polar decomposition of $C_{\phi,\omega}$.
\end{rem}
\section{$\lambda$-spherical mean transforms of $\textbf{C}_{\phi,\omega}$}
Let $C_{\phi,\omega}=U|C_{\phi,\omega}|$ be the polar decomposition of a densely defined composition operator $C_{\phi,\omega}$ and $\lambda\in[0,1]$.  we define the {\em $\lambda$-mean transform} of $C_{\phi,\omega}$, denoted $\mathcal{M}_{\lambda}(C_{\phi,\omega})$, as $$\mathcal{M}_{\lambda}(C_{\phi,\omega})=\lambda U|C_{\phi,\omega}|+(1-\lambda)|C_{\phi,\omega}|U.$$
This coincides with the classical $\lambda$-mean transform when $C_{\phi,\omega}$ is bounded \cite{zamani-jmaa-2021}. It should be mentioned that $\mathcal{M}_{0}(C_{\phi,\omega})\ne|C_{\phi,\omega}|U$ and $\mathcal{M}_{1}(C_{\phi,\omega})\ne C_{\phi,\omega}$ since $$\EuScript{D}(\mathcal{M}_{0}(C_{\phi,\omega}))=\EuScript{D}(U|C_{\phi,\omega}|)\bigcap\EuScript{D}(|C_{\phi,\omega}|U)\ne\EuScript{D}(|C_{\phi,\omega}|U)$$
and
$$\EuScript{D}(\mathcal{M}_{1}(C_{\phi,\omega}))=\EuScript{D}(U|C_{\phi,\omega}|)\bigcap\EuScript{D}(|C_{\phi,\omega}|U)\ne\EuScript{D}(U|C_{\phi,\omega}|)$$
in general (see Example \ref{nemean}). Analogous to this definition, we introduce the concept of the $\lambda$-spherical mean transform of the pair $\textbf{C}_{\phi,\omega}$ as follows.

\begin{dfn}\label{smtdfn}
Suppose $C_{\phi_{1},\omega_{1}}$ and $C_{\phi_{2},\omega_{2}}$ are densely defined. Let $\textbf{C}_{\phi,\omega}=\left(
\begin{array}{c}
	U_{1} \\
	U_{2}
\end{array}
\right)|\textbf{C}_{\phi,\omega}|$ be the polar decomposition of the pair $\textbf{C}_{\phi,\omega}=(C_{\phi_{1},\omega_{1}},C_{\phi_{2},\omega_{2}})$. For $\lambda\in[0,1],$ the pair of operators \begin{equation*}
	\mathcal{M}_{\lambda}(\textbf{C}_{\phi,\omega})=(\lambda U_{1}|\textbf{C}_{\phi,\omega}|+(1-\lambda)|\textbf{C}_{\phi,\omega}|U_{1},\lambda U_{2}|\textbf{C}_{\phi,\omega}|+(1-\lambda)|\textbf{C}_{\phi,\omega}|U_{2})
\end{equation*}
is called the {\em $\lambda$-spherical mean transform} of $\textbf{C}_{\phi,\omega}$.  
\end{dfn}
\begin{rem}\label{singlemult}
Let $C_{\phi,\omega}=U|C_{\phi,\omega}|$ be the polar decomposition of $C_{\phi,\omega}$. Since $$\left(
\begin{array}{c}
	U \\
	0
\end{array}
\right)^\ast\left(
\begin{array}{c}
	U \\
	0
\end{array}
\right)=\left(
\begin{array}{cc}
	U^\ast&0
\end{array}
\right)\left(
\begin{array}{c}
	U \\
	0
\end{array}
\right)=U^\ast U$$ is a projection with $\mathcal{N}\Bigg(\left(
\begin{array}{c}
	U \\
	0
\end{array}
\right)\Bigg)=\mathcal{N}\Bigg(\left(
\begin{array}{c}
	C_{\phi,\omega} \\
	0
\end{array}
\right)\Bigg)$, then $$\left(
\begin{array}{c}
	C_{\phi,\omega} \\
	0
\end{array}
\right)=\left(
\begin{array}{c}
	C_{\phi,\omega} \\
	C_{x,0}
\end{array}
\right)=\left(
\begin{array}{c}
	U \\
	0
\end{array}
\right)|C_{\phi,\omega}|$$ is the polar decomposition of $\left(
\begin{array}{c}
	C_{\phi,\omega} \\
	C_{x,0}
\end{array}
\right)$, where the weighted composition operator $C_{x,0}$ defined as in the proof of Theorem \ref{polar} (i). Hence, $\mathcal{M}_{\lambda}(C_{\phi,\omega},0)$ is well-defined. Further, it is not difficult to verify that $\mathcal{M}_{\lambda}((C_{\phi,\omega},0))$ is closed (resp. densely defined) if and only if $\mathcal{M}_{\lambda}(C_{\phi,\omega})$ is closed (resp. densely defined) since
\begin{equation}\label{singlemulequiv}
	\mathcal{M}_{\lambda}((C_{\phi,\omega},0))=(\mathcal{M}_{\lambda}(C_{\phi,\omega}),0)
\end{equation}
for $\lambda\in[0,1]$.
\end{rem}

To give some basic properties of the $\lambda$-spherical mean transform of $\textbf{C}_{\phi,\omega}=(C_{\phi_{1},\omega_{1}},C_{\phi_{2},\omega_{2}})$, we define the $\mathscr{A}$-measurable functions $\mathsf{E}_{i}^\alpha:X\to\mathbb{R}$ for $\alpha\ge0$ and $i=1,2$ by
\begin{equation}\label{Eequation}
	\mathsf{E}_{i}^\alpha(x)=\Bigg(\Big(\mathsf{E}_{\phi_{i},\omega_{i}}\big((\mathsf{h}_{\phi_{1},\omega_{1}}+\mathsf{h}_{\phi_{2},\omega_{2}})^\alpha\big)\circ\phi_{i}^{-1}\Big)\cdot\frac{\mathsf{h}_{\phi_{i},\omega_{i}}\cdot\chi_{\{\mathsf{h}_{\phi_{1},\omega_{1}}+\mathsf{h}_{\phi_{2},\omega_{2}}>0\}}}{(\mathsf{h}_{\phi_{1},\omega_{1}}+\mathsf{h}_{\phi_{2},\omega_{2}})^\alpha}\Bigg)(x).
\end{equation}
 In particular, when $\alpha=1$, we write $\mathsf{E}_{i}$ for $\mathsf{E}_{i}^1$. Then the following lemma is established.

\begin{lem}\label{weighted}
Suppose $C_{\phi_{1},\omega_{1}}$ and $C_{\phi_{2},\omega_{2}}$ are densely defined. Fix $\lambda\in[0,1]$ and set $i\in\{1,2\}$. Let $\omega_{\lambda}^i$ be a complex $\mathscr{A}$-measurable function such that
\begin{align}\label{wfun}
	\text{$\omega_{\lambda}^i=\left(\mathsf{h}_{i}+\lambda(1-\mathsf{h}_{i})\right)\omega_{i}$\quad a.e. $[\mu]$},
\end{align} where $\mathsf{h}_{i}=\chi_{\{\omega_{i}\ne0\}}\cdot\left(\frac{\mathsf{h}_{\phi_{1},\omega_{1}}+\mathsf{h}_{\phi_{2},\omega_{2}}}{(\mathsf{h}_{\phi_{1},\omega_{1}}+\mathsf{h}_{\phi_{2},\omega_{2}})\circ\phi_{i}}\right)^\frac{1}{2}$.
Then the following properties hold$:$ 
\begin{itemize}
	\item[(i)] $\mu_{\omega_{\lambda}^i}\circ\phi_{i}^{-1}\ll\mu$,
	\item[(ii)] 
	$\mathsf{h}_{\phi_{i},\omega_{\lambda}^i}=\lambda^2\mathsf{h}_{\phi_{i},\omega_{i}}+2\lambda(1-\lambda)\mathsf{E}_{i}^{\frac{1}{2}}+(1-\lambda)^2\mathsf{E}_{i}$\quad a.e.\ $[\mu]$,
\item[(iii)]
for $\alpha\ge0$, we have $\mathsf{E}_{i}^\alpha\ge0$\quad a.e.\ $[\mu]$,
\item[(iv)]
$\Big\{\sum\limits_{i=1}^2\mathsf{E}_{i}<\infty\Big\}=\Big\{\sum\limits_{i=1}^2\mathsf{E}_{\phi_{i},\omega_{i}}(\mathsf{h}_{\phi_{1},\omega_{1}}+\mathsf{h}_{\phi_{2},\omega_{2}})\circ\phi_{i}^{-1}<\infty\Big\}$\quad a.e.\ $[\mu]$.
\end{itemize}
\end{lem}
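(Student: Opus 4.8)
The plan is to obtain parts (i) and (ii) at once from a single computation of the measure $\mu_{\omega_\lambda^i}\circ\phi_i^{-1}$ on an arbitrary $\varDelta\in\mathscr{A}$, written as an integral against $\mu$; parts (iii) and (iv) are then read off from the explicit form of $\mathsf{E}_i^\alpha$. Throughout I abbreviate $\mathsf{s}:=\mathsf{h}_{\phi_1,\omega_1}+\mathsf{h}_{\phi_2,\omega_2}$, which is finite a.e.\ $[\mu]$ since $C_{\phi_i,\omega_i}$ is densely defined (Theorem \ref{combasic}), and I record that $\mathsf{h}_i$ is well-defined and finite a.e.\ because $\mathsf{s}\circ\phi_i>0$ a.e.\ on $\{\omega_i\neq0\}$, which is exactly the estimate \eqref{mueq0} established in the proof of Theorem \ref{polar}. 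Rewriting \eqref{wfun} as $\omega_\lambda^i=(\lambda+(1-\lambda)\mathsf{h}_i)\omega_i$, the first step is to expand $|\omega_\lambda^i|^2=(\lambda^2+2\lambda(1-\lambda)\mathsf{h}_i+(1-\lambda)^2\mathsf{h}_i^2)|\omega_i|^2$ and, using $|\omega_i|^2\D\mu=\D\mu_{\omega_i}$, to turn $\mu_{\omega_\lambda^i}\circ\phi_i^{-1}(\varDelta)=\int_{\phi_i^{-1}(\varDelta)}|\omega_\lambda^i|^2\D\mu$ into an integral over $\phi_i^{-1}(\varDelta)$ against $\mu_{\omega_i}$ of $\lambda^2+2\lambda(1-\lambda)\mathsf{h}_i+(1-\lambda)^2\mathsf{h}_i^2$.

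The engine of the calculation is the identity
\[
\int_{\phi_i^{-1}(\varDelta)}F\D\mu_{\omega_i}=\int_{\varDelta}\big(\mathsf{E}_{\phi_i,\omega_i}(F)\circ\phi_i^{-1}\big)\,\mathsf{h}_{\phi_i,\omega_i}\D\mu,
\]
valid for every nonnegative $\mathscr{A}$-measurable $F$, which I derive by inserting the conditional expectation via \eqref{exp}, replacing $\mathsf{E}_{\phi_i,\omega_i}(F)$ by $(\mathsf{E}_{\phi_i,\omega_i}(F)\circ\phi_i^{-1})\circ\phi_i$ through \eqref{invexp}, and then applying the change of variable \eqref{intcom} together with \eqref{rd}. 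I apply this with $F\equiv1$, $F=\mathsf{h}_i$, and $F=\mathsf{h}_i^2$. The choice $F\equiv1$ contributes $\lambda^2\mathsf{h}_{\phi_i,\omega_i}$. For the other two I substitute $\mathsf{h}_i=\mathsf{s}^{1/2}/(\mathsf{s}\circ\phi_i)^{1/2}$ and $\mathsf{h}_i^2=\mathsf{s}/(\mathsf{s}\circ\phi_i)$ a.e.\ $[\mu_{\omega_i}]$ and pull the $\phi_i^{-1}(\mathscr{A})$-measurable factors $(\mathsf{s}\circ\phi_i)^{-1/2}$ and $(\mathsf{s}\circ\phi_i)^{-1}$ out of the conditional expectation using \eqref{exp+}; after applying $\circ\phi_i^{-1}$, the surviving densities are precisely $\mathsf{E}_i^{1/2}$ and $\mathsf{E}_i$ of \eqref{Eequation}. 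Collecting the three contributions yields
\[
\mu_{\omega_\lambda^i}\circ\phi_i^{-1}(\varDelta)=\int_{\varDelta}\big(\lambda^2\mathsf{h}_{\phi_i,\omega_i}+2\lambda(1-\lambda)\mathsf{E}_i^{1/2}+(1-\lambda)^2\mathsf{E}_i\big)\D\mu,
\]
which is visibly absolutely continuous with respect to $\mu$ (this is (i)) and whose integrand, by uniqueness of the Radon--Nikodym derivative \eqref{rd}, equals $\mathsf{h}_{\phi_i,\omega_\lambda^i}$ (this is (ii)).

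Part (iii) is a positivity statement: conditional expectation maps nonnegative functions to nonnegative functions, the operation $\circ\phi_i^{-1}$ preserves nonnegativity by its defining relations \eqref{Epro} and \eqref{invexp}, and the remaining factor $\mathsf{h}_{\phi_i,\omega_i}\chi_{\{\mathsf{s}>0\}}/\mathsf{s}^\alpha$ is nonnegative; hence $\mathsf{E}_i^\alpha\ge0$ a.e.\ $[\mu]$. For part (iv) I write $\mathsf{E}_i=c_i\,a_i$ with $a_i:=\mathsf{E}_{\phi_i,\omega_i}(\mathsf{s})\circ\phi_i^{-1}\ge0$ and $c_i:=\mathsf{h}_{\phi_i,\omega_i}\chi_{\{\mathsf{s}>0\}}/\mathsf{s}$; by \eqref{polarrn} one has $c_i=\mathsf{h}_{\phi_i,\widetilde{\omega_i}}$, so $0\le c_i\le1$. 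The inclusion $\{\sum_i a_i<\infty\}\subseteq\{\sum_i\mathsf{E}_i<\infty\}$ is immediate from $c_i\le1$. For the reverse inclusion the key observation is that $c_i=0$ forces $a_i=0$: on $\{\mathsf{s}>0\}$ the vanishing of $c_i$ means $\mathsf{h}_{\phi_i,\omega_i}=0$, and then \eqref{Epro} gives $a_i=0$, while on $\{\mathsf{s}=0\}$ both vanish. Consequently $c_ia_i<\infty$ if and only if $a_i<\infty$ pointwise a.e., and summing over $i$ yields the claimed equality of sets.

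The main obstacle is the bookkeeping inside the master identity: correctly pulling the $\phi_i^{-1}(\mathscr{A})$-measurable factors through $\mathsf{E}_{\phi_i,\omega_i}$ via \eqref{exp+}, tracking the two characteristic functions $\chi_{\{\omega_i\neq0\}}$ and $\chi_{\{\mathsf{s}>0\}}$, and justifying that $\circ\phi_i^{-1}$ behaves multiplicatively on the null sets where $\mathsf{s}$ or $\mathsf{s}\circ\phi_i$ degenerates, so that the densities collapse exactly to $\mathsf{E}_i^{1/2}$ and $\mathsf{E}_i$ rather than only off a $\mu$-null set.
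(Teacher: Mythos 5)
Your proposal is correct and follows essentially the same route as the paper: your master identity $\int_{\phi_i^{-1}(\varDelta)}F\,\D\mu_{\omega_i}=\int_{\varDelta}\big(\mathsf{E}_{\phi_i,\omega_i}(F)\circ\phi_i^{-1}\big)\mathsf{h}_{\phi_i,\omega_i}\,\D\mu$ applied to $1$, $\mathsf{h}_i$, $\mathsf{h}_i^2$ is exactly the chain \eqref{mueq0}, \eqref{exp}, \eqref{invexp}, \eqref{intcom} that the paper runs in its proof of (ii), and your arguments for (iii) (positivity of the density read off from the same integral identity) and (iv) (the factorization $\mathsf{E}_i=c_i a_i$ with $c_i\le1$ and $c_i=0\Rightarrow a_i=0$ via \eqref{Epro}) coincide with the paper's. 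The only cosmetic differences are that you obtain (i) as a byproduct of the density formula where the paper checks it directly in one line before invoking \eqref{rd}, and that you pull $\phi_i^{-1}(\mathscr{A})$-measurable factors out of the conditional expectation via \eqref{exp+} where the paper inserts it through \eqref{exp} — equivalent manipulations.
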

\begin{proof} 
(i) Given $i\in\{1,2\}$. Suppose $\Delta\in\mathcal{A}$ satisfying $\mu(\Delta)=0$. Then, we have
\begin{equation*}
	\mu_{\omega_{\lambda}^i}\circ\phi_{i}^{-1}(\Delta)=\int_{\phi_{i}^{-1}(\Delta)}|\omega_{\lambda}^i|^2\D\mu=\int_{\phi_{i}^{-1}(\Delta)}(\mathsf{h}_{i}+\lambda(1-\mathsf{h}_{i}))|\omega_{i}|^2\D\mu=0,
\end{equation*}
where the last equality holds since $\mu_{\omega_{i}}\circ\phi_{i}^{-1}\ll\mu$. Hence, we obtain $\mu_{\omega_{\lambda}^i}\circ\phi_{i}^{-1}\ll\mu$.

(ii) Fix $\alpha\ge0$ and let $\mathsf{h}_{i}=\chi_{\{\omega_{i}\ne0\}}\cdot\left(\frac{\mathsf{h}_{\phi_{1},\omega_{1}}+\mathsf{h}_{\phi_{2},\omega_{2}}}{(\mathsf{h}_{\phi_{1},\omega_{1}}+\mathsf{h}_{\phi_{2},\omega_{2}})\circ\phi_{i}}\right)^\frac{1}{2}$. For any $\varDelta\in\mathscr{A}$, we have
\begin{align*}
	\int_{\phi_{i}^{-1}(\varDelta)}\mathsf{h}_{i}^\alpha\D\mu_{\omega_{i}}&\stackrel{\eqref{mueq0}}{=}\int_{\phi_{i}^{-1}(\varDelta\bigcap\{\sum\limits_{i=1}^2\mathsf{h}_{\phi_{i},\omega_{i}}>0\})}\chi_{\{\omega_{i}\ne0\}}\cdot\left(\frac{\mathsf{h}_{\phi_{1},\omega_{1}}+\mathsf{h}_{\phi_{2},\omega_{2}}}{(\mathsf{h}_{\phi_{1},\omega_{1}}+\mathsf{h}_{\phi_{2},\omega_{2}})\circ\phi_{i}}\right)^\frac{\alpha}{2}\D\mu_{\omega_{i}}\\&\xlongequal{\eqref{exp},\eqref{invexp}}\int_{\phi_{i}^{-1}(\varDelta\bigcap\{\sum\limits_{i=1}^2\mathsf{h}_{\phi_{i},\omega_{i}}>0\})}\frac{(\mathsf{E}_{\phi_{i},\omega_{i}}((\mathsf{h}_{\phi_{1},\omega_{1}}+\mathsf{h}_{\phi_{2},\omega_{2}})^\frac{\alpha}{2})\circ\phi_{i}^{-1})\circ\phi_{i}}{(\mathsf{h}_{\phi_{1},\omega_{1}}+\mathsf{h}_{\phi_{2},\omega_{2}})^\frac{\alpha}{2}\circ\phi_{i}}\D\mu_{\omega_{i}}\\&\stackrel{\eqref{intcom}}{=}\int_{\varDelta}\big(\mathsf{E}_{\phi_{i},\omega_{i}}((\mathsf{h}_{\phi_{1},\omega_{1}}+\mathsf{h}_{\phi_{2},\omega_{2}})^\frac{\alpha}{2})\circ\phi_{i}^{-1}\big)\cdot\frac{\mathsf{h}_{\phi_{i},\omega_{i}}\cdot\chi_{\{\mathsf{h}_{\phi_{1},\omega_{1}}+\mathsf{h}_{\phi_{2},\omega_{2}}>0\}}}{(\mathsf{h}_{\phi_{1},\omega_{1}}+\mathsf{h}_{\phi_{2},\omega_{2}})^\frac{\alpha}{2}}\D\mu.
\end{align*} 
Further, by (i), $\mathsf{h}_{\phi_{i},\omega_{\lambda}^i}$ can be defined as in \eqref{rd}. Then, by using the above equality, we derive \begin{align*}
	&\int_{\varDelta}\mathsf{h}_{\phi_{i},\omega_{\lambda}^i}\D\mu=\int_{X}\chi_{\varDelta}\mathsf{h}_{\phi_{i},\omega_{\lambda}^i}\D\mu\stackrel{\eqref{intcom}}{=}\int_{X}\chi_{\varDelta}\circ\phi_{i}\D\mu_{\omega_{\lambda}^i}\\&=\int_{\phi_{i}^{-1}(\varDelta)}|\omega_{\lambda}^i|^2\D\mu\stackrel{\eqref{wfun}}{=}\int_{\phi_{i}^{-1}(\varDelta)}(\lambda^2+2\lambda(1-\lambda)\mathsf{h}_{i}+(1-\lambda)^2\mathsf{h}_{i}^2)\D\mu_{\omega_{i}}\\&=\int_{\varDelta}\lambda^2\mathsf{h}_{\phi_{i},\omega_{i}}\D\mu+\int_{\varDelta}2\lambda(1-\lambda)\big(\mathsf{E}_{\phi_{i},\omega_{i}}((\mathsf{h}_{\phi_{1},\omega_{1}}+\mathsf{h}_{\phi_{2},\omega_{2}})^\frac{1}{2})\circ\phi_{i}^{-1}\big)\cdot\frac{\mathsf{h}_{\phi_{i},\omega_{i}}\cdot\chi_{\{\mathsf{h}_{\phi_{1},\omega_{1}}+\mathsf{h}_{\phi_{2},\omega_{2}}>0\}}}{(\mathsf{h}_{\phi_{1},\omega_{1}}+\mathsf{h}_{\phi_{2},\omega_{2}})^\frac{1}{2}}\D\mu\\&+\int_{\varDelta}(1-\lambda)^2\big(\mathsf{E}_{\phi_{i},\omega_{i}}(\mathsf{h}_{\phi_{1},\omega_{1}}+\mathsf{h}_{\phi_{2},\omega_{2}})\circ\phi_{i}^{-1}\big)\cdot\frac{\mathsf{h}_{\phi_{i},\omega_{i}}\cdot\chi_{\{\mathsf{h}_{\phi_{1},\omega_{1}}+\mathsf{h}_{\phi_{2},\omega_{2}}>0\}}}{\mathsf{h}_{\phi_{1},\omega_{1}}+\mathsf{h}_{\phi_{2},\omega_{2}}}\D\mu,
\end{align*}
Since $\varDelta$ is arbitrary, it follows immediately that\begin{align*}
	\mathsf{h}_{\phi_{i},\omega_{\lambda}^i}&=\lambda^2\mathsf{h}_{\phi_{i},\omega_{i}}+2\lambda(1-\lambda)\big(\mathsf{E}_{\phi_{i},\omega_{i}}((\mathsf{h}_{\phi_{1},\omega_{1}}+\mathsf{h}_{\phi_{2},\omega_{2}})^\frac{1}{2})\circ\phi_{i}^{-1}\big)\cdot\frac{\mathsf{h}_{\phi_{i},\omega_{i}}\cdot\chi_{\{\mathsf{h}_{\phi_{1},\omega_{1}}+\mathsf{h}_{\phi_{2},\omega_{2}}>0\}}}{(\mathsf{h}_{\phi_{1},\omega_{1}}+\mathsf{h}_{\phi_{2},\omega_{2}})^\frac{1}{2}}\\&+(1-\lambda)^2\big(\mathsf{E}_{\phi_{i},\omega_{i}}(\mathsf{h}_{\phi_{1},\omega_{1}}+\mathsf{h}_{\phi_{2},\omega_{2}})\circ\phi_{i}^{-1}\big)\cdot\frac{\mathsf{h}_{\phi_{i},\omega_{i}}\cdot\chi_{\{\mathsf{h}_{\phi_{1},\omega_{1}}+\mathsf{h}_{\phi_{2},\omega_{2}}>0\}}}{\mathsf{h}_{\phi_{1},\omega_{1}}+\mathsf{h}_{\phi_{2},\omega_{2}}}\\&=\lambda^2\mathsf{h}_{\phi_{i},\omega_{i}}+2\lambda(1-\lambda)\mathsf{E}_{i}^{\frac{1}{2}}+(1-\lambda)^2\mathsf{E}_{i}\quad a.e.\ [\mu].
\end{align*}
(iii) For any $\alpha\ge0$, we have \begin{align*}
	\int_{\varDelta}\mathsf{E}_{\phi_{i},\omega_{i}}((\mathsf{h}_{\phi_{1},\omega_{1}}+\mathsf{h}_{\phi_{2},\omega_{2}})^\alpha)\circ\phi_{i}^{-1}\mathsf{h}_{\phi_{i},\omega_{i}}\D\mu&\xlongequal{\eqref{intcom},\eqref{invexp}}\int_{\phi_{i}^{-1}(\varDelta)}\mathsf{E}_{\phi_{i},\omega_{i}}((\mathsf{h}_{\phi_{1},\omega_{1}}+\mathsf{h}_{\phi_{2},\omega_{2}})^\alpha)\D\mu_{\omega_{i}}\\&\stackrel{\eqref{exp}}{=}\int_{\phi_{i}^{-1}(\varDelta)}(\mathsf{h}_{\phi_{1},\omega_{1}}+\mathsf{h}_{\phi_{2},\omega_{2}})^\alpha\D\mu_{\omega_{i}}\ge0	
\end{align*}
for any $\varDelta\in\mathscr{A}$, which implies\begin{equation*}
	\mathsf{E}_{\phi_{i},\omega_{i}}((\mathsf{h}_{\phi_{1},\omega_{1}}+\mathsf{h}_{\phi_{2},\omega_{2}})^\alpha)\circ\phi_{i}^{-1}\mathsf{h}_{\phi_{i},\omega_{i}}\ge 0\quad \text{a.e.}\ [\mu].
\end{equation*}
By multiplying both sides of the above inequality by $\frac{\chi_{\{\mathsf{h}_{\phi_{1},\omega_{1}}+\mathsf{h}_{\phi_{2},\omega_{2}}>0\}}}{\mathsf{h}_{\phi_{1},\omega_{1}}+\mathsf{h}_{\phi_{2},\omega_{2}}}$, we obtain $\mathsf{E}_{i}^\alpha\ge0$ a.e. [$\mu$].

(iv) For simplicity, we write\begin{equation}\label{E}
	\textbf{E}=\Big\{\sum\limits_{i=1}^2\mathsf{E}_{\phi_{i},\omega_{i}}(\mathsf{h}_{\phi_{1},\omega_{1}}+\mathsf{h}_{\phi_{2},\omega_{2}})\circ\phi_{i}^{-1}=\infty\Big\}
\end{equation}
and denote its complement by $\textbf{E}^\complement$. It can be easily verified that
\begin{align*}
	\Bigg(\Big\{\sum\limits_{i=1}^2\mathsf{E}_{i}<\infty\Big\}\bigcap\textbf{E}\Bigg)\subseteq\Bigg(\bigcup\limits_{i=1}^2(\{\mathsf{h}_{\phi_{i},\omega_{i}}=0\}\bigcap\{\mathsf{E}_{\phi_{i},\omega_{i}}(\mathsf{h}_{\phi_{1},\omega_{1}}+\mathsf{h}_{\phi_{2},\omega_{2}})\circ\phi_{i}^{-1}=\infty\})\Bigg)
\end{align*}
Using \eqref{Epro}, we know that the set on the right-hand side of the above inclusion has measure zero. Consequently, we have
\begin{equation}\label{Er}
	\mu\Big(\Big\{\sum\limits_{i=1}^2\mathsf{E}_{i}<\infty\Big\}\bigcap\textbf{E}\Big)=0.
\end{equation} 
On the other hand, by the fact that\begin{equation*}
\Bigg(\Big\{\sum\limits_{i=1}^2\mathsf{E}_{i}=\infty\Big\}\bigcap\textbf{E}^\complement\Bigg)\subseteq\Bigg(\bigcup\limits_{i=1}^2\big(\{\mathsf{E}_{i}=\infty\}\bigcap\textbf{E}^\complement\big)\Bigg)=\emptyset,	
\end{equation*}
we obtain
\begin{equation}\label{El}
	\mu\Big(\Big\{\sum\limits_{i=1}^2\mathsf{E}_{i}=\infty\Big\}\bigcap\textbf{E}^\complement\Big)=0.
\end{equation}
Then, combining \eqref{Er} and \eqref{El} yields the desired result.
\end{proof}

\begin{thm}\label{meanbasic}
 Suppose $C_{\phi_{1},\omega_{1}}$ and $C_{\phi_{2},\omega_{2}}$ are densely defined. Fix $\lambda\in[0,1]$. Then the pair $\mathcal{M}_{\lambda}(\textbf{C}_{\phi,\omega})$ satisfies the following properties$:$
\begin{enumerate}
	\item[(i)]
 $\EuScript{D}(\mathcal{M}_{\lambda}(\textbf{C}_{\phi,\omega}))=L^2\Big(\Big(1+\sum\limits_{i=1}^2(\mathsf{h}_{\phi_{i},\omega_{i}}+\mathsf{E}_{i})\Big)\D\mu\Big)$,
	\item[(ii)]
	$\overline{\EuScript{D}(\mathcal{M}_{\lambda}(\textbf{C}_{\phi,\omega}))}=\chi_{\Big\{\sum\limits_{i=1}^2\mathsf{E}_{\phi_{i},\omega_{i}}(\mathsf{h}_{\phi_{1},\omega_{1}}+\mathsf{h}_{\phi_{2},\omega_{2}})\circ\phi_{i}^{-1}<\infty\Big\}}\cdot L^2(\mu)$,
	\item[(iii)]
		$\EuScript{D}(\mathcal{M}_{\lambda}(\textbf{C}_{\phi,\omega}))^\perp=\chi_{\Big\{\sum\limits_{i=1}^2\mathsf{E}_{\phi_{i},\omega_{i}}(\mathsf{h}_{\phi_{1},\omega_{1}}+\mathsf{h}_{\phi_{2},\omega_{2}})\circ\phi_{i}^{-1}=\infty\Big\}}\cdot L^2(\mu)$,
	\item[(iv)]
	$\mathcal{M}_{\lambda}(\textbf{C}_{\phi,\omega})\begin{cases}
	=(C_{\phi_{1},\omega_{\lambda}^1},C_{\phi_{2},\omega_{\lambda}^2})&\lambda\in(0,1)\\
	\subseteq(C_{\phi_{1},\omega_{\lambda}^1},C_{\phi_{2},\omega_{\lambda}^2})&\lambda\in\{0,1\}
	\end{cases}$, where $\omega_{\lambda}^i$ {\em ($i=1,2$)} are defined as in \eqref{wfun}.
\end{enumerate}
\end{thm}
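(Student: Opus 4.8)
The plan is to reduce everything to the two ``building block'' operators $U_i\mathsf{M}_H$ and $\mathsf{M}_H U_i$, where I write $H=\sqrt{\mathsf{h}_{\phi_{1},\omega_{1}}+\mathsf{h}_{\phi_{2},\omega_{2}}}$, so that $|\textbf{C}_{\phi,\omega}|=\mathsf{M}_H$ and $U_i=C_{\phi_i,\widetilde{\omega_i}}$ by Theorem \ref{polar}, the latter being bounded. First I would identify each block as a weighted composition operator. Since $U_i$ is bounded, $\EuScript{D}(U_i\mathsf{M}_H)=\EuScript{D}(\mathsf{M}_H)=L^2((1+\mathsf{h}_{\phi_{1},\omega_{1}}+\mathsf{h}_{\phi_{2},\omega_{2}})\D\mu)$, and for such $f$ one has $U_i\mathsf{M}_H f=\widetilde{\omega_i}\cdot(H\circ\phi_i)\cdot(f\circ\phi_i)=\omega_i\cdot(f\circ\phi_i)$, where the cancellation $\widetilde{\omega_i}\cdot(H\circ\phi_i)=\omega_i$ a.e.\ uses precisely \eqref{mueq0}; thus $U_i\mathsf{M}_H$ is the restriction of $C_{\phi_i,\omega_i}$ to $\EuScript{D}(\mathsf{M}_H)$, consistent with \eqref{nepolar}. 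Dually, $\mathsf{M}_H U_i f=(H\widetilde{\omega_i})\cdot(f\circ\phi_i)$ with $H\widetilde{\omega_i}=\mathsf{h}_i\omega_i=\omega_0^i$, so $\mathsf{M}_H U_i=C_{\phi_i,\omega_0^i}$; this is well defined by Lemma \ref{weighted}(i), and by Lemma \ref{weighted}(ii) with $\lambda=0$ its Radon--Nikodym derivative is $\mathsf{h}_{\phi_i,\omega_0^i}=\mathsf{E}_i$, whence $\EuScript{D}(\mathsf{M}_H U_i)=L^2((1+\mathsf{E}_i)\D\mu)$ by Theorem \ref{combasic}(i).

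For (i), I treat $\mathcal{M}_{\lambda}(\textbf{C}_{\phi,\omega})$ as an operator sum, so each component has domain $\EuScript{D}(U_i\mathsf{M}_H)\cap\EuScript{D}(\mathsf{M}_H U_i)$ regardless of $\lambda$ (the same intersection convention flagged for the single-operator case above Definition \ref{smtdfn}). Intersecting over $i=1,2$ and combining the weights gives $\EuScript{D}(\mathcal{M}_{\lambda}(\textbf{C}_{\phi,\omega}))=L^2((1+\sum_{i}(\mathsf{h}_{\phi_i,\omega_i}+\mathsf{E}_i))\D\mu)$, which is (i). For (ii) and (iii) I use the elementary fact --- proved by the same simple-function approximation as in Proposition \ref{basic}(ii) --- that for nonnegative measurable $G$ the space $L^2((1+G)\D\mu)$ has closure $\chi_{\{G<\infty\}}L^2(\mu)$ and orthogonal complement $\chi_{\{G=\infty\}}L^2(\mu)$. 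Here $G=\sum_i(\mathsf{h}_{\phi_i,\omega_i}+\mathsf{E}_i)$; since the $C_{\phi_i,\omega_i}$ are densely defined, $\mathsf{h}_{\phi_{1},\omega_{1}}+\mathsf{h}_{\phi_{2},\omega_{2}}<\infty$ a.e.\ by Theorem \ref{combasic}(iii), so $\{G<\infty\}=\{\sum_i\mathsf{E}_i<\infty\}$, which equals $\{\sum_i\mathsf{E}_{\phi_i,\omega_i}(\mathsf{h}_{\phi_{1},\omega_{1}}+\mathsf{h}_{\phi_{2},\omega_{2}})\circ\phi_i^{-1}<\infty\}$ a.e.\ by Lemma \ref{weighted}(iv). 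This yields (ii) and (iii).

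For (iv), on the common domain the $i$-th component acts as $\lambda\,\omega_i(f\circ\phi_i)+(1-\lambda)\mathsf{h}_i\omega_i(f\circ\phi_i)=\omega_{\lambda}^i(f\circ\phi_i)=C_{\phi_i,\omega_{\lambda}^i}f$ by \eqref{wfun}, so only the domains must be compared. By Theorem \ref{combasic}(i) and Lemma \ref{weighted}(ii), $\EuScript{D}((C_{\phi_1,\omega_{\lambda}^1},C_{\phi_2,\omega_{\lambda}^2}))=L^2((1+\sum_i\mathsf{h}_{\phi_i,\omega_{\lambda}^i})\D\mu)$ with $\mathsf{h}_{\phi_i,\omega_{\lambda}^i}=\lambda^2\mathsf{h}_{\phi_i,\omega_i}+2\lambda(1-\lambda)\mathsf{E}_i^{1/2}+(1-\lambda)^2\mathsf{E}_i$. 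For $\lambda\in(0,1)$ all three coefficients are positive, and using $\mathsf{E}_i^{1/2}\le\frac12(1+\mathsf{E}_i)$ to absorb the cross term I would show the weight $1+\sum_i\mathsf{h}_{\phi_i,\omega_{\lambda}^i}$ is comparable --- bounded above and below by fixed multiples --- to $1+\sum_i(\mathsf{h}_{\phi_i,\omega_i}+\mathsf{E}_i)$, so the two $L^2$-spaces coincide and the pairs are equal. For $\lambda\in\{0,1\}$ one coefficient vanishes, so $1+\sum_i\mathsf{h}_{\phi_i,\omega_{\lambda}^i}$ is dominated by but not comparable to $1+\sum_i(\mathsf{h}_{\phi_i,\omega_i}+\mathsf{E}_i)$; the domain of $\mathcal{M}_{\lambda}(\textbf{C}_{\phi,\omega})$ is then the smaller one, giving only $\subseteq$, the strictness being exactly the loss of control of $\mathsf{h}_{\phi_{1},\omega_{1}}+\mathsf{h}_{\phi_{2},\omega_{2}}$ at $\lambda=0$, or of $\sum_i\mathsf{E}_i$ at $\lambda=1$.

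The main obstacle is the domain bookkeeping in (iv): establishing two-sided comparability of the weights for $\lambda\in(0,1)$ --- in particular controlling the square-root cross term $\mathsf{E}_i^{1/2}$ --- and explaining transparently why the intersection convention forces the transform's domain to be strictly smaller at the endpoints, even though its action there is that of a single weighted composition operator. Everything else is a direct substitution of Theorem \ref{polar} into Definition \ref{smtdfn} together with Lemma \ref{weighted}.
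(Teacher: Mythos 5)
Your argument follows the paper's own route in all essentials: substitute the polar decomposition of Theorem \ref{polar} into Definition \ref{smtdfn}, read off the componentwise action $\omega_{\lambda}^i\cdot(f\circ\phi_i)$ (the paper's \eqref{mpair}), obtain (ii)--(iii) by the truncation/Dominated Convergence argument combined with Lemma \ref{weighted}(iv), and settle (iv) through the graph inclusion plus a domain comparison via Lemma \ref{weighted}(ii). Your one genuine economy is the identification $|\textbf{C}_{\phi,\omega}|U_i=C_{\phi_i,\omega_0^i}$ with $\omega_0^i=\mathsf{h}_i\omega_i$, so that $\EuScript{D}(|\textbf{C}_{\phi,\omega}|U_i)=L^2((1+\mathsf{E}_i)\D\mu)$ drops out of Lemma \ref{weighted}(ii) at $\lambda=0$ together with Theorem \ref{combasic}(i); the paper instead derives this by a long direct chain of integral identities using \eqref{exp}, \eqref{invexp} and \eqref{intcom} in the proof of (i). Both are legitimate, and yours reuses work already done in proving Lemma \ref{weighted}.

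There is, however, one concrete slip in your treatment of (iv): the inequality $\mathsf{E}_i^{1/2}\le\frac{1}{2}(1+\mathsf{E}_i)$ misreads the notation. By \eqref{Eequation}, $\mathsf{E}_i^{1/2}$ denotes the conditional-expectation quantity with $\alpha=\frac{1}{2}$, not the pointwise square root of $\mathsf{E}_i$, and your stated bound is false in general: for $\phi_i=\mathrm{id}$ one has $\mathsf{E}_i^{\alpha}=\mathsf{h}_{\phi_i,\omega_i}$ for every $\alpha\ge0$, so your claim would force $\mathsf{h}_{\phi_i,\omega_i}\le\frac{1}{2}(1+\mathsf{h}_{\phi_i,\omega_i})$, which fails wherever $\mathsf{h}_{\phi_i,\omega_i}>1$. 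The correct estimate, via the conditional Cauchy--Schwarz inequality $\mathsf{E}_{\phi_i,\omega_i}(Q^{1/2})\le\big(\mathsf{E}_{\phi_i,\omega_i}(Q)\big)^{1/2}$ with $Q=\mathsf{h}_{\phi_1,\omega_1}+\mathsf{h}_{\phi_2,\omega_2}$, reads $\mathsf{E}_i^{1/2}\le(\mathsf{h}_{\phi_i,\omega_i}\,\mathsf{E}_i)^{1/2}\le\frac{1}{2}(\mathsf{h}_{\phi_i,\omega_i}+\mathsf{E}_i)$, whence $\mathsf{h}_{\phi_i,\omega_\lambda^i}\le\lambda\,\mathsf{h}_{\phi_i,\omega_i}+(1-\lambda)\,\mathsf{E}_i$, which restores your two-sided comparability of weights. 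Better still, observe that this direction is superfluous: the pointwise identity for the action already yields $\mathcal{M}_{\lambda}(\textbf{C}_{\phi,\omega})\subseteq(C_{\phi_1,\omega_\lambda^1},C_{\phi_2,\omega_\lambda^2})$, hence $\EuScript{D}(\mathcal{M}_{\lambda}(\textbf{C}_{\phi,\omega}))\subseteq\EuScript{D}((C_{\phi_1,\omega_\lambda^1},C_{\phi_2,\omega_\lambda^2}))$ for free, so equality at $\lambda\in(0,1)$ needs only the reverse domain inclusion, which follows from $\lambda^2,(1-\lambda)^2>0$ together with $\mathsf{E}_i^{1/2},\mathsf{E}_i\ge0$ (Lemma \ref{weighted}(iii)) --- exactly the paper's argument via \eqref{meang} and \eqref{meandomain}. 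With that one step deleted or corrected, your proof is sound, including the endpoint cases $\lambda\in\{0,1\}$, where your intersection-convention accounting of the domain matches the paper's.
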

\begin{proof}
	Given $i\in\{1,2\}$. Let $\mathcal{M}_{\lambda}(\textbf{C}_{\phi,\omega})_{i}=\lambda U_{i}|\textbf{C}_{\phi,\omega}|+(1-\lambda)|\textbf{C}_{\phi,\omega}|U_{i}$. Applying Theorem \ref{polar}, we have
	\begin{equation}\label{mpair}
		\begin{aligned}
			\mathcal{M}_{\lambda}(\textbf{C}_{\phi,\omega})_{i}f&=\lambda C_{\phi_{i},\omega_{i}}f+(1-\lambda)\mathsf{M}_{\sqrt{\mathsf{h}_{\phi_{1},\omega_{1}}+\mathsf{h}_{\phi_{2},\omega_{2}}}}C_{\phi_{i},\widetilde{\omega_{i}}}f\\&=\lambda C_{\phi_{i},\omega_{i}}f+(1-\lambda)\sqrt{\mathsf{h}_{\phi_{1},\omega_{1}}+\mathsf{h}_{\phi_{2},\omega_{2}}}\frac{\omega_{i}\cdot\chi_{\{\omega_{i}\ne 0\}}}{\sqrt{(\mathsf{h}_{\phi_{1},\omega_{1}}+\mathsf{h}_{\phi_{2},\omega_{2}})\circ\phi_{i}}}\cdot f\circ\phi_{i}\\&=\lambda\omega_{i}\cdot f\circ\phi_{i}+(1-\lambda)\omega_{i}\cdot\chi_{\{\omega_{i}\ne 0\}}
			\left(\frac{\mathsf{h}_{\phi_{1},\omega_{1}}+\mathsf{h}_{\phi_{2},\omega_{2}}}{(\mathsf{h}_{\phi_{1},\omega_{1}}+\mathsf{h}_{\phi_{2},\omega_{2}})\circ\phi_{i}}\right)^\frac{1}{2}\cdot f\circ\phi_{i}\\&=\left(\mathsf{h}_{i}+\lambda(1-\mathsf{h}_{i})\right)\omega_{i}\cdot f\circ\phi_{i}\\&=\omega_{\lambda}^i\cdot f\circ\phi_{i}
		\end{aligned}
	\end{equation} 
	for $f\in\mathcal{M}_{\lambda}(\textbf{C}_{\phi,\omega})_{i}$, where  $\widetilde{\omega_{i}}$, $\mathsf{h}_{i}$ and $\omega_{\lambda}^i$ are defined as in Theorem \ref{polar} (ii) and Lemma \ref{weighted}, respectively.
	
(i) Since $\mathcal{M}_{\lambda}(\textbf{C}_{\phi,\omega})=(\mathcal{M}_{\lambda}(\textbf{C}_{\phi,\omega})_{1},\mathcal{M}_{\lambda}(\textbf{C}_{\phi,\omega})_{2}),$ then by \eqref{mpair} and Theorem \ref{polar} (ii), we derive
\begin{equation}\label{domen}
	\begin{aligned}
	\EuScript{D}(\mathcal{M}_{\lambda}(\textbf{C}_{\phi,\omega}))&=\bigcap\limits_{i=1}^{2}\EuScript{D}(\mathcal{M}_{\lambda}(\textbf{C}_{\phi,\omega})_{i})\\&=\bigcap\limits_{i=1}^{2}\left(\EuScript{D}(C_{\phi_{i},\omega_{i}})\bigcap\EuScript{D}(C_{\phi_{i},\widetilde{\omega_{i}}})\bigcap\EuScript{D}(\mathsf{M}_{\sqrt{\mathsf{h}_{\phi_{1},\omega_{1}}+\mathsf{h}_{\phi_{2},\omega_{2}}}}C_{\phi_{i},\widetilde{\omega_{i}}})\right)\\&=\bigcap\limits_{i=1}^{2}\left(\EuScript{D}(C_{\phi_{i},\omega_{i}})\bigcap\EuScript{D}(\mathsf{M}_{\sqrt{\mathsf{h}_{\phi_{1},\omega_{1}}+\mathsf{h}_{\phi_{2},\omega_{2}}}}C_{\phi_{i},\widetilde{\omega_{i}}})\right).
	\end{aligned}
\end{equation}
Further, we see that
\begin{small}
	\begin{equation*}\label{quasinormalchar}
		\begin{aligned}
			&\EuScript{D}(|\textbf{C}_{\phi,\omega}|U_{i})=\EuScript{D}(\mathsf{M}_{\sqrt{\mathsf{h}_{\phi_{1},\omega_{1}}+\mathsf{h}_{\phi_{2},\omega_{2}}}}C_{\phi_{i},\widetilde{\omega_{i}}})=\Bigg\{f\in L^2(\mu)\colon\int_{X}\Big|\sqrt{\mathsf{h}_{\phi_{1},\omega_{1}}+\mathsf{h}_{\phi_{2},\omega_{2}}}\cdot \widetilde{\omega_{i}}\cdot f\circ\phi_{i}\Big|^2\D\mu<\infty\Bigg\}\\&=\Bigg\{f\in L^2(\mu)\colon\int_{X}|\mathsf{h}_{i}|^2|\omega_{i}|^2|f\circ\phi_{i}|^2\D\mu<\infty\Bigg\}\\&\stackrel{\eqref{mueq0}}{=}\Bigg\{f\in L^2(\mu)\colon\int_{\{(\sum\limits_{i=1}^2\mathsf{h}_{\phi_{i},\omega_{i}})\circ\phi_{i}>0\}}\frac{\mathsf{h}_{\phi_{1},\omega_{1}}+\mathsf{h}_{\phi_{2},\omega_{2}}}{(\mathsf{h}_{\phi_{1},\omega_{1}}+\mathsf{h}_{\phi_{2},\omega_{2}})\circ\phi_{i}}|f\circ\phi_{i}|^2\D\mu_{\omega_{i}}<\infty\Bigg\}\\&\stackrel{\eqref{exp}}{=}\Bigg\{f\in L^2(\mu)\colon\int_{\phi_{i}^{-1}(\{\sum\limits_{i=1}^2\mathsf{h}_{\phi_{i},\omega_{i}}>0\})}\mathsf{E}_{\phi_{i},\omega_{i}}(\mathsf{h}_{\phi_{1},\omega_{1}}+\mathsf{h}_{\phi_{2},\omega_{2}})\frac{|f\circ\phi_{i}|^2}{(\mathsf{h}_{\phi_{1},\omega_{1}}+\mathsf{h}_{\phi_{2},\omega_{2}})\circ\phi_{i}}\D\mu_{\omega_{i}}<\infty\Bigg\}\\&\stackrel{\eqref{invexp}}{=}\Bigg\{f\in L^2(\mu)\colon\int_{\phi_{i}^{-1}(\{\sum\limits_{i=1}^2\mathsf{h}_{\phi_{i},\omega_{i}}>0\})}((\mathsf{E}_{\phi_{i},\omega_{i}}(\sum\limits_{i=1}^2\mathsf{h}_{\phi_{i},\omega_{i}})\circ\phi_{i}^{-1})\circ\phi_{i})\frac{|f\circ\phi_{i}|^2}{(\mathsf{h}_{\phi_{1},\omega_{1}}+\mathsf{h}_{\phi_{2},\omega_{2}})\circ\phi_{i}}\D\mu_{\omega_{i}}<\infty\Bigg\}
		\end{aligned} 
	\end{equation*}	
\end{small}
\begin{small}
\begin{equation}
	\begin{aligned}
	&\stackrel{\eqref{intcom}}{=}\Bigg\{f\in L^2(\mu)\colon\int_{\{\sum\limits_{i=1}^2\mathsf{h}_{\phi_{i},\omega_{i}}>0\}}(\mathsf{E}_{\phi_{i},\omega_{i}}(\sum\limits_{i=1}^2\mathsf{h}_{\phi_{i},\omega_{i}})\circ\phi_{i}^{-1})\frac{\mathsf{h}_{\phi_{i},\omega_{i}}|f|^2}{\mathsf{h}_{\phi_{1},\omega_{1}}+\mathsf{h}_{\phi_{2},\omega_{2}}}\D\mu<\infty\Bigg\}\\&=L^2\Bigg(\Big(1+\big(\mathsf{E}_{\phi_{i},\omega_{i}}(\mathsf{h}_{\phi_{1},\omega_{1}}+\mathsf{h}_{\phi_{2},\omega_{2}})\circ\phi_{i}^{-1}\big)\cdot\frac{\mathsf{h}_{\phi_{i},\omega_{i}}\cdot\chi_{\{\mathsf{h}_{\phi_{1},\omega_{1}}+\mathsf{h}_{\phi_{2},\omega_{2}}>0\}}}{\mathsf{h}_{\phi_{1},\omega_{1}}+\mathsf{h}_{\phi_{2},\omega_{2}}}\Big)\D\mu\Bigg)\quad\quad\quad\quad\quad\quad\quad\quad\quad\\&=L^2((1+\mathsf{E}_{i})\D\mu).
	\end{aligned} 
\end{equation}	
\end{small}Thus, combining \eqref{domen} and the above equality with the argument used in Proposition \ref{basic} (i), it is not difficult to obtain that\begin{equation*}
\EuScript{D}(\mathcal{M}_{\lambda}(\textbf{C}_{\phi,\omega}))=L^2\Big(\Big(1+\sum\limits_{i=1}^2(\mathsf{h}_{\phi_{i},\omega_{i}}+\mathsf{E}_{i})\Big)\D\mu\Big).
\end{equation*} 

(ii) Let $\textbf{E}$ be defined as in \eqref{E}. In view of \eqref{Epro}, we have $0<\frac{\mathsf{h}_{\phi_{i},\omega_{i}}\cdot\chi_{\{\mathsf{h}_{\phi_{1},\omega_{1}}+\mathsf{h}_{\phi_{2},\omega_{2}}>0\}}}{\mathsf{h}_{\phi_{1},\omega_{1}}+\mathsf{h}_{\phi_{2},\omega_{2}}}<\infty$ a.e. $[\mu]$ on $\textbf{E}$ for $i=1,2$. This implies $\sum\limits_{i=1}^2\mathsf{E}_{i}=\infty$ a.e. $[\mu]$ on $\textbf{E}$. Suppose $f\in\EuScript{D}(\mathcal{M}_{\lambda}(\textbf{C}_{\phi,\omega}))$, then $\mu(\{f\ne0\}\bigcap\textbf{E})=0$. Otherwise, we conclude that there exists $n\in\mathbb{N}$ such that $\mu(\{|f|^2\ge\frac{1}{n}\}\bigcap\textbf{E})\ne0$, which yields that\begin{align*}
\int_{X}|f|^2(1+\sum\limits_{i=1}^2(\mathsf{h}_{\phi_{i},\omega_{i}}+\mathsf{E}_{i}))\D\mu&\ge\int_{\{|f|^2\ge\frac{1}{n}\}\bigcap\textbf{E}}|f|^2\sum\limits_{i=1}^2\mathsf{E}_{i}\D\mu\\&\ge\int_{\{|f|^2\ge\frac{1}{n}\}\bigcap\textbf{E}}\frac{1}{n}\cdot\infty\D\mu\\&\ge\infty\cdot\mu(\{|f|^2\ge\frac{1}{n}\}\bigcap\textbf{E})=\infty.
\end{align*}
This contradicts our assumption. Therefore, we get $\EuScript{D}(\mathcal{M}_{\lambda}(\textbf{C}_{\phi,\omega}))\subseteq\chi_{\textbf{E}^\complement}\cdot L^{2}(\mu)$. Moreover, it can be easily verified that $\overline{\EuScript{D}(\mathcal{M}_{\lambda}(\textbf{C}_{\phi,\omega}))}\subseteq\chi_{\textbf{E}^\complement}\cdot L^{2}(\mu)$. On the other hand, take $f\in\chi_{\textbf{E}^\complement}\cdot L^{2}(\mu)$ and set $X_{n}=\Big\{x\colon \sum\limits_{i=1}^2(\mathsf{h}_{\phi_{i},\omega_{i}}+\mathsf{E}_{i})\le n\Big\}$. By Lemma \ref*{weighted} (iv), we see that $\chi_{X_{n}}$ converges pointwise to $\chi_{\textbf{E}^\complement}$ a.e. $[\mu]$. Hence, $\lim\limits_{n\to\infty}|f\chi_{X_{n}}(x)-f(x)|^2=0$ a.e. $[\mu]$. Applying the Dominated Convergence Theorem, we have $\lim\limits_{n\to\infty}\int_{X}|f\chi_{X_{n}}-f|^2\D\mu=0$, which implies, by using the fact $f_{\chi_{X_{n}}}\in\EuScript{D}(\mathcal{M}_{\lambda}(\textbf{C}_{\phi,\omega}))$, that
$f\in\overline{\EuScript{D}(\mathcal{M}_{\lambda}(\textbf{C}_{\phi,\omega}))}$. Consequently, we obtain $\chi_{\textbf{E}^\complement}\cdot L^{2}(\mu)=\overline{\EuScript{D}(\mathcal{M}_{\lambda}(\textbf{C}_{\phi,\omega}))}$. 

(iii) This follows directly from equation $L^2(\mu)=\overline{\EuScript{D}(\mathcal{M}_{\lambda}(\textbf{C}_{\phi,\omega}))}\oplus\EuScript{D}(\mathcal{M}_{\lambda}(\textbf{C}_{\phi,\omega}))^\perp$ and (ii).

(iv) Using Lemma \ref{weighted} (i), we know that $C_{\phi_{i},\omega_{\lambda}^i}$ ($i=1,2$) are well-defined, which yields, by \eqref{mpair}, that \begin{equation}\label{meang}
	\mathcal{G}(\mathcal{M}_{\lambda}(\textbf{C}_{\phi,\omega}))\subseteq\mathcal{G}((C_{\phi_{1},\omega_{\lambda}^1},C_{\phi_{2},\omega_{\lambda}^2}))
\end{equation}
for $\lambda\in[0,1]$. By Theorem \ref{combasic} (i) and the argument used in Proposition \ref{basic} (i), we have
\begin{equation*}
\EuScript{D}((C_{\phi_{1},\omega_{\lambda}^1},C_{\phi_{2},\omega_{\lambda}^2}))=\bigcap\limits_{i=1}^2\EuScript{D}(C_{\phi_{i},\omega_{\lambda}^i})=L^2\Big(\big(1+\sum\limits_{i=1}^2\mathsf{h}_{\phi_{i},\omega_{\lambda}^i}\big)\D\mu\Big),
\end{equation*} 
where $\mathsf{h}_{\phi_{i},\omega_{\lambda}^i}$ ($i=1,2$) are defined as in Lemma \ref{weighted} (ii). Consider (i) of this theorem, then one can easily verify that \begin{equation}\label{meandomain}
	\EuScript{D}((C_{\phi_{1},\omega_{\lambda}^1},C_{\phi_{2},\omega_{\lambda}^2}))\subseteq\EuScript{D}(\mathcal{M}_{\lambda}(\textbf{C}_{\phi,\omega}))
\end{equation} for $\lambda\in(0,1)$ since in view of Lemma \ref{weighted} (iii) we obtain $\mathsf{E}_{i}^\frac{1}{2}\ge0$ and $\mathsf{E}_{i}\ge0$. Therefore, by \eqref{meang} and \eqref{meandomain}, we get the desired equality.
\end{proof}

Suppose $C_{\phi,\omega}$ is densely defind. By setting $C_{\phi_{1},\omega_{1}}=C_{\phi,\omega}$ and $C_{\phi_{2},\omega_{2}}=C_{x,0}=0$ in the above theorem and \eqref{singlemulequiv}, we immediately have the single-operator version of Theorem \ref{meanbasic}.

\begin{cor}\label{singlemeanbasic}
Suppose $C_{\phi,\omega}$ is densely defined. Fix $\lambda\in[0,1]$. Then
\begin{enumerate}
	\item[(i)]
	$\EuScript{D}(\mathcal{M}_{\lambda}(C_{\phi,\omega}))=L^2\Big(\Big(1+\mathsf{h}_{\phi,\omega}+\mathsf{E}_{\phi,\omega}(\mathsf{h}_{\phi,\omega})\circ\phi^{-1}\Big)\D\mu\Big)$,
	\item[(ii)]
	$\overline{\EuScript{D}(\mathcal{M}_{\lambda}(C_{\phi,\omega}))}=\chi_{\Big\{\mathsf{E}_{\phi,\omega}(\mathsf{h}_{\phi,\omega})\circ\phi^{-1}<\infty\Big\}}\cdot L^2(\mu)$,
	\item[(iii)]
	$\EuScript{D}(\mathcal{M}_{\lambda}(C_{\phi,\omega}))^\perp=\chi_{\Big\{\mathsf{E}_{\phi,\omega}(\mathsf{h}_{\phi,\omega})\circ\phi^{-1}=\infty\Big\}}\cdot L^2(\mu)$,
	\item[(iv)]
	$\mathcal{M}_{\lambda}(C_{\phi,\omega})\begin{cases}
		=C_{\phi,\omega_{\lambda}}&\lambda\in(0,1)\\
		\subseteq C_{\phi,\omega_{\lambda}}&\lambda\in\{0,1\}
	\end{cases}$, where 
	$\omega_{\lambda}=((1-\lambda)(\frac{\chi_{\{\omega\ne0\}}\mathsf{h}_{\phi,\omega}}{\mathsf{h}_{\phi,\omega}\circ\phi})^\frac{1}{2}+\lambda)\omega$.
\end{enumerate}
\end{cor}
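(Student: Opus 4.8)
The plan is to derive the corollary as a direct specialization of Theorem~\ref{meanbasic}, applied to the pair $(C_{\phi,\omega},C_{x,0})$ whose second component is the zero operator $C_{x,0}=0$ (so that $\omega_2=0$ and $\mathsf{h}_{\phi_2,\omega_2}=0$), and then to transfer the conclusions back to the single operator via the identity \eqref{singlemulequiv}. First I would record the elementary simplifications forced by $\mathsf{h}_{\phi_2,\omega_2}=0$: here $\mathsf{h}_{\phi_1,\omega_1}+\mathsf{h}_{\phi_2,\omega_2}=\mathsf{h}_{\phi,\omega}$, so every occurrence of the combined Radon--Nikodym derivative collapses to $\mathsf{h}_{\phi,\omega}$, and in particular $\sum_{i=1}^2\mathsf{h}_{\phi_i,\omega_i}=\mathsf{h}_{\phi,\omega}$.

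The only genuine computation is to show that the auxiliary functions $\mathsf{E}_i$ of \eqref{Eequation} reduce correctly. For the trivial component $i=2$, the factor $\mathsf{h}_{\phi_2,\omega_2}=0$ forces $\mathsf{E}_2=0$ a.e.\ $[\mu]$. For $i=1$, substituting $\alpha=1$ into \eqref{Eequation} and using that the ratio $\mathsf{h}_{\phi,\omega}\cdot\chi_{\{\mathsf{h}_{\phi,\omega}>0\}}/\mathsf{h}_{\phi,\omega}$ equals $\chi_{\{\mathsf{h}_{\phi,\omega}>0\}}$ gives $\mathsf{E}_1=\big(\mathsf{E}_{\phi,\omega}(\mathsf{h}_{\phi,\omega})\circ\phi^{-1}\big)\cdot\chi_{\{\mathsf{h}_{\phi,\omega}>0\}}$. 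The key observation is that the cut-off $\chi_{\{\mathsf{h}_{\phi,\omega}>0\}}$ may be dropped: by the defining convention \eqref{Epro}, $\mathsf{E}_{\phi,\omega}(\mathsf{h}_{\phi,\omega})\circ\phi^{-1}=0$ a.e.\ $[\mu]$ on $\{\mathsf{h}_{\phi,\omega}=0\}$, so $\mathsf{E}_1=\mathsf{E}_{\phi,\omega}(\mathsf{h}_{\phi,\omega})\circ\phi^{-1}$ a.e.\ $[\mu]$. Hence $\sum_{i=1}^2\mathsf{E}_i=\mathsf{E}_{\phi,\omega}(\mathsf{h}_{\phi,\omega})\circ\phi^{-1}$, and the very same reasoning (now applied to $\mathsf{h}_{\phi_2,\omega_2}=0$ through \eqref{Epro}) shows that the sum $\sum_{i=1}^2\mathsf{E}_{\phi_i,\omega_i}(\mathsf{h}_{\phi_1,\omega_1}+\mathsf{h}_{\phi_2,\omega_2})\circ\phi_i^{-1}$ appearing in parts (ii)--(iii) of Theorem~\ref{meanbasic} collapses to $\mathsf{E}_{\phi,\omega}(\mathsf{h}_{\phi,\omega})\circ\phi^{-1}$.

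With these reductions in hand, parts (i)--(iii) follow at once by substitution into the corresponding parts of Theorem~\ref{meanbasic}, invoking \eqref{singlemulequiv} to replace the pair $(C_{\phi,\omega},0)$ by $C_{\phi,\omega}$ itself. For part (iv) I would specialize the weight \eqref{wfun} to $i=1$: with $\mathsf{h}_1=\chi_{\{\omega\ne0\}}\big(\mathsf{h}_{\phi,\omega}/(\mathsf{h}_{\phi,\omega}\circ\phi)\big)^{1/2}$ and the rearrangement $\mathsf{h}_1+\lambda(1-\mathsf{h}_1)=(1-\lambda)\mathsf{h}_1+\lambda$, one obtains $\omega_\lambda=\big((1-\lambda)(\chi_{\{\omega\ne0\}}\mathsf{h}_{\phi,\omega}/(\mathsf{h}_{\phi,\omega}\circ\phi))^{1/2}+\lambda\big)\omega$, where I use $\chi_{\{\omega\ne0\}}=\chi_{\{\omega\ne0\}}^{1/2}$ to move the indicator inside the square root and match the stated form. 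The inclusion/equality dichotomy between $\lambda\in(0,1)$ and $\lambda\in\{0,1\}$ is inherited verbatim from Theorem~\ref{meanbasic}(iv).

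I expect the only nontrivial point to be the passage $\mathsf{E}_1=\mathsf{E}_{\phi,\omega}(\mathsf{h}_{\phi,\omega})\circ\phi^{-1}$, that is, the justification that the indicator $\chi_{\{\mathsf{h}_{\phi,\omega}>0\}}$ is harmless; this rests entirely on the formal property \eqref{Epro} of $\mathsf{E}_{\phi,\omega}(\cdot)\circ\phi^{-1}$, and everything else is bookkeeping.
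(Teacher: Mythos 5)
Your proposal is correct and takes essentially the same route as the paper, whose proof of the corollary consists precisely of setting $C_{\phi_{1},\omega_{1}}=C_{\phi,\omega}$ and $C_{\phi_{2},\omega_{2}}=C_{x,0}=0$ in Theorem \ref{meanbasic} and invoking \eqref{singlemulequiv}. The reductions you make explicit — $\mathsf{E}_{2}=0$ and $\mathsf{E}_{1}=\mathsf{E}_{\phi,\omega}(\mathsf{h}_{\phi,\omega})\circ\phi^{-1}$ via the convention \eqref{Epro}, plus absorbing $\chi_{\{\omega\ne0\}}$ into the square root — are exactly the bookkeeping the paper's one-line proof leaves implicit, and they are carried out correctly.
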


Given $\lambda\in\{0,1\}$. To show that $\mathcal{M}_{\lambda}(\textbf{C}_{\phi,\omega})\ne(C_{\phi_{1},\omega_{\lambda}^1},C_{\phi_{2},\omega_{\lambda}^2})$ in general, it suffices to provide a weighted composition operator $C_{\phi,\omega}$ such that $\EuScript{D}(\mathcal{M}_{\lambda}(C_{\phi,\omega}))\ne\EuScript{D}(C_{\phi,\omega_{\lambda}})$ by Corollary \ref{singlemeanbasic} (iv).

\begin{exa}\label{nemean}
	Let $X=\mathbb{N}$, $\mathscr{A}=2^X$ and $\mu$ be the counting measure on $X$. Let $m\in\mathbb{N}$. Define the $\mathscr{A}$-measurable transform $\phi:X\to X$ by$$\phi(n)=\begin{cases}
		2m-1&n=2m\\
		2m&n=2m-1
	\end{cases}$$
	and the function $\omega:X\to\mathbb{R}$ by
	$$\omega(n)=\begin{cases}
		2m&n=2m\\
		0&n=2m-1
	\end{cases}.$$
 Since $\mu(\varDelta)=0$ if and only if $\varDelta=\emptyset$, then $\mu_{\omega}\circ\phi^{-1}\ll\mu$. Hence, $C_{\phi,\omega}$ is well-defined. By \eqref{h}, we have
 \begin{equation}\label{rnex1}
 	\mathsf{h}_{\phi,\omega}(n)=\sum\limits_{x\in\phi^{-1}(n)}|\omega(x)|^2=\begin{cases}
 		0&n=2m\\
 		4m^2&n=2m-1
 	\end{cases},
 \end{equation}
which implies $C_{\phi,\omega}$ is densely defined. On the other hand, it follows readily that $\phi^{-1}(2^X)=2^X$ since $\phi$ is invertible, and thus
	\begin{equation}\label{closed3}
		\mathsf{E}_{\phi,\omega}(\mathsf{h}_{\phi,\omega})\circ\phi^{-1}(n)=\mathsf{h}_{\phi,\omega}\circ\phi^{-1}(n)=\begin{cases}
			4m^2&n=2m\\
			0&n=2m-1
		\end{cases}.
	\end{equation} 
By setting $C_{\phi_{1},\omega_{1}}=C_{\phi,\omega}$ and $C_{\phi_{2},\omega_{2}}=C_{x,0}=0$ in Lemma \ref{weighted} (ii), we have
	$$\EuScript{D}(C_{\phi,\omega_{0}})\xlongequal{\text{Theorem \ref{combasic} (i)}}L^2((1+\mathsf{h}_{\phi,\omega_{0}})\D\mu)\xlongequal{\text{Lemma}\ \ref{weighted}\ (ii)}L^2((1+\mathsf{E}_{\phi,\omega}(\mathsf{h}_{\phi,\omega})\circ\phi^{-1})\D\mu).$$
Consider the function $f(n)=\begin{cases}
		\frac{1}{n+1}&n=2m-1\\
		0&n=2m
	\end{cases}.$ By \eqref{rnex1} and \eqref{closed3}, we obtain
	$$\int_{X}|f|^2\mathsf{h}_{\phi,\omega}\D\mu=\sum\limits_{m=1}^\infty (\frac{1}{4m^2}\cdot 4m^2)=\sum\limits_{m=1}^\infty 1=\infty$$
	and
	 $$\int_{X}|f|^2(1+\mathsf{E}_{\phi,\omega}(\mathsf{h}_{\phi,\omega})\circ\phi^{-1})\D\mu=\int_{X}|f|^2\D\mu=\sum\limits_{m=1}^\infty\frac{1}{4m^2}<\infty.$$
	 Hence, $f\in\EuScript{D}(C_{\phi,\omega_{0}})$ and $f\notin L^2((1+\mathsf{h}_{\phi,\omega}+\mathsf{E}_{\phi,\omega}(\mathsf{h}_{\phi,\omega})\circ\phi^{-1})\D\mu)\xlongequal{\text{Corollary}\ \ref{singlemeanbasic}\ (i)}\EuScript{D}(\mathcal{M}_{0}(C_{\phi,\omega})))$. Therefore, we get $\mathcal{M}_{0}(C_{\phi,\omega})\ne C_{\phi,\omega_{0}}$. 
	 
	 Define the $\mathscr{A}$-measurable transform $\widetilde{\phi}:X\to X$ by$$\widetilde{\phi}(n)=\begin{cases}
	 	2m-1&n=2m\\
	 	2&n=2m-1
	 \end{cases}$$
	 and the function $\widetilde{\omega}:X\to\mathbb{R}$ by
	 $$\widetilde{\omega}(n)=\begin{cases}
	 	2m&n=2m\\
	 	\frac{1}{2m}&n=2m-1
	 \end{cases}.$$
	 By a similar argument as above, we obtain$$\mathsf{h}_{\widetilde{\phi},\widetilde{\omega}}(n)=\begin{cases}
	 	4m^2&n=2m-1\\
	 	0&n=2m\backslash\{2\}\\
	 	\sum\limits_{m=1}^\infty\frac{1}{4m^2}&n=2
	\end{cases}$$
	and$$\mathsf{E}_{\widetilde{\phi},\widetilde{\omega}}(\mathsf{h}_{\widetilde{\phi},\widetilde{\omega}})\circ\widetilde{\phi}^{-1}(n)=\begin{cases}
		\mathsf{h}_{\widetilde{\phi},\widetilde{\omega}}(2m)&n=2m-1\\
		0&n=2m\backslash\{2\}\\
		\frac{\sum\limits_{m=1}^\infty\mathsf{h}_{\widetilde{\phi},\widetilde{\omega}}(2m-1)|\widetilde{\omega}(2m-1)|^2}{\sum\limits_{m=1}^\infty|\widetilde{\omega}(2m-1)|^2}=\infty&n=2
	\end{cases}.$$
From Theorem \ref{combasic} (i) and Corollary \ref{singlemeanbasic} (i), it is clear that the characteristic function $\chi_{\{2\}}\in\EuScript{D}(C_{\widetilde{\phi},\widetilde{\omega}})=L^2((1+\mathsf{h}_{\widetilde{\phi},\widetilde{\omega}})\D\mu)$ but $\chi_{\{2\}}\notin L^2((1+\mathsf{h}_{\widetilde{\phi},\widetilde{\omega}}+\mathsf{E}_{\widetilde{\phi},\widetilde{\omega}}(\mathsf{h}_{\widetilde{\phi},\widetilde{\omega}})\circ\widetilde{\phi}^{-1})\D\mu)=\EuScript{D}(\mathcal{M}_{1}(C_{\widetilde{\phi},\widetilde{\omega}}))$, that is, $\mathcal{M}_{1}(C_{\widetilde{\phi},\widetilde{\omega}})\ne C_{\widetilde{\phi},\widetilde{\omega}}$.
\end{exa}

A direct application of \cite[Lemma 12.3]{Bud-AMPP-2014} yields the following necessary and sufficient condition for the equality $\mathcal{M}_{\lambda}(\textbf{C}_{\phi,\omega})=(C_{\phi_{1},\omega_{\lambda}^{1}},C_{\phi_{2},\omega_{\lambda}^{2}})$ to hold when $\lambda \in \{0, 1\}$.

\begin{cor}
Suppose $C_{\phi_{1},\omega_{1}}$ and $C_{\phi_{2},\omega_{2}}$ are densely defined. Fix $\lambda\in\{0,1\}$. Then $\mathcal{M}_{\lambda}(\textbf{C}_{\phi,\omega})=(C_{\phi_{1},\omega_{\lambda}^{1}},C_{\phi_{2},\omega_{\lambda}^{2}})$ if and only if $\frac{1+\sum\limits_{i=1}^2\mathsf{h}_{\phi_{i},\omega_{i}}+\sum\limits_{i=1}^2\mathsf{E}_{i}}{1+\lambda\sum\limits_{i=1}^2\mathsf{h}_{\phi_{i},\omega_{i}}+(1-\lambda)\sum\limits_{i=1}^2\mathsf{E}_{i}}\le c$ a.e. $[\mu]$ for some $c\in(0,\infty)$.
\end{cor}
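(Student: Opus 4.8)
The plan is to reduce the claimed identity of operator pairs to an equality of their domains, and then to read that equality off \cite[Lemma 12.3]{Bud-AMPP-2014} as a pointwise comparison of two weights. By Theorem \ref{meanbasic} (iv), for $\lambda\in\{0,1\}$ we already have $\mathcal{M}_{\lambda}(\textbf{C}_{\phi,\omega})\subseteq(C_{\phi_{1},\omega_{\lambda}^{1}},C_{\phi_{2},\omega_{\lambda}^{2}})$, and on the smaller domain both pairs act by the common formula \eqref{mpair}. Hence $\mathcal{M}_{\lambda}(\textbf{C}_{\phi,\omega})=(C_{\phi_{1},\omega_{\lambda}^{1}},C_{\phi_{2},\omega_{\lambda}^{2}})$ holds if and only if $\EuScript{D}((C_{\phi_{1},\omega_{\lambda}^{1}},C_{\phi_{2},\omega_{\lambda}^{2}}))\subseteq\EuScript{D}(\mathcal{M}_{\lambda}(\textbf{C}_{\phi,\omega}))$, the reverse inclusion being automatic. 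This is the only conceptual step; the rest is an identification of weights.

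Next I would express both domains as weighted $L^2$-spaces. Theorem \ref{meanbasic} (i) gives $\EuScript{D}(\mathcal{M}_{\lambda}(\textbf{C}_{\phi,\omega}))=L^2(w_{1}\,\D\mu)$ with $w_{1}=1+\sum_{i=1}^2(\mathsf{h}_{\phi_{i},\omega_{i}}+\mathsf{E}_{i})$, and the computation in the proof of Theorem \ref{meanbasic} (iv) gives $\EuScript{D}((C_{\phi_{1},\omega_{\lambda}^{1}},C_{\phi_{2},\omega_{\lambda}^{2}}))=L^2(w_{2}\,\D\mu)$ with $w_{2}=1+\sum_{i=1}^2\mathsf{h}_{\phi_{i},\omega_{\lambda}^i}$. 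Specialising Lemma \ref{weighted} (ii) to the endpoints, where the cross term $2\lambda(1-\lambda)\mathsf{E}_{i}^{1/2}$ disappears, yields $\mathsf{h}_{\phi_{i},\omega_{0}^i}=\mathsf{E}_{i}$ and $\mathsf{h}_{\phi_{i},\omega_{1}^i}=\mathsf{h}_{\phi_{i},\omega_{i}}$, so that in both cases
\[
w_{2}=1+\lambda\sum_{i=1}^2\mathsf{h}_{\phi_{i},\omega_{i}}+(1-\lambda)\sum_{i=1}^2\mathsf{E}_{i}.
\]
Thus $w_{1}$ and $w_{2}$ are exactly the numerator and denominator of the quotient in the statement.

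Finally I would apply \cite[Lemma 12.3]{Bud-AMPP-2014}. By Lemma \ref{weighted} (iii) we have $\mathsf{E}_{i}\ge0$, so $w_{1}\ge w_{2}\ge1>0$ a.e.\ $[\mu]$ (the difference $w_{1}-w_{2}$ being $\sum_{i}\mathsf{h}_{\phi_{i},\omega_{i}}$ when $\lambda=0$ and $\sum_{i}\mathsf{E}_{i}$ when $\lambda=1$); consequently $L^2(w_{1}\,\D\mu)\subseteq L^2(w_{2}\,\D\mu)$ is automatic and the two spaces agree precisely when the reverse inclusion holds. By the cited lemma this reverse inclusion is equivalent to $w_{1}\le c\,w_{2}$ a.e.\ $[\mu]$ for some $c\in(0,\infty)$, i.e.\ to $w_{1}/w_{2}\le c$ a.e.\ $[\mu]$, which is the asserted condition. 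The point deserving care is the set where $w_{1}=\infty$ (for $\lambda=1$ this is $\{\sum_{i}\mathsf{E}_{i}=\infty\}$, cf.\ Theorem \ref{meanbasic} (iii) and Lemma \ref{weighted} (iv)); there $w_{2}$ stays finite by Theorem \ref{combasic} (iii), the quotient is infinite, and the failure of both the inclusion and the bound occur together, so the a.e.\ quotient condition still captures the equivalence correctly. I expect this endpoint bookkeeping and the vanishing of the cross term to be the only genuine things to verify; everything else is a direct transcription.
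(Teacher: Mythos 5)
Your proof is correct and follows essentially the same route as the paper's: reduce the equality of pairs via Theorem \ref{meanbasic} (iv) to the domain inclusion $\EuScript{D}((C_{\phi_{1},\omega_{\lambda}^{1}},C_{\phi_{2},\omega_{\lambda}^{2}}))\subseteq\EuScript{D}(\mathcal{M}_{\lambda}(\textbf{C}_{\phi,\omega}))$, identify both domains as the weighted spaces $L^2\big((1+\sum_{i=1}^2(\mathsf{h}_{\phi_{i},\omega_{i}}+\mathsf{E}_{i}))\D\mu\big)$ and $L^2\big((1+\lambda\sum_{i=1}^2\mathsf{h}_{\phi_{i},\omega_{i}}+(1-\lambda)\sum_{i=1}^2\mathsf{E}_{i})\D\mu\big)$ using Lemma \ref{weighted} (ii) together with Theorem \ref{meanbasic} (i), and conclude by \cite[Lemma 12.3]{Bud-AMPP-2014}. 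Your extra bookkeeping on the set $\{\sum_{i=1}^2\mathsf{E}_{i}=\infty\}$ is sound but not needed, since the cited lemma already applies to weights with values in $[0,\infty]$.
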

\begin{proof}
By Theorem \ref{meanbasic} (iv), it suffices to prove that $\EuScript{D}(C_{\phi_{1},\omega_{\lambda}^1},C_{\phi_{2},\omega_{\lambda}^2})\subseteq\EuScript{D}(\mathcal{M}_{\lambda}(\textbf{C}_{\phi,\omega}))$ if and only if $\frac{1+\sum\limits_{i=1}^2\mathsf{h}_{\phi_{i},\omega_{i}}+\sum\limits_{i=1}^2\mathsf{E}_{i}}{1+\lambda\sum\limits_{i=1}^2\mathsf{h}_{\phi_{i},\omega_{i}}+(1-\lambda)\sum\limits_{i=1}^2\mathsf{E}_{i}}\le c$ a.e. $[\mu]$ for some $c\in(0,\infty)$. By Proposition \ref{basic} (i), Lemma \ref{weighted} (ii) and Theorem \ref{meanbasic} (i), we have $$\EuScript{D}(C_{\phi_{1},\omega_{\lambda}^1},C_{\phi_{2},\omega_{\lambda}^2})=L^2((1+\sum\limits_{i=1}^2\mathsf{h}_{\phi_{i},\omega_{\lambda}^i})\D\mu)=L^2((1+\lambda\sum\limits_{i=1}^2\mathsf{h}_{\phi_{i},\omega_{i}}+(1-\lambda)\sum\limits_{i=1}^2\mathsf{E}_{i})\D\mu)$$
and
$$\EuScript{D}(\mathcal{M}_{\lambda}(\textbf{C}_{\phi,\omega}))=L^2((1+\sum\limits_{i=1}^2\mathsf{h}_{\phi_{i},\omega_{i}}+\sum\limits_{i=1}^2\mathsf{E}_{i})\D\mu).$$ By \cite[Lemma 12.3]{Bud-AMPP-2014}, we know that $$L^2((1+\lambda\sum\limits_{i=1}^2\mathsf{h}_{\phi_{i},\omega_{i}}+(1-\lambda)\sum\limits_{i=1}^2\mathsf{E}_{i})\D\mu)\subseteq L^2((1+\sum\limits_{i=1}^2\mathsf{h}_{\phi_{i},\omega_{i}}+\sum\limits_{i=1}^2\mathsf{E}_{i})\D\mu)$$ if and only if $\frac{1+\sum\limits_{i=1}^2\mathsf{h}_{\phi_{i},\omega_{i}}+\sum\limits_{i=1}^2\mathsf{E}_{i}}{1+\lambda\sum\limits_{i=1}^2\mathsf{h}_{\phi_{i},\omega_{i}}+(1-\lambda)\sum\limits_{i=1}^2\mathsf{E}_{i}}\le c$ a.e. $[\mu]$ for some $c\in(0,\infty)$.
\end{proof}
\begin{rem}
If $C_{\phi_{i},\omega_{i}}$ is bounded on $L^2(\mu)$ for $i=1,2$, then $\mathcal{M}_{\lambda}(\textbf{C}_{\phi,\omega})=(C_{\phi_{1},\omega_{\lambda}^{1}},C_{\phi_{2},\omega_{\lambda}^{2}})$ for $\lambda\in\{0,1\}$. Although this result is obvious in the bounded case, we provide a measure-theoretic explanation for the sake of compatibility. Indeed, since $\mathsf{h}_{\phi_{1},\omega_{1}}+\mathsf{h}_{\phi_{2},\omega_{2}}\le\|\mathsf{h}_{\phi_{1},\omega_{1}}\|_{\infty}+\|\mathsf{h}_{\phi_{2},\omega_{2}}\|_{\infty}$, then by \cite[Proposition 15 (i)]{b-j-j-sW} we have $$\mathsf{E}_{\phi_{i},\omega_{i}}(\mathsf{h}_{\phi_{1},\omega_{1}}+\mathsf{h}_{\phi_{2},\omega_{2}})\circ\phi_{i}^{-1}\le\|\mathsf{h}_{\phi_{1},\omega_{1}}\|_{\infty}+\|\mathsf{h}_{\phi_{2},\omega_{2}}\|_{\infty},$$which implies $\mathsf{E}_{i}\le\|\mathsf{h}_{\phi_{1},\omega_{1}}\|_{\infty}+\|\mathsf{h}_{\phi_{2},\omega_{2}}\|_{\infty}$ for $i=1,2$. Thus there exists $c=1+2(\|\mathsf{h}_{\phi_{1},\omega_{1}}\|_{\infty}+\|\mathsf{h}_{\phi_{2},\omega_{2}}\|_{\infty})>0$ such that $\frac{1+\sum\limits_{i=1}^2\mathsf{h}_{\phi_{i},\omega_{i}}+\sum\limits_{i=1}^2\mathsf{E}_{i}}{1+\lambda\sum\limits_{i=1}^2\mathsf{h}_{\phi_{i},\omega_{i}}+(1-\lambda)\sum\limits_{i=1}^2\mathsf{E}_{i}}\le c$ a.e. $[\mu]$.
\end{rem}

We now focus on the connections between the pair $(C_{\phi_{1},\omega_{\lambda}^1},C_{\phi_{2},\omega_{\lambda}^2})$ and $\textbf{C}_{\phi,\omega}$ under the condition that $\mathcal{M}_{\lambda}(\textbf{C}_{\phi,\omega})=(C_{\phi_{1},\omega_{\lambda}^1},C_{\phi_{2},\omega_{\lambda}^2})$ for some $\lambda\in[0,1]$. For simplicity, we will always assume $\lambda\in(0,1)$. Let us start with some notions. Denote the set of all weighted composition operator pairs by $\mathcal{WCOP}$, i.e., $$\mathcal{WCOP}=\{(C_{\phi_{1},\omega_{1}},C_{\phi_{2},\omega_{2}})\in\mathcal{L}(L^2(\mu),L^2(\mu)\oplus L^2(\mu))\}.$$ We write
$$\mathcal{WCOP}_{\text{bounded}}=\{(C_{\phi_{1},\omega_{1}},C_{\phi_{2},\omega_{2}})\in\mathcal{WCOP}\bigcap\mathcal{B}(L^2(\mu),L^2(\mu)\oplus L^2(\mu))\}$$
and
$$\mathcal{WCOP}_{\text{dense}}=\{(C_{\phi_{1},\omega_{1}},C_{\phi_{2},\omega_{2}})\in\mathcal{WCOP}:C_{\phi_{i},\omega_{i}}\ \text{is densely defined in $L^2(\mu)$ for}\ i=1,2\}.$$ By Theorem \ref{meanbasic} (iv), we know that the $\lambda$-spherical mean transform maps a pair of weighted composition operators to another. However, as will be shown in Corollary \ref{denseequ}, the image of a densely defined weighted composition operator pair under this map is not necessarily densely defined. A natural question is whether, for $\lambda\in (0,1)$ the restricted map
$$\mathcal{M}_{\lambda}|_{\mathcal{WCOP}_{\text{bounded}}}:\mathcal{WCOP}_{\text{bounded}}\to\mathcal{WCOP}_{\text{bounded}},\quad\quad\textbf{C}_{\phi,\omega}\mapsto\mathcal{M}_{\lambda}(\textbf{C}_{\phi,\omega})$$ is bijective. By Remark \ref{singlemult} and Corollary \ref{singlemeanbasic}, the following example provides a negative answer. Moreover, this example also shows that $\mathcal{M}_{\lambda}|_{\mathcal{WCOP}_{\text{bounded}}}$ is not norm-continuous.

\begin{exa}\label{17}
Let $X=\mathbb{N}$, $\mathscr{A}=2^{X}$ and $\mu$ be the counting measure on $X$. Suppose $\lambda\in(0,1)$, then we have

(i) $\mathcal{M}_{\lambda}$ is not injective.

(ii) $\mathcal{M}_{\lambda}$ is not surjective.

(iii) $\mathcal{M}_{\lambda}|_{\mathcal{WCOP}_{\text{bounded}}}$ is not norm-continuous.
\begin{proof}
(i) Case 1: $\lambda\in(0,\frac{1}{2}]$. Let $c\in(0,\frac{1}{\lambda})$. Let $k\in\mathbb{N}$. Define the function $^c\omega_{inj}:X\to\mathbb{R}$ as
$$^{c}\omega_{inj}(x)=\begin{cases}
	0&x=1\\
	c\cdot(\frac{\lambda}{1-\lambda})^{2k-2}&x=2k\\
	(\frac{1}{\lambda}-c)\cdot(\frac{\lambda}{1-\lambda})^{2k-1}&x=2k+1
\end{cases}.$$ The $\mathscr{A}$-measurable transform $\phi_{inj}:X\to X$ is defined by
\begin{equation}\label{phiinj}
\phi_{inj}(x)=\begin{cases}
	1&x=1,2\\
	x-1&x\in\mathbb{N}\backslash\{1,2\}\\
\end{cases}.
\end{equation}
By \eqref{h}, we have
$$\mathsf{h}_{\phi_{inj},^c\omega_{inj}}(x)=\begin{cases}
	c^2&x=1\\
	|^c\omega_{inj}(x+1)|^2&x\in\mathbb{N}\backslash\{1\}
\end{cases}\le c^2+\frac{1}{\lambda^2},$$which yields, by Theorem \ref{combasic} (iv), that $C_{\phi_{inj},^c\omega_{inj}}$ is bounded. A direct calculation shows that
\begin{small}
	\begin{equation}\label{inj}
		\begin{aligned}
			&(1-\lambda)^c\omega_{inj}(x+1)+\lambda^c\omega_{inj}(x)\\&=\begin{cases}(1-\lambda)(\frac{1}{\lambda}-c)(\frac{\lambda}{1-\lambda})^{2k-1}+\lambda c(\frac{\lambda}{1-\lambda})^{2k-2}=(\frac{\lambda}{1-\lambda})^{2k-2}=(\frac{\lambda}{1-\lambda})^{x-2}&x=2k\\(1-\lambda)c(\frac{\lambda}{1-\lambda})^{2k}+\lambda(\frac{1}{\lambda}-c)(\frac{\lambda}{1-\lambda})^{2k-1}=(\frac{\lambda}{1-\lambda})^{2k-1}=(\frac{\lambda}{1-\lambda})^{x-2}&x=2k+1
			\end{cases}\\&=(\frac{\lambda}{1-\lambda})^{x-2}\quad\quad x\in\mathbb{N}\backslash\{1\}
		\end{aligned}
	\end{equation}
\end{small}for $c\in(0,\frac{1}{\lambda})$. 

\textbf{Claim 1:} $C_{\phi_{inj},^{\alpha}\omega_{inj}}\ne C_{\phi_{inj},^{\beta}\omega_{inj}}$ and $\mathcal{M}_{\lambda}(C_{\phi_{inj},^{\alpha}\omega_{inj}})=\mathcal{M}_{\lambda}(C_{\phi_{inj},^{\beta}\omega_{inj}})=C_{\phi_{inj},^{1}\omega_{inj}}$, where $\frac{1}{\lambda}>\alpha>\beta>0$, that is, the map $\mathcal{M}_{\lambda}$ is not injective for $\lambda\in(0,\frac{1}{2}]$.

Indeed, since $$C_{\phi_{inj},^{\alpha}\omega_{inj}}\chi_{\{2\}}=^{\alpha}\omega_{inj}\cdot\chi_{\{2\}}\circ\phi_{inj}=\frac{1-\lambda \alpha}{1-\lambda}\cdot\chi_{\{3\}}\ne\frac{1-\lambda\beta}{1-\lambda}\cdot\chi_{\{3\}}=C_{\phi_{inj},^{\beta}\omega_{inj}}\chi_{\{2\}},$$ then $C_{\phi_{inj},^{\alpha}\omega_{inj}}\ne C_{\phi_{inj},^{\beta}\omega_{inj}}$. On the other hand, by \eqref{inj} we obtain \begin{align*}
&(((1-\lambda)\chi_{\{^{c}\omega_{inj}\ne0\}}(\frac{\mathsf{h}_{\phi_{inj},^{c}\omega_{inj}}}{\mathsf{h}_{\phi_{inj},^{c}\omega_{inj}}\circ\phi_{inj}})^\frac{1}{2}+\lambda)\cdot^{c}\omega_{inj})(x)\\&=\begin{cases}
	0&x=1\\
	(1-\lambda)^c\omega_{inj}(x+1)+\lambda^c\omega_{inj}(x)&x\in\mathbb{N}\backslash\{1\}
\end{cases}=^1\omega_{inj},\quad\text{a.e.}\ [\mu],
\end{align*}
which implies, by Corollary \ref{singlemeanbasic} (iv), that $\mathcal{M}_{\lambda}(C_{\phi_{inj},^c\omega_{inj}})=C_{\phi_{inj},^1\omega_{inj}}$ for $c\in(0,\frac{1}{\lambda})$. Hence, $\mathcal{M}_{\lambda}(C_{\phi_{inj},^\alpha\omega_{inj}})=\mathcal{M}_{\lambda}(C_{\phi_{inj},^\beta\omega_{inj}})=C_{\phi_{inj},^1\omega_{inj}}$ for $\frac{1}{\lambda}>\alpha>\beta>0$. Therefore, the claim follows. 

Case 2: $\lambda\in(\frac{1}{2},1)$. Let $c\in(0,\frac{1}{\lambda})$. Let $k\in\mathbb{N}$. Consider the modified counting measure on $2^{\mathbb{N}}$ defined by $\widetilde{\mu}(\{x\})=\begin{cases}
	0&x=1\\
	1&x\ne1
\end{cases}$ and weighted composition operators on the measure space $(X,2^X,\widetilde{\mu})$. Let $k\in\mathbb{N}$. Define the function $\widetilde{^c\omega_{inj}}:X\to\mathbb{R}$ as
$$\widetilde{^{c}\omega_{inj}}(x)=\begin{cases}
	\frac{1-\lambda c}{1-\lambda}\cdot(\frac{1-\lambda}{\lambda})^{2k}&x=2k\\
	c\cdot(\frac{1-\lambda}{\lambda})^{2k-2}&x=2k-1
\end{cases}.$$ The $\mathscr{A}$-measurable transform $\widetilde{\phi_{inj}}:X\to X$ is defined by $$\widetilde{\phi_{inj}}(x)=x+1.$$
By \cite[Proposition 79]{b-j-j-sW}, we have
$$\mathsf{h}_{\widetilde{\phi_{inj}},\widetilde{^c\omega_{inj}}}(x)=\begin{cases}
	0&x=1\\
	|\widetilde{^c\omega_{inj}}(x-1)|^2&x\in\mathbb{N}\backslash\{1\}
\end{cases}\le c^2,$$which, via Theorem \ref{combasic} (iv), shows that $C_{\widetilde{\phi_{inj}},\widetilde{^c\omega_{inj}}}$ is bounded. A direct calculation yields that
\begin{small}
	\begin{equation}
		\begin{aligned}\label{injw}
			&(1-\lambda)^c\widetilde{\omega_{inj}}(x-1)+\lambda^c\widetilde{\omega_{inj}}(x)\\&=\begin{cases}
				(1-\lambda)c\cdot(\frac{1-\lambda}{\lambda})^{2k-2}+\lambda\cdot\frac{1-\lambda c}{1-\lambda}\cdot(\frac{1-\lambda}{\lambda})^{2k}=(\frac{1-\lambda}{\lambda})^{2k-1}&x=2k\\
				(1-\lambda)\cdot\frac{1-\lambda c}{1-\lambda}\cdot(\frac{1-\lambda}{\lambda})^{2k}+\lambda\cdot c\cdot(\frac{1-\lambda}{\lambda})^{2k}=(\frac{1-\lambda}{\lambda})^{2k}&x=2k+1
			\end{cases}\\&=(\frac{1-\lambda}{\lambda})^{x-1}\quad\quad x\in\mathbb{N}
		\end{aligned}
	\end{equation}
\end{small}for $c\in(0,\frac{1}{\lambda})$.

 \textbf{Claim 2:} $C_{\widetilde{\phi_{inj}},\widetilde{^{\alpha}\omega_{inj}}}\ne C_{\widetilde{\phi_{inj}},\widetilde{^{\beta}\omega_{inj}}}$ and $\mathcal{M}_{\lambda}(C_{\widetilde{\phi_{inj}},\widetilde{^{\alpha}\omega_{inj}}})=\mathcal{M}_{\lambda}(C_{\widetilde{\phi_{inj}},\widetilde{^{\beta}\omega_{inj}}})=C_{\widetilde{\phi_{inj}},\widetilde{\omega_{inj}}}$, where $\frac{1}{\lambda}>\alpha>\beta>0$ and $\widetilde{\omega_{inj}}=(\frac{1-\lambda}{\lambda})^{x-1}$, that is, the map $\mathcal{M}_{\lambda}$ is not injective for $\lambda\in(\frac{1}{2},1)$.

Indeed, since $$C_{\widetilde{\phi_{inj}},\widetilde{^{\alpha}\omega_{inj}}}\chi_{\{3\}}=\widetilde{^{\alpha}\omega_{inj}}\cdot\chi_{\{3\}}\circ\widetilde{\phi_{inj}}=\frac{1-\lambda \alpha}{\lambda}\cdot\chi_{\{2\}}\ne\frac{1-\lambda\beta}{\lambda}\cdot\chi_{\{2\}}=C_{\widetilde{\phi_{inj}},\widetilde{^{\beta}\omega_{inj}}}\chi_{\{3\}},$$ then $C_{\widetilde{\phi_{inj}},\widetilde{^{\alpha}\omega_{inj}}}\ne C_{\widetilde{\phi_{inj}},\widetilde{^{\beta}\omega_{inj}}}$. Moreover, from \eqref{injw} we get \begin{align*}
&(((1-\lambda)\chi_{\{\widetilde{^{c}\omega_{inj}}\ne0\}}(\frac{\mathsf{h}_{\widetilde{\phi_{inj}},\widetilde{^{c}\omega_{inj}}}}{\mathsf{h}_{\widetilde{\phi_{inj}},\widetilde{^{c}\omega_{inj}}}\circ\widetilde{\phi_{inj}}})^\frac{1}{2}+\lambda)\cdot\widetilde{^{c}\omega_{inj}})(x)\\&=\begin{cases}
	\lambda\cdot c&x=1\\
	(1-\lambda)\widetilde{^c\omega_{inj}}(x+1)+\lambda\widetilde{^c\omega_{inj}}(x)&x\in\mathbb{N}\backslash\{1\}
\end{cases}=\widetilde{\omega_{inj}},\quad\text{a.e.}\ [\widetilde{\mu}].
\end{align*}
By Corollary \ref{singlemeanbasic} (iv), we conclude that $\mathcal{M}_{\lambda}(C_{\widetilde{\phi_{inj}},\widetilde{^c\omega_{inj}}})=C_{\widetilde{\phi_{inj}},\widetilde{\omega_{inj}}}$ for $c\in(0,\frac{1}{\lambda})$. In particular, for $\frac{1}{\lambda}>\alpha>\beta>0$, $$\mathcal{M}_{\lambda}(C_{\widetilde{\phi_{inj}},\widetilde{^\alpha\omega_{inj}}})=\mathcal{M}_{\lambda}(C_{\widetilde{\phi_{inj}},\widetilde{^\beta\omega_{inj}}})=C_{\widetilde{\phi_{inj}},\widetilde{^1\omega_{inj}}},$$ and the claim follows. 

(ii) Case 1: $\lambda\in(0,1)\backslash\{\frac{1}{2}\}$. Define the $\mathscr{A}$-measurable transform $\phi:X\to X$ by
\begin{equation}\label{phiequ}
	\phi_{sur}(x)=\begin{cases}
		2&x=1\\
		1&x=2\\
		x&x\in\mathbb{N}\backslash\{1,2\}
	\end{cases}
\end{equation}
and the function $\omega_{sur}:X\to\mathbb{R}$ by
$\omega_{sur}(x)=\begin{cases}
	1-\lambda&x=1\\
	\lambda&x=2\\
	1&x\in\mathbb{N}\backslash\{1,2\}
\end{cases}.$

\textbf{Claim 3:} There dose not exist a densely defined weighted composition operator $C_{\psi,\omega}$ such that $\mathcal{M}_{\lambda}(C_{\psi,\omega})=C_{\phi_{sur},\omega_{sur}}$ for $\lambda\in(0,1)\backslash\{\frac{1}{2}\}$, that is, the map $\mathcal{M}_{\lambda}$ is not surjective.

Suppose Claim 3 fails, then $\mathcal{M}_{\lambda}(C_{\psi,\omega})=C_{\phi_{sur},\omega_{sur}}$ for some densely defined weighted composition operator $C_{\psi,\omega}$. By Remark \ref{singlemult} and Theorem \ref{meanbasic} (iv), we have $$\mathcal{M}_{\lambda}(C_{\psi,\omega})=C_{\psi,((1-\lambda)\chi_{\{\omega\ne0\}}(\frac{\mathsf{h}_{\psi,\omega}}{\mathsf{h}_{\psi,\omega}\circ\psi})^\frac{1}{2}+\lambda)\omega}=C_{\phi_{sur},\omega_{sur}},$$
which implies
\begin{equation}\label{comid}
	\omega_{\lambda}(x)\cdot\chi_{\{\phi_{sur}(x)\}}\circ\psi(x)=\omega_{sur}(x)\cdot\chi_{\{\phi_{sur}(x)\}}\circ\phi_{sur}(x)=\omega_{sur}(x)
\end{equation}
for any $x\in X$, where
\begin{equation}\label{exmlambda}
	\omega_{\lambda}=((1-\lambda)\chi_{\{\omega\ne0\}}(\frac{\mathsf{h}_{\psi,\omega}}{\mathsf{h}_{\psi,\omega}\circ\psi})^\frac{1}{2}+\lambda)\omega.
\end{equation}
Since $\omega_{sur}>0$ and \eqref{comid}, we have
\begin{equation}\label{weighteq1}
	\omega_{\lambda}=\omega_{sur}
\end{equation}
and $\psi=\phi_{sur}$. Using the fact $\omega_{\lambda}=\omega_{sur}>0$, we obtain $\omega>0$. Hence, by \eqref{h}, we have
$$\mathsf{h}_{\psi,\omega}(x)=\mathsf{h}_{\phi_{sur},\omega}(x)=|\omega(\phi_{sur}^{-1}(x))|^2=\begin{cases}
	(\omega(2))^2&x=1\\
	(\omega(1))^2&x=2\\
	(\omega(x))^2&x\in\mathbb{N}\backslash\{1,2\}
\end{cases},$$which yields, by \eqref{exmlambda} and \eqref{weighteq1}, that$$\omega_{\lambda}(x)=\begin{cases}
	((1-\lambda)\frac{\omega(2)}{\omega(1)}+\lambda)\omega(1)&x=1\\
	((1-\lambda)\frac{\omega(1)}{\omega(2)}+\lambda)\omega(2)&x=2\\
	\omega(x)&x\in\mathbb{N}\backslash\{1,2\}
\end{cases}=\begin{cases}
	1-\lambda&x=1\\
	\lambda&x=2\\
	1&x\in\mathbb{N}\backslash\{1,2\}
\end{cases}.$$
A simple calculation shows that there do not exist positive real numbers $\omega(1)$ and $\omega(2)$ satisfying the above system of equations. This contradicts \eqref{weighteq1}. Therefore, the claim follows.

Case 2: $\lambda=\frac{1}{2}$. Set $\widehat{\phi_{sur}}$ as in \eqref{phiequ}. The function $\widehat{\omega_{sur}}:X\to\mathbb{R}$ is defined as
$\widehat{\omega_{sur}}(x)=\begin{cases}
	2&x=1\\
	1&x\in\mathbb{N}\backslash\{1\}
\end{cases}.$ Using a proof similar to that of Claim 3, one can verify that there dose not exist a densely defined weighted composition operator $C_{\psi,\omega}$ such that $\mathcal{M}_{\frac{1}{2}}(C_{\psi,\omega})=C_{\widehat{\phi_{sur}},\widehat{\omega_{sur}}}$, that is, the map $\mathcal{M}_{\frac{1}{2}}$ is not surjective.

(iii) By Remark \ref{singlemult}, it suffices to prove Claim 4. Define the $\mathscr{A}$-measurable transform $\phi:X\to X$ by
$\phi(x)=\begin{cases}
	2&x=1\\
	1&x=2\\
	x&x\in\mathbb{N}\backslash\{1,2\}
\end{cases}.$ The functions $\omega_{n},\omega:X\to\mathbb{R}$ are defined as $\omega_{n}(x)=\begin{cases}
	1&x=1\\
	\frac{1}{n}&x=2\\
	0&x\in\mathbb{N}\backslash\{1,2\}
\end{cases}$ for $n\in\mathbb{N}$ and 
$\omega(x)=\begin{cases}
	1&x=1\\
	0&x\in\mathbb{N}\backslash\{1\}
\end{cases},$ respectively.

\textbf{Claim 4:} $C_{\phi,\omega_{n}}$ and $C_{\phi,\omega}$ are bounded operators such that $\lim\limits_{n\to\infty}C_{\phi,\omega_{n}}=C_{\phi,\omega}$ in the norm topology, but $\mathcal{M}_{\lambda}(C_{\phi,\omega_{n}})\not\to\mathcal{M}_{\lambda}(C_{\phi,\omega})$ in norm for $\lambda\in (0,1)$.

Indeed, by \eqref{h} we have
$\mathsf{h}_{\phi,\omega_{n}}=\begin{cases}
	\frac{1}{n^2}&x=1\\
	1&x=2\\
	0&x\in\mathbb{N}\backslash\{1,2\}
\end{cases}$ and 
\begin{equation}\label{couduggal}
\mathsf{h}_{\phi,\omega}=\begin{cases}
	1&x=2\\
	0&x\in\mathbb{N}\backslash\{2\}
\end{cases},
\end{equation}
which yields, by Theorem \ref{combasic} (iv), that $C_{\phi,\omega_{n}}$ and $C_{\phi,\omega}$ are bounded operators. Again from \eqref{h} and Theorem \ref{combasic} (iv), it follows that $\mathsf{h}_{\phi,\omega_{n}-\omega}(x)=\begin{cases}
	\frac{1}{n^2}&x=1\\
	0&x\in\mathbb{N}\backslash\{1\}
\end{cases}$
and $\|C_{\phi,\omega_{n}}-C_{\phi,\omega}\|=\|C_{\phi,\omega_{n}-\omega}\|=\|\mathsf{h}_{\phi,\omega_{n}-\omega}\|_{\infty}^{\frac{1}{2}}=\frac{1}{n}\to 0$ as $n\to\infty$. That is, $\lim\limits_{n\to\infty}C_{\phi,\omega_{n}}=C_{\phi,\omega}$ in the norm topology. Let $$\omega_{\lambda}^n\triangleq(((1-\lambda)\chi_{\{\omega_{n}\ne0\}}(\frac{\mathsf{h}_{\phi,\omega_{n}}}{\mathsf{h}_{\phi,\omega_{n}}\circ\phi})^{\frac{1}{2}}+\lambda)\cdot\omega_{n})(x)=\begin{cases}
	(1-\lambda)\frac{1}{n}+\lambda&x=1\\
	(1-\lambda)+\lambda\cdot\frac{1}{n}&x=2\\
	0&x\in\mathbb{N}\backslash\{1,2\}
\end{cases}$$
and
\begin{equation}\label{meanduggal}
\omega_{\lambda}\triangleq(((1-\lambda)\chi_{\{\omega\ne0\}}(\frac{\mathsf{h}_{\phi,\omega}}{\mathsf{h}_{\phi,\omega}\circ\phi})^{\frac{1}{2}}+\lambda)\cdot\omega)(x)=\begin{cases}
	\lambda&x=1\\
	0&x\in\mathbb{N}\backslash\{1\}
\end{cases}.
\end{equation}
Using \eqref{h}, we have $$\mathsf{h}_{\phi,\omega_{\lambda}^n-\omega_{\lambda}}=\begin{cases}
	((1-\lambda)+\lambda\cdot\frac{1}{n})^2&x=1\\
	((1-\lambda)\frac{1}{n})^2&x=2\\
	0&x\in\mathbb{N}\backslash\{1,2\}
\end{cases},$$ which implies, by by Corollary \ref{singlemeanbasic} (iv), that
\begin{align*}
	\|\mathcal{M}_{\lambda}(C_{\phi,\omega_{n}})-\mathcal{M}_{\lambda}(C_{\phi,\omega})\|&=\|C_{\phi,\omega_{\lambda}^n}-C_{\phi,\omega_{\lambda}}\|=\|C_{\phi,\omega_{\lambda}^n-\omega_{\lambda}}\|\\&=\|\mathsf{h}_{\phi,\omega_{\lambda}^n-\omega_{\lambda}}\|_{\infty}^{\frac{1}{2}}\to1-\lambda\ne0
\end{align*}
as $n\to\infty$. Therefore, we get the desired result.
\end{proof}
\end{exa}
\begin{rem}
Recently, in \cite[Section 4]{sjj-mn-2023}, Djordjević et al. proved that for $t\in(0,1]$, the generalized Aluthge map $T\mapsto|T|^t U|T|^{1-t}$ is norm-continuous on $\mathcal{B}(\mathcal{H})$, where $U$ is the partial isometry in the polar decomposition of $T$. In fact, this result holds for $t\in(0,1)$ (see also \cite[Theorem 2.5]{zhou-glma-2023}), but fails for $t=1$. Indeed, the interested reader may verify that Claim 4 provides a counterexample.
\end{rem}

Inspired by \cite[Lemma 4.13]{zhou-glma-2023}, we investigate the unitary equivalence of the $\lambda$-spherical mean transform of $\textbf{C}_{\phi,\omega}$. Let $U\in\mathcal{B}(L^2(\mu))$ be a unitary operator. For $\textbf{C}_{\phi,\omega}=(C_{\phi_{1},\omega_{1}},C_{\phi_{2},\omega_{2}})\in\mathcal{WCOP}$, the pair $U\textbf{C}_{\phi,\omega}U^\ast$ is given by
 $$U\textbf{C}_{\phi,\omega}U^\ast=(UC_{\phi_{1},\omega_{1}}U^\ast,UC_{\phi_{2},\omega_{2}}U^\ast).$$
Since
\begin{equation}\label{unidomain}
	\EuScript{D}(U\textbf{C}_{\phi,\omega}U^\ast)=\bigcap\limits_{i=1}^2(U\EuScript{D}(C_{\phi_{i},\omega_{i}}))=U(\bigcap\limits_{i=1}^2\EuScript{D}(C_{\phi_{i},\omega_{i}})),
\end{equation}it follows that $U\textbf{C}_{\phi,\omega}U^\ast$ is densely defined if and only if so is $(C_{\phi_{1},\omega_{1}},C_{\phi_{2},\omega_{2}})$. Moreover, by a similar argument as in Proposition \ref{basic} (iii), we also conclude that $U\textbf{C}_{\phi,\omega}U^\ast$ is closed. Therefore, if $\textbf{C}_{\phi,\omega}\in\mathcal{WCOP}_{\text{dense}}\subset\mathcal{WCOP}$, then the densely defined closed operator pair $U\textbf{C}_{\phi,\omega}U^\ast$ admits polar decompositions. 

Suppose $\textbf{C}_{\phi,\omega}=\left(
\begin{array}{c}
	U_{1} \\
	U_{2}
\end{array}
\right)|\textbf{C}_{\phi,\omega}|$ is the polar decomposition of the pair $\textbf{C}_{\phi,\omega}\in\mathcal{WCOP}_{\text{dense}}$. We  claim that 
$U\textbf{C}_{\phi,\omega}U^\ast=\left(
\begin{array}{c}
	UU_{1}U^\ast \\
	UU_{2}U^\ast
\end{array}
\right)(U|\textbf{C}_{\phi,\omega}|U^\ast)$ is the polar decomposition of $U\textbf{C}_{\phi,\omega}U^\ast$. Since $U_{1}^\ast U_{1}+U_{2}^\ast U_{2}$ is a partial isometry, it is easy to verify that $$\left(
\begin{array}{c}
	UU_{1}U^\ast \\
	UU_{2}U^\ast
\end{array}
\right)^\ast\left(
\begin{array}{c}
	UU_{1}U^\ast \\
	UU_{2}U^\ast
\end{array}
\right)=UU_{1}^\ast U^\ast UU_{1}U^\ast+UU_{2}^\ast U^\ast UU_{2}U^\ast=U(U_{1}^\ast U_{1}+U_{2}^\ast U_{2})U^\ast$$ is also a partial isometry. Note that $$\mathcal{N}\Bigg(\left(
\begin{array}{c}
	UU_{1}U^\ast \\
	UU_{2}U^\ast
\end{array}
\right)\Bigg)=U\mathcal{N}\Bigg(\left(
\begin{array}{c}
	U_{1} \\
	U_{2}
\end{array}
\right)\Bigg)=U\mathcal{N}\Bigg(\left(
\begin{array}{c}
	C_{\phi_{1},\omega_{1}} \\
	C_{\phi_{2},\omega_{2}}
\end{array}
\right)\Bigg)=\mathcal{N}\Bigg(\left(
\begin{array}{c}
	UC_{\phi_{1},\omega_{1}}U^\ast \\
	UC_{\phi_{2},\omega_{2}}U^\ast
\end{array}
\right)\Bigg)$$ and from \eqref{nepolar} we have
\begin{equation}\label{unitaryeqpolar}
	UC_{\phi_{i},\omega_{i}}U^\ast\supseteq UU_{i}|\textbf{C}_{\phi,\omega}|U^\ast=UU_{i}U^\ast U|\textbf{C}_{\phi,\omega}|U^\ast
\end{equation}
for $i=1,2$. Thus, if we can prove that
\begin{equation}\label{unitaryfactor}
	|U\textbf{C}_{\phi,\omega}U^\ast|=U|\textbf{C}_{\phi,\omega}|U^\ast,
\end{equation}then by \eqref{unitaryeqpolar} and the uniqueness of the polar decomposition we get the desired result. Indeed, by \eqref{unidomain}, we have
\begin{align*}
\EuScript{D}(|U\textbf{C}_{\phi,\omega}U^\ast|)&=\EuScript{D}(U\textbf{C}_{\phi,\omega}U^\ast)=U(\bigcap\limits_{i=1}^2\EuScript{D}(C_{\phi_{i},\omega_{i}}))\\&=U\EuScript{D}(\textbf{C}_{\phi,\omega})=U\EuScript{D}(|\textbf{C}_{\phi,\omega}|)=\EuScript{D}(U|\textbf{C}_{\phi,\omega}|U^\ast).
\end{align*}
 Using \eqref{mulcomeq} and Theorem \ref{polar} (i), we obtain
\begin{align*}
\||U\textbf{C}_{\phi,\omega}U^\ast |f\|^2&=\|U\textbf{C}_{\phi,\omega}U^\ast f\|^2=\|UC_{\phi_{1},\omega_{1}}U^\ast f\|^2+\|UC_{\phi_{2},\omega_{2}}U^\ast f\|^2\\&=\|C_{\phi_{1},\omega_{1}}U^\ast f\|^2+\|C_{\phi_{2},\omega_{2}}U^\ast f\|^2\\&=\|\mathsf{M}_{\sqrt{\mathsf{h}_{\phi_{1},\omega_{1}}+\mathsf{h}_{\phi_{2},\omega_{2}}}}U^\ast f\|^2\\&=\||\textbf{C}_{\phi,\omega}|U^\ast f\|^2=	\|U|\textbf{C}_{\phi,\omega}|U^\ast f\|^2
\end{align*}
for $f\in \EuScript{D}(|U\textbf{C}_{\phi,\omega}U^\ast|)=U\EuScript{D}(\textbf{C}_ {\phi,\omega})$. It is well-known that if a densely defined operator $T$ is selfadjoint on $\mathcal{H}$, then so is $UTU^\ast$ for any unitary operator $U\in\mathcal{B}(\mathcal{H})$. Hence, $U|\textbf{C}_{\phi,\omega}|U^\ast$ is selfadjoint. Thus, $|U\textbf{C}_{\phi,\omega}U^\ast|=U|\textbf{C}_{\phi,\omega}|U^\ast$ follows from \cite[Lemma 4]{b-j-j-sW}. Therefore, the claim follows, which implies Definition \ref{smtdfn} can be extended naturally to the operator pair $U\textbf{C}_{\phi,\omega}U^\ast$. Consequently, the operator pair $$(\lambda UU_{1}U^\ast|U\textbf{C}_{\phi,\omega}U^\ast|+(1-\lambda)|U\textbf{C}_{\phi,\omega}U^\ast|UU_{1}U^\ast,\lambda UU_{2}U^\ast|U\textbf{C}_{\phi,\omega}U^\ast|+(1-\lambda)|U\textbf{C}_{\phi,\omega}U^\ast|UU_{2}U^\ast)$$
may be denoted as $\mathcal{M}_{\lambda}(U\textbf{C}_{\phi,\omega}U^\ast)$. Since\begin{align*}
U(\lambda U_{i}|\textbf{C}_{\phi,\omega}|+(1-\lambda)|\textbf{C}_{\phi,\omega}|U_{i})U^\ast&=\lambda UU_{i}|\textbf{C}_{\phi,\omega}|U^\ast+(1-\lambda)U|\textbf{C}_{\phi,\omega}|U_{i}U^\ast\\&=\lambda UU_{i}U^\ast U|\textbf{C}_{\phi,\omega}|U^\ast+(1-\lambda)U|\textbf{C}_{\phi,\omega}|U^\ast UU_{i}U^\ast\\&\stackrel{\eqref{unitaryfactor}}{=}\lambda UU_{i}U^\ast|U\textbf{C}_{\phi,\omega}U^\ast|+(1-\lambda)|U\textbf{C}_{\phi,\omega}U^\ast|UU_{i}U^\ast
\end{align*}
for $i=1,2$, we conclude that $\mathcal{M}_{\lambda}(U\textbf{C}_{\phi,\omega}U^\ast)=U\mathcal{M}_{\lambda}(\textbf{C}_{\phi,\omega})U^\ast$ for $\lambda\in(0,1)$.

Summarizing, we have the following.
 \begin{pro}
 	Suppose $C_{\phi_{1},\omega_{1}}$ and $C_{\phi_{2},\omega_{2}}$ are densely defined. Fix $\lambda\in(0,1)$. Let $U\in\mathcal{B}(L^2(\mu))$ be a unitary operator and $k\in\mathbb{C}$. Then 
 	\begin{enumerate}
 		\item[(i)]
 		$\mathcal{M}_{\lambda}(U\textbf{C}_{\phi,\omega}U^\ast)=U\mathcal{M}_{\lambda}(\textbf{C}_{\phi,\omega})U^\ast$,
 		\item[(ii)]
 	$\mathcal{N}(\mathcal{M}_{\lambda}(\textbf{C}_{\phi,\omega}))=\mathcal{N}(\textbf{C}_{\phi,\omega})$. In particular, $\textbf{C}_{\phi,\omega}=\textbf{0}$ if and only if $\mathcal{M}_{\lambda}(\textbf{C}_{\phi,\omega})=\textbf{0}$, where $\textbf{0}$ is the operator pair $(0,0)$.
 	\end{enumerate}
 \end{pro}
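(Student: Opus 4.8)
The plan is to treat (i) as essentially a summary of the computation carried out in the paragraphs immediately preceding the proposition, and to prove (ii) by reducing the null-space identity to an a.e.\ comparison of Radon--Nikodym derivatives. For (i), I would first recall that $U\textbf{C}_{\phi,\omega}U^\ast$ is densely defined and closed, so that it admits a polar decomposition, and that the modulus identity $|U\textbf{C}_{\phi,\omega}U^\ast|=U|\textbf{C}_{\phi,\omega}|U^\ast$ of \eqref{unitaryfactor} holds; this is where \eqref{mulcomeq}, Theorem \ref{polar} (i), and the preservation of selfadjointness under unitary conjugation are used. Combining this modulus identity with the inclusion \eqref{unitaryeqpolar} and the matching of null spaces shows that conjugating the polar decomposition of $\textbf{C}_{\phi,\omega}$ by $U$ again produces a polar decomposition; feeding the resulting spherical isometry $(UU_{1}U^\ast,UU_{2}U^\ast)$ into Definition \ref{smtdfn} and factoring $U$ and $U^\ast$ through each summand then gives $\mathcal{M}_{\lambda}(U\textbf{C}_{\phi,\omega}U^\ast)=U\mathcal{M}_{\lambda}(\textbf{C}_{\phi,\omega})U^\ast$, exactly as computed before the statement.

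For (ii), the starting point is Theorem \ref{meanbasic} (iv): since $\lambda\in(0,1)$, we have $\mathcal{M}_{\lambda}(\textbf{C}_{\phi,\omega})=(C_{\phi_{1},\omega_{\lambda}^1},C_{\phi_{2},\omega_{\lambda}^2})$, so $\mathcal{N}(\mathcal{M}_{\lambda}(\textbf{C}_{\phi,\omega}))=\bigcap_{i=1}^2\mathcal{N}(C_{\phi_{i},\omega_{\lambda}^i})$ while $\mathcal{N}(\textbf{C}_{\phi,\omega})=\bigcap_{i=1}^2\mathcal{N}(C_{\phi_{i},\omega_{i}})$. By Theorem \ref{combasic} (vi) these null spaces are $\chi_{\{\mathsf{h}_{\phi_{i},\omega_{\lambda}^i}=0\}}L^2(\mu)$ and $\chi_{\{\mathsf{h}_{\phi_{i},\omega_{i}}=0\}}L^2(\mu)$ respectively, so it suffices to prove $\{\mathsf{h}_{\phi_{i},\omega_{\lambda}^i}=0\}=\{\mathsf{h}_{\phi_{i},\omega_{i}}=0\}$ a.e.\ $[\mu]$ for each $i$. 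I would read this equality off the formula $\mathsf{h}_{\phi_{i},\omega_{\lambda}^i}=\lambda^2\mathsf{h}_{\phi_{i},\omega_{i}}+2\lambda(1-\lambda)\mathsf{E}_{i}^{1/2}+(1-\lambda)^2\mathsf{E}_{i}$ of Lemma \ref{weighted} (ii): by Lemma \ref{weighted} (iii) all three summands are nonnegative, and since $\lambda^2>0$ the vanishing of the left-hand side forces $\mathsf{h}_{\phi_{i},\omega_{i}}=0$, which gives one inclusion; conversely, on $\{\mathsf{h}_{\phi_{i},\omega_{i}}=0\}$ the identity \eqref{Epro} makes the factor $\mathsf{E}_{\phi_{i},\omega_{i}}(\cdots)\circ\phi_{i}^{-1}$ appearing in \eqref{Eequation} vanish, hence $\mathsf{E}_{i}^{1/2}=\mathsf{E}_{i}=0$ there and so $\mathsf{h}_{\phi_{i},\omega_{\lambda}^i}=0$. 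This yields $\mathcal{N}(C_{\phi_{i},\omega_{\lambda}^i})=\mathcal{N}(C_{\phi_{i},\omega_{i}})$ for each $i$ and therefore $\mathcal{N}(\mathcal{M}_{\lambda}(\textbf{C}_{\phi,\omega}))=\mathcal{N}(\textbf{C}_{\phi,\omega})$.

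For the ``in particular'' clause I would use that, for a well-defined weighted composition operator, the operator is zero if and only if its Radon--Nikodym derivative vanishes a.e.\ $[\mu]$, equivalently its null space is all of $L^2(\mu)$ (if the derivative is positive on a set of positive measure, a suitable characteristic function lies in the domain but not in the kernel). Applying this to each component of $\textbf{C}_{\phi,\omega}$ and of $\mathcal{M}_{\lambda}(\textbf{C}_{\phi,\omega})=(C_{\phi_{1},\omega_{\lambda}^1},C_{\phi_{2},\omega_{\lambda}^2})$ shows $\textbf{C}_{\phi,\omega}=\textbf{0}\iff\mathcal{N}(\textbf{C}_{\phi,\omega})=L^2(\mu)$ and $\mathcal{M}_{\lambda}(\textbf{C}_{\phi,\omega})=\textbf{0}\iff\mathcal{N}(\mathcal{M}_{\lambda}(\textbf{C}_{\phi,\omega}))=L^2(\mu)$, whereupon the null-space identity closes the equivalence. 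I expect the only genuinely delicate point to be the backward inclusion in (ii): the expression \eqref{Eequation} for $\mathsf{E}_{i}$ is a product of a factor that may equal $+\infty$ with the factor $\mathsf{h}_{\phi_{i},\omega_{i}}$ that vanishes on the set in question, so one must invoke \eqref{Epro} precisely to rule out the indeterminate $0\cdot\infty$ and conclude $\mathsf{E}_{i}=0$ there; everything else is routine bookkeeping with the structure theorems already established.
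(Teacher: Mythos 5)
Your proof is correct and takes essentially the same route as the paper's: part (i) is handled by the computation preceding the proposition (the paper's proof does the same, only writing out (ii)), and part (ii) reduces the null-space identity via Theorem \ref{meanbasic} (iv) and Theorem \ref{combasic} (vi) to the a.e.\ equality of the zero sets of $\mathsf{h}_{\phi_{i},\omega_{\lambda}^{i}}$ and $\mathsf{h}_{\phi_{i},\omega_{i}}$, which the paper attributes to Lemma \ref{weighted} without elaboration. Your only addition is to spell that step out — nonnegativity of the three summands in Lemma \ref{weighted} (ii)--(iii) together with $\lambda^{2}>0$ for one inclusion, and the convention \eqref{Epro} to dispose of the potential $0\cdot\infty$ in \eqref{Eequation} for the other — which is precisely the intended argument, and in fact yields the slightly stronger componentwise equality $\{\mathsf{h}_{\phi_{i},\omega_{\lambda}^{i}}=0\}=\{\mathsf{h}_{\phi_{i},\omega_{i}}=0\}$.
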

 \begin{proof}
 	(ii) Using Theorem \ref{meanbasic} (iv) and Theorem \ref{combasic} (vi), we have $$\mathcal{N}(\mathcal{M}_{\lambda}(\textbf{C}_{\phi,\omega}))=\bigcap\limits_{i=1}^2\mathcal{N}(C_{\phi_{i},\omega_{\lambda}^i})=\bigcap\limits_{i=1}^2\chi_{\{\mathsf{h}_{\phi_{i},\omega_{\lambda}^i=0}\}}L^2(\mu)=\chi_{\bigcap\limits_{i=1}^2\{\mathsf{h}_{\phi_{i},\omega_{\lambda}^i}=0\}}L^2(\mu)$$
 	and $$\mathcal{N}(\textbf{C}_{\phi,\omega})=\bigcap\limits_{i=1}^2\mathcal{N}(C_{\phi_{i},\omega_{i}})=\bigcap\limits_{i=1}^2\chi_{\{\mathsf{h}_{\phi_{i},\omega_{i}=0}\}}L^2(\mu)=\chi_{\bigcap\limits_{i=1}^2\{\mathsf{h}_{\phi_{i},\omega_{i}}=0\}}L^2(\mu).$$ Since
 	 $\bigcap\limits_{i=1}^2\{\mathsf{h}_{\phi_{i},\omega_{\lambda}^i}=0\}=\bigcap\limits_{i=1}^2\{\mathsf{h}_{\phi_{i},\omega_{i}}=0\}$ a.e. $[\mu]$ for $\lambda\in(0,1)$ follows immediately from Lemma \ref{weighted},  then by the above equalities we get $\mathcal{N}(\mathcal{M}_{\lambda}(\textbf{C}_{\phi,\omega}))=\mathcal{N}(\textbf{C}_{\phi,\omega})$. The latter is  a direct corollary.
 \end{proof}
  
 It should be mentioned that the $\lambda$-Aluthge transform of a weighted composition operator is not necessarily closed \cite[Theorem 3.2]{benhida-mn-2020}, whereas its $\lambda$-mean transform is always closed (Corollary \ref{duggal}). For $\lambda=\frac{1}{2}$, Benhida et al. constructed a composition operator whose Aluthge transform is not closed basing on linear transformations of $\mathbb{R}^n$ \cite[Example 3.7]{benhida-mn-2020}. Here, we use Example \ref{nemean} to illustrate the difference in closedness between $\Delta_{\lambda}(C_{\phi,\omega})$ and $\mathcal{M}_{\lambda}(C_{\phi,\omega})$ of $C_{\phi,\omega}$.

\begin{exa}
	Let $X=\mathbb{N}$, $\mathscr{A}=2^X$ and $\mu$ be the counting measure on $X$. Set $C_{\phi,\omega}$ as in Example \ref{nemean}. Given $\lambda\in(0,1)$. It follows readily that $\phi^{-1}(2^X)=2^X$ since $\phi$ is invertible, and thus
	\begin{equation}\label{closed1}
		1+\mathsf{E}_{\phi,\omega}(\mathsf{h}_{\phi,\omega}^\lambda)\circ\phi^{-1}(n)\cdot\mathsf{h}_{\phi,\omega}^{1-\lambda}(n)=1+\mathsf{h}_{\phi,\omega}^{\lambda}\circ\phi^{-1}(n)\cdot\mathsf{h}_{\phi,\omega}^{1-\lambda}(n)=1,\quad n\in\mathbb{N}.
	\end{equation}
	On the other hand, for every $c>0$ there exists $n\in\mathbb{N}$ such that
	\begin{equation}\label{closed2}
		\mathsf{h}_{\phi,\omega}^{1-\lambda}(2n-1)=(2n)^{2(1-\lambda)}>c.
	\end{equation}
	Combining \eqref{closed1} and \eqref{closed2} with Theorem 3.2 in \cite{benhida-mn-2020}, we conclude that $\Delta_{\lambda}(C_{\phi,\omega})$ fails to be closed. However, the following corollary shows that $\mathcal{M}_{\lambda}(C_{\phi,\omega})$ is closed.
\end{exa} 
 
\begin{cor}\label{duggal}
Suppose $C_{\phi,\omega}$ and $C_{\phi_{i},\omega_{i}}$ $(i=1,2)$ are densely defined. Fix $\lambda\in(0,1)$. Then $\mathcal{M}_{\lambda}(C_{\phi_{1},\omega_{1}},C_{\phi_{2},\omega_{2}})$ is closed. In particular, $\mathcal{M}_{\lambda}(C_{\phi,\omega})$ is closed.
\end{cor}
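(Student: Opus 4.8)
The plan is to reduce the assertion to the closedness of a genuine weighted composition operator pair, which we already understand. Since $\lambda\in(0,1)$, Theorem \ref{meanbasic} (iv) gives the \emph{exact} equality of operator pairs $\mathcal{M}_{\lambda}(\textbf{C}_{\phi,\omega})=(C_{\phi_{1},\omega_{\lambda}^1},C_{\phi_{2},\omega_{\lambda}^2})$, with $\omega_{\lambda}^i$ as in \eqref{wfun}. Because the two pairs coincide as operators (equal graphs), it suffices to show that $(C_{\phi_{1},\omega_{\lambda}^1},C_{\phi_{2},\omega_{\lambda}^2})$, viewed as a column operator in $\mathcal{L}(L^2(\mu),L^2(\mu)\oplus L^2(\mu))$, has closed graph. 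First I would invoke Lemma \ref{weighted} (i) to record that $\mu_{\omega_{\lambda}^i}\circ\phi_i^{-1}\ll\mu$ for $i=1,2$, so each $C_{\phi_{i},\omega_{\lambda}^i}$ is well-defined, and hence closed by Theorem \ref{combasic} (ii).

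Next I would establish closedness of the column operator by repeating the graph-norm completeness argument of Proposition \ref{basic} (iii) verbatim. Explicitly, let $\{f_n\}$ be Cauchy with respect to the joint graph norm of $(C_{\phi_{1},\omega_{\lambda}^1},C_{\phi_{2},\omega_{\lambda}^2})$; then $\{f_n\}$ and $\{C_{\phi_{i},\omega_{\lambda}^i}f_n\}$ ($i=1,2$) are Cauchy in $L^2(\mu)$. Setting $f=\lim_{n}f_n$, the closedness of each $C_{\phi_{i},\omega_{\lambda}^i}$ forces $f\in\EuScript{D}(C_{\phi_{i},\omega_{\lambda}^i})$ and $C_{\phi_{i},\omega_{\lambda}^i}f_n\to C_{\phi_{i},\omega_{\lambda}^i}f$, whence $f$ lies in the joint domain and $f_n\to f$ in the joint graph norm. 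Thus the joint domain is complete, i.e. $\mathcal{G}(\mathcal{M}_{\lambda}(\textbf{C}_{\phi,\omega}))$ is closed. The main point I want to stress here—and the only delicate one—is that this argument uses \emph{only} the closedness of the two component operators, never their dense definiteness. This matters precisely because the mean transform need not be densely defined (cf. Corollary \ref{denseequ}), so the transformed operators $C_{\phi_{i},\omega_{\lambda}^i}$ may themselves fail to be densely defined and Proposition \ref{basic} cannot be cited as a black box; what rescues the argument is that Theorem \ref{combasic} (ii) guarantees closedness of \emph{every} well-defined weighted composition operator, with no dense-definiteness hypothesis.

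Finally, for the single-operator statement, I would specialize by taking $C_{\phi_{2},\omega_{2}}=C_{x,0}=0$ and $C_{\phi_{1},\omega_{1}}=C_{\phi,\omega}$. By Remark \ref{singlemult}, identity \eqref{singlemulequiv} gives $\mathcal{M}_{\lambda}((C_{\phi,\omega},0))=(\mathcal{M}_{\lambda}(C_{\phi,\omega}),0)$, and the pair on the left is closed (resp. densely defined) if and only if $\mathcal{M}_{\lambda}(C_{\phi,\omega})$ is. Since the pair is closed by the preceding paragraphs, $\mathcal{M}_{\lambda}(C_{\phi,\omega})$ is closed as well, completing the proof.
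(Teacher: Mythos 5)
Your proposal is correct and follows essentially the paper's own route: the exact equality $\mathcal{M}_{\lambda}(\textbf{C}_{\phi,\omega})=(C_{\phi_{1},\omega_{\lambda}^1},C_{\phi_{2},\omega_{\lambda}^2})$ from Theorem \ref{meanbasic} (iv), closedness of the resulting column operator via the graph-norm completeness argument, and the single-operator case via Remark \ref{singlemult}. If anything, you are more careful than the paper, which cites Proposition \ref{basic} (iii) as a black box even though its stated hypothesis (dense definiteness of the components) may fail for the transformed pair (cf.\ Corollary \ref{denseequ}); your observation that the completeness argument uses only the closedness of each well-defined $C_{\phi_{i},\omega_{\lambda}^i}$, guaranteed by Theorem \ref{combasic} without any dense-definiteness assumption, is exactly the right repair.
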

\begin{proof}
Using Theorem \ref{meanbasic} (iv) and Proposition \ref{basic} (iii), we immediately know that $\lambda$-spherical mean transform $\mathcal{M}_{\lambda}(C_{\phi_{1},\omega_{1}},C_{\phi_{2},\omega_{2}})$ is closed. The last statement holds by setting $C_{\phi_{1},\omega_{1}}=C_{\phi,\omega}$, $C_{\phi_{2},\omega_{2}}=0$ and Remark \ref{singlemult}.
\end{proof}
\begin{rem}
By Corollary \ref{singlemeanbasic} (iv), we have $|C_{\phi,\omega}|U=C_{\phi,\omega_{0}}$. Corollary \ref{duggal} implies the convex combinations of the closed operators $C_{\phi,\omega}$ and $C_{\phi,\omega_{0}}$ remain closed. This is nontrivial since closedness is not generally preserved under convex combinations (for example, the convex combination $\lambda\cdot\frac{A}{\lambda}+(1-\lambda)\cdot\frac{-A}{1-\lambda}$ of $\frac{A}{\lambda}$ and $\frac{-A}{1-\lambda}$ is not closed, where $\lambda\in(0,1)$ and $A$ is a closed operator with a non-closed domain).
\end{rem}

In what follows, we turn to investigate the dense definiteness of $\mathcal{M}_{\lambda}(\textbf{C}_{\phi,\omega})$.

\begin{cor}\label{denseequ}
	Suppose $C_{\phi_{1},\omega_{1}}$ and $C_{\phi_{2},\omega_{2}}$ are densely defined. Fix $\lambda\in(0,1)$. Then the following conditions are equivalent$:$
	\begin{enumerate}
		\item[(i)]
		the pair $\mathcal{M}_{\lambda}(\textbf{C}_{\phi,\omega})=(C_{\phi_{1},\omega_{\lambda}^1},C_{\phi_{2},\omega_{\lambda}^2})$ is densely defined,
		\item[(ii)]
		$C_{\phi_{i},\omega_{\lambda}^{i}}$ is densely defined for $i=1,2$,
		\item[(iii)]
		$\mathsf{E}_{\phi_{i},\omega_{i}}(\mathsf{h}_{\phi_{1},\omega_{1}}+\mathsf{h}_{\phi_{2},\omega_{2}})\circ\phi_{i}^{-1}<\infty$ a.e. $[\mu]$ for $i=1,2$,
		\item[(iv)]
		$\mathsf{E}_{\phi_{i},\omega_{i}}(\mathsf{h}_{\phi_{j},\omega_{j}})\circ\phi_{i}^{-1}<\infty$ a.e. $[\mu]$ for $i,j=1,2$,
		\item[(v)]
		$\mathsf{E}_{\phi_{i},\omega_{i}}(\mathsf{h}_{\phi_{j},\omega_{j}})<\infty$ a.e. $[\mu]$ on $\{\omega_{i}\ne0\}$ for $i,j=1,2$,
		\item[(vi)]
		$\mathsf{E}_{\phi_{i},\omega_{i}}(\mathsf{h}_{\phi_{1},\omega_{1}}+\mathsf{h}_{\phi_{2},\omega_{2}})<\infty$ a.e. $[\mu]$ on $\{\omega_{i}\ne0\}$ for $i=1,2$,
		\item[(vii)]
		$\mathsf{E}_{\phi_{i},\omega_{i}}(\mathsf{h}_{\phi_{j},\omega_{j}})<\infty$ a.e. $[\mu_{\omega_{i}}]$ for $i,j=1,2$,
		\item[(viii)]
	$\mathsf{E}_{\phi_{i},\omega_{i}}(\mathsf{h}_{\phi_{1},\omega_{1}}+\mathsf{h}_{\phi_{2},\omega_{2}})<\infty$ a.e. $[\mu_{\omega_{i}}]$ for $i=1,2$,
		\item[(ix)]
			$\mu_{\sqrt{\mathsf{h}_{\phi_{1},\omega_{1}}+\mathsf{h}_{\phi_{2},\omega_{2}}}\cdot\omega_{i}}\circ\phi_{i}^{-1}$ is $\sigma$-finite for $i=1,2$,
		\item[(x)]
		$\mu_{\sqrt{\mathsf{h}_{\phi_{1},\omega_{1}}+\mathsf{h}_{\phi_{2},\omega_{2}}}\cdot\omega_{i}}|_{\phi_{i}^{-1}(\mathscr{A})}$ is $\sigma$-finite for $i=1,2$,
		\item[(xi)]
		$\mu_{\sqrt{\mathsf{h}_{\phi_{j},\omega_{j}}}\cdot\omega_{i}}\circ\phi_{i}^{-1}$ is $\sigma$-finite for $i,j=1,2$,
		\item[(xii)]
		$\mu_{\sqrt{\mathsf{h}_{\phi_{j},\omega_{j}}}\cdot\omega_{i}}|_{\phi_{i}^{-1}(\mathscr{A})}$ is $\sigma$-finite for $i,j=1,2$.
	\end{enumerate}
\end{cor}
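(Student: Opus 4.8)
The plan is to route every condition through the single a.e.\ finiteness statement (iii) and, in parallel, through the $\sigma$-finiteness statement (ix). First I would dispose of the top of the list. By Theorem \ref{meanbasic} (iv) we have $\mathcal{M}_{\lambda}(\textbf{C}_{\phi,\omega})=(C_{\phi_{1},\omega_{\lambda}^1},C_{\phi_{2},\omega_{\lambda}^2})$ for $\lambda\in(0,1)$, and the joint domain of a pair is the intersection of the component domains. Hence dense definiteness of the pair forces each component to be densely defined, while the converse is exactly the content of the argument in Proposition \ref{basic} (ii) applied to the weights $\omega_{\lambda}^i$; this gives (i)$\Leftrightarrow$(ii). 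For (ii)$\Leftrightarrow$(iii) I would invoke Theorem \ref{combasic} (iii): $C_{\phi_{i},\omega_{\lambda}^i}$ is densely defined iff $\mathsf{h}_{\phi_{i},\omega_{\lambda}^i}<\infty$ a.e.\ $[\mu]$, and by Lemma \ref{weighted} (ii), whose coefficients are all positive for $\lambda\in(0,1)$, this is equivalent to $\mathsf{E}_{i}<\infty$ a.e.\ (recall $\mathsf{h}_{\phi_{i},\omega_{i}}<\infty$ a.e.\ is given). Since the factor $\frac{\mathsf{h}_{\phi_{i},\omega_{i}}}{\mathsf{h}_{\phi_{1},\omega_{1}}+\mathsf{h}_{\phi_{2},\omega_{2}}}$ in $\mathsf{E}_{i}$ lies in $(0,1]$ on $\{\mathsf{h}_{\phi_{i},\omega_{i}}>0\}$ and by \eqref{Epro} the factor $\mathsf{E}_{\phi_{i},\omega_{i}}(\cdots)\circ\phi_{i}^{-1}$ vanishes on $\{\mathsf{h}_{\phi_{i},\omega_{i}}=0\}$, we get $\{\mathsf{E}_{i}<\infty\}=\{\mathsf{E}_{\phi_{i},\omega_{i}}(\mathsf{h}_{\phi_{1},\omega_{1}}+\mathsf{h}_{\phi_{2},\omega_{2}})\circ\phi_{i}^{-1}<\infty\}$ a.e., which is (iii). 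Alternatively, (i)$\Leftrightarrow$(iii) drops out at once from Theorem \ref{meanbasic} (ii)--(iii).

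The second block (iii)--(viii) is purely about the conditional expectation. Using that $\mathsf{E}_{\phi_{i},\omega_{i}}$ is additive and positive on nonnegative functions, a sum of two nonnegative terms is finite a.e.\ iff each term is, which gives (iii)$\Leftrightarrow$(iv), (v)$\Leftrightarrow$(vi) and (vii)$\Leftrightarrow$(viii). The passage between the ``$\circ\phi_{i}^{-1}$, a.e.\ $[\mu]$'' form and the ``plain, a.e.\ on $\{\omega_{i}\neq0\}$'' form, namely (iv)$\Leftrightarrow$(v) and (iii)$\Leftrightarrow$(vi), is the transfer step. Writing $g=\mathsf{E}_{\phi_{i},\omega_{i}}(f)\circ\phi_{i}^{-1}$ for $f=\mathsf{h}_{\phi_{j},\omega_{j}}$ or $f=\mathsf{h}_{\phi_{1},\omega_{1}}+\mathsf{h}_{\phi_{2},\omega_{2}}$, I would use \eqref{invexp} to identify $g\circ\phi_{i}$ with $\mathsf{E}_{\phi_{i},\omega_{i}}(f)$ a.e.\ $[\mu_{\omega_{i}}]$, and then \eqref{rd} to rewrite $(\mu_{\omega_{i}}\circ\phi_{i}^{-1})(\{g=\infty\})=\int_{\{g=\infty\}}\mathsf{h}_{\phi_{i},\omega_{i}}\D\mu$. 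The direct form is finite a.e.\ $[\mu_{\omega_{i}}]$ iff $\mathsf{h}_{\phi_{i},\omega_{i}}=0$ a.e.\ on $\{g=\infty\}$, while \eqref{Epro} forces $\{g=\infty\}\subseteq\{\mathsf{h}_{\phi_{i},\omega_{i}}>0\}$, so the two finiteness statements collapse to $\mu(\{g=\infty\})=0$, which is the $\circ\phi_{i}^{-1}$ form. Finally (v)$\Leftrightarrow$(vii) and (vi)$\Leftrightarrow$(viii) are immediate from the mutual absolute continuity of $\mu_{\omega_{i}}$ and $\mu|_{\{\omega_{i}\neq0\}}$ recorded in the Preliminaries, under which ``a.e.\ $[\mu]$ on $\{\omega_{i}\neq0\}$'' and ``a.e.\ $[\mu_{\omega_{i}}]$'' coincide.

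For the $\sigma$-finiteness block (ix)--(xii) I would first compute, for $W_{i}=\sqrt{\mathsf{h}_{\phi_{1},\omega_{1}}+\mathsf{h}_{\phi_{2},\omega_{2}}}\cdot\omega_{i}$, that $\mu_{W_{i}}\circ\phi_{i}^{-1}$ has $\mu$-density $\mathsf{E}_{\phi_{i},\omega_{i}}(\mathsf{h}_{\phi_{1},\omega_{1}}+\mathsf{h}_{\phi_{2},\omega_{2}})\circ\phi_{i}^{-1}\cdot\mathsf{h}_{\phi_{i},\omega_{i}}$, via the same chain \eqref{exp}, \eqref{invexp}, \eqref{intcom} used in Lemma \ref{weighted} (ii). Since $\mu$ is $\sigma$-finite, a measure $\rho\,\D\mu$ is $\sigma$-finite iff $\rho<\infty$ a.e.\ $[\mu]$, and by \eqref{Epro} this density is infinite exactly where $\mathsf{E}_{\phi_{i},\omega_{i}}(\cdots)\circ\phi_{i}^{-1}$ is; hence (iii)$\Leftrightarrow$(ix). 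The equivalence (ix)$\Leftrightarrow$(x) is the standard correspondence between $\sigma$-finiteness of a pushforward on $\mathscr{A}$ and of the original measure restricted to the sub-$\sigma$-algebra $\phi_{i}^{-1}(\mathscr{A})$, as in \cite{b-j-j-sW}; and (ix)$\Leftrightarrow$(xi), (x)$\Leftrightarrow$(xii) follow from $\mu_{W_{i}}=\mu_{\sqrt{\mathsf{h}_{\phi_{1},\omega_{1}}}\,\omega_{i}}+\mu_{\sqrt{\mathsf{h}_{\phi_{2},\omega_{2}}}\,\omega_{i}}$ together with the fact that a finite sum of measures is $\sigma$-finite iff each summand is.

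The main obstacle is the transfer step of the second paragraph: keeping careful track of the three distinct ``a.e.'' qualifiers ($[\mu]$, $[\mu]$ on $\{\omega_{i}\neq0\}$, and $[\mu_{\omega_{i}}]$) and applying \eqref{Epro} at precisely the right moment to confine $\{g=\infty\}$ to $\{\mathsf{h}_{\phi_{i},\omega_{i}}>0\}$, which is exactly what forces the two a priori different finiteness conditions to coincide. The density computation underlying (iii)$\Leftrightarrow$(ix) demands the same vigilance, but is otherwise a routine repetition of the calculation already carried out for Lemma \ref{weighted} (ii).
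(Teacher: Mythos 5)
Your proposal is correct, and at bottom it runs on the same machinery as the paper's proof: Theorem \ref{meanbasic} for the top of the list, Proposition \ref{basic} (ii) for (ii)$\Rightarrow$(i), the density identity $\mu_{\sqrt{\mathsf{h}_{\phi_{1},\omega_{1}}+\mathsf{h}_{\phi_{2},\omega_{2}}}\cdot\omega_{i}}\circ\phi_{i}^{-1}(\varDelta)=\int_{\varDelta}\mathsf{E}_{\phi_{i},\omega_{i}}(\mathsf{h}_{\phi_{1},\omega_{1}}+\mathsf{h}_{\phi_{2},\omega_{2}})\circ\phi_{i}^{-1}\cdot\mathsf{h}_{\phi_{i},\omega_{i}}\D\mu$ obtained from \eqref{exp}, \eqref{invexp}, \eqref{intcom}, and the convention \eqref{Epro} to confine the bad sets to $\{\mathsf{h}_{\phi_{i},\omega_{i}}>0\}$. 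What you do differently is organizational, and in three places genuinely leaner. For the $\sigma$-finiteness block you replace the paper's two separate constructions (the exhaustion by sets on which the density is bounded, used for (iii)$\Rightarrow$(ix) in \eqref{finite2}, and the $\widetilde{X_{n}}$ manipulation for (x)$\Rightarrow$(iii)) by the single criterion that, $\mu$ being $\sigma$-finite, a measure with density $\rho$ relative to $\mu$ is $\sigma$-finite iff $\rho<\infty$ a.e.\ $[\mu]$; this yields (iii)$\Leftrightarrow$(ix) in both directions at once, with (ix)$\Leftrightarrow$(x) reduced to the pushforward/sub-$\sigma$-algebra correspondence, whose nontrivial half is exactly the paper's trick of adjoining the remainder $X\setminus\bigcup_{n}X_{n}$ of pushforward measure zero. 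You obtain (xi), (xii) from the additivity $\mu_{\sqrt{\mathsf{h}_{\phi_{1},\omega_{1}}+\mathsf{h}_{\phi_{2},\omega_{2}}}\cdot\omega_{i}}=\mu_{\sqrt{\mathsf{h}_{\phi_{1},\omega_{1}}}\cdot\omega_{i}}+\mu_{\sqrt{\mathsf{h}_{\phi_{2},\omega_{2}}}\cdot\omega_{i}}$ plus the fact that a sum of two measures is $\sigma$-finite iff both summands are, where the paper instead specializes (iii)$\Leftrightarrow$(ix)$\Leftrightarrow$(x) by setting one Radon--Nikodym derivative to zero; and your transfer (iv)$\Leftrightarrow$(vii) via $\mu_{\omega_{i}}(\phi_{i}^{-1}(\{g=\infty\}))=\int_{\{g=\infty\}}\mathsf{h}_{\phi_{i},\omega_{i}}\D\mu$ is an abstract repackaging of the paper's computation \eqref{recon}. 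Both organizations are sound; yours is shorter, the paper's is more self-contained.

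Two compressions deserve a flag, though neither is fatal. First, your primary route to (ii)$\Leftrightarrow$(iii) through Lemma \ref{weighted} (ii) needs, in the direction $\mathsf{E}_{i}<\infty\Rightarrow\mathsf{h}_{\phi_{i},\omega_{\lambda}^{i}}<\infty$, finiteness of the cross term $\mathsf{E}_{i}^{\frac{1}{2}}$ as well; this is true via a conditional Cauchy--Schwarz estimate giving $(\mathsf{E}_{i}^{\frac{1}{2}})^{2}\le\mathsf{h}_{\phi_{i},\omega_{i}}\cdot\mathsf{E}_{i}$ a.e.\ $[\mu]$, but you do not supply it. Your fallback via Theorem \ref{meanbasic} (ii)--(iii) --- which is the route the paper actually takes, including the extraction of a finite-$\mu$-measure subset of the bad set that you suppress in ``drops out at once'' --- closes (i)$\Leftrightarrow$(iii) unconditionally, so the equivalence stands. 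Second, the ``additivity'' behind (iii)$\Leftrightarrow$(iv) is additivity of the formal expressions $\mathsf{E}_{\phi_{i},\omega_{i}}(\cdot)\circ\phi_{i}^{-1}$, not of $\mathsf{E}_{\phi_{i},\omega_{i}}$ itself: the paper devotes a computation to transferring the a.e.\ $[\mu_{\omega_{i}}]$ identity to an a.e.\ $[\mu]$ one via \eqref{invexp}, \eqref{Epro} and the uniqueness part of \cite[Proposition 14]{b-j-j-sW}. Your own transfer mechanism in the very next step supplies exactly these tools, so this too is a compression rather than a gap.
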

\begin{proof}
(i) $\Rightarrow$ (ii): Theorem \ref{meanbasic} (iv).

(ii) $\Rightarrow$ (i): Proposition \ref{basic} (ii).

(i) $\Rightarrow$ (iii): Suppose (iii) fails, then
\begin{equation}\label{dense3}
\mu\Big(\Big\{\mathsf{E}_{\phi_{i},\omega_{i}}(\mathsf{h}_{\phi_{1},\omega_{1}}+\mathsf{h}_{\phi_{2},\omega_{2}})\circ\phi_{i}^{-1}=\infty\Big\}\Big)\ne0
\end{equation} 
for $i=1,2$. It follows from the proof of Lemma \ref{weighted} (iii) that
\begin{equation}\label{dense4}
	\mathsf{E}_{\phi_{i},\omega_{i}}(\mathsf{h}_{\phi_{1},\omega_{1}}+\mathsf{h}_{\phi_{2},\omega_{2}})\circ\phi_{i}^{-1}\ge 0\quad \text{a.e.}\ [\mu].
\end{equation}
By \eqref{dense3} and \eqref{dense4}, we have 
\begin{equation}\label{denseeq}
\mu\Big(\Big\{\sum\limits_{i=1}^2\mathsf{E}_{\phi_{i},\omega_{i}}(\mathsf{h}_{\phi_{1},\omega_{1}}+\mathsf{h}_{\phi_{2},\omega_{2}})\circ\phi_{i}^{-1}=\infty\Big\}\Big)\ne0.
\end{equation} Since $\mu$ is $\sigma$-finite, then there exists an increasing sequence $\{X_{n}\}\subset\mathscr{A}$ such that $\mu(X_{n})<\infty$ and $\bigcup\limits_{n=1}^\infty X_{n}=X$. We claim that there exists some $n_{0}\in\mathbb{N}$ such that
\begin{equation}\label{denseeq2}
	\mu(\Big\{\sum\limits_{i=1}^2\mathsf{E}_{\phi_{i},\omega_{i}}(\mathsf{h}_{\phi_{1},\omega_{1}}+\mathsf{h}_{\phi_{2},\omega_{2}})\circ\phi_{i}^{-1}=\infty\Big\}\bigcap X_{n_{0}})\ne0.
\end{equation}
 Otherwise, we have
\begin{align*}
&\mu\Big(\Big\{\sum\limits_{i=1}^2\mathsf{E}_{\phi_{i},\omega_{i}}(\mathsf{h}_{\phi_{1},\omega_{1}}+\mathsf{h}_{\phi_{2},\omega_{2}})\circ\phi_{i}^{-1}=\infty\Big\}\Big)\\&=\mu\Big(\Big\{\sum\limits_{i=1}^2\mathsf{E}_{\phi_{i},\omega_{i}}(\mathsf{h}_{\phi_{1},\omega_{1}}+\mathsf{h}_{\phi_{2},\omega_{2}})\circ\phi_{i}^{-1}=\infty\Big\}\bigcap\Big(\bigcup\limits_{n=1}^\infty X_{n}\Big)\Big)\\&\le\sum\limits_{n=1}^\infty\mu\Big(\Big\{\sum\limits_{i=1}^2\mathsf{E}_{\phi_{i},\omega_{i}}(\mathsf{h}_{\phi_{1},\omega_{1}}+\mathsf{h}_{\phi_{2},\omega_{2}})\circ\phi_{i}^{-1}=\infty\Big\}\bigcap X_{n}\Big)=0.
\end{align*}
This contradicts \eqref{denseeq}. Thus, by Theorem \ref{meanbasic} (iii) and \eqref{denseeq2}, we have $$\chi_{\Big\{\sum\limits_{i=1}^2\mathsf{E}_{\phi_{i},\omega_{i}}(\mathsf{h}_{\phi_{1},\omega_{1}}+\mathsf{h}_{\phi_{2},\omega_{2}})\circ\phi_{i}^{-1}=\infty\Big\}}\cdot\chi_{X_{n_{0}}}\in\EuScript{D}(\mathcal{M}_{\lambda}(\textbf{C}_{\phi,\omega}))^{\perp}\ne\{0\}.$$
It follows  from equation $L^2(\mu)=\overline{\EuScript{D}(\mathcal{M}_{\lambda}(\textbf{C}_{\phi,\omega}))}\oplus\EuScript{D}(\mathcal{M}_{\lambda}(\textbf{C}_{\phi,\omega}))^\perp$ that $\mathcal{M}_{\lambda}(\textbf{C}_{\phi,\omega})$ is not densely defined. Contradiction.

(iii) $\Rightarrow$ (i): Theorem \ref{meanbasic} (iii).

(iii) $\Leftrightarrow$ (iv) follows from the equality
$$\mathsf{E}_{\phi_{j},\omega_{j}}(\mathsf{h}_{\phi_{1},\omega_{1}}+\mathsf{h}_{\phi_{2},\omega_{2}})\circ\phi_{j}^{-1}=\mathsf{E}_{\phi_{j},\omega_{j}}(\mathsf{h}_{\phi_{1},\omega_{1}})\circ\phi_{j}^{-1}+\mathsf{E}_{\phi_{j},\omega_{j}}(\mathsf{h}_{\phi_{2},\omega_{2}})\circ\phi_{j}^{-1}\quad\text{a.e.}\ [\mu]$$ for $j=1,2$. Indeed, we have
\begin{align*}
	&(\mathsf{E}_{\phi_{j},\omega_{j}}(\mathsf{h}_{\phi_{1},\omega_{1}}+\mathsf{h}_{\phi_{2},\omega_{2}})\circ\phi_{j}^{-1})\circ\phi_{j}\\&\stackrel{\eqref{invexp}}{=}\mathsf{E}_{\phi_{j},\omega_{j}}(\mathsf{h}_{\phi_{1},\omega_{1}}+\mathsf{h}_{\phi_{2},\omega_{2}})\quad\text{a.e.}\ [\mu_{\omega_{j}}]\\&=\mathsf{E}_{\phi_{j},\omega_{j}}(\mathsf{h}_{\phi_{1},\omega_{1}})+\mathsf{E}_{\phi_{j},\omega_{j}}(\mathsf{h}_{\phi_{2},\omega_{2}})\quad\text{a.e.}\ [\mu_{\omega_{j}}]\\&\stackrel{\eqref{invexp}}{=}(\mathsf{E}_{\phi_{j},\omega_{j}}(\mathsf{h}_{\phi_{1},\omega_{1}})\circ\phi_{j}^{-1})\circ\phi_{j}+(\mathsf{E}_{\phi_{j},\omega_{j}}(\mathsf{h}_{\phi_{2},\omega_{2}})\circ\phi_{j}^{-1})\circ\phi_{j}\quad\text{a.e.}\ [\mu_{\omega_{j}}]\\&=(\mathsf{E}_{\phi_{j},\omega_{j}}(\mathsf{h}_{\phi_{1},\omega_{1}})\circ\phi_{j}^{-1}+\mathsf{E}_{\phi_{j},\omega_{j}}(\mathsf{h}_{\phi_{2},\omega_{2}})\circ\phi_{j}^{-1})\circ\phi_{j}\quad\text{a.e.}\ [\mu_{\omega_{j}}]
\end{align*}
where the second equality follows from the linearity of the conditional expectation $\mathsf{E}_{\phi_{j},\omega_{j}}(\cdot)$. Then by \eqref{Epro}, we obtain
$$\mathsf{E}_{\phi_{j},\omega_{j}}(\mathsf{h}_{\phi_{1},\omega_{1}}+\mathsf{h}_{\phi_{2},\omega_{2}})\circ\phi_{j}^{-1}=\mathsf{E}_{\phi_{j},\omega_{j}}(\mathsf{h}_{\phi_{1},\omega_{1}})\circ\phi_{j}^{-1}+\mathsf{E}_{\phi_{j},\omega_{j}}(\mathsf{h}_{\phi_{2},\omega_{2}})\circ\phi_{j}^{-1}=0\quad\text{a.e.}\ [\mu]$$
on $\{\mathsf{h}_{\phi_{j},\omega_{j}}=0\}$. Therefore, by the uniqueness part of \cite[Proposition 14]{b-j-j-sW}, we can get the desired result.

(iv) $\Rightarrow$ (v): Since
\begin{equation}\label{recon}
	\begin{aligned}
	\int_{X}\chi_{\{\mathsf{E}_{\phi_{i},\omega_{i}}(\mathsf{h}_{\phi_{j},\omega_{j}})\circ\phi_{i}^{-1}=\infty\}}&\cdot\mathsf{h}_{\phi_{i},\omega_{i}}\D\mu\\&\stackrel{\eqref{intcom}}{=}\int_{X}\chi_{\{\mathsf{E}_{\phi_{i},\omega_{i}}(\mathsf{h}_{\phi_{j},\omega_{j}})\circ\phi_{i}^{-1}=\infty\}}\circ\phi_{i}\cdot(\chi_{\{\omega_{i}\ne0\}}+\chi_{\{\omega_{i}=0\}})\D\mu_{\omega_{i}}\\&=\int_{X}\chi_{\{\mathsf{E}_{\phi_{i},\omega_{i}}(\mathsf{h}_{\phi_{j},\omega_{j}})\circ\phi_{i}^{-1}=\infty\}}\circ\phi_{i}\cdot\chi_{\{\omega_{i}\ne0\}}\D\mu_{\omega_{i}}\\&=\int_{\{(\mathsf{E}_{\phi_{i},\omega_{i}}(\mathsf{h}_{\phi_{j},\omega_{j}})\circ\phi_{i}^{-1})\circ\phi_{i}=\infty\}}\chi_{\{\omega_{i}\ne0\}}\D\mu_{\omega_{i}}\\&\stackrel{\eqref{invexp}}{=}\int_{\{\mathsf{E}_{\phi_{i},\omega_{i}}(\mathsf{h}_{\phi_{j},\omega_{j}})=\infty\}}\chi_{\{\omega_{i}\ne0\}}\D\mu_{\omega_{i}}\\&\stackrel{\eqref{muomega}}{=}\int_{\{\mathsf{E}_{\phi_{i},\omega_{i}}(\mathsf{h}_{\phi_{j},\omega_{j}})=\infty\}}\chi_{\{\omega_{i}\ne0\}}\cdot|\omega_{i}|^2\D\mu\\&=\int_{X}\chi_{\{\mathsf{E}_{\phi_{i},\omega_{i}}(\mathsf{h}_{\phi_{j},\omega_{j}})=\infty\}}\cdot\chi_{\{\omega_{i}\ne0\}}\cdot|\omega_{i}|^2\D\mu
	\end{aligned}
\end{equation}
for $i,j=1,2$, then by (iv) we have $\chi_{\{\mathsf{E}_{\phi_{i},\omega_{i}}(\mathsf{h}_{\phi_{j},\omega_{j}})=\infty\}}\cdot\chi_{\{\omega_{i}\ne0\}}\cdot|\omega_{i}|^2=0$ a.e. $[\mu]$, which implies $$\chi_{\{\mathsf{E}_{\phi_{i},\omega_{i}}(\mathsf{h}_{\phi_{j},\omega_{j}})=\infty\}}\cdot\chi_{\{\omega_{i}\}\ne0}=0\quad\text{a.e.}\ [\mu],$$
that is, $\mu(\{\mathsf{E}_{\phi_{i},\omega_{i}}(\mathsf{h}_{\phi_{j},\omega_{j}})=\infty\}\bigcap\{\omega_{i}\ne0\})=0$. Hence, (v) holds.

(v) $\Rightarrow$ (iv): By \eqref{recon} and (v), we have $$\chi_{\{\mathsf{E}_{\phi_{i},\omega_{i}}(\mathsf{h}_{\phi_{j},\omega_{j}})\circ\phi_{i}^{-1}=\infty\}}\cdot\mathsf{h}_{\phi_{i},\omega_{i}}=0\quad\text{a.e.}\ [\mu]$$
for $i,j=1,2$, then  
$$\mu(\{\mathsf{E}_{\phi_{i},\omega_{i}}(\mathsf{h}_{\phi_{j},\omega_{j}})\circ\phi_{i}^{-1}=\infty\}\bigcap\{\mathsf{h}_{\phi_{i},\omega_{i}}\ne0\})=0$$
follows. Using the above equality and \eqref{Epro}, we obtain (iv).

(v) $\Leftrightarrow$ (vi) follows from the linearity of the condition expectation $\mathsf{E}_{\phi_{i},\omega_{i}}(\cdot)$.

(v) $\Leftrightarrow$ (vii) follows from the equality$$\mu_{\omega_{i}}(\{\mathsf{E}_{\phi_{i},\omega_{i}}(\mathsf{h}_{\phi_{j},\omega_{j}})=\infty\})=\int_{\{\mathsf{E}_{\phi_{i},\omega_{i}}(\mathsf{h}_{\phi_{j},\omega_{j}})=\infty\}}|\omega_{i}|^2\D\mu=\int_{\{\mathsf{E}_{\phi_{i},\omega_{i}}(\mathsf{h}_{\phi_{j},\omega_{j}})=\infty\}\bigcap\{\omega_{i}\ne0\}}|\omega_{i}|^2\D\mu$$ for $i,j=1,2$.

(vi) $\Leftrightarrow$ (viii) follows from the equality
\begin{align*}
	\mu_{\omega_{i}}(\{\mathsf{E}_{\phi_{i},\omega_{i}}(\mathsf{h}_{\phi_{1},\omega_{1}}+\mathsf{h}_{\phi_{2},\omega_{2}})=\infty\})&=\int_{\{\mathsf{E}_{\phi_{i},\omega_{i}}(\mathsf{h}_{\phi_{1},\omega_{1}}+\mathsf{h}_{\phi_{2},\omega_{2}})=\infty\}}|\omega_{i}|^2\D\mu\\&=\int_{\{\mathsf{E}_{\phi_{i},\omega_{i}}(\mathsf{h}_{\phi_{1},\omega_{1}}+\mathsf{h}_{\phi_{2},\omega_{2}})=\infty\}\bigcap\{\omega_{i}\ne0\}}|\omega_{i}|^2\D\mu
\end{align*}
for $i=1,2$.

(iii) $\Rightarrow$ (ix): By (iii) and Theorem \ref{combasic} (iii), we have $$\mathsf{E}_{\phi_{i},\omega_{i}}(\mathsf{h}_{\phi_{1},\omega_{1}}+\mathsf{h}_{\phi_{2},\omega_{2}})\circ\phi_{i}^{-1}\cdot\mathsf{h}_{\phi_{i},\omega_{i}}<\infty\quad\text{a.e.}\ [\mu].$$ By the $\sigma$-finiteness of $\mu$ and the above equality, it is not difficult to verify that there exists an increasing sequence $\{X_{n}\}\subset\mathscr{A}$ such that $\mu(X_{n})<\infty$, $X=\bigcup\limits_{n=1}^\infty X_{n}$ and $\mathsf{E}_{\phi_{i},\omega_{i}}(\mathsf{h}_{\phi_{1},\omega_{1}}+\mathsf{h}_{\phi_{2},\omega_{2}})\circ\phi_{i}^{-1}\cdot\mathsf{h}_{\phi_{i},\omega_{i}}\le n$ a.e. $[\mu]$ on $X_{n}$ for every $n\in\mathbb{N}$. Thus, we have
\begin{equation}\label{finite2}
	\begin{aligned}
		\mu_{\sqrt{\mathsf{h}_{\phi_{1},\omega_{1}}+\mathsf{h}_{\phi_{2},\omega_{2}}}\cdot\omega_{i}}\circ\phi_{i}^{-1}(X_{n})&\stackrel{\eqref{rd}}{=}\int_{\phi_{i}^{-1}(X_{n})}\Big|\sqrt{\mathsf{h}_{\phi_{1},\omega_{1}}+\mathsf{h}_{\phi_{2},\omega_{2}}}\cdot\omega_{i}\Big|^2\D\mu\\&\stackrel{\eqref{muomega}}{=}\int_{\phi_{i}^{-1}(X_{n})}(\mathsf{h}_{\phi_{1},\omega_{1}}+\mathsf{h}_{\phi_{2},\omega_{2}})\D\mu_{\omega_{i}}\\&\stackrel{\eqref{exp}}{=}\int_{\phi_{i}^{-1}(X_{n})}\mathsf{E}_{\phi_{i},\omega_{i}}(\mathsf{h}_{\phi_{1},\omega_{1}}+\mathsf{h}_{\phi_{2},\omega_{2}})\D\mu_{\omega_{i}}\\&\stackrel{\eqref{invexp}}{=}\int_{\phi_{i}^{-1}(X_{n})}(\mathsf{E}_{\phi_{i},\omega_{i}}(\mathsf{h}_{\phi_{1},\omega_{1}}+\mathsf{h}_{\phi_{2},\omega_{2}})\circ\phi_{i}^{-1})\circ\phi_{i}\D\mu_{\omega_{i}}\\&\stackrel{\eqref{intcom}}{=}\int_{X_{n}}\mathsf{E}_{\phi_{i},\omega_{i}}(\mathsf{h}_{\phi_{1},\omega_{1}}+\mathsf{h}_{\phi_{2},\omega_{2}})\circ\phi_{i}^{-1}\mathsf{h}_{\phi_{i},\omega_{i}}\D\mu\le n\mu(X_{n})<\infty.
	\end{aligned}
\end{equation}
This yields (ix).

(ix) $\Rightarrow$ (x) is clear.

(x) $\Rightarrow$ (iii): 
For any two sets $A$ and $B$, we write $A-B$ for the set $\{x\colon x\in A,x\notin B\}$. By (viii), we know that there exists a sequence $\{X_{n}\}\subset\mathscr{A}$ be such that
$\phi_{i}^{-1}(X_{n})\subset\phi_{i}^{-1}(X_{n+1})$, $\mu_{\sqrt{\mathsf{h}_{\phi_{1},\omega_{1}}+\mathsf{h}_{\phi_{2},\omega_{2}}}\cdot\omega_{i}}(\phi_{i}^{-1}(X_{n}))<\infty$ for every $n\in\mathbb{N}$ and $X=\bigcup\limits_{n=1}^\infty\phi_{i}^{-1}(X_{n})$. Through set operations, we obtain \begin{align*}
&\mu_{\sqrt{\mathsf{h}_{\phi_{1},\omega_{1}}+\mathsf{h}_{\phi_{2},\omega_{2}}}\cdot\omega_{i}}\Big(\phi_{i}^{-1}\Big(X-\bigcup\limits_{n=1}^\infty X_{n}\Big)\Big)\\&=\mu_{\sqrt{\mathsf{h}_{\phi_{1},\omega_{1}}+\mathsf{h}_{\phi_{2},\omega_{2}}}\cdot\omega_{i}}\Big(\phi_{i}^{-1}(X)-\phi_{i}^{-1}\Big(\bigcup\limits_{n=1}^{\infty}X_{n}\Big)\Big)\\&=\mu_{\sqrt{\mathsf{h}_{\phi_{1},\omega_{1}}+\mathsf{h}_{\phi_{2},\omega_{2}}}\cdot\omega_{i}}(\phi_{i}^{-1}(X))-\mu_{\sqrt{\mathsf{h}_{\phi_{1},\omega_{1}}+\mathsf{h}_{\phi_{2},\omega_{2}}}\cdot\omega_{i}}\Big(\phi_{i}^{-1}(X)\bigcap\Big(\phi_{i}^{-1}\Big(\bigcup\limits_{n=1}^{\infty}X_{n}\Big)\Big)\Big)
\end{align*}
\begin{align*}
	&=\mu_{\sqrt{\mathsf{h}_{\phi_{1},\omega_{1}}+\mathsf{h}_{\phi_{2},\omega_{2}}}\cdot\omega_{i}}(X)-\mu_{\sqrt{\mathsf{h}_{\phi_{1},\omega_{1}}+\mathsf{h}_{\phi_{2},\omega_{2}}}\cdot\omega_{i}}\Big(\bigcup\limits_{n=1}^\infty\phi_{i}^{-1}\Big(X_{n}\Big)\Big)\\&=\mu_{\sqrt{\mathsf{h}_{\phi_{1},\omega_{1}}+\mathsf{h}_{\phi_{2},\omega_{2}}}\cdot\omega_{i}}(X)-\mu_{\sqrt{\mathsf{h}_{\phi_{1},\omega_{1}}+\mathsf{h}_{\phi_{2},\omega_{2}}}\cdot\omega_{i}}(X)=0.\quad\quad\quad\quad\quad\quad\quad\quad\quad\quad\quad\quad
\end{align*}
It follows that
\begin{align*}
	\mu_{\sqrt{\mathsf{h}_{\phi_{1},\omega_{1}}+\mathsf{h}_{\phi_{2},\omega_{2}}}\cdot\omega_{i}}&(\phi_{i}^{-1}(\widetilde{X_{n}}))=\mu_{\sqrt{\mathsf{h}_{\phi_{1},\omega_{1}}+\mathsf{h}_{\phi_{2},\omega_{2}}}\cdot\omega_{i}}(\phi_{i}^{-1}(X_{n}\bigcup(X-\bigcup\limits_{n=1}^\infty X_{n})))\\&=\mu_{\sqrt{\mathsf{h}_{\phi_{1},\omega_{1}}+\mathsf{h}_{\phi_{2},\omega_{2}}}\cdot\omega_{i}}(\phi_{i}^{-1}(X_{n})\bigcup\phi_{i}^{-1}((X-\bigcup\limits_{n=1}^\infty X_{n})))\\&\le\mu_{\sqrt{\mathsf{h}_{\phi_{1},\omega_{1}}+\mathsf{h}_{\phi_{2},\omega_{2}}}\cdot\omega_{i}}(\phi_{i}^{-1}(X_{n}))+\mu_{\sqrt{\mathsf{h}_{\phi_{1},\omega_{1}}+\mathsf{h}_{\phi_{2},\omega_{2}}}\cdot\omega_{i}}(\phi_{i}^{-1}((X-\bigcup\limits_{n=1}^\infty X_{n})))\\&<\infty
\end{align*}
 where $\widetilde{X_{n}}=X_{n}\bigcup(X-\bigcup\limits_{n=1}^\infty X_{n})$. Then, by the argument used in \eqref{finite2}, we have
\begin{align*}
\int_{\widetilde{X_{n}}}\mathsf{E}_{\phi_{i},\omega_{i}}(\mathsf{h}_{\phi_{1},\omega_{1}}+\mathsf{h}_{\phi_{2},\omega_{2}})\circ\phi_{i}^{-1}\mathsf{h}_{\phi_{i},\omega_{i}}\D\mu&=\mu_{\sqrt{\mathsf{h}_{\phi_{1},\omega_{1}}+\mathsf{h}_{\phi_{2},\omega_{2}}}\cdot\omega_{i}}\circ\phi_{i}^{-1}(\widetilde{X_{n}})\\&=\mu_{\sqrt{\mathsf{h}_{\phi_{1},\omega_{1}}+\mathsf{h}_{\phi_{2},\omega_{2}}}\cdot\omega_{i}}(\phi_{i}^{-1}(\widetilde{X_{n}}))<\infty,
\end{align*}
which implies $$\mu(\{\mathsf{E}_{\phi_{i},\omega_{i}}(\mathsf{h}_{\phi_{1},\omega_{1}}+\mathsf{h}_{\phi_{2},\omega_{2}})\circ\phi_{i}^{-1}=\infty\}\bigcap\{\mathsf{h}_{\phi_{i},\omega_{i}}\ne0\}\bigcap \widetilde{X_{n}})=0$$
for every $n\in\mathbb{N}$. Thus,
\begin{equation}
\mu(\{\mathsf{E}_{\phi_{i},\omega_{i}}(\mathsf{h}_{\phi_{1},\omega_{1}}+\mathsf{h}_{\phi_{2},\omega_{2}})\circ\phi_{i}^{-1}=\infty\}\bigcap\{\mathsf{h}_{\phi_{i},\omega_{i}}\ne0\})=0
\end{equation}
follows from the equality $X=\bigcup\limits_{n=1}^\infty\widetilde{X_{n}}$. Therefore, we obtain (iii) by the above equality and \eqref{Epro}.

The equivalence (v) $\Leftrightarrow$ (xi) $\Leftrightarrow$ (xii) follows from (iii) $\Leftrightarrow$ (ix) $\Leftrightarrow$ (x) by setting either $\mathsf{h}_{\phi_{1},\omega_{1}}$ or $\mathsf{h}_{\phi_{2},\omega_{2}}$ to $0$.

The proof is complete.
\end{proof}

Using Corollary \ref{denseequ}, we present a $p$-hyponormal weighted composition operator $C_{\phi,\omega}$ whose $\lambda$-Aluthge transform $\Delta_{\lambda}(C_{\phi,\omega})$is densely defined, while its $\lambda$-mean transform $\mathcal{M}_{\lambda}(C_{\phi,\omega})$ has trival domain.

Let $A$ be a positive selfadjoint operator with the spectral measure $E$. By the Stone-von Neumann operator calculus, we may define 
\begin{equation}\label{selfadjoint}
A^p=\int_{[0,\infty)}x^p\D E
\end{equation}
for $p>0$ (see \cite{birman-book-1987}). Recall that in \cite{benhida-mn-2020}, an unbounded operator $T$ is said to be {\em $p$-hyponormal} ($p>0$) if $\EuScript{D}(|T|^p)\subset\EuScript{D}(|T^\ast|^p)$ and $\||T^\ast|^px\|\le\||T|^px\|$ for any $x\in\EuScript{D}(|T|^p)$. A $1-$hyponormal operator is called a {\em hyponormal} operator. We remark that one of Trepkowski's main results in \cite{trep-jmaa-2015} is the construction of a hyponormal weighted composition operator with trival Aluthge transform on a directed tree.

\begin{exa}\label{trep}
	Let $X=\{(x_{1},x_{2},\cdots,x_{m},0,0,0,\cdots)\colon x_{i}\in\mathbb{N}\ \text{for}\ i\in\{1,2,\cdots,m\},m\in\mathbb{N}\}$, $\mathscr{A}=2^X$ and $\mu$ be the counting measure on $X$ (Note that $X$ is a countable set, which  allows us to apply \eqref{h}, \eqref{Ef} and \eqref{Efphi}). The $\mathscr{A}$-measurable transform $\phi$ is defined by
	\begin{align*}
		\phi((x_{1},x_{2},\cdots,x_{m},0,0,\cdots))=\begin{cases}
			(x_{1},0,0,\cdots)&m=1\\
			(x_{1},x_{2},\cdots,x_{m-1},0,0,\cdots)&m\ge2
		\end{cases}.
	\end{align*}
	Let the $\mathscr{A}$-measurable function $\omega:X\to\mathbb{R}$ be given by $$\omega((x_{1},x_{2},\cdots,x_{m},0,0,0,\cdots))=\begin{cases}
		0&m=1\\\frac{x_{1}+\cdots+x_{m-1}}{x_{m}^\frac{3}{2}}&m\ge2
	\end{cases}.$$ Since $\mu(\varDelta)=0$ if and only if $\varDelta=\emptyset$, then $\mu_{\omega}\circ\varphi^{-1}\ll\mu$. Hence, $C_{\phi,\omega}$ is well-defined. By \eqref{h}, we have
	\begin{align*}
		\mathsf{h}_{\phi,\omega}((x_{1},x_{2},\cdots,x_{m},0,0,\cdots))&=\sum\limits_{n\ge1}|\omega((x_{1},x_{2},\cdots,x_{m},n,0,\cdots))|^2\\&=\sum\limits_{n\ge1}\frac{(x_{1}+\cdots+x_{m})^2}{n^3}\\&=(x_{1}+\cdots+x_{m})^2\sum\limits_{n\ge1}\frac{1}{n^3}<\infty
	\end{align*} for $m\in\mathbb{N}$, which implies $C_{\phi,\omega}$ is densely defined.
	Further, we have
	\begin{equation*}
	\begin{aligned}
		0<&\frac{\mathsf{h}_{\phi,\omega}^p\circ\phi}{\mathsf{h}_{\phi,\omega}^p}((x_{1},x_{2},\cdots,x_{m},0,0,\cdots))\\&=\begin{cases}
			1&m=1\\
			\frac{(x_{1}+\cdots+x_{m-1})^{2p}(\sum\limits_{n\ge1}\frac{1}{n^3})^p}{(x_{1}+\cdots+x_{m})^{2p}(\sum\limits_{n\ge1}\frac{1}{n^3})^p}=(1-\frac{x_{m}}{x_{1}+x_{2}+\cdots+x_{m}})^{2p}&m\ge2
		\end{cases}\\&\le1
	\end{aligned}
	\end{equation*}
	for $p\in(0,\infty)$. Thus, by \cite[Appendix A (A.5)]{b-j-j-sW} we derive $\mathsf{E}_{\phi,\omega}(\frac{\mathsf{h}_{\phi,\omega}^p\circ\phi}{\mathsf{h}_{\phi,\omega}^p})\le1$ a.e. $[\mu_{\omega}]$, which yields, by \cite[Theorem 4.5]{benhida-mn-2020}, that $C_{\phi,\omega}$ is $p$-hyponormal. Now, we prove that $\Delta_{\lambda}(C_{\phi,\omega})$ is densely defined and $\mathcal{M}_{\lambda}(C_{\phi,\omega})$ has trival domain for $\lambda\in(0,1)$. Indeed, by \eqref{Efphi}, we obtain
	\begin{align*}
		&\mathsf{E}_{\phi,\omega}(\mathsf{h}_{\phi,\omega}^\lambda)\circ\phi^{-1}((x_{1},x_{2},\cdots,x_{m},0,0,\cdots))\\&=\frac{\sum\limits_{n\ge1}\Big(\sum\limits_{l
				\ge1}|\omega((x_{1},x_{2},\cdots,x_{m},n,l,0,0,\cdots))|^2\Big)^\lambda|\omega((x_{1},x_{2},\cdots,x_{m},n,0,0,\cdots))|^2}{\mathsf{h}_{\phi,\omega}((x_{1},x_{2},\cdots,x_{m},0,0,\cdots))}\\&=\frac{\sum\limits_{n\ge1}\Bigg(\sum\limits_{l\ge1}\frac{(x_{1}+\cdots+x_{m}+n)^2}{l^3}\Bigg)^\lambda\frac{(x_{1}+\cdots+x_{m})^2}{n^3}}{(x_{1}+\cdots+x_{m})^2\sum\limits_{n\ge1}\frac{1}{n^3}}=\Bigg(\sum\limits_{l\ge1}\frac{1}{l^3}\Bigg)^\lambda\cdot\frac{\sum\limits_{n\ge1}\frac{(x_{1}+\cdots+x_{m}+n)^{2\lambda}}{n^3}}{\sum\limits_{n\ge1}\frac{1}{n^3}}
	\end{align*}
	for $m\in\mathbb{N}$. Thus, the convergence of $\sum\limits_{n\ge1}\frac{(x_{1}+\cdots+x_{m}+n)^{2\lambda}}{n^3}$ (since $\lim\limits_{n\to\infty}\frac{\frac{(x_{1}+\cdots+x_{m}+n)^{2\lambda}}{n^3}}{\frac{n^{2\lambda}}{n^3}}=1$) easily yields that $$\mathsf{E}_{\phi,\omega}(\mathsf{h}_{\phi,\omega}^{\lambda})\circ\phi^{-1}((x_{1},x_{2},\cdots,x_{m},0,0,\cdots))<\infty$$
	and 
	$$\mathsf{E}_{\phi,\omega}(\mathsf{h}_{\phi,\omega})\circ\phi^{-1}((x_{1},x_{2},\cdots,x_{m},0,0,\cdots))=\infty$$
	for any $(x_{1},x_{2},\cdots,x_{m},0,0,\cdots)\in X$. By \cite[Corollary 3.9]{benhida-mn-2020}, Remark \ref{singlemult} and Corollary \eqref{denseequ} (iv), we get the desired result.
\end{exa} 

We continue to characterize the dense definiteness of $M_{\lambda}(\textbf{C}_{\phi,\omega})$, based on some new notions introduced below. Analogous to the toral Aluthge transform of operator tuples \cite{curto-ieot-2018}, we call the pair $\widehat{\mathcal{M}_{\lambda}(\textbf{C}_{\phi,\omega})}=(\mathcal{M}_{\lambda}(C_{\phi_{1},\omega_{1}}),\mathcal{M}_{\lambda}(C_{\phi_{2},\omega_{2}}))$ the {\em toral $\lambda$-spherical mean transform} of $(C_{\phi_{1},\omega_{1}},C_{\phi_{2},\omega_{2}})$, where $\mathcal{M}_{\lambda}(C_{\phi_{i},\omega_{i}})$ is the $\lambda$-mean transform of $C_{\phi_{i},\omega_{i}}$ for $i=1,2$.

\begin{pro}\label{pointwisemean}
Suppose $C_{\phi_{i},\omega_{i}}$ $(i=1,2)$ are densely defined. Fix $\lambda\in(0,1)$. If $\mathcal{M}_{\lambda}(\textbf{C}_{\phi,\omega})$ is densely defined, then $\widehat{\mathcal{M}_{\lambda}(\textbf{C}_{\phi,\omega})}$ is densely defined.
\end{pro}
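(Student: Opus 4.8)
The plan is to reduce the statement to the explicit dense-definiteness criteria already established for single weighted composition operators and for their mean transforms, and then to handle the one genuinely non-formal point, namely the passage from the two components to the pair. Recall that
\[
\widehat{\mathcal{M}_{\lambda}(\textbf{C}_{\phi,\omega})}=(\mathcal{M}_{\lambda}(C_{\phi_{1},\omega_{1}}),\mathcal{M}_{\lambda}(C_{\phi_{2},\omega_{2}})),
\]
so its joint domain is $\bigcap_{i=1}^{2}\EuScript{D}(\mathcal{M}_{\lambda}(C_{\phi_{i},\omega_{i}}))$, and what must be shown is that this intersection is dense in $L^{2}(\mu)$.

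First I would feed the hypothesis through Corollary \ref{denseequ}. Since $\mathcal{M}_{\lambda}(\textbf{C}_{\phi,\omega})$ is densely defined, the equivalence (i)$\Leftrightarrow$(iv) of that corollary yields $\mathsf{E}_{\phi_{i},\omega_{i}}(\mathsf{h}_{\phi_{j},\omega_{j}})\circ\phi_{i}^{-1}<\infty$ a.e.\ $[\mu]$ for all $i,j\in\{1,2\}$. Specializing to the diagonal case $j=i$ gives the key pointwise bound
\[
\mathsf{E}_{\phi_{i},\omega_{i}}(\mathsf{h}_{\phi_{i},\omega_{i}})\circ\phi_{i}^{-1}<\infty\quad\text{a.e.\ }[\mu],\qquad i=1,2.
\]
By Corollary \ref{singlemeanbasic} (ii)--(iii), applied to each single operator $C_{\phi_{i},\omega_{i}}$, this bound is precisely the condition $\overline{\EuScript{D}(\mathcal{M}_{\lambda}(C_{\phi_{i},\omega_{i}}))}=L^{2}(\mu)$, i.e.\ each component $\mathcal{M}_{\lambda}(C_{\phi_{i},\omega_{i}})$ is densely defined.

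It then remains to pass from dense definiteness of the two components to that of the pair, which is the only step requiring care, since for arbitrary operator pairs the intersection of two dense domains need not be dense. Here I would exploit the concrete structure: by Corollary \ref{singlemeanbasic} (i),
\[
\EuScript{D}(\mathcal{M}_{\lambda}(C_{\phi_{i},\omega_{i}}))=L^2\Big(\big(1+\mathsf{h}_{\phi_{i},\omega_{i}}+\mathsf{E}_{\phi_{i},\omega_{i}}(\mathsf{h}_{\phi_{i},\omega_{i}})\circ\phi_{i}^{-1}\big)\D\mu\Big),
\]
so the joint domain is the single weighted space $L^{2}\big((1+\sum_{i=1}^{2}(\mathsf{h}_{\phi_{i},\omega_{i}}+\mathsf{E}_{\phi_{i},\omega_{i}}(\mathsf{h}_{\phi_{i},\omega_{i}})\circ\phi_{i}^{-1}))\D\mu\big)$. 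Because each $C_{\phi_{i},\omega_{i}}$ is densely defined, Theorem \ref{combasic} (iii) gives $\mathsf{h}_{\phi_{i},\omega_{i}}<\infty$ a.e.\ $[\mu]$, and combined with the bound above the entire weight is finite a.e.\ $[\mu]$. A routine truncation-and-simple-function approximation, identical to the one carried out in Proposition \ref{basic} (ii), then shows that this weighted $L^{2}$ space is dense in $L^{2}(\mu)$, which completes the argument.

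I expect the main (indeed the only) obstacle to be this last transition: one must not merely assert that each toral component is densely defined, but verify that their common domain is dense. It is exactly the explicit weighted-$L^{2}$ description of the domains, rather than any abstract operator-pair reasoning, that makes this possible, so the proposal hinges on reducing the intersection of domains to a single weighted space and then applying the established density criterion.
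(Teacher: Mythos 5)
Your proof is correct and follows essentially the same route as the paper: both extract the diagonal case $\mathsf{E}_{\phi_{i},\omega_{i}}(\mathsf{h}_{\phi_{i},\omega_{i}})\circ\phi_{i}^{-1}<\infty$ a.e.\ $[\mu]$ from Corollary \ref{denseequ} (iv) to get dense definiteness of each component $\mathcal{M}_{\lambda}(C_{\phi_{i},\omega_{i}})$, and both settle the passage from components to the pair by the weighted-$L^{2}$ truncation argument. The only difference is cosmetic: the paper identifies each component as the weighted composition operator $C_{\phi_{i},\omega_{\lambda}^{i}}$ (via Remark \ref{singlemult} and Theorem \ref{meanbasic} (iv)) and then cites Proposition \ref{basic} (ii) as a black box, whereas you inline that proposition's density argument directly, using the explicit domain description from Corollary \ref{singlemeanbasic} (i).
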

\begin{proof}
By Corollary \ref{denseequ} (iv), we have that $\mathsf{E}_{\phi_{i},\omega_{i}}(\mathsf{h}_{\phi_{i},\omega_{i}})\circ\phi_{i}^{-1}<\infty$ a.e. $[\mu]$ for $i=1,2$, which implies $\mathcal{M}_{\lambda}(C_{\phi_{i},\omega_{i}},0)$ is densely defined for $i=1,2$. Thus, it follows from Remark \ref{singlemult} and Theorem \ref{meanbasic} (iv) that $\mathcal{M}_{\lambda}(C_{\phi_{i},\omega_{i}})=C_{\phi_{i},((1-\lambda)\chi_{\{\omega_{i}\ne0\}}\cdot(\frac{\mathsf{h}_{\phi_{i},\omega_{i}}}{\mathsf{h}_{\phi_{i},\omega_{i}}\circ\phi_{i}})^\frac{1}{2}+\lambda)\omega_{i}}$ is densely defined for $i=1,2$. Using Proposition \ref{basic} (ii), we know that the pair $\widehat{\mathcal{M}_{\lambda}(\textbf{C}_{\phi,\omega})}=(\mathcal{M}_{\lambda}(C_{\phi_{1},\omega_{1}}),\mathcal{M}_{\lambda}(C_{\phi_{2},\omega_{2}}))$ is densely defined.
\end{proof}

The converse of the above proposition does not hold in general (Example \ref{counterex}). However, the dense definiteness of $\mathcal{M}_{\lambda}(\textbf{C}_{\phi,\omega})$ can be characterized via the powers of $\widehat{\mathcal{M}_{\lambda}(\textbf{C}_{\phi,\omega})}$ in Proposition \ref{powers}. Before that, we introduce the definition of power of operator tuples. Put $n,m\in\mathbb{N}$. Denote the set of all $n$-tuples of operators by $\mathcal{L}(\mathcal{H})^n$, i.e., $\mathcal{L}(\mathcal{H})^n=\{(T_{1},T_{2},\cdots,T_{n}):T_{i}\in\mathcal{L}(\mathcal{H}),1\le i\le n\}$. Let $\textbf{T}=(T_{1},T_{2},\cdots,T_{n})\in\mathcal{L}(\mathcal{H})^n$ and $\textbf{S}=(S_{1},S_{2},\cdots,S_{m})\in\mathcal{L}(\mathcal{H})^m$. The product $\textbf{T}\textbf{S}\in\mathcal{L}(\mathcal{H})^{nm}$ is defined as
$$\textbf{T}\textbf{S}=(T_{1}S_{1},T_{1}S_{2},\cdots,T_{1}S_{m},T_{2}S_{1},T_{2}S_{2},\cdots,T_{2}S_{m},\cdots,T_{n}S_{1},T_{n}S_{2},\cdots,T_{n}S_{m}).$$
Further, the power $\textbf{T}^{k+1}$ ($k\in\mathbb{N}$) of $\textbf{T}$ is inductively given by
$\textbf{T}^{k+1}=\textbf{T}\textbf{T}^{k}$. In particular, if $\textbf{T}\in\mathcal{B}(\mathcal{H})$, then $\textbf{T}^k$ coincides with the usual power for single operators. The notion of powers of bounded operator tuples was first introduced in \cite{muller-studia-1992} by M{\"u}ller and Soltysiak to derive an analogue of the classical spectral radius formula in a multivariable setting. On the other hand, in the sense of Stochel and Szafraniec \cite{js-jmsj-2003}, an $n$-tuple $\textbf{T}=(T_{1},T_{2},\cdots,T_{n})$ of unbounded operators acting on $\mathcal{H}$ is said to be densely defined if the joint domain $\EuScript{D}(\textbf{T})=\bigcap\limits_{i=1}^n\EuScript{D}(T_{i})$ is dense in $\mathcal{H}$. Suppose $\textbf{T}^{k+1}$ is densely defined for some $k\in\mathbb{N}$, then for each $i\in\{1,2,\cdots,k+1\}$, $\textbf{T}^{i}$ is also densely defined. To see this, observe that
\begin{equation}\label{densepro}
\EuScript{D}(\textbf{T}^{k+1})=\bigcap\limits_{1\le i_{j}\le n,1\le j\le k+1}\EuScript{D}(T_{i_{1}}T_{i_{2}}\cdots T_{i_{k+1}})=\bigcap\limits_{1\le l\le k+1,1\le j\le l,1\le i_{j}\le n}\EuScript{D}(T_{i_{1}}\cdots T_{i_{l}}),
\end{equation}
which implies $\EuScript{D}(\textbf{T}^{k+1})\subset\EuScript{D}(\textbf{T}^i)$ for $i=1,2,\cdots,k+1$. Consequently, since $\EuScript{D}(\textbf{T}^{k+1})$ is dense, so is $\EuScript{D}(\textbf{T}^i)$. This fact will be used in the proof of Lemma \ref{productdense}. 

Let $l\in\mathbb{N}$ and $\sigma_{l}(\{1,2\})=\{(i_{1},i_{2},\cdots,i_{l}):i_{j}\in\{1,2\},1\le j\le l,j\in\mathbb{N}\}$. Suppose $\phi_{i}$ is an $\mathscr{A}$-measurable transform of $X$ and $\omega_{i}$ is a complex $\mathscr{A}$-measurable function for $i=1,2$. For any $(i_{1},i_{2},\cdots,i_{l})\in\sigma_{l}(\{1,2\})$, we write $$\phi_{i_{j}}^{i_{l}}=\phi_{i_{j}}\circ\phi_{i_{j+1}}\circ\phi_{i_{j+2}}\circ\cdots\circ\phi_{i_{l}}$$ and $$\omega_{i_{j}}^{i_{l}}=\omega_{i_{j}}\circ\phi_{i_{j+1}}\circ\phi_{i_{j+2}}\circ\cdots\circ\phi_{i_{l}}$$
for $1\le j\le l$.

\begin{lem}\label{productdense}
Suppose $C_{\phi_{i},\omega_{i}}$ $(i=1,2)$ are densely defined. Fix $k\in\mathbb{N}$.  Then $(C_{\phi_{1},\omega_{1}},C_{\phi_{2},\omega_{2}})^k$ is densely defined if and only if $C_{\phi_{i_{1}}^{i_{l}},\prod_{j=1}^{l}\omega_{i_{j}}^{i_{l}}}$ is densely defined for any $(i_{1},i_{2},\cdots,i_{l})\in\sigma_{l}(\{1,2\})$, $1\le l\le k$.
\end{lem}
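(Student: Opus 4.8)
The plan is to reduce the joint domain of the power $\textbf{C}_{\phi,\omega}^k$ to a single weighted $L^2$-space whose weight is a \emph{finite} sum of Radon--Nikodym derivatives, and then read dense definiteness off the finiteness of that weight. Throughout write $T_i:=C_{\phi_i,\omega_i}$ and, for a multi-index $s=(i_1,\dots,i_l)\in\sigma_l(\{1,2\})$, abbreviate $C_s:=C_{\phi_{i_1}^{i_l},\,\prod_{j=1}^{l}\omega_{i_j}^{i_l}}$ and $\mathsf{h}_s:=\mathsf{h}_{\phi_{i_1}^{i_l},\,\prod_{j=1}^{l}\omega_{i_j}^{i_l}}$; each $C_s$ is well-defined by the standing assumption, its absolute continuity being inherited from the factors exactly as in Lemma \ref{weighted}(i). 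The first ingredient I would record is the elementary composition rule $C_{\psi_1,\eta_1}C_{\psi_2,\eta_2}\subseteq C_{\psi_2\circ\psi_1,\,\eta_1\cdot(\eta_2\circ\psi_1)}$, which follows from the pointwise identity $(C_{\psi_1,\eta_1}C_{\psi_2,\eta_2}f)(x)=\eta_1(x)\,\eta_2(\psi_1(x))\,f(\psi_2(\psi_1(x)))$ valid wherever the left side is defined, together with the norm identity $\|C_{\phi,\omega}f\|^2=\int_X|f|^2\mathsf{h}_{\phi,\omega}\D\mu$ in $[0,\infty]$, which is a special case of \eqref{intcom}. Iterating shows that each operator product $T_{i_1}\cdots T_{i_l}$ is a restriction of a single weighted composition operator; since operator multiplication reverses the order of the inner maps, that single operator is $C_s$ for the \emph{reversed} index, and because reversal is a bijection of $\sigma_l(\{1,2\})$ this reindexing is harmless once one quantifies over all sequences.

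The technical core is the domain identity
\begin{equation*}
	\EuScript{D}(T_{i_1}\cdots T_{i_l})=\bigcap_{m=1}^{l}\EuScript{D}\big(\widehat{C}_m\big),
\end{equation*}
where $\widehat{C}_m$ is the single weighted composition operator represented by the suffix product $T_{i_m}\cdots T_{i_l}$. I would prove this by induction on $l$, the base case $l=1$ being the definition of $\EuScript{D}(T_{i_1})$. For the step, I use $\EuScript{D}(T_{i_1}\cdots T_{i_l})=\{f\in\EuScript{D}(T_{i_2}\cdots T_{i_l}):T_{i_2}\cdots T_{i_l}f\in\EuScript{D}(T_{i_1})\}$: the condition $T_{i_2}\cdots T_{i_l}f\in\EuScript{D}(T_{i_1})$ is just $T_{i_1}\cdots T_{i_l}f\in L^2(\mu)$, and since this full product agrees pointwise with $\widehat{C}_1 f$, it says $f\in\EuScript{D}(\widehat{C}_1)$; combined with the inductive hypothesis this gives the identity. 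On the resulting intersection every intermediate image $T_{i_m}\cdots T_{i_l}f=\widehat{C}_m f$ automatically lies in $L^2(\mu)$, so the product is genuinely defined there. I expect the main obstacle to be exactly this bookkeeping: the recursive definition of a product domain hides a chain of intermediate $L^2$-membership conditions, and the crux is to recognize each of them as membership in the domain of one of the shorter weighted composition operators already present in the family, rather than as a strictly stronger requirement.

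Finally I would assemble the pieces. The $k$-fold analogue of \eqref{densepro} gives $\EuScript{D}(\textbf{C}_{\phi,\omega}^k)=\bigcap_{l=1}^{k}\bigcap_{(i_1,\dots,i_l)}\EuScript{D}(T_{i_1}\cdots T_{i_l})$, and substituting the domain identity shows that the operators $\widehat{C}_m$, as $(i_1,\dots,i_l)$ and $m$ vary, exhaust precisely the family $\{C_s:s\in\bigcup_{l=1}^{k}\sigma_l(\{1,2\})\}$ of the statement (up to the harmless index reversal), whence $\EuScript{D}(\textbf{C}_{\phi,\omega}^k)=\bigcap_{l=1}^{k}\bigcap_{s\in\sigma_l(\{1,2\})}\EuScript{D}(C_s)$. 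By Theorem \ref{combasic}(i) each $\EuScript{D}(C_s)=L^2((1+\mathsf{h}_s)\D\mu)$, so intersecting the finitely many of them yields $\EuScript{D}(\textbf{C}_{\phi,\omega}^k)=L^2\big((1+\sum_{l=1}^{k}\sum_{s\in\sigma_l(\{1,2\})}\mathsf{h}_s)\D\mu\big)$. Repeating the approximation argument of Proposition \ref{basic}(ii), whose orthogonal-complement computation identifies the closure of such a weighted domain, shows this space is dense in $L^2(\mu)$ if and only if $\sum_{l=1}^{k}\sum_{s\in\sigma_l(\{1,2\})}\mathsf{h}_s<\infty$ a.e.\ $[\mu]$; since the sum is finite this is equivalent to $\mathsf{h}_s<\infty$ a.e.\ $[\mu]$ for every $s$, which by Theorem \ref{combasic}(iii) is exactly the dense definiteness of every $C_s=C_{\phi_{i_1}^{i_l},\,\prod_{j=1}^{l}\omega_{i_j}^{i_l}}$. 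This yields the required equivalence.
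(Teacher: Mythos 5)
Your proposal is correct and follows essentially the same route as the paper's proof: you show each product $T_{i_1}\cdots T_{i_l}$ agrees pointwise with a single weighted composition operator, prove that the domain of the product is the intersection of the domains of the shorter composites (this is exactly the paper's identity \eqref{densepro1}), and assemble via the $k$-fold analogue of \eqref{densepro} together with the approximation argument of Proposition \ref{basic}(ii). The only deviations are cosmetic or local: your index order is reversed relative to the paper's (harmless, as you note), and in the ``only if'' direction you detour through the single weighted space $L^2\big((1+\sum_s\mathsf{h}_s)\D\mu\big)$ and the a.e.-finiteness criterion of Theorem \ref{combasic}(iii), where the paper reads dense definiteness of each $C_s$ directly off the inclusion $\EuScript{D}\big(\textbf{C}_{\phi,\omega}^k\big)\subseteq\EuScript{D}(C_s)$ — your route works but is slightly longer than necessary (note also that Proposition \ref{basic}(ii) itself only proves the ``if'' half of your closure claim; the ``only if'' half is the standard $\sigma$-finiteness argument as in Theorem \ref{combasic}(iii) or Corollary \ref{denseequ}, not an orthogonal-complement computation in Proposition \ref{basic}(ii)).

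One step is under-justified: the well-definedness of the composite operators, i.e.\ $\mu_{\prod_{j=1}^{l}\omega_{i_j}^{i_l}}\circ(\phi_{i_1}^{i_l})^{-1}\ll\mu$. You cite Lemma \ref{weighted}(i), but that lemma treats a modified weight with the \emph{same} transformation $\phi_i$, so its one-line argument does not literally cover a composite $\phi_{i_1}\circ\cdots\circ\phi_{i_l}$; this is precisely the point on which the paper spends the inductive conditional-expectation computation \eqref{welldefined}. The gap is repairable in a few lines by induction on $l$: if $\mu(\Delta)=0$, the inductive hypothesis makes $E=(\phi_{i_1}^{i_{l-1}})^{-1}(\Delta)\cap\{\prod_{j\le l-1}\omega_{i_j}^{i_{l-1}}\ne0\}$ a $\mu$-null set, hence $\mu_{\omega_{i_l}}(\phi_{i_l}^{-1}(E))=0$ by $\mu_{\omega_{i_l}}\circ\phi_{i_l}^{-1}\ll\mu$; since $(\phi_{i_1}^{i_l})^{-1}(\Delta)\cap\{\prod_{j\le l}\omega_{i_j}^{i_l}\ne0\}\subseteq\phi_{i_l}^{-1}(E)$ and $|\omega_{i_l}|^2=0$ $\mu$-a.e.\ on $\phi_{i_l}^{-1}(E)$, the measure $\mu_{\prod_{j}\omega_{i_j}^{i_l}}\big((\phi_{i_1}^{i_l})^{-1}(\Delta)\big)$ vanishes. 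With that supplied (and it is needed even to invoke Theorem \ref{combasic}(i) for $\EuScript{D}(C_s)=L^2((1+\mathsf{h}_s)\D\mu)$, since $\mathsf{h}_s$ only exists once absolute continuity is known), your argument is complete.
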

\begin{proof}
Let $(i_{1},i_{2},\cdots,i_{l})\in\sigma_{l}(\{1,2\})$. We inductively defined a sequence of $\mathscr{A}$-measurable functions $\{\mathsf{h}_{n}\}_{n\ge0}$ by setting $\mathsf{h}_{0}=\mathsf{h}_{\phi_{i_{l}},\omega_{i_{l}}}$ and
\begin{equation}\label{induct}
	\mathsf{h}_{n}=\mathsf{E}_{\phi_{i_{l-n}},\omega_{i_{l-n}}}(\mathsf{h}_{n-1})\circ\phi_{i_{l-n}^{-1}}\cdot\mathsf{h}_{\phi_{i_{l-n}},\omega_{i_{l-n}}}
\end{equation}
for $n\in\mathbb{N}$ (By similar arguments as in Lemma \ref{weighted} (iii) we have $\mathsf{h}_{n-1}\ge 0$ a.e. $[\mu]$, which implies $\mathsf{E}_{\phi_{i_{l-n}},\omega_{i_{l-n}}}(\mathsf{h}_{n-1})$ is well-defined for $n\in\mathbb{N}$). 

Note that $C_{\phi_{i_{1}}^{i_{l}},\prod_{j=1}^{l}\omega_{i_{j}}^{i_{l}}}$ equals either $C_{\phi_{1},\omega_{1}}$ or $C_{\phi_{2},\omega_{2}}$ if $l=1$. For $l\ge2$ we first prove that the weighted composition operator $C_{\phi_{i_{1}}^{i_{l}},\prod_{j=1}^{l}\omega_{i_{j}}^{i_{l}}}$ is well-defined, that is, $\mu_{\prod_{j=1}^{l}\omega_{i_{j}}^{i_{l}}}\circ(\phi_{i_{1}}^{i_{l}})^{-1}\ll\mu$. Suppose $\Delta\in\mathcal{A}$ satisfying $\mu(\Delta)=0$. Then, we inductively have
\begin{equation}\label{welldefined}
	\begin{aligned}
		&\mu_{\prod_{j=1}^{l}\omega_{i_{j}}^{i_{l}}}\circ(\phi_{i_{1}}^{i_{l}})^{-1}(\Delta)\stackrel{\eqref{rd}}{=}\int_{(\phi_{i_{1}}^{i_{l}})^{-1}(\Delta)}\Bigg|\prod_{j=1}^{l}\omega_{i_{j}}^{i_{l}}\Bigg|^2\D\mu\\&=\int_{X}\chi_{(\phi_{i_{1}}^{i_{l-1}})^{-1}(\Delta)}\circ\phi_{i_{l}}\cdot\Bigg|\prod_{j=1}^{l-1}\omega_{i_{j}}^{i_{l-1}}\Bigg|^2\circ\phi_{i_{l}}\D\mu_{\omega_{i_{l}}}\\&\stackrel{\eqref{intcom}}{=}\int_{X}\chi_{(\phi_{i_{1}}^{i_{l-1}})^{-1}(\Delta)}\cdot\Bigg|\prod_{j=1}^{l-1}\omega_{i_{j}}^{i_{l-1}}\Bigg|^2\mathsf{h}_{\phi_{i_{l},\omega_{i_{l}}}}\D\mu\\&\stackrel{\eqref{induct}}{=}\int_{X}\chi_{(\phi_{i_{1}}^{i_{l-1}})^{-1}(\Delta)}\cdot\Bigg|\prod_{j=1}^{l-1}\omega_{i_{j}}^{i_{l-1}}\Bigg|^2\mathsf{h}_{0}\D\mu\\&\xlongequal{\eqref{exp},\eqref{invexp}}\int_{X}\chi_{(\phi_{i_{1}}^{i_{l-2}})^{-1}(\Delta)}\circ\phi_{i_{l-1}}\cdot\Bigg|\prod_{j=1}^{l-2}\omega_{i_{j}}^{i_{l-2}}\Bigg|^2\circ\phi_{i_{l-1}}\cdot\mathsf{E}_{\phi_{i_{l-1}},\omega_{i_{l-1}}}(\mathsf{h}_{0})\circ\phi_{i_{l-1}}^{-1}\circ\phi_{i_{l-1}}\D\mu_{\omega_{i_{l-1}}}\\&\stackrel{\eqref{intcom}}{=}\int_{X}\chi_{(\phi_{i_{1}}^{i_{l-2}})^{-1}(\Delta)}\cdot\Bigg|\prod_{j=1}^{l-2}\omega_{i_{j}}^{i_{l-2}}\Bigg|^2\cdot\mathsf{E}_{\phi_{i_{l-1}},\omega_{i_{l-1}}}(\mathsf{h}_{0})\circ\phi_{i_{l-1}}^{-1}\cdot\mathsf{h}_{\phi_{i_{l-1}},\omega_{i_{l-1}}}\D\mu\\&\stackrel{\eqref{induct}}{=}\int_{X}\chi_{(\phi_{i_{1}}^{i_{l-2}})^{-1}(\Delta)}\cdot\Bigg|\prod_{j=1}^{l-2}\omega_{i_{j}}^{i_{l-2}}\Bigg|^2\cdot\mathsf{h}_{1}\D\mu\\&\xlongequal{\eqref{exp},\eqref{invexp}}\int_{X}\chi_{(\phi_{i_{1}}^{i_{l-3}})^{-1}(\Delta)}\circ\phi_{i_{l-2}}\cdot\Bigg|\prod_{j=1}^{l-3}\omega_{i_{j}}^{i_{l-3}}\Bigg|^2\circ\phi_{i_{l-2}}\cdot\mathsf{E}_{\phi_{i_{l-2}},\omega_{i_{l-2}}}(\mathsf{h}_{1})\circ\phi_{i_{l-2}}^{-1}\circ\phi_{i_{l-2}}\D\mu_{\omega_{i_{l-2}}}\\&=\cdots\\&=\int_{X}\chi_{\phi_{i_{1}}^{-1}(\Delta)}\cdot|\omega_{i_{1}}|^2\cdot\mathsf{h}_{l-2}\D\mu\\&=\int_{\phi_{i_{1}}^{-1}(\Delta)}\mathsf{h}_{l-2}\D\mu_{\omega_{i_{1}}}=0
	\end{aligned} 	
\end{equation}
for $l\ge2$, where the last equality holds since $\mu_{\omega_{i_{1}}}\circ\phi_{i_{1}}^{-1}\ll\mu$ for $i_{1}\in\{1,2\}$. Hence, $C_{\phi_{i_{1}}^{i_{l}},\prod_{j=1}^{l}\omega_{i_{j}}^{i_{l}}}$  is well-defined. 

Now, suppose $(C_{\phi_{1},\omega_{1}},C_{\phi_{2},\omega_{2}})^k$ is densely defined. It follows from the fact mentioned above that $(C_{\phi_{1},\omega_{1}},C_{\phi_{2},\omega_{2}})^l$ is densely defined for $1\le l\le k$. Then by the definition of powers of unbounded operator tuples, we have that $C_{\phi_{i_{l}},\omega_{i_{l}}}C_{\phi_{i_{l-1}},\omega_{i_{l-1}}}\cdots C_{\phi_{i_{1}},\omega_{i_{1}}}$ is densely defined for $(i_{1},i_{2},\cdots,i_{l})\in\sigma_{l}(\{1,2\})$. On the other hand, since
\begin{align*}
	C_{\phi_{i_{l}},\omega_{i_{l}}}C_{\phi_{i_{l-1}},\omega_{i_{l-1}}}\cdots C_{\phi_{i_{1}},\omega_{i_{1}}}f&=C_{\phi_{i_{l}},\omega_{i_{l}}}C_{\phi_{i_{l-1}},\omega_{i_{l-1}}}\cdots C_{\phi_{i_{2}},\omega_{i_{2}}}\omega_{i_{1}}\cdot f\circ\phi_{i_{1}}\\&=C_{\phi_{i_{l}},\omega_{i_{l}}}C_{\phi_{i_{l-1}},\omega_{i_{l-1}}}\cdots C_{\phi_{i_{3}},\omega_{i_{3}}}\omega_{i_{2}}\cdot\omega_{i_{1}}\circ\phi_{i_{2}}\cdot f\circ\phi_{i_{1}}\circ\phi_{i_{2}}\\&=\cdots\\&=\prod_{j=1}^{l}\omega_{i_{j}}^{i_{l}}\cdot f\circ\phi_{i_{1}}^{i_{l}}=C_{\phi_{i_{1}}^{i_{l}},\prod_{j=1}^{l}\omega_{i_{j}}^{i_{l}}}f
\end{align*}	
for any $f\in\EuScript{D}(C_{\phi_{i_{l}},\omega_{i_{l}}}C_{\phi_{i_{l-1}},\omega_{i_{l-1}}}\cdots C_{\phi_{i_{1}},\omega_{i_{1}}})$, then 
\begin{equation}\label{densepro1}
	\begin{aligned}
		&\EuScript{D}(C_{\phi_{i_{l}},\omega_{i_{l}}}C_{\phi_{i_{l-1}},\omega_{i_{l-1}}}\cdots C_{\phi_{i_{1}},\omega_{i_{1}}})\\&=\EuScript{D}(C_{\phi_{i_{1}}^{i_{l}},\prod_{j=1}^{l}\omega_{i_{j}}^{i_{l}}})\bigcap\EuScript{D}(	C_{\phi_{i_{l-1}},\omega_{i_{l-1}}}C_{\phi_{i_{l-2}},\omega_{i_{l-2}}}\cdots C_{\phi_{i_{1}},\omega_{i_{1}}})\\&=\EuScript{D}(C_{\phi_{i_{1}}^{i_{l}},\prod_{j=1}^{l}\omega_{i_{j}}^{i_{l}}})\bigcap\EuScript{D}(C_{\phi_{i_{1}}^{i_{l-1}},\prod_{j=1}^{l-1}\omega_{i_{j}}^{i_{l-1}}})\bigcap\EuScript{D}(	C_{\phi_{i_{l-2}},\omega_{i_{l-2}}}C_{\phi_{i_{l-3}},\omega_{i_{l-3}}}\cdots C_{\phi_{i_{1}},\omega_{i_{1}}})\\&=\cdots=\bigcap\limits_{1\le t\le l}\EuScript{D}(C_{\phi_{i_{1}}^{i_{t}},\prod_{j=1}^{t}\omega_{i_{j}}^{i_{t}}})
	\end{aligned}
\end{equation}
which implies, by the dense definiteness of $C_{\phi_{i_{l}},\omega_{i_{l}}}C_{\phi_{i_{l-1}},\omega_{i_{l-1}}}\cdots C_{\phi_{i_{1}},\omega_{i_{1}}}$, that $C_{\phi_{i_{1}}^{i_{l}},\prod_{j=1}^{l}\omega_{i_{j}}^{i_{l}}}$ is densely defined.

Conversely, by \eqref{densepro1} and \eqref{densepro}, we have
\begin{equation}\label{densepro2}
\begin{aligned}
	\EuScript{D}((C_{\phi_{1},\omega_{1}},C_{\phi_{2},\omega_{2}})^k)&=\bigcap\limits_{1\le l\le k,1\le j\le l,i_{j}\in\{1,2\}}\EuScript{D}(C_{\phi_{i_{l}},\omega_{i_{l}}}\cdots C_{\phi_{i_{1}},\omega_{i_{1}}})\\&=\bigcap\limits_{1\le l\le k,1\le j\le l,i_{j}\in\{1,2\}}\bigcap\limits_{1\le t\le l}\EuScript{D}(C_{\phi_{i_{1}}^{i_{t}},\prod_{j=1}^{t}\omega_{i_{j}}^{i_{t}}}).
\end{aligned}
\end{equation}
Since $C_{\phi_{i_{1}}^{i_{l}},\prod_{j=1}^{l}\omega_{i_{j}}^{i_{l}}}$ is densely defined for any $(i_{1},i_{2},\cdots,i_{l})\in\sigma_{l}(\{1,2\})$, $1\le l\le k$, then by \eqref{densepro2} and the argument used in Proposition \ref{basic} (ii), it is not difficult to get the desired rusult.
\end{proof}

\begin{pro}\label{powers}
Suppose $C_{\phi_{i},\omega_{i}}$ $(i=1,2)$ are densely defined. Fix $\lambda\in(0,1)$. Consider the following conditions$:$
\begin{enumerate}
	\item[(i)]
	$\widehat{\mathcal{M}_{\lambda}(\textbf{C}_{\phi,\omega})}^k$ is densely defined for some $k\in\mathbb{N}\backslash\{1\}$,
	\item[(ii)]
	$(C_{\phi_{1},\omega_{1}},C_{\phi_{2},\omega_{2}})^k$ is densely defined for some $k\in\mathbb{N}\backslash\{1\}$,
	\item[(iii)]
$\mathcal{M}_{\lambda}(\textbf{C}_{\phi,\omega})$ is densely defined.
\end{enumerate}
Then (i) $\Rightarrow$ (ii) $\Rightarrow$ (iii) holds. Moreover, if $k=2$, then (iii) $\Rightarrow$ (ii) holds.
\end{pro}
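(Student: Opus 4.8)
The plan is to reduce each of the three conditions to a statement about finiteness of Radon--Nikodym derivatives, and then to compare these via the weight identity $\omega_{\lambda}^i = g_i\,\omega_i$ furnished by Corollary \ref{singlemeanbasic} (iv), where $g_i := (1-\lambda)\chi_{\{\omega_i\ne 0\}}\big(\mathsf{h}_{\phi_i,\omega_i}/\mathsf{h}_{\phi_i,\omega_i}\circ\phi_i\big)^{1/2}+\lambda$. Two standing facts will be used throughout: by Theorem \ref{combasic} (iii), a well-defined $C_{\phi,\omega}$ is densely defined if and only if $\mathsf{h}_{\phi,\omega}<\infty$ a.e.\ $[\mu]$; and by \eqref{densepro}, for $k\ge 2$ one has $\EuScript{D}(\textbf{T}^k)\subseteq\EuScript{D}(\textbf{T}^2)\subseteq\EuScript{D}(\textbf{T})$, so dense definiteness of $\textbf{T}^k$ forces that of $\textbf{T}^2$ and of $\textbf{T}$ itself. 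In particular the hypothesis in (i) guarantees that $\widehat{\mathcal{M}_{\lambda}(\textbf{C}_{\phi,\omega})}=(C_{\phi_1,\omega_{\lambda}^1},C_{\phi_2,\omega_{\lambda}^2})$ is densely defined, so Lemma \ref{productdense} may legitimately be applied to this pair.

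For (i) $\Rightarrow$ (ii), I would apply Lemma \ref{productdense} to the toral pair to conclude that $C_{\phi_{i_1}^{i_l},\,\prod_{j=1}^l(\omega_{\lambda}^{i_j})^{i_l}}$ is densely defined for every $(i_1,\dots,i_l)\in\sigma_l(\{1,2\})$, $1\le l\le k$. The key is the pointwise identity $\prod_{j=1}^l(\omega_{\lambda}^{i_j})^{i_l}=G\cdot\prod_{j=1}^l\omega_{i_j}^{i_l}$ with $G=\prod_{j=1}^l g_{i_j}^{i_l}$, where $g_{i_j}^{i_l}=g_{i_j}\circ\phi_{i_{j+1}}\circ\cdots\circ\phi_{i_l}$ (and $g_{i_l}^{i_l}=g_{i_l}$). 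Since each $g_i\ge\lambda$ as an extended-real function (the added term is nonnegative), composition preserves this bound and hence $G\ge\lambda^l$; consequently $|\prod_{j=1}^l(\omega_{\lambda}^{i_j})^{i_l}|^2\ge\lambda^{2l}|\prod_{j=1}^l\omega_{i_j}^{i_l}|^2$ a.e.\ $[\mu]$. Integrating over $(\phi_{i_1}^{i_l})^{-1}(\varDelta)$ yields the measure domination $\mu_{\prod(\omega_{\lambda}^{i_j})^{i_l}}\circ(\phi_{i_1}^{i_l})^{-1}\ge\lambda^{2l}\,\mu_{\prod\omega_{i_j}^{i_l}}\circ(\phi_{i_1}^{i_l})^{-1}$, whence $\mathsf{h}_{\phi_{i_1}^{i_l},\prod\omega_{i_j}^{i_l}}\le\lambda^{-2l}\,\mathsf{h}_{\phi_{i_1}^{i_l},\prod(\omega_{\lambda}^{i_j})^{i_l}}<\infty$ a.e.\ $[\mu]$. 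Thus every composed operator for the original pair is densely defined, and the converse direction of Lemma \ref{productdense} gives (ii).

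For the relation between (ii) and (iii) I would pass through the case $l=2$. Running the chain \eqref{welldefined} for a general $\varDelta$ (not merely a null set) identifies the Radon--Nikodym derivative of the $l=2$ composite as $\mathsf{h}_{\phi_{i_1}\circ\phi_{i_2},(\omega_{i_1}\circ\phi_{i_2})\omega_{i_2}}=\mathsf{E}_{\phi_{i_1},\omega_{i_1}}(\mathsf{h}_{\phi_{i_2},\omega_{i_2}})\circ\phi_{i_1}^{-1}\cdot\mathsf{h}_{\phi_{i_1},\omega_{i_1}}$, i.e.\ the term $\mathsf{h}_1$ of \eqref{induct}. Given (ii), the nesting reduces matters to $\textbf{T}^2$ being densely defined, so by Lemma \ref{productdense} this product is finite a.e.\ for all $(i_1,i_2)\in\{1,2\}^2$; since the factor $\mathsf{E}_{\phi_{i_1},\omega_{i_1}}(\cdot)\circ\phi_{i_1}^{-1}$ vanishes on $\{\mathsf{h}_{\phi_{i_1},\omega_{i_1}}=0\}$ by \eqref{Epro}, this finiteness is equivalent to $\mathsf{E}_{\phi_{i},\omega_{i}}(\mathsf{h}_{\phi_{j},\omega_{j}})\circ\phi_{i}^{-1}<\infty$ a.e.\ $[\mu]$ for $i,j=1,2$, which is condition (iv) of Corollary \ref{denseequ} and hence equivalent to (iii). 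For the reverse implication when $k=2$, condition (iii) supplies Corollary \ref{denseequ} (iv); multiplying by $\mathsf{h}_{\phi_{i_1},\omega_{i_1}}$ (finite a.e.\ by dense definiteness) shows each $l=2$ composite is densely defined, and together with the given dense definiteness of the $l=1$ operators, Lemma \ref{productdense} with $k=2$ yields $(C_{\phi_1,\omega_1},C_{\phi_2,\omega_2})^2$ densely defined, i.e.\ (ii).

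The main obstacle is bookkeeping rather than conceptual: one must establish the $l=2$ Radon--Nikodym identity and the bound $G\ge\lambda^l$ while scrupulously handling the iterated compositions $\phi_{i_{j+1}}\circ\cdots\circ\phi_{i_l}$, for which only the weighted absolute continuity $\mu_{\omega_i}\circ\phi_i^{-1}\ll\mu$ is available, not absolute continuity of $\mu\circ\phi_i^{-1}$. The saving observation is that $g_i\ge\lambda$ holds pointwise as extended reals, so the lower bound propagates through $G$ by composition alone, with no $\mu$-a.e.\ argument on the bare transformations required.
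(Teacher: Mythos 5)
Your proposal is correct and follows essentially the same route as the paper's own proof: (i) $\Rightarrow$ (ii) via Lemma \ref{productdense} applied to the toral pair $(C_{\phi_{1},\omega_{\lambda}^{1}},C_{\phi_{2},\omega_{\lambda}^{2}})$ together with the pointwise bound $g_{i}\ge\lambda$, (ii) $\Rightarrow$ (iii) via the Radon--Nikodym identity \eqref{powercom} combined with \eqref{Epro} and Corollary \ref{denseequ} (iv), and (iii) $\Rightarrow$ (ii) for $k=2$ by reversing that identity. The only cosmetic differences are that you obtain $\mathsf{h}_{\phi_{i_{1}}^{i_{l}},\prod\omega_{i_{j}}^{i_{l}}}\le\lambda^{-2l}\,\mathsf{h}_{\phi_{i_{1}}^{i_{l}},\prod(\omega_{\lambda}^{i_{j}})^{i_{l}}}$ through measure domination where the paper compares domains directly via the inequality $\int_{X}|C_{\phi_{i_{1}}^{i_{l}},\prod\omega_{i_{j}}^{i_{l}}}f|^{2}\D\mu\le\lambda^{-2l}\int_{X}|C_{\phi_{i_{1}}^{i_{l}},\prod(\mathsf{h}_{i_{j}}^{i_{l}}\cdot\omega_{i_{j}}^{i_{l}})}f|^{2}\D\mu$, and that in (iii) $\Rightarrow$ (ii) you cite the converse half of Lemma \ref{productdense} where the paper re-runs the Proposition \ref{basic} (ii) density argument inline --- both pairs of steps are equivalent.
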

\begin{proof}
(i) $\Rightarrow$ (ii):	By Remark \ref{singlemult} and Theorem \ref{meanbasic}, we have $\widehat{\mathcal{M}_{\lambda}(\textbf{C}_{\phi,\omega})}^k=(C_{\phi_{1},\omega_{\lambda}^1},C_{\phi_{2},\omega_{\lambda}^2})^k,$
where 
$\omega_{\lambda}^i=((1-\lambda)\chi_{\{\omega_{i}\ne0\}}\cdot(\frac{\mathsf{h}_{\phi_{i},\omega_{i}}}{\mathsf{h}_{\phi_{i},\omega_{i}}\circ\phi_{i}})^\frac{1}{2}+\lambda)\omega_{i}$ for $i=1,2$. By (i) and the definitions of powers and dense definiteness for unbounded operator tuples, we have that $C_{\phi_{i},\omega_{\lambda}^i}$ is densely defined for $i=1,2$. Then using Lemma \ref{productdense}, we obtain $C_{\phi_{i_{1}}^{i_{l}},\prod_{j=1}^{l}(\mathsf{h}_{i_{j}}^{i_{l}}\cdot\omega_{i_{j}}^{i_{l}})}$ is densely defined for any $(i_{1},i_{2},\cdots,i_{l})\in\sigma_{l}(\{1,2\})$, $1\le l\le k$, where $\mathsf{h}_{i_{j}}^{i_{l}}=((1-\lambda)\chi_{\{\omega_{i_{j}}\ne0\}}\cdot(\frac{\mathsf{h}_{\phi_{i_{j}},\omega_{i_{j}}}}{\mathsf{h}_{\phi_{i_{j}},\omega_{i_{j}}}\circ\phi_{i_{j}}})^\frac{1}{2}+\lambda)\circ\phi_{i_{j+1}}\circ\phi_{i_{j+2}}\circ\cdots\circ\phi_{i_{l}}$. Since$$(1-\lambda)\chi_{\{\omega_{i}\ne0\}}\cdot(\frac{\mathsf{h}_{\phi_{i},\omega_{i}}}{\mathsf{h}_{\phi_{i},\omega_{i}}\circ\phi_{i}})^\frac{1}{2}+\lambda\ge\lambda$$ for $i\in\{1,2\}$, then we have
$$\frac{1}{(\prod_{j=1}^{l}\mathsf{h}_{i_{j}}^{i_{l}})^2}\le\frac{1}{\lambda^{2l}},$$ which implies\begin{align*}
	\int_{X}|C_{\phi_{i_{1}}^{i_{l}},\prod_{j=1}^{l}\omega_{i_{j}}^{i_{l}}}f|^2\D\mu&=\int_{X}\frac{1}{(\prod_{j=1}^{l}\mathsf{h}_{i_{j}}^{i_{l}})^2}|C_{\phi_{i_{1}}^{i_{l}},\prod_{j=1}^{l}(\mathsf{h}_{i_{j}}^{i_{l}}\cdot\omega_{i_{j}}^{i_{l}})}f|^2\D\mu\\&\le\frac{1}{\lambda^{2l}}\int_{X}|C_{\phi_{i_{1}}^{i_{l}},\prod_{j=1}^{l}(\mathsf{h}_{i_{j}}^{i_{l}}\cdot\omega_{i_{j}}^{i_{l}})}f|^2\D\mu<\infty
\end{align*}
for any $f\in\EuScript{D}\Big(C_{\phi_{i_{1}}^{i_{l}},\prod_{j=1}^{l}(\mathsf{h}_{i_{j}}^{i_{l}}\cdot\omega_{i_{j}}^{i_{l}})}\Big)$, that is, $\EuScript{D}\Big(C_{\phi_{i_{1}}^{i_{l}},\prod_{j=1}^{l}(\mathsf{h}_{i_{j}}^{i_{l}}\cdot\omega_{i_{j}}^{i_{l}})}\Big)\subset\EuScript{D}(C_{\phi_{i_{1}}^{i_{l}},\prod_{j=1}^{l}\omega_{i_{j}}^{i_{l}}})$. Hence, $C_{\phi_{i_{1}}^{i_{l}},\prod_{j=1}^{l}\omega_{i_{j}}^{i_{l}}}$ is densely defined for any $(i_{1},i_{2},\cdots,i_{l})\in\sigma_{l}(\{1,2\})$, $1\le l\le k$. By Lemma \ref{productdense}, we obtain (ii).

(ii) $\Rightarrow$ (iii): Suppose $(C_{\phi_{1},\omega_{1}},C_{\phi_{2},\omega_{2}})^k$ is densely defined for some $k\in\mathbb{N}\backslash\{1\}$. By Lemma \ref{productdense}, we have that $C_{\phi_{j}\circ\phi_{i},\omega_{i}\cdot\omega_{j}\circ\phi_{i}}$ is densely defined for $i,j\in\{1,2\}$. Thus, by Theorem \ref{combasic} (iii), we have
\begin{equation}\label{finite}
\mathsf{h}_{\phi_{j}\circ\phi_{i},\omega_{i}\cdot\omega_{j}\circ\phi_{i}}<\infty\quad\text{a.e.}\ [\mu].
\end{equation} On the other hand, for any $\varDelta\in\mathscr{A}$, we derive
\begin{align*}
	\int_{\varDelta}\mathsf{h}_{\phi_{j}\circ\phi_{i},\omega_{i}\cdot\omega_{j}\circ\phi_{i}}\D\mu&\stackrel{\eqref{rd}}{=}\int_{(\phi_{j}\circ\phi_{i})^{-1}(\Delta)}|\omega_{i}\cdot\omega_{j}\circ\phi_{i}|^2\D\mu\stackrel{\eqref{welldefined}}{=}\int_{\phi_{j}^{-1}(\Delta)}\mathsf{h}_{\phi_{i},\omega_{i}}\D\mu_{\omega_{j}}\\&=\int_{\phi_{j}^{-1}(\Delta)}\mathsf{E}_{\phi_{j},\omega_{j}}(\mathsf{h}_{\phi_{i},\omega_{i}})\D\mu_{\omega_{j}}\\&\xlongequal{\eqref{intcom},\eqref{invexp}}\int_{\Delta}\mathsf{E}_{\phi_{j},\omega_{j}}(\mathsf{h}_{\phi_{i},\omega_{i}})\circ\phi_{j}^{-1}\mathsf{h}_{\phi_{j},\omega_{j}}\D\mu.
\end{align*} 
Since $\Delta$ is arbitrary, then
\begin{equation}\label{powercom}
\mathsf{h}_{\phi_{j}\circ\phi_{i},\omega_{i}\cdot\omega_{j}\circ\phi_{i}}=\mathsf{E}_{\phi_{j},\omega_{j}}(\mathsf{h}_{\phi_{i},\omega_{i}})\circ\phi_{j}^{-1}\mathsf{h}_{\phi_{j},\omega_{j}}\quad\text{a.e.}\ [\mu]
\end{equation}
 for $i,j=1,2$. Therefore, by \eqref{finite}, \eqref{Epro}, \eqref{powercom} and Theorem \ref{combasic} (iii), we have $$\mathsf{E}_{\phi_{j},\omega_{j}}(\mathsf{h}_{\phi_{i},\omega_{i}})\circ\phi_{j}^{-1}<\infty\quad\text{a.e.}\ [\mu],$$
 which yields, by using Corollary \ref{denseequ} (iv), that $\mathcal{M}_{\lambda}(\textbf{C}_{\phi,\omega})$ is densely defined.
 
 Next, we prove (iii) $\Rightarrow$ (ii) if $k=2$. Recalling the definitions of powers and dense definiteness for unbounded operator tuples, we have
 \begin{align*}
 \EuScript{D}((C_{\phi_{1},\omega_{1}},C_{\phi_{2},\omega_{2}})^2)&=\EuScript{D}(C_{\phi_{1},\omega_{1}}^2,C_{\phi_{1},\omega_{1}}C_{\phi_{2},\omega_{2}},C_{\phi_{2},\omega_{2}}C_{\phi_{1},\omega_{1}},C_{\phi_{2},\omega_{2}}^2)\\&=\bigcap\limits_{i,j\in\{1,2\}}\EuScript{D}(C_{\phi_{i},\omega_{i}}C_{\phi_{j},\omega_{j}})\\&\stackrel{\eqref{densepro1}}{=}\EuScript{D}(C_{\phi_{1},\omega_{1}})\bigcap\EuScript{D}(C_{\phi_{2},\omega_{2}})\bigcap\limits_{i,j\in\{1,2\}}\EuScript{D}(C_{\phi_{j}\circ\phi_{i},\omega_{i}\cdot\omega_{j}\circ\phi_{i}}).
 \end{align*}
 If we can prove $C_{\phi_{j}\circ\phi_{i},\omega_{i}\cdot\omega_{j}\circ\phi_{i}}$ is densely defined for $i,j=1,2$, then combining the above equality with the argument used in Proposition \ref{basic} (ii), it is not difficult to verify that$$\overline{\EuScript{D}((C_{\phi_{1},\omega_{1}},C_{\phi_{2},\omega_{2}})^2)}=\overline{L^2\Big(\big(1+\sum\limits_{i=1}^2\mathsf{h}_{\phi_{i},\omega_{i}}+\sum\limits_{i,j\in\{1,2\}}\mathsf{h}_{\phi_{j}\circ\phi_{i},\omega_{i}\cdot\omega_{j}\circ\phi_{i}})\D\mu\Big)}=L^2(\mu),$$
 that is, $(C_{\phi_{1},\omega_{1}},C_{\phi_{2},\omega_{2}})^2$ is densely defined. Indeed, since $\mathcal{M}_{\lambda}(\textbf{C}_{\phi,\omega})$ is densely defined, then by Corollary \ref{denseequ} (iv) we have $\mathsf{E}_{\phi_{j},\omega_{j}}(\mathsf{h}_{\phi_{i},\omega_{i}})\circ\phi_{j}^{-1}<\infty\quad\text{a.e.}\ [\mu]$ for $i,j=1,2$, which implies, by \eqref{powercom} and Theorem \ref{combasic} (iii), that $\mathsf{h}_{\phi_{j}\circ\phi_{i},\omega_{i}\cdot\omega_{j}\circ\phi_{i}}<\infty\quad\text{a.e.}\ [\mu]$ for $i,j\in\{1,2\}$. Thus, by Theorem \ref{combasic} (iii), we obtain $C_{\phi_{j}\circ\phi_{i},\omega_{i}\cdot\omega_{j}\circ\phi_{i}}$ is densely defined, as desired. This completes the proof.
\end{proof}

The following example shows that: \textbf{(a)} condition (iii) of the above proposition does not imply (ii) if $k\ne2$; \textbf{(b)} (iii) does not imply (i); and \textbf{(c)} the converse of Proposition \ref{pointwisemean} is false.

\begin{exa}\label{counterex}
Let $X=\mathbb{Z}_{+}\bigcup\mathbb{N}\times\mathbb{N}$, $\mathscr{A}=2^X$ and $\mu$ be the counting measure on $X$. Suppose $\lambda\in(0,1)$, then we have that

\textbf{(a)} $\mathcal{M}_{\lambda}(C_{\phi_{1},\omega_{1}},0)$ is densely defined, but $(C_{\phi_{1},\omega_{1}},0)^3$ is not, where the $\mathscr{A}$-measurable transform $\phi_{1}:X\to X$ is defined by $\phi_{1}(x)=\begin{cases}
	k&x=k-1\in\mathbb{Z}_{+}\\
	0&x=(k,1)\in\mathbb{N}\times\{1\}\\
	(m,n-1)&x=(m,n)\in\mathbb{N}\times(\mathbb{N}\backslash\{1\})
\end{cases}$ and the function $\omega_{1}:X\to\mathbb{R}$ is defined by $\omega_{1}(x)=\begin{cases}
	\frac{1}{k}&x=(k,1)\in\mathbb{N}\times\{1\}\\
		k&x=(k,3)\in\mathbb{N}\times\{3\}\\
		1&x\in X\backslash\Big(\bigcup\limits_{i=1,3}\mathbb{N}\times\{i\}\Big)
\end{cases}$.

\textbf{(b)} $\mathcal{M}_{\lambda}(C_{\phi_{2},\omega_{2}},0)$ is densely defined, but $\widehat{\mathcal{M}_{\lambda}(C_{\phi_{2},\omega_{2}},0)}^2$ is not, where the $\mathscr{A}$-measurable transform $\phi_{2}:X\to X$ is defined by $\phi_{2}(x)=\begin{cases}
	1&x=3k\\
	3k+2&x=3k+1\\
	3k&x=3k+2\\
	x&x\in \{0,1,2\}\bigcup\mathbb{N}\times\mathbb{N}
\end{cases}$, the function $\omega_{2}:X\to\mathbb{R}$ is defined by $\omega_{2}(x)=\begin{cases}
	\frac{1}{k}&x=3k\\
	\frac{k-\lambda^2}{\lambda(1-\lambda)}&x=3k+1\\
	1&x=3k+2\\
	0&x\in \{0,1,2\}\bigcup\mathbb{N}\times\mathbb{N}
\end{cases}$
and $k\in\mathbb{N}$.

\textbf{(c)} $\reallywidehat{\mathcal{M}_{\lambda}(C_{\phi_{3},\omega_{3}},C_{\phi_{4},\omega_{4}})}$ is densely defined, but $\mathcal{M}_{\lambda}(C_{\phi_{3},\omega_{3}},C_{\phi_{4},\omega_{4}})$ is not, where $\phi_{3}=\phi_{4}=\phi_{1}$ ($\phi_{1}$ is defined in \textbf{(a)}) 
and the functions $\omega_{3},\omega_{4}:X\to\mathbb{R}$ are defined as $$\omega_{3}(x)=\begin{cases}
	\frac{1}{k}&x=(k,l)\in\mathbb{N}\times\{1\}\bigcup\mathbb{N}\times\{3\}\\
	1&x\in\mathbb{N}\times\{2\}\\
	0&x\notin\bigcup\limits_{i=1}^3\mathbb{N}\times\{i\}
\end{cases}$$ and $$\omega_{4}(x)=\begin{cases}
	\sqrt{k}&x=(k,2)\in\mathbb{N}\times\{2\}\\
	0&x\notin\mathbb{N}\times\{2\}
\end{cases},$$ respectively. 
\begin{proof}
\textbf{(a)} It suffices by Remark \ref{singlemult} to show that $\mathcal{M}_{\lambda}(C_{\phi_{1},\omega_{1}})$ is densely defined, but $C_{\phi_{1},\omega_{1}}^3$ is not. Indeed, by \eqref{h}, we have
$$\mathsf{h}_{\phi_{1},\omega_{1}}(x)=\sum\limits_{y\in\phi_{1}^{-1}(x)}|\omega_{1}(y)|^2=\begin{cases}
	1&x=k\in\mathbb{N}\\
	\sum\limits_{k=1}^\infty\frac{1}{k^2}&x=0\\
	|\omega_{1}(m,n)|^2&x=(m,n-1)\in\mathbb{N}\times\mathbb{N}
\end{cases}.$$
Hence, by Theorem \ref{combasic} (iii), $C_{\phi_{1},\omega_{1}}$ is densely defined. By \eqref{Efphi} and the above equality, we obtain
\begin{equation}\label{power33}
	\mathsf{E}_{\phi_{1},\omega_{1}}(\mathsf{h}_{\phi_{1},\omega_{1}})\circ\phi_{1}^{-1}(x)=\frac{\sum\limits_{y\in\phi_{1}^{-1}(x)}\mathsf{h}_{\phi_{1},\omega_{1}}(y)|\omega_{1}(y)|^2}{\mathsf{h}_{\phi_{1},\omega_{1}}(x)}=\begin{cases}
		\sum\limits_{k=1}^{\infty}\frac{1}{k^2}&x=1\\
		1&x\in\mathbb{Z}_{+}\backslash\{1\}\\
		|\omega_{1}(m,n+1)|^2&x=(m,n-1)\in\mathbb{N}\times\mathbb{N}
	\end{cases},
\end{equation}
which implies, by Remark \ref{singlemult} and Corollary \ref{denseequ} (iv), that $\mathcal{M}_{\lambda}(C_{\phi,\omega})$ is densely defined. On the other hand, since \begin{align*}
	C_{\phi_{1},\omega_{1}}^3f&=C_{\phi_{1},\omega_{1}}C_{\phi_{1},\omega_{1}}\omega_{1}\cdot f\circ\phi_{1}=C_{\phi_{1},\omega_{1}}\omega_{1}\cdot\omega_{1}\circ\phi_{1}\cdot f\circ\phi_{1}\circ\phi_{1}\\&=\omega_{1}\cdot\omega_{1}\circ\phi_{1}\cdot\omega_{1}\circ\phi_{1}\circ\phi_{1}\cdot f\circ\phi_{1}\circ\phi_{1}\circ\phi_{1}\\&=C_{\phi_{1}\circ\phi_{1}\circ\phi_{1},\omega_{1}\cdot\omega_{1}\circ\phi_{1}\cdot\omega_{1}\circ\phi_{1}\circ\phi_{1}}f
\end{align*}
for any $f\in\EuScript{D}(C_{\phi_{1},\omega_{1}}^3)$, we have $\EuScript{D}(C_{\phi_{1},\omega_{1}}^3)\subseteq\EuScript{D}(C_{\phi_{1}\circ\phi_{1}\circ\phi_{1},\omega_{1}\cdot\omega_{1}\circ\phi_{1}\cdot\omega_{1}\circ\phi_{1}\circ\phi_{1}})$. If we can prove $$\mathsf{h}_{\phi_{1}\circ\phi_{1}\circ\phi_{1},\omega_{1}\cdot\omega_{1}\circ\phi_{1}\cdot\omega_{1}\circ\phi_{1}\circ\phi_{1}}(0)=\infty,$$ then by Theorem \ref{combasic} (iii) we have $\overline{\EuScript{D}(C_{\phi_{1},\omega_{1}}^3)}\subseteq\overline{\EuScript{D}(C_{\phi_{1}\circ\phi_{1}\circ\phi_{1},\omega_{1}\cdot\omega_{1}\circ\phi_{1}\cdot\omega_{1}\circ\phi_{1}\circ\phi_{1}})}\ne L^2(\mu)$, that is, $C_{\phi_{1},\omega_{1}}^3$ is not densely defined. We now compute $\mathsf{h}_{\phi_{1}\circ\phi_{1}\circ\phi_{1},\omega_{1}\cdot\omega_{1}\circ\phi_{1}\cdot\omega_{1}\circ\phi_{1}\circ\phi_{1}}(0)$. In fact, for any $\Delta\in 2^X$, we have\begin{align*}
	\int_{\Delta}\mathsf{h}_{\phi_{1}\circ\phi_{1}\circ\phi_{1},\omega_{1}\cdot\omega_{1}\circ\phi_{1}\cdot\omega_{1}\circ\phi_{1}\circ\phi_{1}}\D\mu&\stackrel{\eqref{rd}}{=}\int_{(\phi_{1}\circ\phi_{1}\circ\phi_{1})^{-1}(\Delta)}|\omega_{1}\cdot\omega_{1}\circ\phi_{1}\cdot\omega_{1}\circ\phi_{1}\circ\phi_{1}|^2\D\mu\\&\stackrel{\eqref{muomega}}{=}\int_{X}\chi_{(\phi_{1}\circ\phi_{1})^{-1}(\Delta)}\circ\phi_{1}\cdot|\omega_{1}\cdot\omega_{1}\circ\phi_{1}|^2\circ\phi_{1}\D\mu_{\omega_{1}}\\&\stackrel{\eqref{intcom}}{=}\int_{X}\chi_{(\phi_{1}\circ\phi_{1})^{-1}(\Delta)}\cdot|\omega_{1}\cdot\omega_{1}\circ\phi_{1}|^2\mathsf{h}_{\phi_{1},\omega_{1}}\D\mu\\&\stackrel{\eqref{muomega}}{=}\int_{X}\chi_{\phi_{1}^{-1}(\Delta)}\circ\phi_{1}\cdot|\omega_{1}|^2\circ\phi_{1}\mathsf{h}_{\phi_{1},\omega_{1}}\D\mu_{\omega_{1}}\\&\stackrel{\eqref{exp}}{=}\int_{X}\chi_{\phi_{1}^{-1}(\Delta)}\circ\phi_{1}\cdot|\omega_{1}|^2\circ\phi_{1}\mathsf{E}_{\phi_{1},\omega_{1}}(\mathsf{h}_{\phi_{1},\omega_{1}})\D\mu_{\omega_{1}}\\&\xlongequal{\eqref{invexp},\eqref{intcom}}\int_{\phi_{1}^{-1}(\Delta)}\mathsf{E}_{\phi_{1},\omega_{1}}(\mathsf{h}_{\phi_{1},\omega_{1}})\circ\phi_{1}^{-1}\mathsf{h}_{\phi_{1},\omega_{1}}\D\mu_{\omega_{1}}\\&\xlongequal{\eqref{exp},\eqref{invexp},\eqref{intcom}}\int_{\Delta}\mathsf{E}_{\phi_{1},\omega_{1}}(\mathsf{E}_{\phi_{1},\omega_{1}}(\mathsf{h}_{\phi_{1},\omega_{1}})\circ\phi_{1}^{-1}\mathsf{h}_{\phi_{1},\omega_{1}})\circ\phi_{1}^{-1}\mathsf{h}_{\phi_{1},\omega_{1}}\D\mu.
\end{align*}
Since $\Delta$ is arbitrary, then $\mathsf{h}_{\phi_{1}\circ\phi_{1}\circ\phi_{1},\omega_{1}\cdot\omega_{1}\circ\phi_{1}\cdot\omega_{1}\circ\phi_{1}\circ\phi_{1}}=\mathsf{E}_{\phi_{1},\omega_{1}}(\mathsf{E}_{\phi_{1},\omega_{1}}(\mathsf{h}_{\phi_{1},\omega_{1}})\circ\phi_{1}^{-1}\mathsf{h}_{\phi_{1},\omega_{1}})\circ\phi_{1}^{-1}\mathsf{h}_{\phi_{1},\omega_{1}}$ a.e. $[\mu]$. Thus, by \eqref{power33} and \eqref{Efphi}, we obtain\begin{align*}
	\mathsf{h}_{\phi_{1}\circ\phi_{1}\circ\phi_{1},\omega_{1}\cdot\omega_{1}\circ\phi_{1}\cdot\omega_{1}\circ\phi_{1}\circ\phi_{1}}(0)&=(\mathsf{E}_{\phi_{1},\omega_{1}}(\mathsf{E}_{\phi_{1},\omega_{1}}(\mathsf{h}_{\phi_{1},\omega_{1}})\circ\phi_{1}^{-1}\mathsf{h}_{\phi_{1},\omega_{1}})\circ\phi_{1}^{-1}\mathsf{h}_{\phi_{1},\omega_{1}})(0)\\&=\frac{\sum\limits_{y\in\phi_{1}^{-1}(0)}(\mathsf{E}_{\phi_{1},\omega_{1}}(\mathsf{h}_{\phi_{1},\omega_{1}})\circ\phi_{1}^{-1}\mathsf{h}_{\phi_{1},\omega_{1}})(y)|\omega_{1}(y)|^2}{\mathsf{h}_{\phi_{1},\omega_{1}}(0)}\cdot\mathsf{h}_{\phi_{1},\omega_{1}}(0)\\&=\sum\limits_{y\in\phi_{1}^{-1}(0)}(\mathsf{E}_{\phi_{1},\omega_{1}}(\mathsf{h}_{\phi_{1},\omega_{1}})\circ\phi_{1}^{-1}\mathsf{h}_{\phi_{1},\omega_{1}})(y)|\omega_{1}(y)|^2\\&=\sum\limits_{k=1}^\infty(\mathsf{E}_{\phi_{1},\omega_{1}}(\mathsf{h}_{\phi_{1},\omega_{1}})\circ\phi_{1}^{-1})(k,1)|\omega_{1}(k,2)|^2|\omega_{1}(k,1)|^2\\&=\sum\limits_{k=1}^\infty|\omega_{1}(k,3)|^2|\omega_{1}(k,2)|^2|\omega_{1}(k,1)|^2=\sum\limits_{k=1}^\infty 1=\infty.
\end{align*}
Therefore, we get the desired result.

\textbf{(b)} It suffices by Remark \ref{singlemult} to show that $\mathcal{M}_{\lambda}(C_{\phi_{2},\omega_{2}})$ is densely defined, but $(\mathcal{M}_{\lambda}(C_{\phi_{2},\omega_{2}}))^2$ is not. Indeed, by \eqref{Ef}, we have
$$\mathsf{h}_{\phi_{2},\omega_{2}}(x)=\sum\limits_{y\in\phi_{2}^{-1}(x)}|\omega_{2}(y)|^2=\begin{cases}
	\sum\limits_{k=1}^\infty\frac{1}{k^2}&x=1\\
	1&x=3k\\
	(\frac{k-\lambda^2}{\lambda(1-\lambda)})^2&x=3k+2\\
	0&x\in \mathbb{N}\times\mathbb{N}\bigcup(3\mathbb{N}+1)\bigcup\{0,2\}
\end{cases},$$
Hence, by Theorem \ref{combasic} (iii), $C_{\phi_{2},\omega_{2}}$ is densely defined. By \eqref{Efphi} and the above equality, we obtain
\begin{equation}
	\mathsf{E}_{\phi_{2},\omega_{2}}(\mathsf{h}_{\phi_{2},\omega_{2}})\circ\phi_{2}^{-1}(x)=\frac{\sum\limits_{y\in\phi_{2}^{-1}(x)}\mathsf{h}_{\phi_{2},\omega_{2}}(y)|\omega_{2}(y)|^2}{\mathsf{h}_{\phi_{2},\omega_{2}}(x)}=\begin{cases}
		1&x=1\\
		(\frac{k-\lambda^2}{\lambda(1-\lambda)})^2&x=3k\\
		0&x\in X\backslash(\{1\}\bigcup3\mathbb{N})
	\end{cases},
\end{equation}
which implies, by Remark \ref{singlemult} and Corollary \ref{denseequ} (iv), that $\mathcal{M}_{\lambda}(C_{\phi_{2},\omega_{2}})$ is densely defined. Let $\omega_{2}^\lambda=((1-\lambda)(\frac{\chi_{\{\omega_{2}\ne0\}}\mathsf{h}_{\phi_{2},\omega_{2}}}{\mathsf{h}_{\phi_{2},\omega_{2}}\circ\phi_{2}})^\frac{1}{2}+\lambda)\omega_{2}$, then $$\omega_{2}^\lambda(x)=\begin{cases}
	((1-\lambda)\frac{1}{\sqrt{\sum\limits_{k=1}^\infty\frac{1}{k^2}}}+\lambda)\frac{1}{k}&x=3k\\
	\frac{k-\lambda^2}{1-\lambda}&x=3k+1\\
\frac{k}{\lambda}&x=3k+2\\
0&x\in \{0,1,2\}\bigcup\mathbb{N}\times\mathbb{N}
\end{cases}.$$
On the other hand, since \begin{align*}
	(\mathcal{M}_{\lambda}(C_{\phi_{2},\omega_{2}})^2)f\xlongequal{\text{Corollary \ref{singlemeanbasic} (iv)}}C_{\phi_{2},\omega_{2}^\lambda}^2f&=C_{\phi_{2},\omega_{2}^\lambda}\omega_{2}^\lambda\cdot f\circ\phi_{2}=\omega_{2}^\lambda\cdot\omega_{2}^\lambda\circ\phi_{2}\cdot f\circ\phi_{2}\circ\phi_{2}\\&=C_{\phi_{2}\circ\phi_{2},\omega_{2}^\lambda\cdot\omega_{2}^\lambda\circ\phi_{2}}f
\end{align*}
for any $f\in\EuScript{D}(\mathcal{M}_{\lambda}(C_{\phi_{2},\omega_{2}})^2)$, we have $\EuScript{D}(\mathcal{M}_{\lambda}(C_{\phi_{2},\omega_{2}})^2)\subseteq\EuScript{D}(C_{\phi_{2}\circ\phi_{2},\omega_{2}^\lambda\cdot\omega_{2}^\lambda\circ\phi_{2}})$. If we can prove $$\mathsf{h}_{\phi_{2}\circ\phi_{2},\omega_{2}^\lambda\cdot\omega_{2}^\lambda\circ\phi_{2}}(1)=\infty,$$ then by Theorem \ref{combasic} (iii) we have $$\overline{\EuScript{D}(\mathcal{M}_{\lambda}(C_{\phi_{2},\omega_{2}})^2)}\subseteq\overline{\EuScript{D}(C_{\phi_{2}\circ\phi_{2},\omega_{2}^\lambda\cdot\omega_{2}^\lambda\circ\phi_{2}})}\ne L^2(\mu),$$ that is, $\mathcal{M}_{\lambda}(C_{\phi_{2},\omega_{2}})^2$ is not densely defined. By \eqref{h}, we obtain\begin{align*}
\mathsf{h}_{\phi_{2}\circ\phi_{2},\omega_{2}^\lambda\cdot\omega_{2}^\lambda\circ\phi_{2}}(1)&=\sum\limits_{y\in(\phi_{2}\circ\phi_{2})^{-1}(1)}|\omega_{2}^\lambda\cdot\omega_{2}^\lambda\circ\phi_{2}|^2(y)=\sum\limits_{k=1}^\infty|\omega_{2}^\lambda(3k+2)\cdot\omega_{2}^\lambda\circ\phi_{2}(3k+2)|^2\\&=\sum\limits_{k=1}^\infty|\frac{k}{\lambda}((1-\lambda)\frac{1}{\sqrt{\sum\limits_{k=1}^\infty\frac{1}{k^2}}}+\lambda)\frac{1}{k}|^2=\sum\limits_{k=1}^\infty((1-\lambda)\frac{1}{\sqrt{\sum\limits_{k=1}^\infty\frac{1}{k^2}}}+\lambda)=\infty.
\end{align*}
Therefore, we get the desired result.

\textbf{(c)} By \eqref{h}, we have$$\mathsf{h}_{\phi_{3},\omega_{3}}(x)=\sum\limits_{y\in\phi_{3}^{-1}(x)}|\omega_{3}(y)|^2=\begin{cases}
	\sum\limits_{k=1}^\infty\frac{1}{k^2}&x=0\\
	1&x\in\mathbb{N}\times\{1\}\\
	\frac{1}{n^2}&x=(n,2)\in\mathbb{N}\times\{2\}\\
0&x\in X\backslash\Big(\mathbb{N}\times\{1\}\bigcup\mathbb{N}\times\{2\}\bigcup\{0\}\Big)
\end{cases}$$
and$$\mathsf{h}_{\phi_{4},\omega_{4}}(x)=\sum\limits_{y\in\phi_{4}^{-1}(x)}|\omega_{4}(y)|^2=\begin{cases}
	n&x=(n,1)\in\mathbb{N}\times\{1\}\\
	0&x\notin\mathbb{N}\times\{1\}
\end{cases}.$$
Combining \eqref{Efphi} and the above equalities, we obtain
\begin{equation*}
	\mathsf{E}_{\phi_{3},\omega_{3}}(\mathsf{h}_{\phi_{3},\omega_{3}})\circ\phi_{3}^{-1}(x)=\frac{\sum\limits_{y\in\phi_{3}^{-1}(x)}\mathsf{h}_{\phi_{3},\omega_{3}}(y)|\omega_{3}(y)|^2}{\mathsf{h}_{\phi_{3},\omega_{3}}(x)}=\begin{cases}
		0&x=n\in\mathbb{N}\\
		1&x=0\\
		|\omega_{3}(m,n+1)|^2&x=(m,n-1)\in\mathbb{N}\times\mathbb{N}
	\end{cases},
\end{equation*}
\begin{equation}\label{countermean}
	\mathsf{E}_{\phi_{3},\omega_{3}}(\mathsf{h}_{\phi_{4},\omega_{4}})\circ\phi_{3}^{-1}(0)=\frac{\sum\limits_{k}^\infty\mathsf{h}_{\phi_{4},\omega_{4}}(k,1)|\omega_{3}(k,1)|^2}{\mathsf{h}_{\phi_{3},\omega_{3}}(0)}=\frac{\sum\limits_{k=1}^\infty\frac{1}{k}}{\sum\limits_{k=1}^\infty\frac{1}{k^2}}=\infty,
\end{equation}
and
\begin{equation*}
	\mathsf{E}_{\phi_{4},\omega_{4}}(\mathsf{h}_{\phi_{4},\omega_{4}})\circ\phi_{4}^{-1}(x)=\frac{\sum\limits_{y\in\phi_{4}^{-1}(x)}\mathsf{h}_{\phi_{4},\omega_{4}}(y)|\omega_{4}(y)|^2}{\mathsf{h}_{\phi_{4},\omega_{4}}(x)}=0.
\end{equation*}
It follows from \eqref{countermean} and Corollary \eqref{denseequ} (iv) that $\mathcal{M}_{\lambda}(C_{\phi_{3},\omega_{3}},C_{\phi_{4},\omega_{4}})$ is not densely defined. From the proof of Proposition \ref{pointwisemean}, we know that $\reallywidehat{\mathcal{M}_{\lambda}(C_{\phi_{3},\omega_{3}},C_{\phi_{4},\omega_{4}})}$ is densely defined since $\mathsf{E}_{\phi_{3},\omega_{3}}(\mathsf{h}_{\phi_{3},\omega_{3}})\circ\phi_{3}^{-1}<\infty$ and $\mathsf{E}_{\phi_{4},\omega_{4}}(\mathsf{h}_{\phi_{4},\omega_{4}})\circ\phi_{4}^{-1}<\infty$.

The proof is complete.
\end{proof}
\end{exa}

\section{generalized normality of $\textbf{C}_{\phi,\omega}$}
In the final section, we investigate the generalized normality of $\textbf{C}_{\phi,\omega}$, specifically its spherical quasinormality and spherical $p$-hyponormality. We start with the following observation.

Suppose $C_{\phi_{1},\omega_{1}}$ and $C_{\phi_{2},\omega_{2}}$ are densely defined, that is, $\mathsf{h}_{\phi_{i},\omega_{i}}<\infty$ a.e. $[\mu]$ for $i=1,2$. Then \begin{align}\label{sumfinite}
	\mathsf{h}_{\phi_{1},\omega_{1}}+\mathsf{h}_{\phi_{2},\omega_{2}}<\infty\quad \text{a.e.}\ [\mu_{\omega_{i}}]
\end{align}
since $\{\mathsf{h}_{\phi_{1},\omega_{1}}+\mathsf{h}_{\phi_{2},\omega_{2}}=\infty\}\subset\bigcup\limits_{i=1}^2\{\mathsf{h}_{\phi_{i},\omega_{i}}=\infty\}$ and $\mu_{\omega_{i}}$ is absolutely continuous with respect to $\mu$ for $i=1,2$. Corollary \ref{denseequ} implies that the dense definiteness is not necessarily preserved under the $\lambda$-spherical mean transform of $(C_{\phi_{1},\omega_{1}},C_{\phi_{2},\omega_{2}})$. However, inspecting Corollary \ref{denseequ} (viii) in view of \eqref{sumfinite}, we find that the natural condition
\begin{align}\label{fix}
	\mathsf{h}_{\phi_{1},\omega_{1}}+\mathsf{h}_{\phi_{2},\omega_{2}}=(\mathsf{h}_{\phi_{1},\omega_{1}}+\mathsf{h}_{\phi_{2},\omega_{2}})\circ\phi_{i}\quad \text{a.e.}\ [\mu_{\omega_{i}}]
\end{align}
immediately yields that
\begin{align*}
	\mathsf{E}_{\phi_{i},\omega_{i}}(\mathsf{h}_{\phi_{1},\omega_{1}}+\mathsf{h}_{\phi_{2},\omega_{2}})=\mathsf{h}_{\phi_{1},\omega_{1}}+\mathsf{h}_{\phi_{2},\omega_{2}}<\infty\quad \text{a.e.}\ [\mu_{\omega_{i}}]
\end{align*}
for $i=1,2$, that is, $\mathcal{M}_{\lambda}(\textbf{C}_{\phi,\omega})$ is densely defined for $\lambda\in(0,1)$. Moreover, recalling \eqref{muomega1}, one can easily check that Equality \eqref{fix} holds if and only if
\begin{equation}\label{equivquasinormal}
	\mathsf{h}_{\phi_{1},\omega_{1}}+\mathsf{h}_{\phi_{2},\omega_{2}}=(\mathsf{h}_{\phi_{1},\omega_{1}}+\mathsf{h}_{\phi_{2},\omega_{2}})\circ\phi_{i}\quad \text{a.e.}\ [\mu]\quad\text{on}\ \{\omega_{i}\ne0\}
\end{equation}
since the measure $\mu^{\omega_{i}}$ and $\mu_{\omega_{i}}$ are mutuallly absolutely continuous. Thus, by \eqref{wfun} we have
$$\omega_{\lambda}^i=((1-\lambda)\chi_{\{\omega_{i}\ne0\}}+\lambda)\omega_{i}=\omega_{i}\quad\text{a.e.}\ [\mu]$$
for $i=1,2$ if \eqref{fix} holds, which implies, by Theorem \ref{meanbasic} (iv), that $$\mathcal{M}_{\lambda}(\textbf{C}_{\phi,\omega})=(C_{\phi_{1},\omega_{1}},C_{\phi_{2},\omega_{2}})=\textbf{C}_{\phi,\omega}.$$
In fact, we have the following.

\begin{thm}\label{quasinormal}
	Suppose $C_{\phi_{1},\omega_{1}}$ and $C_{\phi_{2},\omega_{2}}$ are densely defined. Fix $\lambda\in(0,1)$. Then $\mathcal{M}_{\lambda}(\textbf{C}_{\phi,\omega})=\textbf{C}_{\phi,\omega}$ if and only if $\mathsf{h}_{\phi_{1},\omega_{1}}+\mathsf{h}_{\phi_{2},\omega_{2}}=(\mathsf{h}_{\phi_{1},\omega_{1}}+\mathsf{h}_{\phi_{2},\omega_{2}})\circ\phi_{i}$ a.e. $[\mu_{\omega_{i}}]$ for $i=1,2$.
\end{thm}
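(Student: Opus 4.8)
The plan is to convert the operator identity into an a.e.\ identity of weights and then read off the stated condition; the sufficiency is essentially recorded in the discussion preceding the theorem. Abbreviate $g=\mathsf{h}_{\phi_{1},\omega_{1}}+\mathsf{h}_{\phi_{2},\omega_{2}}$. Since $\lambda\in(0,1)$, Theorem \ref{meanbasic} (iv) gives $\mathcal{M}_{\lambda}(\textbf{C}_{\phi,\omega})=(C_{\phi_{1},\omega_{\lambda}^{1}},C_{\phi_{2},\omega_{\lambda}^{2}})$, so the identity $\mathcal{M}_{\lambda}(\textbf{C}_{\phi,\omega})=\textbf{C}_{\phi,\omega}$ is equivalent to $C_{\phi_{i},\omega_{\lambda}^{i}}=C_{\phi_{i},\omega_{i}}$ for $i=1,2$, where $\omega_{\lambda}^{i}=(\lambda+(1-\lambda)\mathsf{h}_{i})\omega_{i}$ and $\mathsf{h}_{i}=\chi_{\{\omega_{i}\ne0\}}(g/(g\circ\phi_{i}))^{1/2}$ as in \eqref{wfun}.

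For sufficiency I would expand the preceding paragraph: if $g=g\circ\phi_{i}$ a.e.\ $[\mu_{\omega_{i}}]$, equivalently a.e.\ $[\mu]$ on $\{\omega_{i}\ne0\}$ by \eqref{equivquasinormal}, then $\mathsf{h}_{i}=\chi_{\{\omega_{i}\ne0\}}$ a.e.\ $[\mu]$, hence $\omega_{\lambda}^{i}=((1-\lambda)\chi_{\{\omega_{i}\ne0\}}+\lambda)\omega_{i}=\omega_{i}$ a.e.\ $[\mu]$, and Theorem \ref{meanbasic} (iv) closes the case.

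The substance lies in necessity. Assuming $C_{\phi_{i},\omega_{\lambda}^{i}}=C_{\phi_{i},\omega_{i}}$, I first claim $\omega_{\lambda}^{i}=\omega_{i}$ a.e.\ $[\mu]$. Using \cite[Lemma 9]{b-j-j-sW}, choose an increasing sequence $\{X_{n}\}$ with $\mu(X_{n})<\infty$, $\mathsf{h}_{\phi_{i},\omega_{i}}\le n$ a.e.\ on $X_{n}$, and $\bigcup_{n}X_{n}=X$; by Theorem \ref{combasic} (i) each $\chi_{X_{n}}$ lies in $\EuScript{D}(C_{\phi_{i},\omega_{i}})$, which under our assumption equals $\EuScript{D}(C_{\phi_{i},\omega_{\lambda}^{i}})$. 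Evaluating both operators at $\chi_{X_{n}}$ yields $\omega_{\lambda}^{i}\cdot\chi_{\phi_{i}^{-1}(X_{n})}=\omega_{i}\cdot\chi_{\phi_{i}^{-1}(X_{n})}$ a.e.\ $[\mu]$, so $\omega_{\lambda}^{i}=\omega_{i}$ a.e.\ on $\phi_{i}^{-1}(X_{n})$; since $\bigcup_{n}\phi_{i}^{-1}(X_{n})=\phi_{i}^{-1}(X)=X$, the claim follows. Cancelling $\omega_{i}$ on $\{\omega_{i}\ne0\}$ then gives $\lambda+(1-\lambda)\mathsf{h}_{i}=1$ a.e.\ $[\mu]$, and as $1-\lambda\ne0$ this forces $\mathsf{h}_{i}=1$, i.e.\ $g=g\circ\phi_{i}$ a.e.\ $[\mu]$ on $\{\omega_{i}\ne0\}$, which is exactly the stated condition a.e.\ $[\mu_{\omega_{i}}]$.

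The one step needing care is the reduction from operator equality to weight equality: I must verify that the test functions $\chi_{X_{n}}$ genuinely belong to the common domain and that $\{\phi_{i}^{-1}(X_{n})\}$ exhausts $X$, the latter being automatic as $\phi_{i}^{-1}$ preserves countable unions and $\phi_{i}^{-1}(X)=X$. A secondary subtlety, already dispatched by \eqref{mueq0} in the proof of Theorem \ref{polar} (ii), is that $g\circ\phi_{i}>0$ a.e.\ on $\{\omega_{i}\ne0\}$, which ensures $\mathsf{h}_{i}$ and the cancellation above are well posed.
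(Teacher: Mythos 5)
Your overall strategy is the same as the paper's (test against characteristic functions $\chi_{X_{n}}$ of an exhausting sequence, deduce $\omega_{\lambda}^{i}=\omega_{i}$ a.e.\ $[\mu]$, then cancel $\omega_{i}$ on $\{\omega_{i}\ne0\}$ and invoke \eqref{equivquasinormal}), and your sufficiency direction correctly reproduces the discussion preceding the theorem. But there is a genuine gap in your opening reduction: you assert that the pair identity $\mathcal{M}_{\lambda}(\textbf{C}_{\phi,\omega})=\textbf{C}_{\phi,\omega}$ is \emph{equivalent} to the componentwise operator identities $C_{\phi_{i},\omega_{\lambda}^{i}}=C_{\phi_{i},\omega_{i}}$, $i=1,2$. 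In this paper equality of pairs means equality of \emph{joint} graphs, i.e.\ $\EuScript{D}(C_{\phi_{1},\omega_{\lambda}^{1}})\cap\EuScript{D}(C_{\phi_{2},\omega_{\lambda}^{2}})=\EuScript{D}(C_{\phi_{1},\omega_{1}})\cap\EuScript{D}(C_{\phi_{2},\omega_{2}})$ together with agreement of the actions on that intersection; it does not a priori give $\EuScript{D}(C_{\phi_{i},\omega_{\lambda}^{i}})=\EuScript{D}(C_{\phi_{i},\omega_{i}})$ for each $i$ separately. This matters because your necessity argument is built on exactly that unproven componentwise domain equality: you choose $X_{n}$ with only $\mathsf{h}_{\phi_{i},\omega_{i}}\le n$ on $X_{n}$, so $\chi_{X_{n}}$ lies in the individual domain $\EuScript{D}(C_{\phi_{i},\omega_{i}})$ but need not lie in the joint domain (the other Radon--Nikodym derivative $\mathsf{h}_{\phi_{j},\omega_{j}}$ may be unbounded on $X_{n}$), and under the pair identity alone the evaluation $C_{\phi_{i},\omega_{\lambda}^{i}}\chi_{X_{n}}=C_{\phi_{i},\omega_{i}}\chi_{X_{n}}$ is licensed only for vectors in the joint domain. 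As written, the step ``evaluating both operators at $\chi_{X_{n}}$'' is therefore not justified; the componentwise equivalence you lean on is established only \emph{after} one knows $\omega_{\lambda}^{i}=\omega_{i}$ a.e., so your reduction is circular.

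The repair is minor and leaves your computation intact: since $\mathsf{h}_{\phi_{1},\omega_{1}}+\mathsf{h}_{\phi_{2},\omega_{2}}<\infty$ a.e.\ $[\mu]$ (dense definiteness), apply \cite[Lemma 9]{b-j-j-sW} to the \emph{sum}, obtaining $X_{n}\nearrow X$ with $\mu(X_{n})<\infty$ and $\mathsf{h}_{\phi_{1},\omega_{1}}+\mathsf{h}_{\phi_{2},\omega_{2}}\le n$ a.e.\ on $X_{n}$; then $\chi_{X_{n}}\in\EuScript{D}(\textbf{C}_{\phi,\omega})$ by Proposition \ref{basic} (i), and the assumed pair equality puts $\chi_{X_{n}}$ in $\EuScript{D}(\mathcal{M}_{\lambda}(\textbf{C}_{\phi,\omega}))$ with matching componentwise actions, after which your exhaustion and cancellation argument (including the well-posedness observation via \eqref{mueq0}) goes through verbatim. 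It is worth noting that with this fix your route is in fact slightly leaner than the paper's: the paper first deduces dense definiteness of $\mathcal{M}_{\lambda}(\textbf{C}_{\phi,\omega})$ from Proposition \ref{basic} (ii), passes through Corollary \ref{denseequ} to control the functions $\mathsf{E}_{i}$, and chooses $X_{n}$ adapted to $1+\sum_{i=1}^{2}(\mathsf{h}_{\phi_{i},\omega_{i}}+\mathsf{E}_{i})$ so that $\chi_{X_{n}}\in\EuScript{D}(\mathcal{M}_{\lambda}(\textbf{C}_{\phi,\omega}))$ via Theorem \ref{meanbasic} (i), whereas the corrected version of your argument needs only the $\mathsf{h}$-sum because the pair equality already identifies the two joint domains.
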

\begin{proof}
	{\em Necessity}. This follows from the above discussion.
	
	{\em Sufficiency}. Assume $\mathcal{M}_{\lambda}(\textbf{C}_{\phi,\omega})=\textbf{C}_{\phi,\omega}$. Using Proposition \ref{basic} (ii), we have $\mathcal{M}_{\lambda}(\textbf{C}_{\phi,\omega})$ is densely defined. Thus by Corollary \ref{denseequ} (iii) we obtain $$\mathsf{E}_{\phi_{i},\omega_{i}}(\mathsf{h}_{\phi_{1},\omega_{1}}+\mathsf{h}_{\phi_{2},\omega_{2}})\circ\phi_{i}^{-1}<\infty\quad \text{a.e.}\ [\mu]$$ for $i=1,2$. Recall \eqref{Eequation} and $\mathsf{h}_{\phi_{i},\omega_{i}}<\infty$ a.e. $[\mu]$ for $i=1,2$, then by the above inequality we have $$1+\sum\limits_{i=1}^2(\mathsf{h}_{\phi_{i},\omega_{i}}+\mathsf{E}_{i})<\infty\quad\text{a.e.}\ [\mu].$$ By \cite[Lemma 9]{b-j-j-sW}, there exists an increasing sequence $\{X_{n}\}\subset\mathscr{A}$ such that $\mu(X_{n})<\infty$, $1+\sum\limits_{i=1}^2(\mathsf{h}_{\phi_{i},\omega_{i}}+\mathsf{E}_{i})\le n$ a.e. $[\mu]$ on $X_{n}$ for $n\in\mathbb{N}$ and $\bigcup\limits_{n=1}^\infty X_{n}=X$. Hence, $$\chi_{X_{n}}\in L^2((1+\sum\limits_{i=1}^2(\mathsf{h}_{\phi_{i},\omega_{i}}+\mathsf{E}_{i}))\D\mu)\xlongequal{\text{Theorem}\ \ref{meanbasic}\ (i)}\EuScript{D}(\mathcal{M}_{\lambda}(\textbf{C}_{\phi,\omega})).$$
	By assumption, we have that $$C_{\phi_{i},\omega_{\lambda}^i}\chi_{X_{n}}=C_{\phi_{i},\omega_{i}}\chi_{X_{n}}\Rightarrow\omega_{\lambda}^i\cdot \chi_{X_{n}}\circ\phi_{i}=\omega_{i}\cdot \chi_{X_{n}}\circ\phi_{i}\quad\text{a.e.}\ [\mu]$$ for $n\in\mathbb{N}$ and $i\in\{1,2\}$. Since $\phi_{i}^{-1}(X_{n})\to X$ as $n\to\infty$, then by the above inequality we have
	\begin{align*}
		&\omega_{\lambda}^i=\omega_{i}\quad\text{a.e.}\ [\mu]\xlongequal{\eqref{wfun}}((1-\lambda)\chi_{\{\omega_{i}\ne0\}}\left(\frac{\mathsf{h}_{\phi_{1},\omega_{1}}+\mathsf{h}_{\phi_{2},\omega_{2}}}{(\mathsf{h}_{\phi_{1},\omega_{1}}+\mathsf{h}_{\phi_{2},\omega_{2}})\circ\phi_{i}}\right)^\frac{1}{2}+\lambda)\omega_{i}=\omega_{i}\quad\text{a.e.}\ [\mu]\\&=(1-\lambda)\chi_{\{\omega_{i}\ne0\}}\left(\frac{\mathsf{h}_{\phi_{1},\omega_{1}}+\mathsf{h}_{\phi_{2},\omega_{2}}}{(\mathsf{h}_{\phi_{1},\omega_{1}}+\mathsf{h}_{\phi_{2},\omega_{2}})\circ\phi_{i}}\right)^\frac{1}{2}+\lambda=1\quad\text{a.e.}\ [\mu]\quad\text{on}\ \{\omega_{i}\ne0\}\\&=\mathsf{h}_{\phi_{1},\omega_{1}}+\mathsf{h}_{\phi_{2},\omega_{2}}=(\mathsf{h}_{\phi_{1},\omega_{1}}+\mathsf{h}_{\phi_{2},\omega_{2}})\circ\phi_{i}\quad\text{a.e.}\ [\mu]\quad\text{on} \{\omega_{i}\ne0\}\\&\stackrel{\eqref{equivquasinormal}}{=}\mathsf{h}_{\phi_{1},\omega_{1}}+\mathsf{h}_{\phi_{2},\omega_{2}}=(\mathsf{h}_{\phi_{1},\omega_{1}}+\mathsf{h}_{\phi_{2},\omega_{2}})\circ\phi_{i}\quad\text{a.e.}\ [\mu_{\omega_{i}}]
	\end{align*}
	for $i=1,2$. Therefore, the proof is complete.
\end{proof}

Suppose $C_{\phi_{1},\omega_{1}}$ and $C_{\phi_{2},\omega_{2}}$ are densely defined. Given $\lambda\in(0,1)$ and $n\in\mathbb{N}$. Since the $\lambda$-spherical mean transform maps a pair of weighted composition operators to another, we can define the {\em iterated $\lambda$-spherical mean transform} of the pair $\textbf{C}_{\phi,\omega}=(C_{\phi_{1},\omega_{1}},C_{\phi_{2},\omega_{2}})$ by$$\mathcal{M}_{\lambda}^0(\textbf{C}_{\phi,\omega})=\textbf{C}_{\phi,\omega}\quad\text{and}\quad\mathcal{M}_{\lambda}^n(\textbf{C}_{\phi,\omega})=\mathcal{M}_{\lambda}(\mathcal{M}_{\lambda}^{n-1}(\textbf{C}_{\phi,\omega}))$$
if $\mathcal{M}_{\lambda}^{n-1}(\textbf{C}_{\phi,\omega})$ is densely defined. 

This assumption in the  above definition cannot be dropped. Indeed, let $n=2$. As mentioned above, the $\lambda$-spherical mean transform $\mathcal{M}_{\lambda}(\textbf{C}_{\phi,\omega})$ of the pair $\textbf{C}_{\phi,\omega}=(C_{\phi_{1},\omega_{1}},C_{\phi_{2},\omega_{2}})$ is not necessarily densely defined. However, the equality $\mathcal{M}_{\lambda}^2(\textbf{C}_{\phi,\omega})=\mathcal{M}_{\lambda}(\mathcal{M}_{\lambda}(\textbf{C}_{\phi,\omega}))$ requires the polar decomposition of $\mathcal{M}_{\lambda}(\textbf{C}_{\phi,\omega})$, for which the dense definiteness of $\mathcal{M}_{\lambda}(\textbf{C}_{\phi,\omega})$ is a necessary condition. Therefore, an arbitrary weighted composition operator pair need not admit the $n$-th $\lambda$-spherical mean transform for some $n\in\mathbb{N}$. Nevertheless, the above theorem provides a class of weighted composition operator pairs whose $n$-th $\lambda$-spherical mean transform exists for all $n\in\mathbb{N}$. More precisely, if $\mathcal{M}_{\lambda}(\textbf{C}_{\phi,\omega})=\textbf{C}_{\phi,\omega}$, then
$$\textbf{C}_{\phi,\omega}=\mathcal{M}_{\lambda}(\textbf{C}_{\phi,\omega})=\mathcal{M}_{\lambda}(\mathcal{M}_{\lambda}(\textbf{C}_{\phi,\omega}))=\mathcal{M}_{\lambda}(\mathcal{M}_{\lambda}(\mathcal{M}_{\lambda}(\textbf{C}_{\phi,\omega})))=\cdots=\mathcal{M}_{\lambda}^n(\textbf{C}_{\phi,\omega})$$ for $n\in\mathbb{N}$. Furthermore, using Theorem \ref{quasinormal}, we can readily give operator-theoretic characterizations of the equality $\mathcal{M}_{\lambda}(\textbf{C}_{\phi,\omega})=\textbf{C}_{\phi,\omega}$.
\begin{cor}\label{unboundedquasinormal}
	Suppose  $C_{\phi_{1},\omega_{1}}$ and $C_{\phi_{2},\omega_{2}}$ are densely defined. Let $\textbf{C}_{\phi,\omega}=\left(
	\begin{array}{c}
		U_{1} \\
		U_{2}
	\end{array}
	\right)|\textbf{C}_{\phi,\omega}|$ be the polar decomposition of the pair $\textbf{C}_{\phi,\omega}=(C_{\phi_{1},\omega_{1}},C_{\phi_{2},\omega_{2}})$. Fix $\lambda\in (0,1)$. Then the following assertions are equivalent$:$
	\begin{enumerate}
		\item[(i)]
		$\mathcal{M}_{\lambda}(\textbf{C}_{\phi,\omega})=\textbf{C}_{\phi,\omega}$,
		\item[(ii)]
		$U_{i}|\textbf{C}_{\phi,\omega}|\subseteq|\textbf{C}_{\phi,\omega}|U_{i}$ for $i=1,2$,
		\item[(iii)]
		$C_{\phi_{i},\omega_{i}}|\textbf{C}_{\phi,\omega}|^2\subseteq|\textbf{C}_{\phi,\omega}|^2C_{\phi_{i},\omega_{i}}$ for $i=1,2$.
		\item[(iv)]
		$C_{\phi_{i},\omega_{i}}(C_{\phi_{1},\omega_{1}}^\ast C_{\phi_{1},\omega_{1}}+C_{\phi_{2},\omega_{2}}^\ast C_{\phi_{2},\omega_{2}})\subseteq (C_{\phi_{1},\omega_{1}}^\ast C_{\phi_{1},\omega_{1}}+C_{\phi_{2},\omega_{2}}^\ast C_{\phi_{2},\omega_{2}})C_{\phi_{i},\omega_{i}}$ for $i=1,2$.
		
	\end{enumerate}
\end{cor}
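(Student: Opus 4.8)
The plan is to reduce every one of the four assertions to the single pointwise identity \eqref{equivquasinormal}, namely $\mathsf{h}_{\phi_{1},\omega_{1}}+\mathsf{h}_{\phi_{2},\omega_{2}}=(\mathsf{h}_{\phi_{1},\omega_{1}}+\mathsf{h}_{\phi_{2},\omega_{2}})\circ\phi_{i}$ a.e. $[\mu]$ on $\{\omega_{i}\ne0\}$, and then to chain the equivalences through Theorem \ref{quasinormal}. Write $H=\mathsf{h}_{\phi_{1},\omega_{1}}+\mathsf{h}_{\phi_{2},\omega_{2}}$, so that Theorem \ref{polar} gives $|\textbf{C}_{\phi,\omega}|=\mathsf{M}_{\sqrt{H}}$ and $U_{i}=C_{\phi_{i},\widetilde{\omega_{i}}}$ with $\widetilde{\omega_{i}}=\omega_{i}\chi_{\{\omega_{i}\ne0\}}/\sqrt{H\circ\phi_{i}}$. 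The paragraph preceding Theorem \ref{quasinormal} already records that \eqref{equivquasinormal} is equivalent to \eqref{fix}, i.e. $H=H\circ\phi_{i}$ a.e. $[\mu_{\omega_{i}}]$, via the mutual absolute continuity of $\mu^{\omega_{i}}$ and $\mu_{\omega_{i}}$; and Theorem \ref{quasinormal} states exactly that (i) holds iff \eqref{fix} holds. Thus (i) $\Leftrightarrow$ \eqref{equivquasinormal} is in hand, and it remains to show that each of (ii), (iii), (iv) is equivalent to \eqref{equivquasinormal}.

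I would dispatch (iii) $\Leftrightarrow$ (iv) first, as this is an identification of operators rather than a computation: by Theorem \ref{polar} (i), $C_{\phi_{i},\omega_{i}}^{\ast}C_{\phi_{i},\omega_{i}}=|C_{\phi_{i},\omega_{i}}|^{2}=\mathsf{M}_{\mathsf{h}_{\phi_{i},\omega_{i}}}$, so Lemma \ref{mulsp} (i) yields $C_{\phi_{1},\omega_{1}}^{\ast}C_{\phi_{1},\omega_{1}}+C_{\phi_{2},\omega_{2}}^{\ast}C_{\phi_{2},\omega_{2}}=\mathsf{M}_{H}=|\textbf{C}_{\phi,\omega}|^{2}$, whence the inclusions in (iii) and (iv) are literally the same statement. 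For (ii), I would represent the two operators as weighted composition operators: using $|\textbf{C}_{\phi,\omega}|=\mathsf{M}_{\sqrt{H}}$ and $\sqrt{H}\circ\phi_{i}=\sqrt{H\circ\phi_{i}}$ one computes $U_{i}|\textbf{C}_{\phi,\omega}|f=\omega_{i}\,f\circ\phi_{i}$, so that $U_{i}|\textbf{C}_{\phi,\omega}|$ is $C_{\phi_{i},\omega_{i}}$ cut down to $\EuScript{D}(\textbf{C}_{\phi,\omega})$, while $|\textbf{C}_{\phi,\omega}|U_{i}f=\chi_{\{\omega_{i}\ne0\}}\sqrt{H/(H\circ\phi_{i})}\,\omega_{i}\,f\circ\phi_{i}=C_{\phi_{i},\mathsf{h}_{i}\omega_{i}}f$, with $\mathsf{h}_{i}$ as in Lemma \ref{weighted}. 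The inclusion $U_{i}|\textbf{C}_{\phi,\omega}|\subseteq|\textbf{C}_{\phi,\omega}|U_{i}$ forces the symbols to agree, i.e. $\mathsf{h}_{i}=1$ on $\{\omega_{i}\ne0\}$, which (recalling $H\circ\phi_{i}>0$ a.e. there by \eqref{mueq0}) is precisely \eqref{equivquasinormal}; conversely, when \eqref{equivquasinormal} holds we have $\mathsf{h}_{i}\omega_{i}=\omega_{i}$ a.e., the two operators coincide, and the inclusion is immediate.

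The equivalence (iii) $\Leftrightarrow$ \eqref{equivquasinormal} is handled in the same spirit, now with $|\textbf{C}_{\phi,\omega}|^{2}=\mathsf{M}_{H}$: one finds $C_{\phi_{i},\omega_{i}}\mathsf{M}_{H}f=\omega_{i}\,(H\circ\phi_{i})\,f\circ\phi_{i}$ and $\mathsf{M}_{H}C_{\phi_{i},\omega_{i}}f=H\,\omega_{i}\,f\circ\phi_{i}$, so the inclusion in (iii) again collapses to $H\circ\phi_{i}=H$ on $\{\omega_{i}\ne0\}$. Assembling (i) $\Leftrightarrow$ \eqref{equivquasinormal} (via Theorem \ref{quasinormal}), (ii) $\Leftrightarrow$ \eqref{equivquasinormal}, (iii) $\Leftrightarrow$ \eqref{equivquasinormal}, and (iii) $\Leftrightarrow$ (iv) closes the cycle among the four conditions.

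\noindent\textbf{Main obstacle.} The delicate point is not the symbol algebra but the domain bookkeeping hidden in the inclusions $\subseteq$. For the forward directions I must extract the pointwise symbol identity from a mere operator inclusion, which requires testing against a sufficient supply of functions in $\EuScript{D}(\textbf{C}_{\phi,\omega})$ — concretely, characteristic functions $\chi_{\varDelta}$ of finite-measure sets on which $H$ (resp. $\mathsf{E}_{i}$) is bounded, just as in the proof of Proposition \ref{basic} (ii) — to conclude $\mathsf{h}_{i}=1$ (resp. $H=H\circ\phi_{i}$) a.e. on $\{\omega_{i}\ne0\}$. For the reverse directions I must verify that once \eqref{equivquasinormal} holds the relevant symbols coincide a.e., so that the formal weighted composition operators are genuinely equal and the inclusions hold with room to spare. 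One should also read $|\textbf{C}_{\phi,\omega}|^{2}$ as the self-adjoint operator $\textbf{C}_{\phi,\omega}^{\ast}\textbf{C}_{\phi,\omega}=\mathsf{M}_{H}$ rather than a formal product of two copies of $\mathsf{M}_{\sqrt{H}}$, for which Lemma \ref{mulsp} (ii) need not apply on $\{H=0\}$; and the hypothesis $\lambda\in(0,1)$ enters only through Theorem \ref{quasinormal}, where it cancels the convex-combination coefficient $(1-\lambda)$ linking (i) with the remaining conditions.
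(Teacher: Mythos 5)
Your route is essentially the paper's: reduce all four conditions to the identity $\mathsf{h}_{\phi_{1},\omega_{1}}+\mathsf{h}_{\phi_{2},\omega_{2}}=(\mathsf{h}_{\phi_{1},\omega_{1}}+\mathsf{h}_{\phi_{2},\omega_{2}})\circ\phi_{i}$ a.e.\ $[\mu_{\omega_{i}}]$ via Theorem \ref{quasinormal}; extract that identity from the inclusions (ii), (iii) by testing on characteristic functions $\chi_{X_{n}}$ of finite-measure sets on which the relevant densities are bounded (the paper does this with \cite[Lemma 9]{b-j-j-sW} and $\phi_{i}^{-1}(X_{n})\to X$); and settle (iii) $\Leftrightarrow$ (iv) by the identification $C_{\phi_{1},\omega_{1}}^{\ast}C_{\phi_{1},\omega_{1}}+C_{\phi_{2},\omega_{2}}^{\ast}C_{\phi_{2},\omega_{2}}=\mathsf{M}_{H}=|\textbf{C}_{\phi,\omega}|^{2}$, exactly as in \eqref{selfadjointmut}. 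Your symbol computations for $U_{i}|\textbf{C}_{\phi,\omega}|$, $|\textbf{C}_{\phi,\omega}|U_{i}$, $C_{\phi_{i},\omega_{i}}\mathsf{M}_{H}$ and $\mathsf{M}_{H}C_{\phi_{i},\omega_{i}}$ also match the paper's.

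There is, however, one genuinely false intermediate claim in your reverse directions: you assert that once \eqref{equivquasinormal} holds ``the two operators coincide'' (for (ii)) and ``the formal weighted composition operators are genuinely equal'' (for (iii)). This is not true, and the paper's example immediately after the corollary exhibits a spherically quasinormal pair with $U_{1}|\textbf{C}_{\phi,\omega}|\ne|\textbf{C}_{\phi,\omega}|U_{1}$ and $C_{\phi_{1},\omega_{1}}|\textbf{C}_{\phi,\omega}|^{2}\ne|\textbf{C}_{\phi,\omega}|^{2}C_{\phi_{1},\omega_{1}}$: under \eqref{equivquasinormal} the \emph{symbols} agree, but the domains of the two products differ (e.g.\ $\EuScript{D}(U_{i}|\textbf{C}_{\phi,\omega}|)=L^{2}((1+H)\D\mu)$ while $\EuScript{D}(|\textbf{C}_{\phi,\omega}|U_{i})=L^{2}((1+\mathsf{E}_{i})\D\mu)=L^{2}((1+\mathsf{h}_{\phi_{i},\omega_{i}})\D\mu)$, with $H=\mathsf{h}_{\phi_{1},\omega_{1}}+\mathsf{h}_{\phi_{2},\omega_{2}}$). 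Only the inclusions survive, and proving them is precisely the step you label ``bookkeeping'' and defer. For (ii) it is cheap ($\mathsf{h}_{\phi_{i},\omega_{i}}\le H$), but for (iii) it is the heart of the paper's longest computation: one must identify $\EuScript{D}(C_{\phi_{i},\omega_{i}}|\textbf{C}_{\phi,\omega}|^{2})=L^{2}\big((1+(1+\mathsf{h}_{\phi_{i},\omega_{i}})H^{2})\D\mu\big)$ and $\EuScript{D}(|\textbf{C}_{\phi,\omega}|^{2}C_{\phi_{i},\omega_{i}})=L^{2}\big((1+(1+H^{2})\mathsf{h}_{\phi_{i},\omega_{i}})\D\mu\big)$ via the Radon--Nikodym derivatives \eqref{quasirnderivate} and \eqref{quasinormalrn4}, and then verify the first is contained in the second using $\mathsf{h}_{\phi_{i},\omega_{i}}\le H$ together with an elementary bound (the paper uses the boundedness of $(1+t)/(1+t^{2})$; the inequality $t\le 1+t^{2}$ serves equally well). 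So: same approach as the paper, but replace the equality claim by agreement of symbols plus an explicit domain inclusion, and actually carry out the weighted-$L^{2}$ comparison for (iii); as written, your reverse direction proves a statement the paper shows to be false. Your closing caution about reading $|\textbf{C}_{\phi,\omega}|^{2}$ as the self-adjoint $\textbf{C}_{\phi,\omega}^{\ast}\textbf{C}_{\phi,\omega}=\mathsf{M}_{H}$ rather than through Lemma \ref{mulsp} (ii) on $\{H=0\}$ is a fair and correct observation.
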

\begin{proof}
	(i) $\Rightarrow$ (ii): Recalling \eqref{Eequation}, we have
	\begin{align*}
		\mathsf{E}_{i}&=\Big(\mathsf{E}_{\phi_{i},\omega_{i}}\big(\mathsf{h}_{\phi_{1},\omega_{1}}+\mathsf{h}_{\phi_{2},\omega_{2}}\big)\circ\phi_{i}^{-1}\Big)\cdot\frac{\mathsf{h}_{\phi_{i},\omega_{i}}\cdot\chi_{\{\mathsf{h}_{\phi_{1},\omega_{1}}+\mathsf{h}_{\phi_{2},\omega_{2}}>0\}}}{\mathsf{h}_{\phi_{1},\omega_{1}}+\mathsf{h}_{\phi_{2},\omega_{2}}}\\&\xlongequal{\text{Theorem}\ \ref{quasinormal}}\Big(\mathsf{E}_{\phi_{i},\omega_{i}}\big((\mathsf{h}_{\phi_{1},\omega_{1}}+\mathsf{h}_{\phi_{2},\omega_{2}})\circ\phi_{i}\big)\circ\phi_{i}^{-1}\Big)\cdot\frac{\mathsf{h}_{\phi_{i},\omega_{i}}\cdot\chi_{\{\mathsf{h}_{\phi_{1},\omega_{1}}+\mathsf{h}_{\phi_{2},\omega_{2}}>0\}}}{\mathsf{h}_{\phi_{1},\omega_{1}}+\mathsf{h}_{\phi_{2},\omega_{2}}}\\&=(\mathsf{h}_{\phi_{1},\omega_{1}}+\mathsf{h}_{\phi_{2},\omega_{2}})\cdot\frac{\mathsf{h}_{\phi_{i},\omega_{i}}\cdot\chi_{\{\mathsf{h}_{\phi_{1},\omega_{1}}+\mathsf{h}_{\phi_{2},\omega_{2}}>0\}}}{\mathsf{h}_{\phi_{1},\omega_{1}}+\mathsf{h}_{\phi_{2},\omega_{2}}}=\mathsf{h}_{\phi_{i},\omega_{i}}\quad\text{a.e.}\ [\mu].
	\end{align*}
	Hence, $$\EuScript{D}(|\textbf{C}_{\phi,\omega}|U_{i})\stackrel{\eqref{quasinormalchar}}{=}L^2((1+\mathsf{E}_{i})\D\mu)=L^2((1+\mathsf{h}_{\phi_{i},\omega_{i}})\D\mu),$$ which yields 
	\begin{align*}
		\EuScript{D}(U_{i}|\textbf{C}_{\phi,\omega}|)=\EuScript{D}(|\textbf{C}_{\phi,\omega}|)=\EuScript{D}(\textbf{C}_{\phi,\omega})&\xlongequal{\text{Proposition}\ \ref{basic}\ (i)}L^2((1+\mathsf{h}_{\phi_{1},\omega_{1}}+\mathsf{h}_{\phi_{2},\omega_{2}})\D\mu)\\&\subseteq L^2((1+\mathsf{h}_{\phi_{i},\omega_{i}})\D\mu)=\EuScript{D}(|\textbf{C}_{\phi,\omega}|U_{i}).
	\end{align*}
	On the other hand, a direct calculation shows that\begin{align*}
		U_{i}|\textbf{C}_{\phi,\omega}|f&\xlongequal{\text{Theorem}\ \ref{polar}}C_{\phi_{i},\widetilde{\omega_{i}}}\mathsf{M}_{\sqrt{\mathsf{h}_{\phi_{1},\omega_{1}}+\mathsf{h}_{\phi_{2},\omega_{2}}}}f=\widetilde{\omega_{i}}\cdot \sqrt{\mathsf{h}_{\phi_{1},\omega_{1}}+\mathsf{h}_{\phi_{2},\omega_{2}}}\circ\phi_{i}\cdot f\circ\phi_{i}\\&\xlongequal{\text{Theorem} \ref{quasinormal}}\sqrt{\mathsf{h}_{\phi_{1},\omega_{1}}+\mathsf{h}_{\phi_{2},\omega_{2}}}\cdot\widetilde{\omega_{i}}\cdot f\circ\phi_{i}=\mathsf{M}_{\sqrt{\mathsf{h}_{\phi_{1},\omega_{1}}+\mathsf{h}_{\phi_{2},\omega_{2}}}}C_{\widetilde{\omega_{i}},\phi_{i}}f\\&\xlongequal{\text{Theorem}\ \ref{polar}}|\textbf{C}_{\phi,\omega}|U_{i}f
	\end{align*}	
	for $f\in\EuScript{D}(U_{i}|\textbf{C}_{\phi,\omega}|)$. Therefore, we obtain $U_{i}|\textbf{C}_{\phi,\omega}|\subseteq|\textbf{C}_{\phi,\omega}|U_{i}$ for $i=1,2$.
	
	(ii) $\Rightarrow$ (i):	The fact $\mathsf{h}_{\phi_{1},\omega_{1}},\mathsf{h}_{\phi_{2},\omega_{2}}<\infty$ a.e. $[\mu]$ implies $1+\sum\limits_{i=1}^2\mathsf{h}_{\phi_{i},\omega_{i}}<\infty$ a.e. By \cite[Lemma 9]{b-j-j-sW}, there exists an increasing sequence $\{X_{n}\}\subset\mathscr{A}$ such that $\mu(X_{n})<\infty$, $1+\sum\limits_{i=1}^2\mathsf{h}_{\phi_{i},\omega_{i}}\le n$ a.e. $[\mu]$ on $X_{n}$ for $n\in\mathbb{N}$ and $\bigcup\limits_{n=1}^\infty X_{n}=X$. Hence, $$\chi_{X_{n}}\in L^2((1+\sum\limits_{i=1}^2\mathsf{h}_{\phi_{i},\omega_{i}})\D\mu)\xlongequal{\text{Proposition}\ \ref{basic}\ (i)}\EuScript{D}(\textbf{C}_{\phi,\omega})=\EuScript{D}(U_{i}|\textbf{C}_{\phi,\omega}|).$$ By assumption, we have 
	\begin{align*}
		&U_{i}|\textbf{C}_{\phi,\omega}|\chi_{X_{n}}=|\textbf{C}_{\phi,\omega}|U_{i}\chi_{X_{n}}\\&\xLongrightarrow{\text{Theorem}\ \ref{polar}}\widetilde{\omega_{i}}\cdot \sqrt{\mathsf{h}_{\phi_{1},\omega_{1}}+\mathsf{h}_{\phi_{2},\omega_{2}}}\circ\phi_{i}\cdot \chi_{X_{n}}\circ\phi_{i}=\sqrt{\mathsf{h}_{\phi_{1},\omega_{1}}+\mathsf{h}_{\phi_{2},\omega_{2}}}\cdot\widetilde{\omega_{i}}\cdot \chi_{X_{n}}\circ\phi_{i}\quad \text{a.e.}\ [\mu]\\&\Rightarrow\omega_{i}\cdot\chi_{\{\omega_{i}\ne0\}}\cdot\chi_{X_{n}}\circ\phi_{i}=\sqrt{\frac{\mathsf{h}_{\phi_{1},\omega_{1}}+\mathsf{h}_{\phi_{2},\omega_{2}}}{(\mathsf{h}_{\phi_{1},\omega_{1}}+\mathsf{h}_{\phi_{2},\omega_{2}})\circ\phi_{i}}}\cdot\omega_{i}\cdot\chi_{\{\omega_{i}\ne0\}}\cdot\chi_{X_{n}}\circ\phi_{i}\quad\text{a.e.}\ [\mu].
	\end{align*}
	Since $\phi_{i}^{-1}(X_{n})\to X$ as $n\to\infty$, then by the above inequality we have
	\begin{align*}
		&\sqrt{\frac{\mathsf{h}_{\phi_{1},\omega_{1}}+\mathsf{h}_{\phi_{2},\omega_{2}}}{(\mathsf{h}_{\phi_{1},\omega_{1}}+\mathsf{h}_{\phi_{2},\omega_{2}})\circ\phi_{i}}}=1\quad\text{a.e.}\ [\mu]\quad\text{on}\ \{\omega_{i}\ne0\}\\&\stackrel{\eqref{equivquasinormal}}{=}\mathsf{h}_{\phi_{1},\omega_{1}}+\mathsf{h}_{\phi_{2},\omega_{2}}=(\mathsf{h}_{\phi_{1},\omega_{1}}+\mathsf{h}_{\phi_{2},\omega_{2}})\circ\phi_{i}\quad\text{a.e.}\ [\mu_{\omega_{i}}]
	\end{align*}
	for $i=1,2$. Applying Theorem \ref{quasinormal}, we conclude the proof.
	
	(i) $\Rightarrow$ (iii): By Theorem \ref{polar} (i) and Lemma \ref{mulsp} (ii) we have
	\begin{equation}\label{quasimultiplication}
		|\textbf{C}_{\phi,\omega}|^2=\mathsf{M}_{\mathsf{h}_{\phi_{1},\omega_{1}}+\mathsf{h}_{\phi_{2},\omega_{2}}}.
	\end{equation}
	Then, for $f\in\EuScript{D}(C_{\phi_{i},\omega_{i}}|\textbf{C}_{\phi,\omega}|^2)\bigcap\EuScript{D}(|\textbf{C}_{\phi,\omega}|^2C_{\phi_{i},\omega_{i}})$, we obtain\begin{align*}
		C_{\phi_{i},\omega_{i}}|\textbf{C}_{\phi,\omega}|^2f&=C_{\phi_{i},\omega_{i}}\mathsf{M}_{\mathsf{h}_{\phi_{1},\omega_{1}}+\mathsf{h}_{\phi_{2},\omega_{2}}}f=\omega_{i}\cdot (\mathsf{h}_{\phi_{1},\omega_{1}}+\mathsf{h}_{\phi_{2},\omega_{2}})\circ\phi\cdot f\circ\phi_{i}\quad\text{a.e.}\ [\mu]\\&\xlongequal{\text{Theorem}\ \ref{quasinormal}\ \text{and}\ \eqref{equivquasinormal}}(\mathsf{h}_{\phi_{1},\omega_{1}}+\mathsf{h}_{\phi_{2},\omega_{2}})\cdot\omega_{i}\cdot f\circ\phi_{i}\quad\text{a.e.}\ [\mu]\\&=\mathsf{M}_{\mathsf{h}_{\phi_{1},\omega_{1}}+\mathsf{h}_{\phi_{2},\omega_{2}}}C_{\phi_{i},\omega_{i}}f=|\textbf{C}_{\phi,\omega}|^2C_{\phi_{i},\omega_{i}}f.
	\end{align*}
	Thus, it suffices to prove that $\EuScript{D}(C_{\phi_{i},\omega_{i}}|\textbf{C}_{\phi,\omega}|^2)\subseteq\EuScript{D}(|\textbf{C}_{\phi,\omega}|^2C_{\phi_{i},\omega_{i}})$. We first claim that
	\begin{equation}
		\EuScript{D}(C_{\phi_{i},\omega_{i}}|\textbf{C}_{\phi,\omega}|^2)=L^2((1+(1+\mathsf{h}_{\phi_{i},\omega_{i}})(\mathsf{h}_{\phi_{1},\omega_{1}}+\mathsf{h}_{\phi_{2},\omega_{2}})^2)\D\mu).
	\end{equation}
	Since
	\begin{equation}\label{quasirnde}
		\begin{aligned}
			\mu_{\omega_{i}\cdot(\mathsf{h}_{\phi_{1},\omega_{1}}+\mathsf{h}_{\phi_{2},\omega_{2}})\circ\phi_{i}}\circ\phi_{i}^{-1}(\Delta)&\stackrel{\eqref{rd}}{=}\int_{\phi_{i}^{-1}(\Delta)}|\omega_{i}\cdot(\mathsf{h}_{\phi_{1},\omega_{1}}+\mathsf{h}_{\phi_{2},\omega_{2}})\circ\phi_{i}|^2\D\mu\\&=\int_{X}\chi_{\Delta}\circ\phi_{i}\cdot|\omega_{i}\cdot(\mathsf{h}_{\phi_{1},\omega_{1}}+\mathsf{h}_{\phi_{2},\omega_{2}})\circ\phi_{i}|^2\D\mu\\&=\int_{X}(\chi_{\Delta}\cdot (\mathsf{h}_{\phi_{1},\omega_{1}}+\mathsf{h}_{\phi_{2},\omega_{2}})^2)\circ\phi_{i}\D\mu_{\omega_{i}}\\&\stackrel{\eqref{intcom}}{=}\int_{\Delta}(\mathsf{h}_{\phi_{1},\omega_{1}}+\mathsf{h}_{\phi_{2},\omega_{2}})^2\mathsf{h}_{\phi_{i},\omega_{i}}\D\mu
		\end{aligned}
	\end{equation}
	for any $\Delta\in\mathcal{A}$, then we have
	\begin{equation}\label{quasirn}
		\mu_{\omega_{i}\cdot(\mathsf{h}_{\phi_{1},\omega_{1}}+\mathsf{h}_{\phi_{2},\omega_{2}})\circ\phi_{i}}\circ\phi_{i}^{-1}\ll\mu
	\end{equation}
	which implies that the weighted composition operator $C_{\phi_{i},\omega_{i}\cdot(\mathsf{h}_{\phi_{1},\omega_{1}}+\mathsf{h}_{\phi_{2},\omega_{2}})\circ\phi_{i}}$ is well-defined. Combining \eqref{quasirn} and \eqref{quasirnde} with \eqref{rd}, we obtain
	\begin{equation}\label{quasirnderivate}
		\mathsf{h}_{\phi_{i},\omega_{i}\cdot(\mathsf{h}_{\phi_{1},\omega_{1}}+\mathsf{h}_{\phi_{2},\omega_{2}})\circ\phi_{i}}=(\mathsf{h}_{\phi_{1},\omega_{1}}+\mathsf{h}_{\phi_{2},\omega_{2}})^2\mathsf{h}_{\phi_{i},\omega_{i}}.
	\end{equation}
	By \eqref{quasimultiplication}, we derive $$C_{\phi_{i},\omega_{i}}|\textbf{C}_{\phi,\omega}|^2f=C_{\phi_{i},\omega_{i}}\mathsf{M}_{\mathsf{h}_{\phi_{1},\omega_{1}}+\mathsf{h}_{\phi_{2},\omega_{2}}}f=\omega_{i}\cdot(\mathsf{h}_{\phi_{1},\omega_{1}}+\mathsf{h}_{\phi_{2},\omega_{2}})\circ\phi_{i}\cdot f\circ\phi=C_{\phi_{i},\omega_{i}\cdot(\mathsf{h}_{\phi_{1},\omega_{1}}+\mathsf{h}_{\phi_{2},\omega_{2}})\circ\phi_{i}}f$$
	for $f\in\EuScript{D}(C_{\phi_{i},\omega_{i}}|\textbf{C}_{\phi,\omega}|^2)$, which yields that
	\begin{equation}\label{domainquasi}
		\begin{aligned}
			\EuScript{D}(C_{\phi_{i},\omega_{i}}|\textbf{C}_{\phi,\omega}|^2)&=\EuScript{D}(C_{\phi_{i},\omega_{i}}\mathsf{M}_{\mathsf{h}_{\phi_{1},\omega_{1}}+\mathsf{h}_{\phi_{2},\omega_{2}}})=\EuScript{D}(C_{\phi_{i},\omega_{i}\cdot(\mathsf{h}_{\phi_{1},\omega_{1}}+\mathsf{h}_{\phi_{2},\omega_{2}})\circ\phi_{i}})\bigcap\EuScript{D}(\mathsf{M}_{\mathsf{h}_{\phi_{1},\omega_{1}}+\mathsf{h}_{\phi_{2},\omega_{2}}})\\&\xlongequal{\text{Proposition}\ \ref{basic}\ (i)}L^2((1+\mathsf{h}_{\phi_{i},\omega_{i}\cdot(\mathsf{h}_{\phi_{1},\omega_{1}}+\mathsf{h}_{\phi_{2},\omega_{2}})\circ\phi_{i}}+(\mathsf{h}_{\phi_{1},\omega_{1}}+\mathsf{h}_{\phi_{2},\omega_{2}})^2)\D\mu)\\&\stackrel{\eqref{quasirnderivate}}{=}L^2((1+(1+\mathsf{h}_{\phi_{i},\omega_{i}})(\mathsf{h}_{\phi_{1},\omega_{1}}+\mathsf{h}_{\phi_{2},\omega_{2}})^2)\D\mu),
		\end{aligned}
	\end{equation}
	as desired. On the other hand, since
	\begin{equation}\label{quasirn2}
		\begin{aligned}
			&\mu_{\omega_{i}\cdot(\mathsf{h}_{\phi_{1},\omega_{1}}+\mathsf{h}_{\phi_{2},\omega_{2}})}\circ\phi_{i}^{-1}(\Delta)\stackrel{\eqref{rd}}{=}\int_{\phi_{i}^{-1}(\Delta)}|\omega_{i}\cdot(\mathsf{h}_{\phi_{1},\omega_{1}}+\mathsf{h}_{\phi_{2},\omega_{2}})|^2\D\mu\\&\stackrel{\eqref{equivquasinormal}}{=}\int_{\phi_{i}^{-1}(\Delta)}|\omega_{i}\cdot(\mathsf{h}_{\phi_{1},\omega_{1}}+\mathsf{h}_{\phi_{2},\omega_{2}})\circ\phi_{i}|^2\D\mu\stackrel{\eqref{quasirnde}}{=}\int_{\Delta}(\mathsf{h}_{\phi_{1},\omega_{1}}+\mathsf{h}_{\phi_{2},\omega_{2}})^2\mathsf{h}_{\phi_{i},\omega_{i}}\D\mu
		\end{aligned}
	\end{equation}
	for any $\Delta\in\mathscr{A}$, then we have
	\begin{equation}\label{quasirn3}
		\mu_{\omega_{i}\cdot(\mathsf{h}_{\phi_{1},\omega_{1}}+\mathsf{h}_{\phi_{2},\omega_{2}})}\circ\phi_{i}^{-1}\ll\mu
	\end{equation}
	which implies that the weighted composition operator $C_{\phi_{i},\omega_{i}\cdot(\mathsf{h}_{\phi_{1},\omega_{1}}+\mathsf{h}_{\phi_{2},\omega_{2}})}$ is well-defined. Combining \eqref{quasirn2} and \eqref{quasirn3} with \eqref{rd}, we obtain
	\begin{equation}\label{quasinormalrn4}
		\mathsf{h}_{\phi_{i},\omega_{i}\cdot(\mathsf{h}_{\phi_{1},\omega_{1}}+\mathsf{h}_{\phi_{2},\omega_{2}})}=(\mathsf{h}_{\phi_{1},\omega_{1}}+\mathsf{h}_{\phi_{2},\omega_{2}})^2\mathsf{h}_{\phi_{i},\omega_{i}}.
	\end{equation}
	Thus, we have 
	\begin{equation}\label{domainequasinormal2}
		\begin{aligned}
			&\EuScript{D}(|\textbf{C}_{\phi,\omega}|^2C_{\phi_{i},\omega_{i}})\stackrel{\eqref{quasimultiplication}}{=}\EuScript{D}(\mathsf{M}_{\mathsf{h}_{\phi_{1},\omega_{1}}+\mathsf{h}_{\phi_{2},\omega_{2}}}C_{\phi_{i},\omega_{i}})\\&=\EuScript{D}(\mathsf{M}_{\mathsf{h}_{\phi_{1},\omega_{1}}+\mathsf{h}_{\phi_{2},\omega_{2}}}C_{\phi_{i},\omega_{i}})\bigcap\EuScript{D}(C_{\phi_{i},\omega_{i}})\\&=\EuScript{D}(C_{\phi_{i},\omega_{i}\cdot(\mathsf{h}_{\phi_{1},\omega_{1}}+\mathsf{h}_{\phi_{2},\omega_{2}})})\bigcap\EuScript{D}(C_{\phi_{i},\omega_{i}})\\&\xlongequal{\text{Proposition \ref{basic} (i)}}L^2((1+\mathsf{h}_{\phi_{i},\omega_{i}\cdot(\mathsf{h}_{\phi_{1},\omega_{1}}+\mathsf{h}_{\phi_{2},\omega_{2}})}+\mathsf{h}_{\phi_{i},\omega_{i}})\D\mu)\\&\stackrel{\eqref{quasinormalrn4}}{=}L^2((1+(1+(\mathsf{h}_{\phi_{1},\omega_{1}}+\mathsf{h}_{\phi_{2},\omega_{2}})^2)\mathsf{h}_{\phi_{i},\omega_{i}})\D\mu).
		\end{aligned}
	\end{equation}
	Finally, let 
	\begin{equation}\label{gsubspace}
		g\in L^2((1+(1+\mathsf{h}_{\phi_{i},\omega_{i}})(\mathsf{h}_{\phi_{1},\omega_{1}}+\mathsf{h}_{\phi_{2},\omega_{2}})^2)\D\mu)\stackrel{\eqref{domainquasi}}{=}\EuScript{D}(C_{\phi_{i},\omega_{i}}|\textbf{C}_{\phi,\omega}|^2),
	\end{equation}which immediately yields that\begin{equation}\label{subspace}
		\begin{aligned}
			&\int_{X}|g|^2(1+(\mathsf{h}_{\phi_{1},\omega_{1}}+\mathsf{h}_{\phi_{2},\omega_{2}})^2)\D\mu<\infty\\&\int_{X}|g|^2(\mathsf{h}_{\phi_{1},\omega_{1}}+\mathsf{h}_{\phi_{2},\omega_{2}})^2\mathsf{h}_{\phi_{i},\omega_{i}}\D\mu<\infty
		\end{aligned}.
	\end{equation}
	Since the function $h(t)=\frac{1+t}{1+t^2}$ has a maximum value $c$ for $t>0$, we have
	\begin{align*}
		\int_{X}|g|^2(1+\mathsf{h}_{\phi_{i},\omega_{i}})\D\mu&=\int_{X}|g|^2(1+(\mathsf{h}_{\phi_{1},\omega_{1}}+\mathsf{h}_{\phi_{2},\omega_{2}})^2)\cdot\frac{1+\mathsf{h}_{\phi_{i},\omega_{i}}}{1+\mathsf{h}_{\phi_{i},\omega_{i}}^2}\cdot\frac{1+\mathsf{h}_{\phi_{i},\omega_{i}}^2}{1+(\mathsf{h}_{\phi_{1},\omega_{1}}+\mathsf{h}_{\phi_{2},\omega_{2}})^2}\D\mu\\&\stackrel{\eqref{subspace}}{\le}\int_{X}|g|^2(1+(\mathsf{h}_{\phi_{1},\omega_{1}}+\mathsf{h}_{\phi_{2},\omega_{2}})^2)\cdot c\D\mu<\infty.
	\end{align*}
	Thus, combining the above inequality and \eqref{subspace}, we obtain$$g\in L^2((1+(1+(\mathsf{h}_{\phi_{1},\omega_{1}}+\mathsf{h}_{\phi_{2},\omega_{2}})^2)\mathsf{h}_{\phi_{i},\omega_{i}})\D\mu)\stackrel{\eqref{domainequasinormal2}}{=}\EuScript{D}(|\textbf{C}_{\phi,\omega}|^2C_{\phi_{i},\omega_{i}}),$$
	which implies, by $\eqref{gsubspace}$, that $\EuScript{D}(C_{\phi_{i},\omega_{i}}|\textbf{C}_{\phi,\omega}|^2)\subseteq\EuScript{D}(|\textbf{C}_{\phi,\omega}|^2C_{\phi_{i},\omega_{i}})$, as desired. The proof is complete.
	
	(iii) $\Rightarrow$ (i): The fact $\mathsf{h}_{\phi_{1},\omega_{1}},\mathsf{h}_{\phi_{2},\omega_{2}}<\infty$ a.e. $[\mu]$ implies that $1+(1+\mathsf{h}_{\phi_{i},\omega_{i}})(\mathsf{h}_{\phi_{1},\omega_{1}}+\mathsf{h}_{\phi_{2},\omega_{2}})^2<\infty$ a.e. $[\mu]$. By \cite[Lemma 9]{b-j-j-sW}, there exists an increasing sequence $\{X_{n}\}\subset\mathscr{A}$ such that $\mu(X_{n})<\infty$, $1+(1+\mathsf{h}_{\phi_{i},\omega_{i}})(\mathsf{h}_{\phi_{1},\omega_{1}}+\mathsf{h}_{\phi_{2},\omega_{2}})^2\le n$ a.e. $[\mu]$ on $X_{n}$ for $n\in\mathbb{N}$ and $\bigcup\limits_{n=1}^\infty X_{n}=X$. Therefore, it follows from \eqref{quasimultiplication}, \eqref{domainquasi} and $C_{\phi_{i},\omega_{i}}|\textbf{C}_{\phi,\omega}|^2\subseteq|\textbf{C}_{\phi,\omega}|^2C_{\phi_{i},\omega_{i}}$ that
	\begin{align*}
		&C_{\phi_{i},\omega_{i}}|\textbf{C}_{\phi,\omega}|^2\chi_{X_{n}}=|\textbf{C}_{\phi,\omega}|^2C_{\phi_{i},\omega_{i}}\chi_{X_{n}}\\&\Rightarrow\omega_{i}\cdot(\mathsf{h}_{\phi_{1},\omega_{1}}+\mathsf{h}_{\phi_{2},\omega_{2}})\circ\phi_{i}\cdot \chi_{X_{n}}\circ\phi=(\mathsf{h}_{\phi_{1},\omega_{1}}+\mathsf{h}_{\phi_{2},\omega_{2}})\cdot\omega_{i}\cdot \chi_{X_{n}}\circ\phi_{i}\quad \text{a.e}\ [\mu]
	\end{align*}
	Since $\phi_{i}^{-1}(X_{n})\to X$ as $n\to\infty$, then by the above inequality we have
	\begin{align*}
		&\mathsf{h}_{\phi_{1},\omega_{1}}+\mathsf{h}_{\phi_{2},\omega_{2}}=(\mathsf{h}_{\phi_{1},\omega_{1}}+\mathsf{h}_{\phi_{2},\omega_{2}})\circ\phi_{i}\quad\text{a.e.}\ [\mu]\quad\text{on}\ \{\omega_{i}\ne0\}\\&\stackrel{\eqref{equivquasinormal}}{=}\mathsf{h}_{\phi_{1},\omega_{1}}+\mathsf{h}_{\phi_{2},\omega_{2}}=(\mathsf{h}_{\phi_{1},\omega_{1}}+\mathsf{h}_{\phi_{2},\omega_{2}})\circ\phi_{i}\quad\text{a.e.}\ [\mu_{\omega_{i}}]
	\end{align*}
	for $i=1,2$. By Theorem \ref{quasinormal}, we get the desired result.
	
	(iii) $\Leftrightarrow$ (iv): It suffices to prove that $|\textbf{C}_{\phi,\omega}|^2=C_{\phi_{1},\omega_{1}}^\ast C_{\phi_{1},\omega_{1}}+C_{\phi_{2},\omega_{2}}^\ast C_{\phi_{2},\omega_{2}}$. Indeed, by Theorem \ref{polar} (i) and Lemma \ref{mulsp} (ii) we have $C_{\phi_{i},\omega_{i}}^\ast C_{\phi_{i},\omega_{i}}=|C_{\phi_{i},\omega_{i}}|^2=\mathsf{M}_{\sqrt{\mathsf{h}_{\phi_{i},\omega_{i}}}}^2=\mathsf{M}_{\mathsf{h}_{\phi_{i},\omega_{i}}}$ for $i=1,2$. Thus, we obtain
	\begin{equation}\label{selfadjointmut}
		C_{\phi_{1},\omega_{1}}^\ast C_{\phi_{1},\omega_{1}}+C_{\phi_{2},\omega_{2}}^\ast C_{\phi_{2},\omega_{2}}=\mathsf{M}_{\mathsf{h}_{\phi_{1},\omega_{1}}}+\mathsf{M}_{\mathsf{h}_{\phi_{2},\omega_{2}}}\xlongequal{\text{Lemma \ref{mulsp} (i)}}\mathsf{M}_{\mathsf{h}_{\phi_{1},\omega_{1}}+\mathsf{h}_{\phi_{2},\omega_{2}}}\stackrel{\eqref{quasimultiplication}}{=}|\textbf{C}_{\phi,\omega}|^2.
	\end{equation}
\end{proof}

It is not difficult to verify that if $C_{\phi_{1},\omega_{1}}$ and $C_{\phi_{2},\omega_{2}}$ are bounded on $L^2(\mu)$, then the inclusions in (ii)-(iv) of the corollary above become equalities. In this case, the pair $\textbf{C}_{\phi,\omega}$ is said to be spherically quasinormal, as defined in \cite{curto-AOF-2019}, which generalizes the notion of a single bounded quasinormal operator to the multivariable setting. On the other hand, suppose $T$ is a closed densely defined operator with the polar decomposition $T=U|T|$. Recall from \cite{b-j-j-sW} that $T$ is said to be quasinormal if $U|T|\subseteq |T|U$. Therefore, by Corollary \ref{unboundedquasinormal} (ii), it is justified that the unbounded pair $\textbf{C}_{\phi,\omega}$ satisfying $\mathcal{M}_{\lambda}(\textbf{C}_{\phi,\omega})=\textbf{C}_{\phi,\omega}$ is said to be {\em spherically quasinormal}. We remark that Jab{\l}o\'nski et al. in \cite[Theorem 3.1]{jab-ieot-2014} showed that $T$ is quasinormal if and only if $U|T|=|T|U$ if and only if $T|T|^2=|T|^2T$ if and only if $T|T|^2\subseteq |T|^2T$. However, the equalities in (ii)-(iv) of Corollary \ref{unboundedquasinormal} do not hold in general for the spherically quasinormal pair $\textbf{C}_{\phi,\omega}$, as shown in the following example.

\begin{exa}
	Let $X=\mathbb{N}$, $\mathscr{A}=2^X$ and $\mu$ be the counting measure on $X$. Define the $\mathscr{A}$-measurable transforms $\phi_{1}(n)=\phi_{2}(n)=n$. The functions $\omega_{1}$ and $\omega_{2}$ are defined by $\omega_{1}(n)=\frac{1}{n}$ and $\omega_{2}(n)=\sqrt{n}$, respectively.  By \eqref{h}, we have $\mathsf{h}_{\phi_{1},\omega_{1}}(n)=|\omega_{1}(n)|^2=\frac{1}{n^2}$ and $\mathsf{h}_{\phi_{2},\omega_{2}}(n)=|\omega_{2}(n)|^2=n$. It is clear that $\mathsf{h}_{\phi_{1},\omega_{1}}+\mathsf{h}_{\phi_{2},\omega_{2}}=(\mathsf{h}_{\phi_{1},\omega_{1}}+\mathsf{h}_{\phi_{2},\omega_{2}})\circ\phi_{i}$ a.e. $[\mu_{\omega_{i}}]$ for $i=1,2$ since $\phi_{1}$ and $\phi_{2}$ are identity maps, which implies, by Theorem \ref{quasinormal}, that $\mathcal{M}_{\lambda}(\textbf{C}_{\phi,\omega})=\textbf{C}_{\phi,\omega}$. Now, consider the function $f(n)=\frac{1}{n}\in L^2(\mu)$. We first prove that $U_{1}|\textbf{C}_{\phi,\omega}|\ne|\textbf{C}_{\phi,\omega}|U_{1}$. Since $$(|\textbf{C}_{\phi,\omega}|f)(n)\xlongequal{\text{Theorem \ref{polar} (i)}}(\mathsf{M}_{\sqrt{\mathsf{h}_{\phi_{1},\omega_{1}}+\mathsf{h}_{\phi_{2},\omega_{2}}}}f)(n)=(\frac{1}{n^2}+n)^\frac{1}{2}\cdot\frac{1}{n},$$then
	\begin{equation}\label{ex27 (1)}
		\int_{X}||\textbf{C}_{\phi,\omega}|f|^2\D\mu=\int_{X}(\frac{1}{n^2}+n)\cdot\frac{1}{n^2}\D\mu=\int_{X}\frac{1}{n^4}+\frac{1}{n}\D\mu>\infty\Rightarrow f\notin\EuScript{D}(|\textbf{C}_{\phi,\omega}|).
	\end{equation}
	Since$$(|\textbf{C}_{\phi,\omega}|U_{1}f)(n)\xlongequal{\text{Theorem \ref{polar} (i)}}(\sqrt{\mathsf{h}_{\phi_{1},\omega_{1}}+\mathsf{h}_{\phi_{2},\omega_{2}}}\cdot C_{\phi_{1},\frac{\omega_{1}}{\sqrt{\mathsf{h}_{\phi_{1},\omega_{1}}+\mathsf{h}_{\phi_{2},\omega_{2}}}}}f)(n)=(\omega_{1}\cdot f)(n)=\frac{1}{n^2},$$
	then
	\begin{equation}\label{ex27 (2)}
		\int_{X}||\textbf{C}_{\phi,\omega}|U_{1}f|^2\D\mu=\int_{X}\frac{1}{n^4}\D\mu<\infty\Rightarrow f\in\EuScript{D}(|\textbf{C}_{\phi,\omega}|U_{1}).
	\end{equation}
	Combining \eqref{ex27 (1)} and \eqref{ex27 (2)}, we have $U_{1}|\textbf{C}_{\phi,\omega}|\ne|\textbf{C}_{\phi,\omega}|U_{1}$. Then, we prove that $C_{\phi_{1},\omega_{1}}|\textbf{C}_{\phi,\omega}|^2\ne|\textbf{C}_{\phi,\omega}|^2C_{\phi_{1},\omega_{1}}$. Since $$(|\textbf{C}_{\phi,\omega}|^2f)(n)\xlongequal{\text{Theorem \ref{polar} (i)}}(\mathsf{M}_{\mathsf{h}_{\phi_{1},\omega_{1}}+\mathsf{h}_{\phi_{2},\omega_{2}}}f)(n)=(\frac{1}{n^2}+n)\cdot\frac{1}{n}=\frac{1}{n^3}+1,$$then
	\begin{equation}\label{ex27 (3)}
		\int_{X}||\textbf{C}_{\phi,\omega}|f|^2\D\mu=\int_{X}(\frac{1}{n^3}+1)^2\D\mu>\infty\Rightarrow f\notin\EuScript{D}(|\textbf{C}_{\phi,\omega}|^2).
	\end{equation}
	Since
	$$\int_{X}|C_{\phi_{1},\omega_{1}}f|^2\D\mu=\int_{X}|\omega_{1}\cdot f|^2\D\mu=\int_{X}\frac{1}{n^4}\D\mu<\infty\Rightarrow f\in\EuScript{D}(C_{\phi_{1},\omega_{1}})$$
	and
	$$(|\textbf{C}_{\phi,\omega}|^2C_{\phi_{1},\omega_{1}}f)(n)\xlongequal{\text{Theorem \ref{polar} (i)}}(\mathsf{M}_{\mathsf{h}_{\phi_{1},\omega_{1}}+\mathsf{h}_{\phi_{2},\omega_{2}}}\omega_{1}\cdot f)(n)=(\frac{1}{n^2}+n)\cdot \frac{1}{n^2}=\frac{1}{n^4}+\frac{1}{n},$$then
	\begin{equation}\label{ex27 (4)}
		\int_{X}||\textbf{C}_{\phi,\omega}|^2C_{\phi_{1},\omega_{1}}f|^2\D\mu=\int_{X}(\frac{1}{n^4}+\frac{1}{n})^2\D\mu<\infty\Rightarrow f\in\EuScript{D}(|\textbf{C}_{\phi,\omega}|^2C_{\phi_{1},\omega_{1}}).
	\end{equation}	
	Combining \eqref{ex27 (3)} and \eqref{ex27 (4)}, we have
	$C_{\phi_{1},\omega_{1}}|\textbf{C}_{\phi,\omega}|^2\ne|\textbf{C}_{\phi,\omega}|^2C_{\phi_{1},\omega_{1}}$.
\end{exa}

Recently, Kim et al. \cite{Yoon-glma-2022} introduced the spherical $p$-hyponormality for operator pairs. A pair $\textbf{T}=(T_{1},T_{2})$ is said to be spherically $p$-hyponormal (0$<$$p$$\le$1) if $T_{1}$, $T_{2}\in\mathcal{B}(\mathcal{H})$, $T_{1}T_{2}=T_{2}T_{1}$ and
$$(T_{1}T_{1}^\ast+T_{2}T_{2}^{\ast})^p\le(T_{1}^\ast T_{1}+T_{2}^\ast T_{2})^p.$$
The equivalence,$$(T_{1}T_{1}^\ast+T_{2}T_{2}^{\ast})^p\le(T_{1}^\ast T_{1}+T_{2}^\ast T_{2})^p\Leftrightarrow\|(T_{1}T_{1}^\ast+T_{2}T_{2}^{\ast})^\frac{p}{2}x\|\le\|((T_{1}^\ast T_{1}+T_{2}^\ast T_{2}))^\frac{p}{2}x\|$$ for any $x\in\mathcal{H}$, together with \eqref{selfadjoint}, allows us to extend this definition to unbounded weighted composition operator pairs without assuming commutativity. Note that our definition, being based on the functional calculus for self-adjoint operators, requires operators $C_{\phi_{1},\omega_{1}}^\ast C_{\phi_{1},\omega_{1}}+C_{\phi_{2},\omega_{2}}^\ast C_{\phi_{2},\omega_{2}}$ and $C_{\phi_{1},\omega_{1}}C_{\phi_{1},\omega_{1}}^\ast+C_{\phi_{2},\omega_{2}}C_{\phi_{2},\omega_{2}}^\ast$ to be self-adjoint. By \eqref{selfadjointmut}, the operator $$C_{\phi_{1},\omega_{1}}^\ast C_{\phi_{1},\omega_{1}}+C_{\phi_{2},\omega_{2}}^\ast C_{\phi_{2},\omega_{2}}\ (=\mathsf{M}_{\mathsf{h}_{\phi_{1},\omega_{1}}+\mathsf{h}_{\phi_{2},\omega_{2}}})$$is always self-adjoint. However, $C_{\phi_{1},\omega_{1}}C_{\phi_{1},\omega_{1}}^\ast+C_{\phi_{2},\omega_{2}}C_{\phi_{2},\omega_{2}}^\ast$ is not necessarily self-adjoint. More precisely, for $\textbf{C}_{\phi,\omega}=(C_{\phi_{1},\omega_{1}},C_{\phi_{2},\omega_{2}})\in\mathcal{WCOP}_{\text{dense}}$, we say that the pair $\textbf{C}_{\phi,\omega}$ is {\em spherically $p$-hyponormal} (0$<$$p$$\le$1) if $C_{\phi_{1},\omega_{1}}C_{\phi_{1},\omega_{1}}^\ast+C_{\phi_{2},\omega_{2}}C_{\phi_{2},\omega_{2}}^\ast$ is self-adjoint,
$$\EuScript{D}((C_{\phi_{1},\omega_{1}}^\ast C_{\phi_{1},\omega_{1}}+C_{\phi_{2},\omega_{2}}^\ast C_{\phi_{2},\omega_{2}})^\frac{p}{2})\subseteq\EuScript{D}((C_{\phi_{1},\omega_{1}}C_{\phi_{1},\omega_{1}}^\ast+C_{\phi_{2},\omega_{2}}C_{\phi_{2},\omega_{2}}^\ast)^\frac{p}{2})$$
and$$\|(C_{\phi_{1},\omega_{1}}C_{\phi_{1},\omega_{1}}^\ast+C_{\phi_{2},\omega_{2}}C_{\phi_{2},\omega_{2}}^\ast)^\frac{p}{2}f\|\le\|(C_{\phi_{1},\omega_{1}}^\ast C_{\phi_{1},\omega_{1}}+C_{\phi_{2},\omega_{2}}^\ast C_{\phi_{2},\omega_{2}})^\frac{p}{2}f\|$$
for any $f\in\EuScript{D}((C_{\phi_{1},\omega_{1}}^\ast C_{\phi_{1},\omega_{1}}+C_{\phi_{2},\omega_{2}}^\ast C_{\phi_{2},\omega_{2}})^\frac{p}{2})$.

We now present an example showing that operator $C_{\phi_{1},\omega_{1}}C_{\phi_{1},\omega_{1}}^\ast+C_{\phi_{2},\omega_{2}}C_{\phi_{2},\omega_{2}}^\ast$ is not necessarily self-adjoint, but first, the following result guarantees the existence of its adjoint.

\begin{pro}
	Suppose $C_{\phi_{1},\omega_{1}}$ and $C_{\phi_{2},\omega_{2}}$ are densely defined. Then the sum $C_{\phi_{1},\omega_{1}}C_{\phi_{1},\omega_{1}}^\ast+C_{\phi_{2},\omega_{2}}C_{\phi_{2},\omega_{2}}^\ast$ is densely defined.
\end{pro}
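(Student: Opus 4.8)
The plan is to work with the explicit description of the adjoints. Since $\EuScript{D}(C_{\phi_{1},\omega_{1}}C_{\phi_{1},\omega_{1}}^\ast+C_{\phi_{2},\omega_{2}}C_{\phi_{2},\omega_{2}}^\ast)=\EuScript{D}(C_{\phi_{1},\omega_{1}}C_{\phi_{1},\omega_{1}}^\ast)\bigcap\EuScript{D}(C_{\phi_{2},\omega_{2}}C_{\phi_{2},\omega_{2}}^\ast)$, and each $C_{\phi_{i},\omega_{i}}$ is closed (Theorem \ref{combasic} (ii)) and densely defined, each $C_{\phi_{i},\omega_{i}}C_{\phi_{i},\omega_{i}}^\ast=|C_{\phi_{i},\omega_{i}}^\ast|^2$ is self-adjoint and so has a dense domain. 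The issue is that an intersection of two dense domains may be as small as $\{0\}$, so I must exhibit a concrete dense subspace sitting inside the intersection. The two tools I would record first are: the formula $C_{\phi_{i},\omega_{i}}^\ast f=\mathsf{h}_{\phi_{i},\omega_{i}}\cdot\mathsf{E}_{\phi_{i},\omega_{i}}(f_{\omega_{i}})\circ\phi_{i}^{-1}$ from Theorem \ref{adjoint} (where $f_{\omega_{i}}=\chi_{\{\omega_{i}\ne0\}}\tfrac{f}{\omega_{i}}$), and the resulting observation that if $f$ is supported on $\{\omega_{i}=0\}$ then $f_{\omega_{i}}=0$, hence $C_{\phi_{i},\omega_{i}}^\ast f=0$ and $f\in\EuScript{D}(C_{\phi_{i},\omega_{i}}C_{\phi_{i},\omega_{i}}^\ast)$ automatically. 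This last point lets me ignore the zero set of $\omega_{i}$.

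\textbf{The approximants.} Fix $f\in L^2(\mu)$. By $\sigma$-finiteness choose $Y_n\uparrow X$ with $\mu(Y_n)<\infty$, and set $B_i^{n}=\{\mathsf{h}_{\phi_{i},\omega_{i}}\le n\}\bigcap Y_n$, a set of finite measure on which $\mathsf{h}_{\phi_{i},\omega_{i}}\le n$. I would then define the truncation
\[
E_n=Y_n\bigcap\{|f|\le n\}\bigcap\bigcap_{i=1}^{2}\Big(\{\omega_{i}=0\}\bigcup\big(\{\tfrac1n\le|\omega_{i}|\le n\}\bigcap\phi_{i}^{-1}(B_i^{n})\big)\Big),\qquad f_n=f\chi_{E_n}.
\]
The point of intersecting with the \emph{pre-image} $\phi_{i}^{-1}(B_i^{n})$ (rather than a forward image) is that pre-images behave well under $\mu_{\omega_{i}}$. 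The one genuinely nontrivial convergence is $\chi_{E_n}\to 1$ a.e. $[\mu]$: on $\{\omega_{i}\ne0\}$ this needs $x\in\phi_{i}^{-1}(B_i^{n})$ eventually, i.e. $\mathsf{h}_{\phi_{i},\omega_{i}}\circ\phi_{i}<\infty$ a.e. $[\mu]$ on $\{\omega_{i}\ne0\}$. I would verify this from $\mu_{\omega_{i}}(\{\mathsf{h}_{\phi_{i},\omega_{i}}\circ\phi_{i}=\infty\})=\mu_{\omega_{i}}\circ\phi_{i}^{-1}(\{\mathsf{h}_{\phi_{i},\omega_{i}}=\infty\})\stackrel{\eqref{rd}}{=}\int_{\{\mathsf{h}_{\phi_{i},\omega_{i}}=\infty\}}\mathsf{h}_{\phi_{i},\omega_{i}}\,\D\mu=0$, using dense definiteness ($\mathsf{h}_{\phi_{i},\omega_{i}}<\infty$ a.e.) and mutual absolute continuity of $\mu^{\omega_{i}}$ and $\mu_{\omega_{i}}$. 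Granting this, $E_n\uparrow X$ a.e.\ $[\mu]$ and the Dominated Convergence Theorem gives $f_n\to f$ in $L^2(\mu)$.

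\textbf{Membership in each domain.} For each $i$ I would split $f_n=f_n\chi_{\{\omega_{i}=0\}}+f_n\chi_{\{\omega_{i}\ne0\}}$. The first summand lies in the kernel of $C_{\phi_{i},\omega_{i}}^\ast$, hence in $\EuScript{D}(C_{\phi_{i},\omega_{i}}C_{\phi_{i},\omega_{i}}^\ast)$. For the second, write $g=f_n\chi_{\{\omega_{i}\ne0\}}$, which is bounded and supported in $\{\tfrac1n\le|\omega_{i}|\le n\}\bigcap\phi_{i}^{-1}(B_i^{n})$, so $g_{\omega_{i}}$ is bounded; by the contraction property of the conditional expectation (\cite[Proposition 15]{b-j-j-sW}) we get $|C_{\phi_{i},\omega_{i}}^\ast g|\le\|g_{\omega_{i}}\|_\infty\,\mathsf{h}_{\phi_{i},\omega_{i}}$. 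Crucially, $C_{\phi_{i},\omega_{i}}^\ast g$ is supported where a fibre of $\phi_{i}$ meets $\operatorname{supp}g\subseteq\phi_{i}^{-1}(B_i^{n})$, which forces that support into $B_i^{n}$; since $\mu(B_i^{n})<\infty$ and $\mathsf{h}_{\phi_{i},\omega_{i}}\le n$ there, $\int_X|C_{\phi_{i},\omega_{i}}^\ast g|^2(1+\mathsf{h}_{\phi_{i},\omega_{i}})\,\D\mu\le\|g_{\omega_{i}}\|_\infty^2\,n^2(1+n)\mu(B_i^{n})<\infty$. By Theorem \ref{combasic} (i) this says $C_{\phi_{i},\omega_{i}}^\ast g\in\EuScript{D}(C_{\phi_{i},\omega_{i}})$, so $g\in\EuScript{D}(C_{\phi_{i},\omega_{i}}C_{\phi_{i},\omega_{i}}^\ast)$. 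As this domain is a subspace, $f_n\in\EuScript{D}(C_{\phi_{i},\omega_{i}}C_{\phi_{i},\omega_{i}}^\ast)$ for both $i$, i.e. $f_n\in\EuScript{D}(C_{\phi_{1},\omega_{1}}C_{\phi_{1},\omega_{1}}^\ast+C_{\phi_{2},\omega_{2}}C_{\phi_{2},\omega_{2}}^\ast)$, and $f_n\to f$ completes the proof.

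\textbf{Main obstacle.} The delicate step is controlling the support of $C_{\phi_{i},\omega_{i}}^\ast g$, which a priori lives on the uncontrolled forward image $\phi_{i}(\operatorname{supp}g)$; the whole construction is engineered around the identity $\phi_{i}(\phi_{i}^{-1}(B_i^{n}))\subseteq B_i^{n}$, so that replacing an ad hoc finite-measure support by the pre-image $\phi_{i}^{-1}(B_i^{n})$ pushes all the finiteness onto a set where $\mathsf{h}_{\phi_{i},\omega_{i}}$ is bounded and $\mu$ is finite. The second, subtler point is the a.e.\ exhaustion $E_n\uparrow X$, whose only non-routine ingredient is $\mathsf{h}_{\phi_{i},\omega_{i}}\circ\phi_{i}<\infty$ a.e.\ on $\{\omega_{i}\ne0\}$ established above.
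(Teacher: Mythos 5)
Your proposal is correct, but it takes a genuinely different route from the paper. The paper's proof is shorter and works at the level of a single explicit core: invoking the closed formula $C_{\phi_{i},\omega_{i}}C_{\phi_{i},\omega_{i}}^\ast f=\omega_{i}\cdot\mathsf{h}_{\phi_{i},\omega_{i}}\circ\phi_{i}\cdot\mathsf{E}_{\phi_{i},\omega_{i}}(f_{\omega_{i}})$ from \cite[Theorem 4.3]{b-j-j-sW} together with the conditional Jensen inequality $|\mathsf{E}_{\phi_{i},\omega_{i}}(u)|^2\le\mathsf{E}_{\phi_{i},\omega_{i}}(|u|^2)$ \cite[Theorem A.4]{b-j-j-sW}, it shows the weighted space $L^2\big(\sum_{i=1}^2\mathsf{h}_{\phi_{i},\omega_{i}}^2\circ\phi_{i}\,\D\mu\big)$ is contained in the common domain, and then proves that this weighted space is dense by the same simple-function exhaustion as in Proposition \ref{basic} (ii). You instead approximate an arbitrary $f$ directly by the multi-constraint truncations $f\chi_{E_n}$ and verify domain membership by hand from Theorem \ref{adjoint}, using only the $L^\infty$-contraction of $\mathsf{E}_{\phi_{i},\omega_{i}}$ rather than conditional Jensen, with the pre-image trick $\phi_{i}^{-1}(B_i^{n})$ doing the work that the weight $\mathsf{h}_{\phi_{i},\omega_{i}}^2\circ\phi_{i}$ does in the paper. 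What your approach buys is precisely the point you flag as subtle: you prove $\mathsf{h}_{\phi_{i},\omega_{i}}\circ\phi_{i}<\infty$ a.e.\ $[\mu]$ only on $\{\omega_{i}\ne0\}$ (which is all that is true in general) and route the set $\{\omega_{i}=0\}$ through the kernel of $C_{\phi_{i},\omega_{i}}^\ast$; the paper's density step asserts that $\mathsf{h}_{\phi_{i},\omega_{i}}<\infty$ a.e.\ $[\mu]$ implies $\mathsf{h}_{\phi_{i},\omega_{i}}^2\circ\phi_{i}<\infty$, which is automatic only $\mu_{\omega_{i}}$-a.e., so strictly the paper's weight should carry a factor $\chi_{\{\omega_{i}\ne0\}}$ (harmless there, since the integral identity actually produces it, but your handling is the more careful one). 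One step of yours is stated at sketch level and deserves a line: the localization $C_{\phi_{i},\omega_{i}}^\ast g=0$ a.e.\ off $B_i^{n}$ follows rigorously because $\phi_{i}^{-1}(B_i^{n})$ is $\phi_{i}^{-1}(\mathscr{A})$-measurable, so \eqref{exp+} gives $\mathsf{E}_{\phi_{i},\omega_{i}}(g_{\omega_{i}})=\chi_{B_i^{n}}\circ\phi_{i}\cdot\mathsf{E}_{\phi_{i},\omega_{i}}(g_{\omega_{i}})$, and then \eqref{invexp}, \eqref{intcom} and \eqref{Epro} force $\mathsf{h}_{\phi_{i},\omega_{i}}\cdot\mathsf{E}_{\phi_{i},\omega_{i}}(g_{\omega_{i}})\circ\phi_{i}^{-1}$ to vanish a.e.\ $[\mu]$ outside $B_i^{n}$. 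With that line added, your proof is complete.
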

\begin{proof}
By \cite[Theorem 4.3]{b-j-j-sW}, we have
\begin{equation}\label{98}
C_{\phi_{i},\omega_{i}}C_{\phi_{i},\omega_{i}}^\ast f=|C_{\phi_{i},\omega_{i}}^\ast|^2f=\omega_{i}\cdot\mathsf{h}_{\phi_{i},\omega_{i}}\circ\phi_{i}\cdot\mathsf{E}_{\phi_{i},\omega_{i}}(f_{\omega_{i}}),\quad\quad f\in\EuScript{D}(C_{\phi_{i},\omega_{i}}C_{\phi_{i},\omega_{i}}^\ast)
\end{equation}
and
\begin{equation}\label{99}
	\EuScript{D}(C_{\phi_{i},\omega_{i}}C_{\phi_{i},\omega_{i}}^\ast)=\EuScript{D}(|C_{\phi_{i},\omega_{i}}^\ast|^2)=\{f\in L^2(\mu)\colon\omega_{i}\cdot\mathsf{h}_{\phi_{i},\omega_{i}}\circ\phi_{i}\cdot\mathsf{E}_{\phi_{i},\omega_{i}}(f_{\omega_{i}})\in L^2(\mu)\}
\end{equation}for $i=1,2$. Thus, we derive that \begin{align*}
		&\int_{X}|C_{\phi_{i},\omega_{i}}C_{\phi_{i},\omega_{i}}^\ast f|^2\D\mu=\int_{X}|\omega_{i}\cdot\mathsf{h}_{\phi_{i},\omega_{i}}\circ\phi_{i}\cdot\mathsf{E}_{\phi_{i},\omega_{i}}(f_{\omega_{i}})|^2\D\mu\\&\stackrel{\eqref{muomega}}{=}\int_{X}|\mathsf{h}_{\phi_{i},\omega_{i}}\circ\phi_{i}\cdot\mathsf{E}_{\phi_{i},\omega_{i}}(f_{\omega_{i}})|^2\D\mu_{\omega_{i}}\stackrel{\text{\cite[Theorem A.4]{b-j-j-sW}}}{\le}\int_{X}\mathsf{h}_{\phi_{i},\omega_{i}}^2\circ\phi_{i}\cdot\mathsf{E}_{\phi_{i},\omega_{i}}(|f_{\omega_{i}}|^2)d\mu_{\omega_{i}}\\&\stackrel{\eqref{exp+}}{=}\int_{X}\mathsf{E}_{\phi_{i},\omega_{i}}(\mathsf{h}_{\phi_{i},\omega_{i}}^2\circ\phi_{i}\cdot|f_{\omega_{i}}|^2)d\mu_{\omega_{i}}=\int_{X}\mathsf{h}_{\phi_{i},\omega_{i}}^2\circ\phi_{i}\cdot|f_{\omega_{i}}|^2\D\mu_{\omega_{i}}\stackrel{\eqref{muomega}}{=}\int_{X}\mathsf{h}_{\phi_{i},\omega_{i}}^2\circ\phi_{i}\cdot|f|^2\D\mu
	\end{align*}implies $L^2(\mathsf{h}_{\phi_{i},\omega_{i}}^2\circ\phi_{i}\D\mu)\subseteq\EuScript{D}(C_{\phi_{i},\omega_{i}}C_{\phi_{i},\omega_{i}}^\ast)$ for $i=1,2$, then
\begin{align*}
&\bigcap\limits_{i=1}^2L^2(\mathsf{h}_{\phi_{i},\omega_{i}}^2\circ\phi_{i}\D\mu)=L^2(\sum\limits_{i=1}^2\mathsf{h}_{\phi_{i},\omega_{i}}^2\circ\phi_{i}\D\mu)\\&\subseteq\bigcap\limits_{i=1}^2\EuScript{D}(C_{\phi_{i},\omega_{i}}C_{\phi_{i},\omega_{i}}^\ast)=\EuScript{D}(C_{\phi_{1},\omega_{1}}C_{\phi_{1},\omega_{1}}^\ast+C_{\phi_{2},\omega_{2}}C_{\phi_{2},\omega_{2}}^\ast).
\end{align*}
 If we can prove
 \begin{equation}\label{sumdense}
 \overline{L^2(\sum\limits_{i=1}^2\mathsf{h}_{\phi_{i},\omega_{i}}^2\circ\phi_{i}\D\mu)}=L^2(\mu),
 \end{equation}then $$\overline{\EuScript{D}(C_{\phi_{1},\omega_{1}}C_{\phi_{1},\omega_{1}}^\ast+C_{\phi_{2},\omega_{2}}C_{\phi_{2},\omega_{2}}^\ast)}=L^2(\mu),$$as desired. Indeed, since $\mathsf{h}_{\phi_{i},\omega_{i}}<\infty$ a.e. $[\mu]$ implies that $\mathsf{h}_{\phi_{i},\omega_{i}}^2\circ\phi_{i}<\infty$ for $i=1,2$, an argument similar to the proof of Proposition \ref{basic} (ii) readily yields Equation \eqref{sumdense}.
\end{proof}

\begin{exa}\label{notselfadjoint}
Let $X=\mathbb{N}$ and $\mu$ be the counting measure on $X$. Set $k\in \mathbb{N}$. Define the $\mathscr{A}$-measurable transform $\phi:X\to X$ by $\phi(n)=\begin{cases}
2k-1&n=2k-1\\
2k-1&n=2k
\end{cases}.$ The functions $\omega_{1},\omega_{2}:X\to\mathbb{R}$ are defined as$$\omega_{1}(n)=\begin{cases}
1&n=2k-1\\
2k-1&n=2k
\end{cases}\quad\text{and}\quad\omega_{2}(n)=\begin{cases}
1&n=2k-1\\
1-2k&n=2k
\end{cases},$$
respectively. By \eqref{h}, we have $\mathsf{h}_{\phi,\omega_{1}}(n)=\sum\limits_{y\in\phi^{-1}(n)}|\omega_{1}(y)|^2=\begin{cases}
(2k-1)^2+1&n=2k-1\\
0&n=2k
\end{cases}$ and$$\mathsf{h}_{\phi,\omega_{2}}(n)=\sum\limits_{y\in\phi^{-1}(n)}|\omega_{2}(y)|^2=\begin{cases}
(1-2k)^2+1&n=2k-1\\
0&n=2k
\end{cases}.$$
Combining \eqref{Ef} and the above inequalities with Theorem \ref{adjoint}, we have
\begin{equation}\label{C_{1}}
\begin{aligned}
	(C_{\phi,\omega_{1}}C_{\phi,\omega_{1}}^\ast r)(n)&=(\omega_{1}\cdot\mathsf{h}_{\phi,\omega_{1}}\circ\phi\cdot\mathsf{E}_{\phi,\omega_{1}}(r_{\omega_{1}}))(n)\\&=\omega_{1}(n)\cdot\sum\limits_{y\in\phi^{-1}(\phi(n))}r_{\omega_{1}}(y)|\omega_{1}(y)|^2\\&=\begin{cases}
		r(2k)(2k-1)+r(2k-1)&n=2k-1\\
		(2k-1)\cdot(r(2k)(2k-1)+f(2k-1))&n=2k\\
	\end{cases}
\end{aligned}
\end{equation}
for $r\in\EuScript{D}(C_{\phi,\omega_{1}}C_{\phi,\omega_{1}}^\ast)$ and
\begin{equation}\label{C_{2}}
	\begin{aligned}
		&(C_{\phi,\omega_{2}}C_{\phi,\omega_{2}}^\ast s)(n)=(\omega_{2}\cdot\mathsf{h}_{\phi,\omega_{2}}\circ\phi\cdot\mathsf{E}_{\phi,\omega_{2}}(s_{\omega_{2}}))(n)=\omega_{2}(n)\cdot\sum\limits_{y\in\phi^{-1}(\phi(n))}s_{\omega_{2}}(y)|\omega_{2}(y)|^2\\&=\begin{cases}
		s(2k)(1-2k)+s(2k-1)&n=2k-1\\
		(1-2k)(s(2k)(1-2k)+s(2k-1))&n=2k\\
		\end{cases}
	\end{aligned}
\end{equation}
for $s\in\EuScript{D}(C_{\phi,\omega_{2}}C_{\phi,\omega_{2}}^\ast)$. Let $p(n)=\begin{cases}
	1&n=2k-1\\
	2\cdot(2k-1)^2&n=2k
\end{cases},$ then by \eqref{C_{1}} and \eqref{C_{2}} we obtain
$$((C_{\phi,\omega_{1}}C_{\phi,\omega_{1}}^\ast+C_{\phi,\omega_{2}}C_{\phi,\omega_{2}}^\ast)q)(n)=\begin{cases}
		q(2k-1)&n=2k-1\\
		2\cdot(2k-1)^2\cdot q(2k)&n=2k
	\end{cases}=p(n)\cdot q(n)$$
for $q\in\EuScript{D}(C_{\phi,\omega_{1}}C_{\phi,\omega_{1}}^\ast)\bigcap\EuScript{D}(C_{\phi,\omega_{2}}C_{\phi,\omega_{2}}^\ast)$, which immediately yields $$\EuScript{D}((C_{\phi,\omega_{1}}C_{\phi,\omega_{1}}^\ast+C_{\phi,\omega_{2}}C_{\phi,\omega_{2}}^\ast)^\ast)=\{f\in L^2(\mu)\colon p\cdot f\in L^2(\mu)\}.$$
Considering the function $f(n)=\begin{cases}
	\frac{1}{2k-1}&n=2k-1\\
	0&n=2k
\end{cases}\in L^{2}(\mu)$, by \eqref{C_{1}} we derive
   $$(C_{\phi,\omega_{1}}C_{\phi,\omega_{1}}^\ast f)(n)=\begin{cases}
   	\frac{1}{2k-1}&n=2k-1\\
   	1&n=2k
   \end{cases}.$$
A direct calculation gives
 $$(p\cdot f)(n)=\begin{cases}
 	\frac{1}{2k-1}&n=2k-1\\
 	0&n=2k
 \end{cases}.$$
 Therefore, we conclude that $$f\in\EuScript{D}((C_{\phi,\omega_{1}}C_{\phi,\omega_{1}}^\ast+C_{\phi,\omega_{2}}C_{\phi,\omega_{2}}^\ast)^\ast)$$ and $$f\notin\EuScript{D}(C_{\phi,\omega_{1}}C_{\phi,\omega_{1}}^\ast)\Rightarrow f\notin\EuScript{D}(C_{\phi,\omega_{1}}C_{\phi,\omega_{1}}^\ast+C_{\phi,\omega_{2}}C_{\phi,\omega_{2}}^\ast),$$ that is, $C_{\phi,\omega_{1}}C_{\phi,\omega_{1}}^\ast+C_{\phi,\omega_{2}}C_{\phi,\omega_{2}}^\ast$ is not selfadjoint.  
\end{exa}

 As shown in the previous sections, we have presented numerous examples based on the measurable space $(X,2^X,\mu)$, where $X$ is a countable set and $\mu$ is the counting measure on $X$. Indeed, $(X,2^X,\mu)$ is a discrete measure space. A measure space $(Y,\mathscr{A},\nu)$ is said to be {\em discrete} if $\mathscr{A}=2^Y$, card($\mathsf{At}(\nu)$)$\le\aleph_{0}$, $\nu(\mathsf{At}^\complement(\nu))=0$ and $\nu(y)<\infty$ for all $y\in Y$, where $\mathsf{At}(\nu)=\{y\in Y\colon\nu(y)>0\}$ and $\mathsf{At}^\complement(\nu)=Y\backslash\mathsf{At}(\nu)$. Let $\omega:Y\to\mathbb{C}$ be a function, and we have
\begin{equation}\label{discount1}
\nu_{\omega}(\mathsf{At}^\complement(\nu))=\int_{\mathsf{At}^\complement(\nu)}|\omega|^2\D\nu=0,
\end{equation} which implies
\begin{equation}\label{discount}
\{y\colon\nu_{\omega}(\{y\})=0\}=\mathsf{At}^\complement(\nu)\bigcup\widetilde{\mathsf{At}(\nu)},
\end{equation}
 where $\widetilde{\mathsf{At}({\nu})}\ (\subseteq\mathsf{At}(\nu))$ is a countable set since card($\mathsf{At}(\nu)$)$\le\aleph_{0}$. By \eqref{discount1} and \eqref{discount}, we obtain\begin{equation}\label{discount2}
 	\nu_{\omega}(\{y\colon\nu_{\omega}(\{y\})=0\})=\nu_{\omega}(\widetilde{\mathsf{At}(\nu)})\le\sum\limits_{y\in\widetilde{\mathsf{At}(\nu)}}\nu_{\omega}(\{y\})=0.
 \end{equation} 
 Let $C_{\phi,\omega}$ be a densely defined weighted composition operator on $(Y,2^Y,\nu)$. Then
 \begin{equation}\label{dish}
\mathsf{h}_{\phi,\omega}(y)=\begin{cases}
	\frac{\nu_{\omega}(\phi^{-1}(\{y\}))}{\nu(\{y\})}&y\in\mathsf{At}(\nu)\\
	0&y\in\mathsf{At}^\complement(\nu)
\end{cases}
 \end{equation}
 and
 \begin{equation}\label{disdomain}
 (\mathsf{E}_{\phi,\omega}(f))(y)=\begin{cases}
 	\frac{\int_{\phi^{-1}(\phi(y))}f\D\nu_{\omega}}{\nu_{\omega}(\phi^{-1}(\phi(y)))}&y\in\mathsf{At}(\nu_{\omega}\circ\phi^{-1}\circ\phi)\\
 	0&y\in\mathsf{At}^\complement(\nu_{\omega}\circ\phi^{-1}\circ\phi)
 \end{cases}
 \end{equation}
for any $f: Y\to[0,+\infty]$ (For proofs and more facts about discrete measure spaces we refer to \cite[Chapter 6]{b-j-j-sW}). Now, consider the discrete measure space $(\mathbb{Z}_{+}\times\mathbb{Z}_{+},2^{\mathbb{Z}_{+}\times\mathbb{Z}_{+}},\upsilon)$ equipped with the counting measure. 
Define the transforms $\phi_{1}$ and $\phi_{2}$ on $\mathbb{Z}_{+}\times\mathbb{Z}_{+}$ by
$$\phi_{1}(n,m)=\begin{cases}
 	(n-1,m)&(n,m)\in(\mathbb{Z}_{+}\backslash\{0\})\times\mathbb{Z}_{+}\\
 	(0,m)&(n,m)\in\{0\}\times\mathbb{Z}_{+}
 \end{cases}$$
and$$\phi_{2}(n,m)=\begin{cases}
	(n,m-1)&(n,m)\in\mathbb{Z}_{+}\times(\mathbb{Z}_{+}\backslash\{0\})\\
	(n,0)&(n,m)\in\mathbb{Z}_{+}\times\{0\}
\end{cases},$$
respectively. The weight functions $\omega_{1}$ and $\omega_{2}$ on $\mathbb{Z}_{+}\times\mathbb{Z}_{+}$ are respectively given by the positive sequences $\{\alpha_{(n,m)}\}$ and $\{\beta_{(n,m)}\}$ as follows:
$$\omega_{1}(n,m)=\begin{cases}
	\alpha_{(n-1,m)}&(n,m)\in(\mathbb{Z}_{+}\backslash\{0\})\times\mathbb{Z}_{+}\\
	0&(n,m)\in\{0\}\times\mathbb{Z}_{+}
\end{cases}$$
and$$\omega_{2}(n,m)=\begin{cases}
	\beta_{(n,m-1)}&(n,m)\in\mathbb{Z}_{+}\times(\mathbb{Z}_{+}\backslash\{0\})\\
	0&(n,m)\in\mathbb{Z}_{+}\times\{0\}
\end{cases},$$
respectively. Then, we have $$C_{\phi_{1},\omega_{1}}e_{(n,m)}=\omega_{1}\cdot e_{(n,m)}\circ\phi_{1}=\omega_{1}(n+1,m)e_{(n+1,m)}=\alpha_{(n,m)}e_{(n+1,m)}$$ and $$C_{\phi_{2},\omega_{2}}e_{(n,m)}=\omega_{2}\cdot e_{(n,m)}\circ\phi_{2}=\omega_{2}(n,m+1)e_{(n,m+1)}=\beta_{(n,m)}e_{(n,m+1)},$$ where the function $e_{(i,j)}$ on $\mathbb{Z}_{+}\times\mathbb{Z}_{+}$ is defined as $e_{(i,j)}(x)=\begin{cases}
	1&x=(i,j)\\
	0&x\ne(i,j)
\end{cases}$. By \eqref{dish}, we have
\begin{equation}\label{2vws1}
\mathsf{h}_{\phi_{1},\omega_{1}}(n,m)=\omega_{1}^2(\phi_{1}^{-1}(n,m))=\omega_{1}^2(n+1,m)<\infty
\end{equation}
 and
 \begin{equation}\label{2vws2}
 \mathsf{h}_{\phi_{2},\omega_{2}}(n,m)=\omega_{2}^2(\phi_{2}^{-1}(n,m))=\omega_{2}^2(n,m+1)<\infty,
 \end{equation} which implies $C_{\phi_{1},\omega_{1}}$ and $C_{\phi_{2},\omega_{2}}$ are densely defined. The operator pair $\textbf{C}_{\phi,\omega}=(C_{\phi_{1},\omega_{1}},C_{\phi_{2},\omega_{2}})$ is known as the {\em $2$-variable weighted shift}. Some related topics can be found in \cite{curto-ieot-2018, curto-CRA-2016, Yoon-jmaa-2011}. In \cite[Theorem 2.3]{Yoon-glma-2022}, Kim et al. established a characterization for the class of spherically $p$-hyponormal $2$–variable weighted shift under the condition that $\omega_{1}$ and $\omega_{2}$ are bounded. The next main result provides a characterization of a class of unbounded spherically $p$-hyponormal weighted composition operator pairs on a discrete measurable space. As a corollary, we present the unbounded analogue of the result by Kim et al. Let us start with the following lemma. 

\begin{lem}\label{mulselfadjoint}
Suppose the measure $(X,2^X,\mu)$ is discrete and $C_{\phi,\omega}$ is densely defined. Then the map $f\mapsto\mathsf{E}_{\phi,\omega}(\frac{\chi_{\{\omega\ne0\}}}{\omega}\cdot f)$ is a multiplication operator if and only if $\phi$ is injective a.e. $[\mu_{\omega}]$. In this case, $\mathsf{E}_{\phi,\omega}(\frac{\chi_{\{\omega\ne0\}}}{\omega}\cdot f)=\frac{\chi_{\{\omega\ne0\}}}{\omega}\cdot f\ \text{a.e.}\ [\mu_{\omega}]$ for $f\in L^2(\mu)$ and $C_{\phi,\omega}C_{\phi,\omega}^\ast=\mathsf{M}_{\mathsf{h}_{\phi,\omega}\circ\phi\cdot\chi_{\{\omega\ne0\}}}$.
\end{lem}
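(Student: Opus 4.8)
The plan is to reduce everything to the explicit description of the conditional expectation on a discrete space given in \eqref{disdomain}, together with the adjoint formula of Theorem \ref{adjoint}. Write $S=\{x\in\mathsf{At}(\mu)\colon\omega(x)\ne0\}$ for the essential support of $\mu_{\omega}$. Since $\mathsf{At}(\mu)$ is countable and $\mu(\mathsf{At}^\complement(\mu))=0$, the set $X\setminus S$ is $\mu_{\omega}$-null, so the hypothesis ``$\phi$ injective a.e.\ $[\mu_{\omega}]$'' is equivalent to ``$\phi|_{S}$ is injective''. I would first record that for every $f\in L^2(\mu)$ the function $f_{\omega}=\frac{\chi_{\{\omega\ne0\}}}{\omega}f$ lies in $L^2(\mu_{\omega})$, because $\int_X|f_{\omega}|^2\D\mu_{\omega}=\int_{\{\omega\ne0\}}|f|^2\D\mu\le\|f\|^2$ by \eqref{muomega}; hence $Tf:=\mathsf{E}_{\phi,\omega}(f_{\omega})$ is well-defined on all of $L^2(\mu)$.

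For the ``if'' direction, assume $\phi|_{S}$ is injective. Then for each $y\in S$ the fiber $\phi^{-1}(\phi(y))$ meets $S$ only in $y$, so by \eqref{disdomain} the numerator $\int_{\phi^{-1}(\phi(y))}f_{\omega}\,\D\mu_{\omega}$ collapses to $f_{\omega}(y)\mu_{\omega}(\{y\})$ and the denominator to $\mu_{\omega}(\{y\})$, giving $Tf(y)=f_{\omega}(y)=\frac{\chi_{\{\omega\ne0\}}(y)}{\omega(y)}f(y)$ for $y\in S$, i.e.\ $Tf=\frac{\chi_{\{\omega\ne0\}}}{\omega}f$ a.e.\ $[\mu_{\omega}]$. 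Thus $T$ is multiplication by $\frac{\chi_{\{\omega\ne0\}}}{\omega}$, which simultaneously proves the displayed identity.

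For the converse I would argue by contraposition: if $\phi|_S$ is not injective, pick distinct $x_1,x_2\in S$ with $\phi(x_1)=\phi(x_2)$ and test $T$ against $f=\chi_{\{x_1\}}$. Then $f_{\omega}=\frac{1}{\omega(x_1)}\chi_{\{x_1\}}$, and since $x_1$ lies in the fiber $\phi^{-1}(\phi(x_2))$, \eqref{disdomain} yields $Tf(x_2)=\frac{\overline{\omega(x_1)}\,\mu(\{x_1\})}{\mu_{\omega}(\phi^{-1}(\phi(x_1)))}$, where the denominator is finite (by \eqref{dish} and the dense definiteness $\mathsf{h}_{\phi,\omega}<\infty$) and nonzero (it dominates $\mu_{\omega}(\{x_1\})>0$), so $Tf(x_2)\ne0$. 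If $T$ were a multiplication operator $\mathsf{M}_u$, then $Tf(x_2)=u(x_2)f(x_2)=0$ since $f(x_2)=0$; as $x_2\in S$ has positive $\mu_{\omega}$-measure, this is a genuine contradiction. The main subtlety throughout is precisely this measure-theoretic bookkeeping on the discrete space—verifying that the offending point $x_2$ is non-null and that every fiber appearing has finite, positive $\mu_{\omega}$-measure—rather than any deep structural obstacle.

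Finally, to identify $C_{\phi,\omega}C_{\phi,\omega}^\ast$ in the injective case, I would substitute $Tf=\frac{\chi_{\{\omega\ne0\}}}{\omega}f$ into the formula \eqref{98}, namely $C_{\phi,\omega}C_{\phi,\omega}^\ast f=\omega\cdot(\mathsf{h}_{\phi,\omega}\circ\phi)\cdot\mathsf{E}_{\phi,\omega}(f_{\omega})$. Multiplying by the leading factor $\omega$ turns $\omega\cdot\frac{\chi_{\{\omega\ne0\}}}{\omega}$ into $\chi_{\{\omega\ne0\}}$ a.e.\ $[\mu]$ (the a.e.\ $[\mu_{\omega}]$ equality upgrades to a.e.\ $[\mu]$ on $\{\omega\ne0\}$ by mutual absolute continuity there, while both sides vanish on $\{\omega=0\}$), so $C_{\phi,\omega}C_{\phi,\omega}^\ast f=\mathsf{h}_{\phi,\omega}\circ\phi\cdot\chi_{\{\omega\ne0\}}\cdot f$. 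Comparing the domain description from \eqref{99} with that of $\mathsf{M}_{\mathsf{h}_{\phi,\omega}\circ\phi\cdot\chi_{\{\omega\ne0\}}}$—both consisting exactly of the $f$ for which this common expression lies in $L^2(\mu)$—then yields the operator identity $C_{\phi,\omega}C_{\phi,\omega}^\ast=\mathsf{M}_{\mathsf{h}_{\phi,\omega}\circ\phi\cdot\chi_{\{\omega\ne0\}}}$.
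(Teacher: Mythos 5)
Your proposal is correct and follows essentially the same route as the paper: the explicit discrete formula \eqref{disdomain} collapses each fiber to a single $\mu_{\omega}$-atom under injectivity, point-mass test functions inside a non-injective fiber rule out the multiplication property, and the operator identity follows by substituting the resulting equality $\mathsf{E}_{\phi,\omega}(f_{\omega})=f_{\omega}$ a.e.\ $[\mu_{\omega}]$ into \eqref{98}--\eqref{99} and upgrading to a.e.\ $[\mu]$ via the mutual absolute continuity of $\mu^{\omega}$ and $\mu_{\omega}$. The only cosmetic difference is in the converse: you argue by contraposition with the single test function $\chi_{\{x_{1}\}}$ evaluated at the sibling point $x_{2}$, whereas the paper first extracts the multiplier $g$ via $\chi_{\{z\}}$ and then contradicts a vanishing-integral identity using $\chi_{\{z,p\}}$ -- the same mechanism with slightly different bookkeeping.
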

\begin{proof}
For each $x\in X$, define the subset $\mathsf{\Phi}_{x}$ of $X$ as follows:$$\mathsf{\Phi}_{x}=\{y:\phi(y)=\phi(x),y\ne x\}.$$ We say that $\phi$ is injective a.e. $[\mu_{\omega}]$ if $\mu_{\omega}(\mathsf{\Phi}_{z})=0$ for every $z\in\mathsf{At}(\mu_{\omega})$.

{\em Sufficiency}. Suppose there exists a function $g$ such that $\mathsf{E}_{\phi,\omega}(\frac{\chi_{\{\omega\ne0\}}}{\omega}\cdot f)=\mathsf{M}_{g}f$ a.e. $[\mu_{\omega}]$ for every $f\in L^2(\mu)$. Then for $z\in\mathsf{At}(\mu_{\omega})$, we have
\begin{equation}\label{dis1}
	\begin{aligned}
	(\mathsf{E}_{\phi,\omega}(\frac{\chi_{\{\omega\ne0\}}}{\omega}\cdot f))(z)&=\frac{\int_{\phi^{-1}(\phi(z))}\frac{\chi_{\{\omega\ne0\}}}{\omega}\cdot f\D\mu_{\omega}}{\mu_{\omega}(\phi^{-1}(\phi(z)))}\\&=\frac{\int_{\phi^{-1}(\phi(z))\bigcap\mathsf{At}(\mu_{\omega})}\frac{\chi_{\{\omega\ne0\}}}{\omega}\cdot f\D\mu_{\omega}}{\mu_{\omega}(\phi^{-1}(\phi(z)))}\\&=\frac{\frac{1}{\omega(z)}\cdot f(z)\cdot \mu_{\omega}(\{z\})+\int_{\mathsf{\Phi}_{z}\bigcap\mathsf{At}(\mu_{\omega})}\frac{\chi_{\{\omega\ne0\}}}{\omega}\cdot f\D\mu_{\omega}}{\mu_{\omega}(\{z\})+\mu_{\omega}(\mathsf{\Phi}_{z})}\\&=g(z)\cdot f(z)
	\end{aligned}
\end{equation}
Taking $f(x)=\chi_{\{z\}}\in L^{2}(\mu)$ in the above equality, we get
\begin{equation}\label{dis2}
g(z)=\frac{\frac{1}{\omega(z)}\cdot \mu_{\omega}(\{z\})}{\mu_{\omega}(\{z\})+\mu_{\omega}(\mathsf{\Phi}_{z})}.
\end{equation}
Substituting \eqref{dis2} into the last equality of \eqref{dis1} yields
\begin{equation}\label{dis4}
	\int_{\mathsf{\Phi}_{z}\bigcap\mathsf{At}(\mu_{\omega})}\frac{\chi_{\{\omega\ne0\}}}{\omega}\cdot f\D\mu_{\omega}=0
\end{equation}
 for $z\in\mathsf{At}(\mu_{\omega})$ and $f\in L^2(\mu)$. Assume that $\phi$ is not injective a.e. $[\mu_{\omega}]$ (that is, there exists $z\in\mathsf{At}(\mu_{\omega})$ such that $\mu_{\omega}(\mathsf{\Phi}_{z})>0$), then we assert
 \begin{equation}\label{dis3}
 	\mathsf{\Phi}_{z}\bigcap\mathsf{At}(\mu_{\omega})\ne\emptyset;
 \end{equation}
 otherwise, $\mu_{\omega}(\mathsf{\Phi}_{z})=\mu_{\omega}(\mathsf{\Phi}_{z}\bigcap\mathsf{At}(\mu_{\omega}))+\mu_{\omega}(\mathsf{\Phi}_{z}\bigcap\mathsf{At}^\complement(\mu_{\omega}))=0$. By \eqref{dis3}, we can take $p\in\mathsf{\Phi}_{z}\bigcap\mathsf{At}(\mu_{\omega})$. Considering the function $\chi_{\{z,p\}}$, we obtain \begin{align*}
 	\int_{\mathsf{\Phi}_{z}\bigcap\mathsf{At}(\mu_{\omega})}\frac{\chi_{\{\omega\ne0\}}}{\omega}\cdot \chi_{\{z,p\}}\D\mu_{\omega}\ge\int_{\{p\}}\frac{\chi_{\{\omega\ne0\}}}{\omega}\cdot\chi_{\{z,p\}}\D\mu_{\omega}=\frac{1}{\omega(p)}\cdot\mu_{\omega}(p)>0,
 \end{align*}
and this contradicts \eqref{dis4}. Therefore, we conclude that $\phi$ is injective a.e. $[\mu_{\omega}]$.

{\em Necessity}. Suppose $\phi$ is injective a.e. $[\mu_{\omega}]$, then for $z\in\mathsf{At}(\mu_{\omega})$, we have $\mu_{\omega}(\mathsf{\Phi}_{z})=0$, which implies 
\begin{align*}
&(\mathsf{E}_{\phi,\omega}(\frac{\chi_{\{\omega\ne0\}}}{\omega}\cdot f))(z)=\frac{\int_{\phi^{-1}(\phi(z))}\frac{\chi_{\{\omega\ne0\}}}{\omega}\cdot f\D\mu_{\omega}}{\mu_{\omega}(\phi^{-1}(\phi(z)))}\\&=\frac{\frac{1}{\omega(z)}\cdot f(z)\cdot\mu_{\omega}(\{z\})+\int_{\mathsf{\Phi}_{z}}\frac{\chi_{\{\omega\ne0\}}}{\omega}\cdot f\D\mu_{\omega}}{\mu_{\omega}(\{z\})+\mu_{\omega}(\mathsf{\Phi}_{z})}=\frac{1}{\omega(z)}f(z)\quad\quad z\in\mathsf{At}(\mu_{\omega})
\end{align*}
for any $f\in L^2(\mu)$. By \eqref{discount2}, we have $\mu_{\omega}(\mathsf{At}^\complement(\mu_{\omega}))=0$. Thus, we obtain
\begin{equation}\label{mul}
(\mathsf{E}_{\phi,\omega}(\frac{\chi_{\{\omega\ne0\}}}{\omega}\cdot f))(z)=\frac{\chi_{\{\omega\ne0\}}}{\omega(z)}\cdot f(z)\quad\text{a.e.}\ [\mu_{\omega}]
\end{equation}
for any $f\in L^2(\mu)$, that is,  $f\mapsto\mathsf{E}_{\phi,\omega}(\frac{\chi_{\{\omega\ne0\}}}{\omega}\cdot f)$ is a multiplication operator.

In this case, we claim that $C_{\phi,\omega}C_{\phi,\omega}^\ast=\mathsf{M}_{\mathsf{h}_{\phi,\omega}\circ\phi\cdot\chi_{\{\omega\ne0\}}}$. Since 
\begin{align*}
\|C_{\phi,\omega}C_{\phi,\omega}^\ast f\|^2&\stackrel{\eqref{98}}{=}\int_{X}|\omega\cdot\mathsf{h}_{\phi,\omega}\circ\phi\cdot\mathsf{E}_{\phi,\omega}(f_{\omega})|^2\D\mu=\int_{X}|\mathsf{h}_{\phi,\omega}\circ\phi\cdot\mathsf{E}_{\phi,\omega}(f_{\omega})|^2\D\mu_{\omega}\\&\stackrel{\eqref{mul}}{=}\int_{X}|\mathsf{h}_{\phi,\omega}\circ\phi|^2|f_{\omega}|^2\D\mu_{\omega}=\int_{X}\mathsf{h}_{\phi,\omega}^2\circ\phi\cdot\chi_{\{\omega\ne0\}}\cdot|f|^2\D\mu=\|\mathsf{M}_{\mathsf{h}_{\phi,\omega}\circ\phi\cdot\chi_{\{\omega\ne0\}}}f\|^2,
\end{align*}then by \eqref{99} we have $\EuScript{D}(C_{\phi,\omega}C_{\phi,\omega}^\ast)=\EuScript{D}(\mathsf{M}_{\mathsf{h}_{\phi,\omega}\circ\phi\cdot\chi_{\{\omega\ne0\}}})$, which implies that it suffices to prove that
\begin{equation}\label{equdis}
C_{\phi,\omega}C_{\phi,\omega}^\ast f=\mathsf{M}_{\mathsf{h}_{\phi,\omega}\circ\phi\cdot\chi_{\{\omega\ne0\}}}f\quad\text{a.e.}\ [\mu]
\end{equation}
 for $f\in\EuScript{D}(C_{\phi,\omega}C_{\phi,\omega}^\ast)$. Indeed, by \eqref{98} and \eqref{mul}, we have $$C_{\phi,\omega}C_{\phi,\omega}^\ast f=\omega\cdot\mathsf{h}_{\phi,\omega}\circ\phi\cdot\mathsf{E}_{\phi,\omega}(f_{\omega})=\omega\cdot\mathsf{h}_{\phi,\omega}\circ\phi\cdot \frac{\chi_{\{\omega\ne0\}}}{\omega}\cdot f=\mathsf{h}_{\phi,\omega}\circ\phi\cdot\chi_{\{\omega\ne0\}}\cdot f\quad\text{a.e.}\ [\mu_{\omega}]$$
for $f\in\EuScript{D}(C_{\phi,\omega}C_{\phi,\omega}^\ast)$. Using the above equality and the fact that $\mu^\omega$ and $\mu_{\omega}$ are mutuallly absolutely continuous, we obtain
\begin{equation*}
\mu(\{C_{\phi,\omega}C_{\phi,\omega}^\ast f\ne\mathsf{M}_{\mathsf{h}_{\phi,\omega}\circ\phi\cdot\chi_{\{\omega\ne0\}}}f\}\bigcap\{\omega\ne0\})=\mu^\omega(\{C_{\phi,\omega}C_{\phi,\omega}^\ast f\ne\mathsf{M}_{\mathsf{h}_{\phi,\omega}\circ\phi\cdot\chi_{\{\omega\ne0\}}}f\})=0
\end{equation*}
for $f\in\EuScript{D}(C_{\phi,\omega}C_{\phi,\omega}^\ast)$.  Thus, we conclude that
\begin{align*}
&\mu(\{C_{\phi,\omega}C_{\phi,\omega}^\ast f\ne\mathsf{M}_{\mathsf{h}_{\phi,\omega}\circ\phi\cdot\chi_{\{\omega\ne0\}}}f\})\\&=\mu(\{C_{\phi,\omega}C_{\phi,\omega}^\ast f\ne\mathsf{M}_{\mathsf{h}_{\phi,\omega}\circ\phi\cdot\chi_{\{\omega\ne0\}}}f\}\bigcap\{\omega\ne0\})+\mu(\{C_{\phi,\omega}C_{\phi,\omega}^\ast f\ne\mathsf{M}_{\mathsf{h}_{\phi,\omega}\circ\phi\cdot\chi_{\{\omega\ne0\}}}f\}\bigcap\{\omega=0\})\\&=0+\mu(\emptyset)=0.
\end{align*}
Therefore, \eqref{equdis} holds. The proof is complete.
\end{proof}

\begin{thm}\label{thm35}
Suppose the measure space $(X,\mathscr{A},\mu)$ is discrete. Let $p\in(0,1]$ and $C_{\phi_{i},\omega_{i}}$ be densely defined in $L^2(\mu)$ such that $\phi_{i}$ is injective a.e. $[\mu_{\omega_{i}}]$ for $i=1,2$. Then the following assertions are equivalent$:$
\begin{enumerate}
	\item[(i)]
	$\textbf{C}_{\phi,\omega}=(C_{\phi_{1},\omega_{1}},C_{\phi_{2},\omega_{2}})$ is spherically $p$-hyponormal,
	\item[(ii)]
	for every $f\in L^2(\mu)$,
$$\int_{X}(\mathsf{h}_{\phi_{1},\omega_{1}}\circ\phi_{1}\cdot\chi_{\{\omega_{1}\ne0\}}+\mathsf{h}_{\phi_{2},\omega_{2}}\circ\phi_{2}\cdot\chi_{\{\omega_{2}\ne0\}})^p|f|^2\D\mu\le\int_{X}(\mathsf{h}_{\phi_{1},\omega_{1}}+\mathsf{h}_{\phi_{2},\omega_{2}})^p|f|^2\D\mu,$$
	\item[(iii)]
	$\mathsf{h}_{\phi_{1},\omega_{1}}\circ\phi_{1}\cdot\chi_{\{\omega_{1}\ne0\}}+\mathsf{h}_{\phi_{2},\omega_{2}}\circ\phi_{2}\cdot\chi_{\{\omega_{2}\ne0\}}\le\mathsf{h}_{\phi_{1},\omega_{1}}+\mathsf{h}_{\phi_{2},\omega_{2}}$ a.e. $[\mu]$.
\end{enumerate}
\end{thm}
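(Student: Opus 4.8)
The plan is to transport the entire statement to the world of multiplication operators, after which the selfadjointness clause becomes automatic and the two inequalities reduce to elementary statements about symbols. Throughout I write $u=\mathsf{h}_{\phi_{1},\omega_{1}}+\mathsf{h}_{\phi_{2},\omega_{2}}$ and $v=\mathsf{h}_{\phi_{1},\omega_{1}}\circ\phi_{1}\cdot\chi_{\{\omega_{1}\ne0\}}+\mathsf{h}_{\phi_{2},\omega_{2}}\circ\phi_{2}\cdot\chi_{\{\omega_{2}\ne0\}}$. First I record that $C_{\phi_{1},\omega_{1}}^\ast C_{\phi_{1},\omega_{1}}+C_{\phi_{2},\omega_{2}}^\ast C_{\phi_{2},\omega_{2}}=\mathsf{M}_{u}$ by \eqref{selfadjointmut}. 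For the adjoint side, the hypothesis that $\phi_{i}$ is injective a.e. $[\mu_{\omega_{i}}]$ lets me invoke Lemma \ref{mulselfadjoint}, giving $C_{\phi_{i},\omega_{i}}C_{\phi_{i},\omega_{i}}^\ast=\mathsf{M}_{\mathsf{h}_{\phi_{i},\omega_{i}}\circ\phi_{i}\cdot\chi_{\{\omega_{i}\ne0\}}}$ for $i=1,2$. Before summing I must verify the two symbols are finite a.e.: since $C_{\phi_{i},\omega_{i}}$ is densely defined, $\mathsf{h}_{\phi_{i},\omega_{i}}<\infty$ a.e. $[\mu]$ by Theorem \ref{combasic} (iii), and then \eqref{rd} yields $\mu_{\omega_{i}}(\{\mathsf{h}_{\phi_{i},\omega_{i}}\circ\phi_{i}=\infty\})=\int_{\{\mathsf{h}_{\phi_{i},\omega_{i}}=\infty\}}\mathsf{h}_{\phi_{i},\omega_{i}}\,\D\mu=0$, so $\mathsf{h}_{\phi_{i},\omega_{i}}\circ\phi_{i}\cdot\chi_{\{\omega_{i}\ne0\}}<\infty$ a.e. $[\mu]$ (it vanishes where $\omega_{i}=0$, and where $\omega_{i}\ne0$ the measures $\mu$ and $\mu_{\omega_{i}}$ are mutually absolutely continuous). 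With finiteness in hand, Lemma \ref{mulsp} (i) gives $C_{\phi_{1},\omega_{1}}C_{\phi_{1},\omega_{1}}^\ast+C_{\phi_{2},\omega_{2}}C_{\phi_{2},\omega_{2}}^\ast=\mathsf{M}_{v}$, a multiplication operator by a real-valued a.e.-finite function and hence selfadjoint; thus the selfadjointness requirement in the definition of spherical $p$-hyponormality is always met under these hypotheses.

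Next I apply the Stone--von Neumann calculus \eqref{selfadjoint} to the positive selfadjoint multiplication operators $\mathsf{M}_{u}$ and $\mathsf{M}_{v}$. Their spectral measures are carried by the symbols, so $(\mathsf{M}_{u})^{p/2}=\mathsf{M}_{u^{p/2}}$ and $(\mathsf{M}_{v})^{p/2}=\mathsf{M}_{v^{p/2}}$, whence $\EuScript{D}((\mathsf{M}_{u})^{p/2})=\{f\in L^2(\mu):\int_{X}u^{p}|f|^{2}\,\D\mu<\infty\}$ with $\|(\mathsf{M}_{u})^{p/2}f\|^{2}=\int_{X}u^{p}|f|^{2}\,\D\mu$, and likewise for $v$. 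This identifies the defining data of spherical $p$-hyponormality with the integrals $\int_{X}u^{p}|f|^{2}\,\D\mu$ and $\int_{X}v^{p}|f|^{2}\,\D\mu$ and yields the equivalence (i) $\Leftrightarrow$ (ii): reading (ii) over all $f\in L^{2}(\mu)$ with values in $[0,\infty]$ simultaneously encodes the domain inclusion $\EuScript{D}((\mathsf{M}_{u})^{p/2})\subseteq\EuScript{D}((\mathsf{M}_{v})^{p/2})$ (apply it to $f$ with finite right-hand side) and the norm inequality on that domain, while conversely, whenever $\int_{X}u^{p}|f|^{2}\,\D\mu=\infty$ the inequality in (ii) is automatic.

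It remains to prove (ii) $\Leftrightarrow$ (iii), the only genuinely analytic step, which rests on the strict monotonicity of $t\mapsto t^{p}$ on $[0,\infty)$. For (iii) $\Rightarrow$ (ii), $v\le u$ a.e. gives $v^{p}\le u^{p}$ a.e., and integrating against $|f|^{2}$ finishes it. For (ii) $\Rightarrow$ (iii) I argue by contradiction, exploiting discreteness of $(X,\mathscr{A},\mu)$: if $\mu(\{v>u\})>0$ then $\{v>u\}$ meets $\mathsf{At}(\mu)$, so there is an atom $x_{0}$ with $0\le u(x_{0})<v(x_{0})<\infty$ (both finite by the a.e.-finiteness above); testing (ii) against $f=\chi_{\{x_{0}\}}$ gives $v(x_{0})^{p}\mu(\{x_{0}\})\le u(x_{0})^{p}\mu(\{x_{0}\})$, contradicting $u(x_{0})^{p}<v(x_{0})^{p}$. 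Hence $v\le u$ a.e.

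I expect the main obstacle to be the bookkeeping of the reduction rather than any deep inequality: correctly establishing the a.e.-finiteness of $v$ so that Lemma \ref{mulsp} (i) applies, and cleanly matching the domain-inclusion-plus-norm-inequality formulation of spherical $p$-hyponormality with the single integral inequality (ii) read over all of $L^{2}(\mu)$ with possibly infinite integrals. Once the problem is moved onto multiplication operators, the three equivalences are routine.
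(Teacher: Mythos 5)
Your proof is correct and follows essentially the same route as the paper's: reduce $\sum_{i=1}^2 C_{\phi_{i},\omega_{i}}^\ast C_{\phi_{i},\omega_{i}}$ and $\sum_{i=1}^2 C_{\phi_{i},\omega_{i}}C_{\phi_{i},\omega_{i}}^\ast$ to multiplication operators via \eqref{selfadjointmut} and Lemma \ref{mulselfadjoint}, compute the fractional powers through the functional calculus so that spherical $p$-hyponormality becomes the integral inequality (ii) read over all of $L^2(\mu)$, and settle (ii) $\Leftrightarrow$ (iii) by monotonicity of $t\mapsto t^p$ together with a characteristic-function test. The only deviations are in your favor: you explicitly verify the a.e.\ $[\mu]$-finiteness of $\mathsf{h}_{\phi_{i},\omega_{i}}\circ\phi_{i}\cdot\chi_{\{\omega_{i}\ne0\}}$ before summing the two multiplication operators (which the paper leaves implicit when it invokes the identity behind Lemma \ref{mulsp} (i)), and in (ii) $\Rightarrow$ (iii) you test against a single atom $\chi_{\{x_{0}\}}$, where both integrals are automatically finite, whereas the paper tests against $\chi_{\Delta}$ for a set of finite positive measure without arranging $\int_{\Delta}(\mathsf{h}_{\phi_{1},\omega_{1}}+\mathsf{h}_{\phi_{2},\omega_{2}})^p\,\D\mu<\infty$ (one should further intersect $\Delta$ with $\{\mathsf{h}_{\phi_{1},\omega_{1}}+\mathsf{h}_{\phi_{2},\omega_{2}}\le n\}$ to preclude both sides being infinite), so your atomic test quietly closes that small gap.
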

\begin{proof}
	By \eqref{selfadjointmut}, we have 
	\begin{equation}\label{charspp}
	(C_{\phi_{1},\omega_{1}}^\ast C_{\phi_{1},\omega_{1}}+C_{\phi_{2},\omega_{2}}^\ast C_{\phi_{2},\omega_{2}})^\frac{p}{2}=(\mathsf{M}_{\mathsf{h}_{\phi_{1},\omega_{1}}+\mathsf{h}_{\phi_{2},\omega_{2}}}))^\frac{p}{2}=\mathsf{M}_{(\mathsf{h}_{\phi_{1},\omega_{1}}+\mathsf{h}_{\phi_{2},\omega_{2}})^\frac{p}{2}}.
	\end{equation}
	By Lemma \ref{mulselfadjoint}, we obtain that
	\begin{align*}
		C_{\phi_{1},\omega_{1}}C_{\phi_{1},\omega_{1}}^\ast+C_{\phi_{2},\omega_{2}}C_{\phi_{2},\omega_{2}}^\ast&=\mathsf{M}_{\mathsf{h}_{\phi_{1},\omega_{1}}\circ\phi_{1}\cdot\chi_{\{\omega_{1}\ne0\}}}+\mathsf{M}_{\mathsf{h}_{\phi_{2},\omega_{2}}\circ\phi_{2}\cdot\chi_{\{\omega_{2}\ne0\}}}\\&=\mathsf{M}_{\mathsf{h}_{\phi_{1},\omega_{1}}\circ\phi_{1}\cdot\chi_{\{\omega_{1}\ne0\}}+\mathsf{h}_{\phi_{2},\omega_{2}}\circ\phi_{2}\cdot\chi_{\{\omega_{2}\ne0\}}}
	\end{align*}
is self-adjoint, which yields $$(C_{\phi_{1},\omega_{1}}C_{\phi_{1},\omega_{1}}^\ast+C_{\phi_{2},\omega_{2}}C_{\phi_{2},\omega_{2}}^\ast)^\frac{p}{2}=\mathsf{M}_{(\mathsf{h}_{\phi_{1},\omega_{1}}\circ\phi_{1}\cdot\chi_{\{\omega_{1}\ne0\}}+\mathsf{h}_{\phi_{2},\omega_{2}}\circ\phi_{2}\cdot\chi_{\{\omega_{2}\ne0\}})^\frac{p}{2}}.$$
	
(i) $\Rightarrow$ (ii): Since $$\|(C_{\phi_{1},\omega_{1}}C_{\phi_{1},\omega_{1}}^\ast+C_{\phi_{2},\omega_{2}}C_{\phi_{2},\omega_{2}}^\ast)^\frac{p}{2}f\|\le\|(C_{\phi_{1},\omega_{1}}^\ast C_{\phi_{1},\omega_{1}}+C_{\phi_{2},\omega_{2}}^\ast C_{\phi_{2},\omega_{2}})^\frac{p}{2}f\|$$
for $f\in\EuScript{D}((C_{\phi_{1},\omega_{1}}^\ast C_{\phi_{1},\omega_{1}}+C_{\phi_{2},\omega_{2}}^\ast C_{\phi_{2},\omega_{2}})^\frac{p}{2})$, then
\begin{align*}
	&\int_{X}|\mathsf{M}_{(\mathsf{h}_{\phi_{1},\omega_{1}}\circ\phi_{1}\cdot\chi_{\{\omega_{1}\ne0\}}+\mathsf{h}_{\phi_{2},\omega_{2}}\circ\phi_{2}\cdot\chi_{\{\omega_{2}\ne0\}})^\frac{p}{2}}f|^2\D\mu\\&=\int_{X}(\mathsf{h}_{\phi_{1},\omega_{1}}\circ\phi_{1}\cdot\chi_{\{\omega_{1}\ne0\}}+\mathsf{h}_{\phi_{2},\omega_{2}}\circ\phi_{2}\cdot\chi_{\{\omega_{2}\ne0\}})^p|f|^2\D\mu\\&\le\int_{X}|\mathsf{M}_{(\mathsf{h}_{\phi_{1},\omega_{1}}+\mathsf{h}_{\phi_{2},\omega_{2}})^\frac{p}{2}}f|^2\D\mu=\int_{X}(\mathsf{h}_{\phi_{1},\omega_{1}}+\mathsf{h}_{\phi_{2},\omega_{2}})^p|f|^2\D\mu
\end{align*}
for $f\in\EuScript{D}((C_{\phi_{1},\omega_{1}}^\ast C_{\phi_{1},\omega_{1}}+C_{\phi_{2},\omega_{2}}^\ast C_{\phi_{2},\omega_{2}})^\frac{p}{2})$. If $f\notin\EuScript{D}((C_{\phi_{1},\omega_{1}}^\ast C_{\phi_{1},\omega_{1}}+C_{\phi_{2},\omega_{2}}^\ast C_{\phi_{2},\omega_{2}})^\frac{p}{2})$, then by \eqref{charspp} we derive $\int_{X}(\mathsf{h}_{\phi_{1},\omega_{1}}+\mathsf{h}_{\phi_{2},\omega_{2}})^p|f|^2\D\mu=\infty$, which clearly implies that (ii) holds. 

(ii) $\Rightarrow$ (iii): Suppose (iii) does not hold. Since $X$ is $\sigma$-finite, there exists a measurable set $\Delta$ such that $0<\mu(\Delta)<\infty$ and $\mathsf{h}_{\phi_{1},\omega_{1}}\circ\phi_{1}\cdot\chi_{\{\omega_{1}\ne0\}}+\mathsf{h}_{\phi_{2},\omega_{2}}\circ\phi_{2}\cdot\chi_{\{\omega_{2}\ne0\}}>\mathsf{h}_{\phi_{1},\omega_{1}}+\mathsf{h}_{\phi_{2},\omega_{2}}$ a.e. $[\mu]$ on $\Delta$, which yields that$$\int_{X}(\mathsf{h}_{\phi_{1},\omega_{1}}\circ\phi_{1}\cdot\chi_{\{\omega_{1}\ne0\}}+\mathsf{h}_{\phi_{2},\omega_{2}}\circ\phi_{2}\cdot\chi_{\{\omega_{2}\ne0\}})^p|\chi_{\Delta}|^2\D\mu>\int_{X}(\mathsf{h}_{\phi_{1},\omega_{1}}+\mathsf{h}_{\phi_{2},\omega_{2}})^p|\chi_{\Delta}|^2\D\mu.$$ This contradicts (ii).

(iii) $\Rightarrow$ (i): By (iii), we have
\begin{equation}\label{114}
\int_{X}(\mathsf{h}_{\phi_{1},\omega_{1}}\circ\phi_{1}\cdot\chi_{\{\omega_{1}\ne0\}}+\mathsf{h}_{\phi_{2},\omega_{2}}\circ\phi_{2}\cdot\chi_{\{\omega_{2}\ne0\}})^p|f|^2\D\mu\le\int_{X}(\mathsf{h}_{\phi_{1},\omega_{1}}+\mathsf{h}_{\phi_{2},\omega_{2}})^p|f|^2\D\mu
\end{equation}
	for every $f\in L^2(\mu)$. Combining the proof of (i) $\Rightarrow$ (ii), we obtain$$\|(C_{\phi_{1},\omega_{1}}C_{\phi_{1},\omega_{1}}^\ast+C_{\phi_{2},\omega_{2}}C_{\phi_{2},\omega_{2}}^\ast)^\frac{p}{2}f\|\le\|(C_{\phi_{1},\omega_{1}}^\ast C_{\phi_{1},\omega_{1}}+C_{\phi_{2},\omega_{2}}^\ast C_{\phi_{2},\omega_{2}})^\frac{p}{2}f\|.$$ Furthermore, if $f\in\EuScript{D}((C_{\phi_{1},\omega_{1}}^\ast C_{\phi_{1},\omega_{1}}+C_{\phi_{2},\omega_{2}}^\ast C_{\phi_{2},\omega_{2}})^\frac{p}{2})$, it follows from \eqref{114} that $$\int_{X}(\mathsf{h}_{\phi_{1},\omega_{1}}\circ\phi_{1}\cdot\chi_{\{\omega_{1}\ne0\}}+\mathsf{h}_{\phi_{2},\omega_{2}}\circ\phi_{2}\cdot\chi_{\{\omega_{2}\ne0\}})^p|f|^2\D\mu<\infty,$$ and hence $f\in\EuScript{D}((C_{\phi_{1},\omega_{1}}C_{\phi_{1},\omega_{1}}^\ast+C_{\phi_{2},\omega_{2}}C_{\phi_{2},\omega_{2}}^\ast)^\frac{p}{2})$. Therefore, $\textbf{C}_{\phi,\omega}=(C_{\phi_{1},\omega_{1}},C_{\phi_{2},\omega_{2}})$ is spherically $p$-hyponormal.
\end{proof}
\begin{cor}\label{phypres}
Let $X$ be  a countable set and $\mu$ the counting measure on $X$. Suppose $\phi_{i}:X\to X$ is bijective and $\omega_{i}$ is a function mapping $X$ to $\mathbb{R}\backslash\{0\}$ for $i=1,2$. Let $\lambda\in(0,1)$. Then
\begin{enumerate}
	\item[(i)]
	$\textbf{C}_{\phi,\omega}=(C_{\phi_{1},\omega_{1}},C_{\phi_{2},\omega_{2}})$ is spherically $p$-hyponormal if and only if $$\omega_{1}^2+\omega_{2}^2\le\omega_{1}^2\circ\phi_{1}^{-1}+\omega_{2}^2\circ\phi_{2}^{-1}$$
	\item[(ii)]
$\mathcal{M}_{\lambda}(\textbf{C}_{\phi,\omega})=(C_{\phi_{1},\omega_{\lambda}^1},C_{\phi_{2},\omega_{\lambda}^2})$ is spherically $p$-hyponormal if and only if $$(\omega_{\lambda}^1)^2+(\omega_{\lambda}^2)^2\le(\omega_{\lambda}^1)^2\circ\phi_{1}^{-1}+(\omega_{\lambda}^2)^2\circ\phi_{2}^{-1},$$ where $\omega_{\lambda}^i=((1-\lambda)\sqrt{\frac{|\omega_{1}\circ\phi_{1}^{-1}|^2+|\omega_{2}\circ\phi_{2}^{-1}|^2}{|\omega_{1}\circ\phi_{1}^{-1}\circ\phi_{i}|^2+|\omega_{2}\circ\phi_{2}^{-1}\circ\phi_{i}|^2}}+\lambda)\cdot\omega_{i}$ for $i=1,2$.
\end{enumerate}
\end{cor}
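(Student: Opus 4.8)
The plan is to derive both parts from Theorem \ref{thm35}, the general discrete characterization, after specializing the Radon--Nikodym data to the present bijective, nowhere-vanishing setting.

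First I would record the two structural simplifications that the hypotheses force. The space $(X,2^X,\mu)$ with $\mu$ the counting measure on a countable set $X$ is a discrete measure space, and each $\phi_i$, being bijective, is injective and in particular injective a.e.\ $[\mu_{\omega_i}]$; thus the standing assumptions of Theorem \ref{thm35} hold once we check that $C_{\phi_i,\omega_i}$ is densely defined. The latter follows from Theorem \ref{combasic} (iii), since by \eqref{h} and bijectivity of $\phi_i$ each fibre $\phi_i^{-1}(\{y\})$ is a singleton, whence
$$\mathsf{h}_{\phi_i,\omega_i}=\omega_i^2\circ\phi_i^{-1}<\infty\quad\text{on }X.$$
Composing with $\phi_i$ and using $\phi_i^{-1}\circ\phi_i=\mathrm{id}$ gives $\mathsf{h}_{\phi_i,\omega_i}\circ\phi_i=\omega_i^2$, while $\omega_i\ne 0$ everywhere yields $\chi_{\{\omega_i\ne0\}}=1$. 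Substituting these into the inequality of Theorem \ref{thm35} (iii) turns
$$\mathsf{h}_{\phi_1,\omega_1}\circ\phi_1\cdot\chi_{\{\omega_1\ne0\}}+\mathsf{h}_{\phi_2,\omega_2}\circ\phi_2\cdot\chi_{\{\omega_2\ne0\}}\le \mathsf{h}_{\phi_1,\omega_1}+\mathsf{h}_{\phi_2,\omega_2}$$
into exactly $\omega_1^2+\omega_2^2\le \omega_1^2\circ\phi_1^{-1}+\omega_2^2\circ\phi_2^{-1}$ (here a.e.\ $[\mu]$ is pointwise, as every singleton carries positive measure). The equivalence (i) $\Leftrightarrow$ (iii) of Theorem \ref{thm35} then gives part (i).

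For part (ii) I would apply part (i) to the pair produced by the mean transform. Since $\lambda\in(0,1)$, Theorem \ref{meanbasic} (iv) yields $\mathcal{M}_\lambda(\textbf{C}_{\phi,\omega})=(C_{\phi_1,\omega_\lambda^1},C_{\phi_2,\omega_\lambda^2})$ with $\omega_\lambda^i$ as in \eqref{wfun}. Feeding in the same simplifications — $\chi_{\{\omega_i\ne0\}}=1$, $\mathsf{h}_{\phi_1,\omega_1}+\mathsf{h}_{\phi_2,\omega_2}=\omega_1^2\circ\phi_1^{-1}+\omega_2^2\circ\phi_2^{-1}$, and its composition with $\phi_i$ — rewrites $\omega_\lambda^i=(\mathsf{h}_i+\lambda(1-\mathsf{h}_i))\omega_i=((1-\lambda)\mathsf{h}_i+\lambda)\omega_i$ into the explicit form displayed in the statement. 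Because $(1-\lambda)\mathsf{h}_i+\lambda\ge\lambda>0$ and $\omega_i$ is nowhere zero, the new weight $\omega_\lambda^i$ again maps $X$ into $\mathbb{R}\setminus\{0\}$, and $\phi_i$ is unchanged; hence $(C_{\phi_1,\omega_\lambda^1},C_{\phi_2,\omega_\lambda^2})$ satisfies the hypotheses of part (i). Invoking part (i) for this pair produces the characterization in terms of the $(\omega_\lambda^i)^2$ and completes the proof.

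I do not expect a genuine obstacle: the substance resides in Theorem \ref{thm35} and Theorem \ref{meanbasic} (iv), and the corollary is a specialization of the Radon--Nikodym derivatives. The only points needing a little care are (a) verifying that the transformed weights $\omega_\lambda^i$ remain admissible — nonvanishing and densely defined — so that part (i) may be reapplied, and (b) matching the displayed formula for $\omega_\lambda^i$ against \eqref{wfun} after the substitutions, which is bookkeeping rather than a difficulty.
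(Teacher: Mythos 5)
Your proposal is correct and follows essentially the same route as the paper: part (i) is obtained by specializing Theorem \ref{thm35} with $\mathsf{h}_{\phi_i,\omega_i}=\omega_i^2\circ\phi_i^{-1}$ (so $\mathsf{h}_{\phi_i,\omega_i}\circ\phi_i=\omega_i^2$ and $\chi_{\{\omega_i\ne0\}}=1$), and part (ii) by computing $\omega_\lambda^i$ explicitly from \eqref{wfun}, noting the new weights remain nowhere zero, and reapplying part (i) to the transformed pair. The only cosmetic difference is that you cite \eqref{h} and Theorem \ref{meanbasic} (iv) where the paper uses \eqref{dish} and leaves the identification $\mathcal{M}_\lambda(\textbf{C}_{\phi,\omega})=(C_{\phi_1,\omega_\lambda^1},C_{\phi_2,\omega_\lambda^2})$ implicit.
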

\begin{proof}
(i)	Observe that $(X,2^X,\mu)$ is a discrete measure space and that $\phi_{i}$ is injective a.e. $[\mu_{\omega_{i}}]$ for $i=1,2$. From Equation \eqref{dish}, we have
\begin{equation}\label{dish2}
\mathsf{h}_{\phi_{i},\omega_{i}}(x)=|\omega_{i}(\phi_{i}^{-1}(x))|^2,
\end{equation}
which implies $C_{\phi_{i},\omega_{i}}$ is densely defined for $i=1,2$. Moreover, we derive the equivalence
\begin{align*}
&\mathsf{h}_{\phi_{1},\omega_{1}}\circ\phi_{1}\cdot\chi_{\{\omega_{1}\ne0\}}+\mathsf{h}_{\phi_{2},\omega_{2}}\circ\phi_{2}\cdot\chi_{\{\omega_{2}\ne0\}}\le\mathsf{h}_{\phi_{1},\omega_{1}}+\mathsf{h}_{\phi_{2},\omega_{2}}\\&\Leftrightarrow |\omega_{1}(x)|^2+|\omega_{2}(x)|^2\le|\omega_{1}(\phi_{1}^{-1}(x))|^2+|\omega_{2}(\phi_{2}^{-1}(x))|^2.
\end{align*}
Therefore, by applying Theorem \ref{thm35}, we obtain the desired result.

(ii) By \eqref{dish2} and recalling \eqref{wfun}, we have
\begin{align*}
\omega_{\lambda}^i&=((1-\lambda)\cdot\left(\frac{\mathsf{h}_{\phi_{1},\omega_{1}}+\mathsf{h}_{\phi_{2},\omega_{2}}}{(\mathsf{h}_{\phi_{1},\omega_{1}}+\mathsf{h}_{\phi_{2},\omega_{2}})\circ\phi_{i}}\right)^\frac{1}{2}+\lambda)\cdot\omega_{i}\\&=((1-\lambda)\sqrt{\frac{|\omega_{1}\circ\phi_{1}^{-1}|^2+|\omega_{2}\circ\phi_{2}^{-1}|^2}{|\omega_{1}\circ\phi_{1}^{-1}\circ\phi_{i}|^2+|\omega_{2}\circ\phi_{2}^{-1}\circ\phi_{i}|^2}}+\lambda)\cdot\omega_{i}.
\end{align*}
Since $\omega_i: X \to \mathbb{R}\backslash\{0\}$ for $i = 1,2$, it follows that $\omega_{\lambda}^i$ also maps $X$ to $\mathbb{R}\backslash\{0\}$. Replacing $(C_{\phi_{1},\omega_{1}},C_{\phi_{2},\omega_{2}})$ in (i) by $(C_{\phi_{1},\omega_{\lambda}^1},C_{\phi_{2},\omega_{\lambda}^2})$, we obtain the desired result.
\end{proof}
Let $\lambda\in(0,1)$. In general, the $\lambda$-spherical mean transform does not preserve the spherical $p$-hyponormality of weighted composite operator pair $(C_{\phi_{1},\omega_{1}},C_{\phi_{2},\omega_{2}})$, even when $X$ is a finite set and $\phi_{i}$ is a bijective for $i=1,2$. For the reader's convenience, we give such an example below. 
\begin{exa}
Let $X=\{1,2,3\}$ and $\mu$ be the counting measure on $X$. Define the transforms $\phi_{1}$ and $\phi_{2}$ on $X$ by $$\phi_{1}(x)=\begin{cases}
	2&x=1\\
	3&x=2\\
	1&x=3
\end{cases}\quad\text{and}\quad\phi_{2}(x)=\begin{cases}
3&x=1\\
1&x=2\\
2&x=3
\end{cases},$$respectively. Define the functions $\omega_{1}$ and $\omega_{2}$ on $X$ by $$\omega_{1}(x)=\begin{cases}
\sqrt{10}&x=1\\
1&x=2\\
1&x=3
\end{cases}\quad\text{and}\quad\omega_{2}(x)=\begin{cases}
\sqrt{10}&x=1\\
\sqrt{19}&x=2\\
\sqrt{10}&x=3
\end{cases},$$ respectively. Since
$$(\omega_{1}^2+\omega_{2}^2)(x)=\begin{cases}
	10+10=20&x=1\\
	1+19=20&x=2\\
	1+10=11&x=3
\end{cases}\quad\le\quad(\omega_{1}^2\circ\phi_{1}^{-1}+\omega_{2}^2\circ\phi_{2}^{-1})(x)=\begin{cases}
1+19=20&x=1\\
10+10=20&x=2\\
1+10=11&x=3
\end{cases},$$ then by Corollary \ref{phypres} (i) we know that $\textbf{C}_{\phi,\omega}=(C_{\phi_{1},\omega_{1}},C_{\phi_{2},\omega_{2}})$ is spherically $p$-hyponormal. Let $\lambda\in(0,1)$. A direct calculation gives
\begin{align*}
((\omega_{\lambda}^1)^2+(\omega_{\lambda}^2)^2)(1)&=(((1-\lambda)\sqrt{\frac{1+19}{10+10}}+\lambda)\cdot\sqrt{10})^2+(((1-\lambda)\sqrt{\frac{1+19}{1+10}}+\lambda)\cdot\sqrt{10})^2\\&=10+((1-\lambda)\sqrt{\frac{20}{11}}+\lambda)^2\cdot10\\&>20\\&>((1-\lambda)\sqrt{\frac{11}{20}}+\lambda)^2+19
\end{align*}
\begin{align*}
\quad\quad\quad\quad\quad\quad\quad&=(((1-\lambda)\sqrt{\frac{1+10}{1+19}}+\lambda)\cdot1)^2+(((1-\lambda)\sqrt{\frac{10+10}{1+19}}+\lambda)\cdot\sqrt{19})^2\\&=(\omega_{\lambda}^1)^2(3)+(\omega_{\lambda}^2)^2(2)\\&=(\omega_{\lambda}^1)^2\circ\phi_{1}^{-1}(1)+(\omega_{\lambda}^2)^2\circ\phi_{2}^{-1}(1),
\end{align*}which implies, by Corollary \ref{phypres} (ii), that $\mathcal{M}_{\lambda}(\textbf{C}_{\phi,\omega})=(C_{\phi_{1},\omega_{\lambda}^1},C_{\phi_{2},\omega_{\lambda}^2})$ is not spherically $p$-hyponormal.
\end{exa}

We close this paper with the unbounded counterparts of Theorem 3.1 and 3.6 established recently by Stanković \cite{svic-jmaa-2024}. Our results reduce to those in \cite{svic-jmaa-2024} when $\lambda=\frac{1}{2}$, $p=1$ and the sequences $\{\alpha_{(n,m)}\}$ and $\{\beta_{(n,m)}\}$ are bounded. For the sequel, recall that a real sequence $\{s_{k}\}_{k\in\mathbb{Z}_{+}}$ is called a {\em Stieltjes moment sequence} if there exists
a positive Borel measure $\mu$ on $[0,\infty)$ such that$$s_{k}=\int_{0}^\infty t^k\D\mu(t),\quad k\in\mathbb{Z}_{+}.$$Using the notation introduced above for the $2$-variable weighted shift, we have the following result.
\begin{cor}\label{2vari}
Let $\textbf{C}_{\phi,\omega}=(C_{\phi_{1},\omega_{1}},C_{\phi_{2},\omega_{2}})$ be a $2$-variable weighted shift. Let $\lambda\in(0,1)$ and $p\in(0,1]$. Then
\begin{enumerate}
	\item[(i)]
	$\textbf{C}_{\phi,\omega}=(C_{\phi_{1},\omega_{1}},C_{\phi_{2},\omega_{2}})$ is spherically $p$-hyponormal if and only if $$\alpha_{(n-1,m)}^2+\beta_{(n,m-1)}^2\le\alpha_{(n,m)}^2+\beta_{(n,m)}^2$$ for $(n,m)\in\mathbb{Z}_{+}\times\mathbb{Z}_{+}$, where $\alpha_{(-1,m)}=\beta_{(n,-1)}=0$ for $n,m\in\mathbb{Z}_{+}$.
	\item[(ii)]
	$\mathcal{M}_{\lambda}(\textbf{C}_{\phi,\omega})=(C_{\phi_{1},\omega_{\lambda}^1},C_{\phi_{2},\omega_{\lambda}^2})$ is a $2-$variable weighted shift with domain$$\EuScript{D}(\mathcal{M}_{\lambda}(\textbf{C}_{\phi,\omega}))=\{f:\sum\limits_{(n,m)}|f(n,m)|^2\cdot g(n,m)<\infty\},$$
	and its weighted functions are
\begin{align*}
		\omega_{\lambda}^1(n,m)=\begin{cases}
			0&(n,m)\in\{0\}\times\mathbb{Z}_{+}\\
			\widetilde{\alpha_{(n-1,m)}}&(n,m)\in(\mathbb{Z}_{+}\backslash\{0\})\times\mathbb{Z}_{+}
		\end{cases}
	\end{align*}
	and
	\begin{align*}
		\omega_{\lambda}^2(n,m)=\begin{cases}
			0&(n,m)\in\mathbb{Z}_{+}\times\{0\}\\
			\widetilde{\beta_{(n,m-1)}}&(n,m)\in\mathbb{Z}_{+}\times(\mathbb{Z}_{+}\backslash\{0\})
		\end{cases},
	\end{align*}
	where 
	$$g(n,m)=1+\alpha_{(n,m)}^2+\beta_{(n,m)}^2+\frac{\alpha_{(n,m)}^2(\alpha_{(n+1,m)}^2+\beta_{(n+1,m)}^2)+\beta_{(n,m)}^2(\alpha_{(n,m+1)}^2+\beta_{(n,m+1)}^2)}{\alpha_{(n,m)}^2+\beta_{(n,m)}^2}.$$	
The positive sequences $\{\widetilde{\alpha_{(n,m)}}\}$ and $\{\widetilde{\beta_{(n,m)}}\}$ are defined as $$\{\widetilde{\alpha_{(n,m)}}\}=((1-\lambda)\sqrt{\frac{\alpha_{(n+1,m)}^2+\beta_{(n+1,m)}^2}{\alpha_{(n,m)}^2+\beta_{(n,m)}^2}}+\lambda)\cdot\alpha_{(n,m)}$$ and $$\{\widetilde{\beta_{(n,m)}}\}=((1-\lambda)\sqrt{\frac{\alpha_{(n,m+1)}^2+\beta_{(n,m+1)}^2}{\alpha_{(n,m)}^2+\beta_{(n,m)}^2}}+\lambda)\cdot\beta_{(n,m)},$$respectively.
	\item[(iii)]
	$\mathcal{M}_{\lambda}(\textbf{C}_{\phi,\omega})$ is spherically $p$-hyponormal if and only if 
	\begin{small}
	\begin{align*}
		&((1-\lambda)\sqrt{\frac{\alpha_{(n,m)}^2+\beta_{(n,m)}^2}{\alpha_{(n-1,m)}^2+\beta_{(n-1,m)}^2}}+\lambda)^2\cdot\alpha_{(n-1,m)}^2+((1-\lambda)\sqrt{\frac{\alpha_{(n,m)}^2+\beta_{(n,m)}^2}{\alpha_{(n,m-1)}^2+\beta_{(n,m-1)}^2}}+\lambda)^2\cdot\beta_{(n,m-1)}^2\\&\le((1-\lambda)\sqrt{\frac{\alpha_{(n+1,m)}^2+\beta_{(n+1,m)}^2}{\alpha_{(n,m)}^2+\beta_{(n,m)}^2}}+\lambda)^2\cdot\alpha_{(n,m)}^2+((1-\lambda)\sqrt{\frac{\alpha_{(n,m+1)}^2+\beta_{(n,m+1)}^2}{\alpha_{(n,m)}^2+\beta_{(n,m)}^2}}+\lambda)^2\cdot\beta_{(n,m)}^2
	\end{align*}
	\end{small}
	\item[(iv)]
	the $\lambda$-spherical mean transform preserves the spherical $p$-hyponormality of $\textbf{C}_{\phi,\omega}$ if the positive sequences $\{\alpha_{(n,m)}\}$ and $\{\beta_{(n,m)}\}$ defining the weight functions $\omega_{1}$ and $\omega_{2}$ satisfy the following conditions:
	 \begin{enumerate}
	 	\item $\alpha_{(n+1,m)} = \alpha_{(n,m+1)}$ and $\beta_{(n+1,m)} = \beta_{(n,m+1)}$ for all $n, m \ge 0$;
	 	\item For every fixed $m \ge 0$, the sequence $\{\alpha_{(n,m)}^2 + \beta_{(n,m)}^2\}_{n}$ is a Stieltjes moment sequence in $n$.
	 \end{enumerate}
\end{enumerate}
\end{cor}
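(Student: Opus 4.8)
The plan is to treat (i)--(iii) as direct specializations of the machinery already developed and to reserve the real work for (iv). For the $2$-variable weighted shift the maps $\phi_1,\phi_2$ fail to be globally injective only on the boundary lines $\{0\}\times\mathbb{Z}_{+}$ and $\mathbb{Z}_{+}\times\{0\}$, which carry no $\mu_{\omega_i}$-mass; hence $\phi_i$ is injective a.e.\ $[\mu_{\omega_i}]$ and Theorem \ref{thm35} applies (note that \emph{Corollary \ref{phypres} is unavailable here}, since $\phi_i$ is not bijective and $\omega_i$ vanishes on the boundary). First I would record, from \eqref{2vws1} and \eqref{2vws2}, that $\mathsf{h}_{\phi_1,\omega_1}(n,m)=\alpha_{(n,m)}^2$ and $\mathsf{h}_{\phi_2,\omega_2}(n,m)=\beta_{(n,m)}^2$, and compute $\mathsf{h}_{\phi_1,\omega_1}\circ\phi_1\cdot\chi_{\{\omega_1\ne0\}}(n,m)=\alpha_{(n-1,m)}^2$ and $\mathsf{h}_{\phi_2,\omega_2}\circ\phi_2\cdot\chi_{\{\omega_2\ne0\}}(n,m)=\beta_{(n,m-1)}^2$ (with the stated boundary conventions $\alpha_{(-1,m)}=\beta_{(n,-1)}=0$). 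Substituting these into the pointwise inequality of Theorem \ref{thm35} (iii) gives (i) at once.

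For (ii) I would invoke Theorem \ref{meanbasic} (iv) to identify $\mathcal{M}_{\lambda}(\textbf{C}_{\phi,\omega})$ with $(C_{\phi_1,\omega_\lambda^1},C_{\phi_2,\omega_\lambda^2})$ and then evaluate \eqref{wfun}. Since $(\mathsf{h}_{\phi_1,\omega_1}+\mathsf{h}_{\phi_2,\omega_2})\circ\phi_1(n,m)=\alpha_{(n-1,m)}^2+\beta_{(n-1,m)}^2$ on $\{\omega_1\ne0\}$, the factor $\mathsf{h}_1$ becomes the square root of $(\alpha_{(n,m)}^2+\beta_{(n,m)}^2)/(\alpha_{(n-1,m)}^2+\beta_{(n-1,m)}^2)$, and a reindexing turns $\omega_\lambda^1$ into $\widetilde{\alpha}$; the symmetric computation gives $\widetilde{\beta}$. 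The domain weight $g$ is read off from Theorem \ref{meanbasic} (i): using \eqref{Efphi} one finds $\mathsf{E}_{\phi_1,\omega_1}(\mathsf{h}_{\phi_1,\omega_1}+\mathsf{h}_{\phi_2,\omega_2})\circ\phi_1^{-1}(n,m)=\alpha_{(n+1,m)}^2+\beta_{(n+1,m)}^2$, and multiplying by $\mathsf{h}_{\phi_1,\omega_1}/(\mathsf{h}_{\phi_1,\omega_1}+\mathsf{h}_{\phi_2,\omega_2})$ (adding the symmetric term) recovers the $\mathsf{E}_1+\mathsf{E}_2$ summand of $g$, per \eqref{Eequation}. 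Part (iii) is then immediate: as $\omega_\lambda^i$ again maps into $\mathbb{R}\setminus\{0\}$ and $\phi_i$ is unchanged, $\mathcal{M}_{\lambda}(\textbf{C}_{\phi,\omega})$ is itself a $2$-variable weighted shift with weights $\widetilde\alpha,\widetilde\beta$, so applying (i) to it produces the displayed inequality.

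The substance is (iv). Writing $\gamma_{(n,m)}=\alpha_{(n,m)}^2+\beta_{(n,m)}^2$, hypothesis (a) forces $\gamma_{(n+1,m)}=\gamma_{(n,m+1)}$, and chaining this along each antidiagonal shows $\gamma_{(n,m)}$ depends only on $n+m$; write $\gamma_{(n,m)}=\Gamma_{n+m}$. The crucial consequence is that the horizontal and vertical transform multipliers coincide: under (a) one has $\widetilde{\alpha_{(n,m)}}=C_{n+m}\,\alpha_{(n,m)}$ and $\widetilde{\beta_{(n,m)}}=C_{n+m}\,\beta_{(n,m)}$, where $C_k=\lambda+(1-\lambda)\sqrt{\Gamma_{k+1}/\Gamma_k}$ depends only on the index sum. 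Substituting into (iii) and observing that the multipliers at $(n-1,m)$ and at $(n,m-1)$ are both $C_{n+m-1}$, the spherical $p$-hyponormality condition for the transform collapses to
\begin{equation*}
 C_{n+m-1}^2\bigl(\alpha_{(n-1,m)}^2+\beta_{(n,m-1)}^2\bigr)\le C_{n+m}^2\,\Gamma_{n+m}.
\end{equation*}
Since $\textbf{C}_{\phi,\omega}$ is spherically $p$-hyponormal, (i) gives $\alpha_{(n-1,m)}^2+\beta_{(n,m-1)}^2\le\Gamma_{n+m}$, so it suffices to prove $C_{n+m-1}\le C_{n+m}$, i.e.\ that $k\mapsto C_k$ is nondecreasing.

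Finally I would invoke hypothesis (b): taking $m=0$, the sequence $\{\Gamma_k\}_k$ is a Stieltjes moment sequence, and the Cauchy--Schwarz estimate $\Gamma_k=\int_0^\infty t^k\,\mathrm{d}\mu\le\bigl(\int_0^\infty t^{k-1}\,\mathrm{d}\mu\bigr)^{1/2}\bigl(\int_0^\infty t^{k+1}\,\mathrm{d}\mu\bigr)^{1/2}$ shows it is log-convex, so $\Gamma_{k+1}/\Gamma_k$ is nondecreasing in $k$. As $C_k$ is an increasing function of that ratio, monotonicity of $C_k$ follows, and the reduced inequality holds. The main obstacle is recognizing that hypothesis (a) simultaneously reduces $\gamma$ to a one-parameter moment-type sequence \emph{and} equalizes the two transform factors --- this is exactly what lets the genuinely two-dimensional inequality of (iii) collapse to the scalar monotonicity $C_{k-1}\le C_k$ --- after which the sole role of (b) is to supply log-convexity through the classical Stieltjes-moment/log-convexity link. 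The boundary indices where $\alpha_{(-1,m)}=\beta_{(n,-1)}=0$ must be checked separately, but there both sides are trivially controlled.
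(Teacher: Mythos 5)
Your proposal is correct and follows essentially the same route as the paper: parts (i)--(iii) specialize Theorem \ref{thm35} and Theorem \ref{meanbasic} exactly as the paper does (including the a.e.\ injectivity check for $\phi_i$ on the boundary lines of zero $\mu_{\omega_i}$-mass), and part (iv) rests on the same three ingredients the paper uses --- condition (a) equalizing the horizontal and vertical transform multipliers, the base inequality from (i), and log-convexity of the moment sequence via Cauchy--Schwarz from (b). Your repackaging through the antidiagonal-constant sequence $\Gamma_{n+m}$ and the single multiplier $C_k$ (with (b) invoked only at $m=0$, which suffices since (a) makes every $\gamma_{(n,m)}$ appear in the $m=0$ row) is a cleaner rewriting of the paper's chain of inequalities, not a different argument.
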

\begin{proof}
(i) Since the measure space $(\mathbb{Z}_{+}\times\mathbb{Z}_{+},2^{\mathbb{Z}_{+}\times\mathbb{Z}_{+}},\mu)$ equipped with the counting measure is discrete, it suffices to prove that $\phi_{i}$ is injective a.e. $[\mu_{\omega_{i}}]$ for $i=1,2$. The desired result then follows from Theorem \ref{thm35}. A simple calculation shows that $$\mu_{\omega_{1}}(\mathsf{\Phi_{1}}_{(n,m)})=\begin{cases}
\mu_{\omega_{1}}(\emptyset)=0&(n,m)\in(\mathbb{Z}_{+}\backslash\{0,1\})\times\mathbb{Z}_{+}\\
\mu_{\omega_{1}}(\{0\}\times\mathbb{Z}_{+})=0&(1,m)\in\{1\}\times\mathbb{Z}_{+}
\end{cases}$$
and
$$\mu_{\omega_{2}}(\mathsf{\Phi_{2}}_{(n,m)})=\begin{cases}
	\mu_{\omega_{2}}(\emptyset)=0&(n,m)\in\mathbb{Z}_{+}\times(\mathbb{Z}_{+}\backslash\{0,1\})\\
	\mu_{\omega_{2}}(\mathbb{Z}_{+}\times\{0\})=0&(n,1)\in\mathbb{Z}_{+}\times\{1\}
\end{cases}.$$ Thus, $\phi_{i}$ is injective a.e. $[\mu_{\omega_{i}}]$ for $i=1,2$. Moreover, we have
\begin{align*}
&\mathsf{h}_{\phi_{1},\omega_{1}}\circ\phi_{1}\cdot\chi_{\{\omega_{1}\ne0\}}+\mathsf{h}_{\phi_{2},\omega_{2}}\circ\phi_{2}\cdot\chi_{\{\omega_{2}\ne0\}}\le\mathsf{h}_{\phi_{1},\omega_{1}}+\mathsf{h}_{\phi_{2},\omega_{2}}\\&\Leftrightarrow\begin{cases}
0\le\alpha_{(0,0)}^2+\beta_{(0,0)}^2&(n,m)=(0,0)\\
\beta_{(n,m-1)}^2\le\alpha_{(n,m)}^2+\beta_{(n,m)}^2&(n,m)\in\{0\}\times(\mathbb{Z}_{+}\backslash\{0\})\\
\alpha_{(n-1,m)}^2\le\alpha_{(n,m)}^2+\beta_{(n,m)}^2&(n,m)\in(\mathbb{Z}_{+}\backslash\{0\})\times\{0\}\\
\alpha_{(n-1,m)}^2+\beta_{(n,m-1)}^2\le\alpha_{(n,m)}^2+\beta_{(n,m)}^2&(n,m)\in(\mathbb{Z}_{+}\backslash\{0\})\times(\mathbb{Z}_{+}\backslash\{0\}).
\end{cases}
\end{align*}
Therefore, we obtain the desired result.
	
(ii) By \eqref{2vws1}, \eqref{2vws2} and recalling \eqref{wfun}, we have
\begin{align*}
	\omega_{\lambda}^1(n,m)&=((1-\lambda)\cdot\chi_{\{\omega_{1}\ne0\}}\cdot\left(\frac{\mathsf{h}_{\phi_{1},\omega_{1}}+\mathsf{h}_{\phi_{2},\omega_{2}}}{(\mathsf{h}_{\phi_{1},\omega_{1}}+\mathsf{h}_{\phi_{2},\omega_{2}})\circ\phi_{1}}\right)^\frac{1}{2}+\lambda)(n,m)\cdot\omega_{1}(n,m)\\&=((1-\lambda)\cdot\chi_{\{\omega_{1}\ne0\}}\cdot\sqrt{\frac{\omega_{1}^2(n+1,m)+\omega_{2}^2(n,m+1)}{\omega_{1}^2(n,m)+\omega_{2}^2(n-1,m+1)}}+\lambda)\cdot\omega_{1}(n,m)\\&=\begin{cases}
		0&(n,m)\in\{0\}\times\mathbb{Z}_{+}\\
		((1-\lambda)\sqrt{\frac{\alpha_{(n,m)}^2+\beta_{(n,m)}^2}{\alpha_{(n-1,m)}^2+\beta_{(n-1,m)}^2}}+\lambda)\cdot\alpha_{(n-1,m)}&(n,m)\in(\mathbb{Z}_{+}\backslash\{0\})\times\mathbb{Z}_{+}
	\end{cases}
\end{align*}
and
\begin{align*}
	\omega_{\lambda}^2(n,m)&=((1-\lambda)\cdot\chi_{\{\omega_{2}\ne0\}}\cdot\left(\frac{\mathsf{h}_{\phi_{1},\omega_{1}}+\mathsf{h}_{\phi_{2},\omega_{2}}}{(\mathsf{h}_{\phi_{1},\omega_{1}}+\mathsf{h}_{\phi_{2},\omega_{2}})\circ\phi_{2}}\right)^\frac{1}{2}+\lambda)(n,m)\cdot\omega_{2}(n,m)\\&=((1-\lambda)\cdot\chi_{\{\omega_{2}\ne0\}}\cdot\sqrt{\frac{\omega_{1}^2(n+1,m)+\omega_{2}^2(n,m+1)}{\omega_{1}^2(n+1,m-1)+\omega_{2}^2(n,m)}}+\lambda)\cdot\omega_{2}(n,m)
\end{align*}
\begin{align*}
	\quad\quad\quad\quad=\begin{cases}
		0&(n,m)\in\mathbb{Z}_{+}\times\{0\}\\
		((1-\lambda)\sqrt{\frac{\alpha_{(n,m)}^2+\beta_{(n,m)}^2}{\alpha_{(n,m-1)}^2+\beta_{(n,m-1)}^2}}+\lambda)\cdot\beta_{(n,m-1)}&(n,m)\in\mathbb{Z}_{+}\times(\mathbb{Z}_{+}\backslash\{0\})
	\end{cases},
\end{align*}

which yields that 
\begin{align*}
C_{\phi_{1},\omega_{\lambda}^1}e_{(n,m)}&=\omega_{\lambda}^1\cdot e_{(n,m)}\circ\phi_{1}=\omega_{\lambda}^1(n+1,m)\cdot e_{(n+1,m)}\\&=((1-\lambda)\sqrt{\frac{\alpha_{(n+1,m)}^2+\beta_{(n+1,m)}^2}{\alpha_{(n,m)}^2+\beta_{(n,m)}^2}}+\lambda)\cdot\alpha_{(n,m)}\cdot e_{(n+1,m)}
\end{align*}
and
\begin{align*}
	C_{\phi_{2},\omega_{\lambda}^2}e_{(n,m)}&=\omega_{\lambda}^2\cdot e_{(n,m)}\circ\phi_{2}=\omega_{\lambda}^2(n,m+1)\cdot e_{(n,m+1)}\\&=((1-\lambda)\sqrt{\frac{\alpha_{(n,m+1)}^2+\beta_{(n,m+1)}^2}{\alpha_{(n,m)}^2+\beta_{(n,m)}^2}}+\lambda)\cdot\beta_{(n,m)}\cdot e_{(n,m+1)}.
\end{align*}
By \eqref{disdomain}, we have
\begin{align*}
(\mathsf{E}_{\phi_{1},\omega_{1}}(\mathsf{h}_{\phi_{1},\omega_{1}}+\mathsf{h}_{\phi_{2},\omega_{2}})\circ\phi_{1}^{-1})(n,m)&=\frac{\sum\limits_{y\in\phi_{1}^{-1}(\{(n,m)\})}(\mathsf{h}_{\phi_{1},\omega_{1}}+\mathsf{h}_{\phi_{2},\omega_{2}})(y)|\omega_{1}(y)|^2}{\sum\limits_{y\in\phi_{1}^{-1}(\{(n,m)\})}|\omega_{1}(y)|^2}\\&=\alpha_{(n+1,m)}^2+\beta_{(n+1,m)}^2
\end{align*}
and
\begin{align*}
	(\mathsf{E}_{\phi_{2},\omega_{2}}(\mathsf{h}_{\phi_{1},\omega_{1}}+\mathsf{h}_{\phi_{2},\omega_{2}})\circ\phi_{2}^{-1})(n,m)&=\frac{\sum\limits_{y\in\phi_{2}^{-1}(\{(n,m)\})}(\mathsf{h}_{\phi_{1},\omega_{1}}+\mathsf{h}_{\phi_{2},\omega_{2}})(y)|\omega_{2}(y)|^2}{\sum\limits_{y\in\phi_{2}^{-1}(\{(n,m)\})}|\omega_{2}(y)|^2}\\&=\alpha_{(n,m+1)}^2+\beta_{(n,m+1)}^2,
\end{align*}
which yields that $$\Big(\big(\mathsf{E}_{\phi_{1},\omega_{i}}(\mathsf{h}_{\phi_{1},\omega_{1}}+\mathsf{h}_{\phi_{2},\omega_{2}})\circ\phi_{1}^{-1}\big)\cdot\frac{\mathsf{h}_{\phi_{1},\omega_{1}}\cdot\chi_{\{\mathsf{h}_{\phi_{1},\omega_{1}}+\mathsf{h}_{\phi_{2},\omega_{2}}>0\}}}{\mathsf{h}_{\phi_{1},\omega_{1}}+\mathsf{h}_{\phi_{2},\omega_{2}}}\Big)(n,m)=\frac{\alpha_{(n,m)}^2(\alpha_{(n+1,m)}^2+\beta_{(n+1,m)}^2)}{\alpha_{(n,m)}^2+\beta_{(n,m)}^2}$$
and
$$\Big(\big(\mathsf{E}_{\phi_{2},\omega_{2}}(\mathsf{h}_{\phi_{1},\omega_{1}}+\mathsf{h}_{\phi_{2},\omega_{2}})\circ\phi_{2}^{-1}\big)\cdot\frac{\mathsf{h}_{\phi_{2},\omega_{2}}\cdot\chi_{\{\mathsf{h}_{\phi_{1},\omega_{1}}+\mathsf{h}_{\phi_{2},\omega_{2}}>0\}}}{\mathsf{h}_{\phi_{1},\omega_{1}}+\mathsf{h}_{\phi_{2},\omega_{2}}}\Big)(n,m)=\frac{\beta_{(n,m)}^2(\alpha_{(n,m+1)}^2+\beta_{(n,m+1)}^2)}{\alpha_{(n,m)}^2+\beta_{(n,m)}^2}.$$
Thus, by Theorem \ref{meanbasic} (i) we conclude that
$$\EuScript{D}(\mathcal{M}_{\lambda}(\textbf{C}_{\phi,\omega}))=\{f:\sum\limits_{(n,m)}|f(n,m)|^2\cdot g(n,m)<\infty\},$$ where $g(n,m)=1+\alpha_{(n,m)}^2+\beta_{(n,m)}^2+\frac{\alpha_{(n,m)}^2(\alpha_{(n+1,m)}^2+\beta_{(n+1,m)}^2)+\beta_{(n,m)}^2(\alpha_{(n,m+1)}^2+\beta_{(n,m+1)}^2)}{\alpha_{(n,m)}^2+\beta_{(n,m)}^2}.$

(iii) Since $\mathcal{M}_{\lambda}(\textbf{C}_{\phi,\omega})$ is a $2$-variable weighted shift, the desired result is obtained by replacing $(C_{\phi_{1},\omega_{1}},C_{\phi_{2},\omega_{2}})$ in (i) with $(C_{\phi_{1},\omega_{\lambda}^1},C_{\phi_{2},\omega_{\lambda}^2})$.

(iv) Suppose $\textbf{C}_{\phi,\omega}=(C_{\phi_{1},\omega_{1}},C_{\phi_{2},\omega_{2}})$ is spherically $p$-hyponormal. By condition (b), for every $m\ge0$, there exists a positive Borel measure $\mu_{m}$ on $[0,\infty)$ such that 
$$\alpha_{(n,m)}^2 + \beta_{(n,m)}^2=\int_{0}^\infty t^n\D\mu_{m}.$$ It follows that
\begin{align*}
	(\alpha_{(n,m)}^2 + \beta_{(n,m)}^2)^2&=(\int_{0}^\infty t^n\D\mu_{m})^2=(\int_{0}^\infty t^\frac{n-1}{2}t^\frac{n+1}{2}\D\mu_{m})^2\\&\stackrel{\text{Cauchy's inequality}}{\le}\int_{0}^\infty t^{n-1}\D\mu_{m}\int_{0}^\infty t^{n+1}\D\mu_{m}\\&=(\alpha_{(n-1,m)}^2 + \beta_{(n-1,m)}^2)\cdot(\alpha_{(n+1,m)}^2 + \beta_{(n+1,m)}^2).
\end{align*}
Hence, we have\begin{equation}\label{stie}
	\frac{\alpha_{(n+1,m)}^2+\beta_{(n+1,m)}^2}{\alpha_{(n,m)}^2+\beta_{(n,m)}^2}\ge\frac{\alpha_{(n,m)}^2+\beta_{(n,m)}^2}{\alpha_{(n-1,m)}^2+\beta_{(n-1,m)}^2}.
\end{equation}
Thus, we obtain
\begin{align*}
&((1-\lambda)\sqrt{\frac{\alpha_{(n+1,m)}^2+\beta_{(n+1,m)}^2}{\alpha_{(n,m)}^2+\beta_{(n,m)}^2}}+\lambda)^2\cdot\alpha_{(n,m)}^2+((1-\lambda)\sqrt{\frac{\alpha_{(n,m+1)}^2+\beta_{(n,m+1)}^2}{\alpha_{(n,m)}^2+\beta_{(n,m)}^2}}+\lambda)^2\cdot\beta_{(n,m)}^2\\&\xlongequal{\text{condition (a)}}((1-\lambda)\sqrt{\frac{\alpha_{(n+1,m)}^2+\beta_{(n+1,m)}^2}{\alpha_{(n,m)}^2+\beta_{(n,m)}^2}}+\lambda)^2\cdot(\alpha_{(n,m)}^2+\beta_{(n,m)}^2)\\&\stackrel{\text{Corollary \ref{2vari} (i)}}{\ge}((1-\lambda)\sqrt{\frac{\alpha_{(n+1,m)}^2+\beta_{(n+1,m)}^2}{\alpha_{(n,m)}^2+\beta_{(n,m)}^2}}+\lambda)^2\cdot(\alpha_{(n-1,m)}^2+\beta_{(n,m-1)}^2)\\&\stackrel{\eqref{stie}}{\ge}((1-\lambda)\sqrt{\frac{\alpha_{(n,m)}^2+\beta_{(n,m)}^2}{\alpha_{(n-1,m)}^2+\beta_{(n-1,m)}^2}}+\lambda)^2\cdot(\alpha_{(n-1,m)}^2+\beta_{(n,m-1)}^2)\\&\xlongequal{\text{condition (a)}}\\&((1-\lambda)\sqrt{\frac{\alpha_{(n,m)}^2+\beta_{(n,m)}^2}{\alpha_{(n-1,m)}^2+\beta_{(n-1,m)}^2}}+\lambda)^2\cdot\alpha_{(n-1,m)}^2+((1-\lambda)\sqrt{\frac{\alpha_{(n,m)}^2+\beta_{(n,m)}^2}{\alpha_{(n,m-1)}^2+\beta_{(n,m-1)}^2}}+\lambda)^2\cdot\beta_{(n,m-1)}^2,
\end{align*}
which implies, by part (iii) of the corollary, that $\mathcal{M}_{\lambda}(\textbf{C}_{\phi,\omega})$ is spherically $p$-hyponormal.
\end{proof}

	\bibliographystyle{amsalpha}
	
\end{document}